\newcommand\delf[1]{} 
\newcommand\addf[1]{{\color{blue} #1}} 
\newcommand\deln[1]{}
\newcommand\delg[1]{} 
\newcommand\delh[1]{} 
\newcommand\del[1]{}
\newcommand\dela[1]{}
\newcommand\Greendel[1]{}
\newcommand\old[1]{}
\numberwithin{equation}{section}
\newcommand{\embed}{\hookrightarrow}
\newcommand{\lb}{\langle}
\newcommand{\rb}{\rangle}
\def\E{{\mathbb E}\,}
\def\ds{\displaystyle}
\def\vs{\vspace{.1cm}}
\newcommand{\rA}{\mathrm{A}}
\newcommand{\rB}{\mathrm{B}}
\newcommand{\rH}{\mathrm{ H}}
\newcommand{\rV}{\mathrm{ V}}
\newcommand{\cH}{\mathcal{ H}}
\newcommand{\cX}{\mathcal{ X}}
\newcommand{\cHX}{\mathcal{HX}}
\newcommand\todown{\searrow}
\newcommand{\eps}{\varepsilon}
\renewcommand{\H}{{H_\infty}}
\renewcommand{\a}{\alpha}
\renewcommand{\d}{\delta}
\def\d{{\,\rm d}}
\def\old#1{}
\def\text#1{{\rm #1}}
\def\newold#1{}
\def\divv{{\rm div}\,}
\theoremstyle{plain}
\newtheorem{theorem}{Theorem}[section]
\theoremstyle{remark}
\newtheorem{remark}[theorem]{Remark}
\theoremstyle{plain}
\newtheorem{corollary}[theorem]{Corollary}
\newtheorem{lemma}[theorem]{Lemma}
\newtheorem{proposition}[theorem]{Proposition}
\newtheorem{definition}[theorem]{Definition}
\newtheorem{assumption}[theorem]{Assumption}
\newtheorem{rem}{Remark}
\numberwithin{equation}{section}
\begin{document}

\baselineskip 12pt

\title[QP 2D SNSES st wn]
{Quasipotential and exit time for 2D Stochastic Navier-Stokes equations driven by   space time white noise}
\author{Z. Brze{\'z}niak}
\address{Department of Mathematics\\
The University of York\\
Heslington, York YO10 5DD, UK} \email{zdzislaw.brzezniak@york.ac.uk}
\author{S. Cerrai and M. Freidlin}
\address{Department of Mathematics\\
University of Maryland\\
College Park, MD, 20742,  USA} \email{cerrai@math.umd.edu, mif@math.umd.edu}
\date{\today}




\begin{abstract} We are dealing with the Navier-Stokes equation in a bounded regular domain $\mathcal{O}$ of $\mathbb{R}^2$, perturbed by an additive Gaussian noise $\partial w^{Q_\delta}/\partial t$, which is white in time and colored in space. We assume that the correlation radius of the noise gets smaller and smaller as $\delta\searrow 0$, so that the noise converges to the white noise in space and time. For every $\delta>0$ we introduce the large deviation action functional $S^\delta_{T}$ and the corresponding quasi-potential $U_\delta$ and, by using arguments from relaxation and  $\Gamma$-convergence we show that $U_\delta$ converges to $U=U_0$, in spite of the fact that the Navier-Stokes equation has no meaning in the space of square integrable functions, when perturbed by space-time white noise. Moreover, in the case of periodic boundary conditions the limiting functional $U$ is explicitly computed.

Finally, we apply these results to estimate of the asymptotics of the  expected  exit time of the solution of the stochastic Navier-Stokes equation from a basin of attraction of an asymptotically stable point for the unperturbed system.

\end{abstract}

\maketitle \tableofcontents
\section{Introduction}\addf{\label{sec-intro}}

Let $\mathcal{O}$ be a regular bounded open domain of $\mathbb{R}^2$. We consider here the $2$-dimensional  Navier-Stokes equation in $\mathcal{O}$, perturbed by a small Gaussian noise
\[\frac{\partial u(t,x)}{\partial t}=\Delta u(t,x)-\left(u(t,x)\cdot \nabla u(t,x)\right)u(t,x)-\nabla p(t,x)+\sqrt{\eps}\,\eta(t,x),\]
with the incompressibility condition
\[\text{div}\, u(t,x)=0\]
and initial and boundary conditions
\[u(t,x)=0,\ \ \ \ x \in\,\partial \mathcal{O},\ \ \ \ u(0,x)=u_0(x).\]
Here $0<\eps<<1$ and $\eta(t,x)$ is a Gaussian random field, white in time and colored in space.

\medskip

In what follows, for any $\a \in\,\mathbb{R}$ we shall denote by $\rV_\a$ the closure in the space $[H^\a(\mathcal{O})]^2$ of the set of infinitely differentiable $2$-dimensional vector fields, having   zero divergence and compact support on $\mathcal{O}$,  and we shall set $\rH=\rV_0$ and $\rV=\rV_1$. We will also set
\[D(\rA)=[H^2(\mathcal{O})]^2\cap \rV,\ \ \ \ \rA x=-\Delta x,\ \ x \in\,D(\rA).\]
The operator $\rA$ is positive and  self-adjoint, with compact resolvent. We will denote with  $0<\lambda_1\leq \lambda_2\leq \cdots$ and $\{e_k\}_{k \in\,\mathbb{N}}$  the eigenvalues and the eigenfunctions of $\rA$, respectively. Moreover, we will define the bilinear operator $B:\rV\times \rV\to \rV_{-1}$ by setting
\[\left<B(u,v),z\right>=\int_{\mathcal{O}}z(x)\cdot \left[\left(u(x)\cdot \nabla \right)v(x)\right]\,dx.\]
With these notations, if we apply to each term of the Navier-Stokes equation above the projection operator into the space of divergence free fields, we formally arrive to the abstract equation
\begin{equation}
\label{eq1}
du(t)+\rA u(t)+B(u(t),u(t))=\sqrt{\eps}\,dw^Q(t),\ \ \ u(0)=u_0,
\end{equation}
where the noise $w^Q(t)$ is assumed to be of the following form
\begin{equation}
\label{noise}
w^Q(t)=\sum_{k=1}^\infty Q e_k \beta_k(t),\ \ \ \ t\geq 0,
\end{equation}
for some sequence of independent standard Brownian motions $\{\beta_k(t)\}_{k \in\,\mathbb{N}}$ and a  linear operator $Q$ defined on $\rH$ (for all details see Section \ref{sec-prel}).

As well known,  white noise in space and time (that is $Q=I$) cannot be taken into consideration in order to study equation  \eqref{eq1} in the space $\rH$. But if we assume that $Q$ is a compact operator satisfying suitable conditions, as for example $Q\sim \rA^{-\a}$, for some $\a>0$, we have that for any $u_0 \in\,\rH$ and $T>0$ equation \eqref{eq1} is well defined in  $C([0,T];\rH)$ and the validity of a large deviation principle and the problem of the exit of the solution of equation \eqref{eq1} from a domain can be studied.

As in the previous work \cite{cerrai-freidlin}, where a class of reaction-diffusion equations in any space dimension perturbed by multiplicative noise has been considered, in the present  paper we want to see how we can describe the small noise asymptotics of equation \eqref{eq1}, as if the noisy perturbation were given by a white noise in space and time. This means that, in spite of the fact that equation \eqref{eq1} is not meaningful in $\rH$ when the noise is white in space, the relevant quantities for the large deviations and the exit problems associated with it can be approximated by the analogous quantities that one would get in the case of white noise in space. In particular, when periodic boundary conditions are imposed, such quantities can be explicitly computed and such approximation becomes particularly useful.

\medskip

In what follows we shall consider a family of positive linear operators $\{Q_\delta\}_{\delta \in\,(0,1]}$ defined on $\rH$, such that for any fixed $\delta \in\,(0,1]$ equation \eqref{eq1}, with noise
\[w^{Q_\delta}(t)=\sum_{k=1}^\infty Q_\delta e_k \beta_k(t),\ \ \ \ t\geq 0,\]
is well defined in $C([0,T];\rH)$, and $Q_\delta$ is strongly convergent to the identity operator in $\rH$, for $\delta\searrow  0$. For each fixed $\delta \in\,(0,1]$, the family $\{\mathcal{L}(u^x_{\eps,\delta})\}_{\eps \in\,(0,1]}$ satisfies a large deviation principle in $C([0,T];\rH)$ with   action functional
\[S^\delta_{T}(u)=\frac 12\int_0^T\left|Q_\delta^{-1}\left(u^\prime(t)+\rA u(t)+B(u(t),u(t))\right)\right|_\rH^2\,dt,\]
and the corresponding quasi-potential is defined by
\[U_\delta(\phi)=\inf\left\{S^\delta_{T}(u)\ :\ u \in\,C([0,T];\rH),\ u(0)=0,\ u(T)=\phi,\ T>0\right\}.\]
Our purpose here is to show  that, despite we cannot prove any limit for the solution $u^\delta$ of equation \eqref{eq1}, nevertheless,  for all $ \phi \in \rH$ such that $U_\delta(\phi)<\infty$,
\begin{equation}
\label{brc101}
\lim_{\delta\to 0}U_\delta(\phi)=U(\phi),
\end{equation}
where
$U(\phi)$ is defined as $U_\delta(\phi)$, with the action functional $S^\delta_{T}$ replaced by
\[S_{T}(u)=\frac 12\int_0^T\left|u^\prime(t)+\rA u(t)+B(u(t),u(t))\right|_\rH^2\,dt.\]
To this purpose, the key idea consists in characterizing the quasi-potentials $U_\delta$ and $U$ as
\begin{equation}
\label{brc102}
U_\delta(\phi)=\min\left\{S^\delta_{-\infty}(u)\ :\ u \in\,\mathcal{X} \mbox{ and } {u(0)=\phi}\right\}
\end{equation}
and
\begin{equation}
\label{brc103}
U(\phi)=\min\left\{S_{-\infty}(u)\ :\ u \in\,\mathcal{X} \mbox{ and } {u(0)=\phi} \right\},
\end{equation}
where
\[\mathcal{X}=\big\{ u\in C((-\infty,0];\rH): \; \lim_{t\to-\infty}\vert u(t)\vert_{\rH}=0\big\}
\]
and the functionals $S^\delta_{-\infty}$ and $S_{-\infty}$ are defined on  $\mathcal{X}$ in a natural way, see formulae \eqref{eqn-A5bis} and \eqref{eqn-A5} later on.

In this way, in the definition of $U_\delta$ and $U$, the infimum with respect to time $T>0$ has disappeared and we have only to take the infimum of suitable functionals in the   space $\mathcal{X}_\phi:=\{ u\in \mathcal{X}: u(0)=\phi\}$. In particular, the convergence of $U_\delta(\phi)$ to $U(\phi)$ becomes the convergence of the infima of $S^\delta_{-\infty}$ in $\mathcal{X}_\phi$ to   the infimum of $S_{-\infty}$ in $\mathcal{X}_\phi$, so that \eqref{brc101} follows once we prove that $S^\delta_{-\infty}$ is Gamma-convergent to $S_{-\infty}$ in $\mathcal{X}_\phi$, as $\delta\searrow 0$. Moreover, as a consequence of \eqref{brc103},  in the case of the stochastic Navier-Stokes equations with periodic boundary conditions we can prove, see section \ref{sect-periodic},  that
\begin{equation}
\label{Vper}
U(\phi)=|\phi|^2_\rV.
\end{equation}
This means that $U(\phi)$ can be explicitly computed and the use of \eqref{brc101} in applications becomes particularly relevant. Let us point out that a similar explicit formula for the quasipotential has been derived for linear SPDEs by Da Prato, Pritchard and Zabczyk in \cite{DaPrato+P+Z_1991} and in the recent work by the second and  third authors for stochastic reaction diffusion equations in \cite{cerrai-freidlin}. A finite dimensional counterpart of our formula \eqref{Vper} was first derived  in Theorem IV.3.1 in the monograph \cite{freidlin}.

The proofs of  characterizations \eqref{brc102} and \eqref{brc103} and of the Gamma-convergence of $S^\delta_{-\infty}$  to $S_{-\infty}$ are based on a thorough analysis of the Navier-Stokes equation with an external deterministic force in the domain of suitable fractional powers of the operator $\rA$.

\medskip

One of the main motivation for proving  \eqref{brc101} comes from the study of the expected exit time $\tau_\phi^{\eps,\delta}$ of the solution $u_\phi^{\eps,\delta}$ from a domain $D$ in $L^2(\mathcal{O})$,  which is attracted to the zero function. Actually, in the second part of the paper we  prove that, under suitable regularity properties of $D$,
for any fixed $\delta>0$
\begin{equation}
\label{brc110}
\lim_{\eps \to 0}\eps\,\log\,\mathbb{E}(\tau_\phi^{\eps,\delta})=\inf_{y \in\,\partial D}U_\delta(y).\end{equation}
This means that, as in finite dimension, the expectation of $\tau_\phi^{\eps,\delta}$ can be described in terms of the quantity $U_\delta(\phi)$. Moreover, once we have \eqref{brc101}, by a general argument introduced in \cite{cerrai-freidlin} and based again on Gamma-convergence, we can prove that if $D$ is a domain in $\rH$ such that any point $\phi \in\,\rV\cap \partial D$  can be approximated in $\rV$ by a sequence $\{\phi_n\}_{n \in\,\mathbb{N}}\subset D(\rA^{\frac 12+\a})\cap \partial D$ (think for example of $D$ as a ball in $\rH$), then
\[\lim_{\delta \to 0}\,\inf_{\phi \in\,\partial D}U_\delta(\phi)=\inf_{\phi \in\,\partial D}U(\phi).\]
According to \eqref{brc110}, this implies that for $0<\eps<<\delta<<1$
\[\mathbb{E}\,\tau_\phi^{\eps,\delta}\sim \exp\left(\frac 1\eps\inf_{\phi \in\,\partial D}U(\phi)\right).\]
In particular, if $D$ is the ball of $\rH$ of radius $c$ and the boundary conditions are periodic, in view of \eqref{Vper} for any $\phi \in\,D$ we get,
\[e^{-\frac {c^2 \lambda_1^2}{\eps}}\, \mathbb{E}\,\tau_\phi^{\eps,\delta}\sim 1,\ \ \ 0<\eps<<\delta<<1.\]

\medskip

At the end of this long introduction, we would like to point out that although 2-D  stochastic Navier-Stokes equations with periodic boundary conditions have been investigated by Flandoli and Gozzi in \cite{Flandoli+Gozzi_1998} and Da Prato and Debussche in \cite{DaPrato+D_2002} from the point of view of Kolmogorov equations and the existence of a Markov process, we do not know whether our results (even in the periodic case) could be derived from these papers. One should bear in mind that the solution from    \cite{DaPrato+D_2002} exists for almost every initial data $u_0$ from a certain Besov space of negative order with respect to a specific Gaussian measure while we construct a quasipotential for every $u_0$ from the space $\rH$ whose measure is equal to $0$. Of course our results are also valid for 2-D  stochastic Navier-Stokes equations with Dirichlet boundary conditions.

{
We have been recently become aware of a  work by F. Bouchet et all \cite{Bouchet_2014} where somehow related issues are considered from a physical point of view. We hope to be able to understand the relationship between our work and this work in a future publication.
}

\subsection*{Acknowledgments}
Research of the first named author was also supported by the EPSRC grant  EP/E01822X/1, research of the second named author was also supported by NSF grant DMS0907295 and research of the third named author was also  supported by the NSF grants DMS 0803287 and 0854982.

The first named author  would like to thank the University of Maryland for hospitality during his visit in Summer 2008. The second named author would like to thank Michael Salins for some useful discussions about Lemmas \ref{lem3bis} and \ref{lem4}.

Finally, the three authors would like to thank the two anonymous referees for reading carefully the original manuscript and giving very good suggestions that helped us to improve considerably the final version of the paper.

\section{Notation and preliminaries}\label{sec-prel}

Let $\mathcal{O}\subset\mathbb{R}^{2}$ be an open and bounded set. We denote by  $\Gamma=\partial\mathcal{O}$ the  boundary of $\mathcal{O}$. We will always assume that the closure $\overline{\mathcal{O}}$ of the set $\mathcal{O}$ is a manifold with boundary of $C^\infty$ class, whose boundary $\partial\mathcal{O}$ is denoted by $\Gamma$,   is a
$1$-dimensional infinitely differentiable manifold being
locally on one side of $\mathcal{O}$, see   condition (7.10) from \cite[chapter I]{Lions+Magenes_1972}. Let us also denote by $\nu$ the unit outer normal vector field to $\Gamma$.

It is known that   $\mathcal{O}$ is a Poincar\'e
domain, i.e.   there exists a constant $\lambda_1>0$ such that  the following Poincar\'{e} inequality is satisfied
\begin{equation}
\lambda_1\int_{\mathcal{O}}\varphi^{2}(x)\,dx\leq\int_{\mathcal{O}}|\nabla
\varphi(x)|^{2}\,dx,\ \ \ \varphi\in H^1_0({\mathcal O}).
\label{ineq-Poincare}
\end{equation}

 In order to formulate  our
problem in an abstract framework,  let us recall the definition of  the following
functional spaces. First of all, let $\mathcal{D}(\mathcal{O})$ (resp. $\mathcal{D
}(\overline{\mathcal{O}})$) be the set of all $C^\infty$ class vector  fields  $u:\mathbb{R}^2\to \mathbb{R}^2$ with compact support   contained in the set $\mathcal{O}$ (resp. $\overline{\mathcal{O}}$). Then, let us define
\begin{eqnarray*}
	E(\mathcal{O}) &=& \{ u\in L^2(\mathcal{O},\mathbb{R}^2):\divv u \in\,L^2(\mathcal{O})\},\\
\mathcal{V}&=&\big\{  u\in \mathcal{D}(\mathcal{O}): \divv u=0\big\} ,\\
\rH&=& \mbox{the closure of $\mathcal{V}$ in } L^2(\mathcal{O}),\\
\rH_0^1(\mathcal{O},\mathbb{R}^2)&=& \mbox{the closure of $\mathcal{D}(\mathcal{O},\mathbb{R}^2)
$ in } \rH^1(\mathcal{O},\mathbb{R}^2),\\
V&=& \mbox{the closure of $\mathcal{V}$ in } \rH^1_0(\mathcal{O},\mathbb{R}^2).
\end{eqnarray*}
The inner products in all the $L^2$ spaces will be denoted by $(\cdot,\cdot)$. The space $E(\mathcal{O})$ is a Hilbert space with a scalar product \begin{equation}\label{Temam_1.13}
\lb u,v\rb_{E(\mathcal{O})}:=(u,v)_{L^2(\mathcal{O},\mathbb{R}^2)}+(\divv u, \divv v)_{L^2(\mathcal{O},\mathbb{R}^2)}.
\end{equation}
We endow the set $H$ with the inner product $(\cdot,\cdot)_H$ and the norm   $\left\vert \cdot\right\vert_H $ induced by $L^2(\mathcal{O},\mathbb{R}^2)$.
Thus, we have
\[
(u,v)_H=\sum_{j=1}^{2}\int_{\mathcal{O}}u_{j}(x)v_{j}(x)\,{d}x,
\]
The space $H$ can also be characterised in the following way.  Let $H^{-\frac 12}(\Gamma)$ be the dual space of $H^{1/2}(\Gamma)$, the image in $L^2(\Gamma)$ of the trace operator $\gamma_0:H^{1}(\mathcal{O}) \to L^2(\Gamma)$ and let
$\gamma_\nu$ be the bounded linear map from $E(\mathcal{O})$ to $H^{-\frac12}(\Gamma)$ such that, see  \cite[Theorem I.1.2]{Temam_2001},
\begin{equation}\label{Temam_1.18}
\gamma_\nu(u)= \mbox{ the restriction of } u\cdot \nu \mbox{ to } \Gamma, \;\; \mbox{ if } u\in \mathcal{D}(\overline{\mathcal{O}}).
\end{equation}
Then, see \cite[Theorem I.1.4]{Temam_2001},
\[\begin{array}{lcl}
\rH&=&\{ u \in E(\mathcal{O}): \divv u=0 \mbox{ and } \gamma_\nu(u)=0\},
\\
&\vspace{.1cm} &\\
\rH^\perp&=&\{ u \in E(\mathcal{O}):  u= \nabla p,\ p \in H^{1}(\mathcal{O}) \}.
\end{array}\]
Let us denote by $\mathrm{P}:L^2(\mathcal{O},\mathbb{R}^2) \rightarrow \mathrm{H}$
 the orthogonal projection called usually the Leray-Helmholtz projection. It is known, see for instance \cite[Remark I.1.6]{Temam_2001} that
\begin{equation}\label{Temam_Remark 1.6}
P u= u -\nabla (p+q), \;\; u\in L^2(\mathcal{O},\mathbb{R}^2),
\end{equation}
where, for $u\in L^2(\mathcal{O})$, $p$ is the unique solution of the following homogenous boundary Dirichlet problem for the Laplace equation
\begin{equation}
\label{brc33}
 \Delta p= \divv u \in H^{-1}(\mathcal{O}),\;\; p{\vert}_{\Gamma}=0\delh{\in H^{1}_0(\mathcal{O})}.\end{equation}
and $q\in H^{1}(\mathcal{O})$ is the unique solution of the following in-homogenous  Neumann boundary  problem for the Laplace equation
\begin{equation}
\label{brc34}
 \Delta q= 0 , \;\; \frac{\partial q}{\partial \nu}{\Big\vert}_{\Gamma}= \gamma_\nu(u-\nabla p).
 \end{equation}
Note that the function $p$ above satisfies $\nabla p \in L^2(\mathcal{O},\mathbb{R}^2)$ and $\divv (u-\nabla p)=0$. In particular, $u-\nabla p\in E(\mathcal{O})$ so that $q$ is well defined. \\
It is proved in \cite[Remark I.1.6]{Temam_2001} that $P$ maps continuously the Sobolev space $\rH^{1}(\mathcal{O},\mathbb{R}^2)$ into itself. Below, we will discuss continuity of $P$ with respect to other topologies.

Since the set $\mathcal{O}$ is a Poincar\'e domain,  the norms on the space $\rV$ induced by norms from the Sobolev spaces $\rH^1(\mathcal{O},\mathbb{R}^2)$ and $\rH_0^1(\mathcal{O},\mathbb{R}^2)$ are equivalent.  The latter norm   and the associated inner
product will be denoted by $\left\vert \cdot\right\vert_\rV $ and $\big(\cdot,\cdot\big)_\rV$, respectively.  They satisfy the following equality
\[
\big(u,v\big)_\rV=\sum_{i,j=1}^{2}\int_{\mathcal{O}}{\frac{\partial
u_{j}}{\partial x_{i}}\frac{\partial v_{j}}{\partial
x_{i}}}\,{d}x,\ \ \ \ u,v\in\rH^1_0(\mathcal{O},\mathbb{R}^2).
\]
Since the space $\mathrm{V} $ is  densely and continuously
 embedded into $\mathrm{H}$, by   identifying   $\mathrm{H}$
 with its dual $\mathrm{H}^\prime$, we have the following embeddings
 \begin{equation}
 \label{eqn:Gelfanf}
\mathrm{V} \subset \mathrm{H}\cong\mathrm{H}^\prime \subset
\mathrm{V}^\prime.\end{equation}
Let us observe here  that, in particular,  the spaces $\mathrm{V}$, $\mathrm{H}$ and
$\mathrm{V}^\prime$ form a Gelfand triple.

We will denote by  $| \cdot |_{\rV^{\prime}}$   and  $\left\langle \cdot,\cdot\right\rangle $ the norm in
$\rV^{\prime}$ and  the
duality pairing between $\rV$ and $\rV^{\prime}$, respectively.

The presentation of the  Stokes operator is standard and we  follow here the one given in  \cite{Brz+Li_2006}.
We  first define the bilinear
form $a:\mathrm{V}\times \mathrm{V} \to \mathbb{R}$ by setting
 \begin{equation}
 \label{form-a}
a(u,v):=(\nabla u,\nabla v)_\rH, \quad u,v \in \mathrm{V}.
\end{equation}
As  obviously the bilinear form   $a$ coincides with   the  scalar product in $\mathrm{V}$, it is
$\mathrm{V}$-continuous, i.e. there exists some $C>0$ such that
\[ \vert a(u,u) \vert
\leq C \vert u \vert_\rV^2,\ \ \ \ u \in\,\mathrm{V}\]
Hence, by the Riesz Lemma,
 there exists a unique linear operator
$\mathcal{A}:\mathrm{V} \to \mathrm{V}^\prime$,  such that
$a(u,v)=\lb \mathcal{A}u,v\rb$, for $u, v \in \mathrm{V}$. Moreover, since  $\mathcal{O}$ is a Poincar\'e
domain,  the form $a$ is  $\mathrm{V}$-coercive, i.e. it satisfies
$a(u,u) \geq \alpha \vert u \vert_\rV^2$ for some $\alpha
>0$ and all $u \in \mathrm{V}$. Therefore, in view  of the Lax-Milgram theorem, see for instance
   Temam \cite[Theorem II.2.1]{Temam_2001},
 the operator $\mathcal{A}:\mathrm{V} \to \mathrm{V}^\prime$ is an
 isomorphism.

 Next we  define an unbounded linear operator
$\mathrm{A}$ in $\mathrm{H}$ as follows
\begin{equation}
\label{def-A} \left\{
\begin{array}{ll}
D(\mathrm{A}) &= \{u \in \mathrm{V}: \mathcal{A}u \in
\mathrm{H}\}\\
&\vspace{.1cm} \\
\mathrm{A}u&=\mathcal{A}u, \, u \in
D(\mathrm{A}).
\end{array}
\right.
\end{equation}

It is now well established that under suitable  assumptions\footnote{These assumptions are satisfied in our case}
related to  the regularity of the domain $\mathcal{O}$, the space
$D(\mathrm{A})$ can be characterized in terms of  the Sobolev spaces.
For example, (see \cite{Heywood-80}, where only the 2-dimensional case
is studied but the result is also valid in the 3-dimensional case),
if $\mathcal{O}\subset \mathbb{R}^2$ is a uniform $C^2$-class
Poincar\'{e} domain, then we have
\begin{equation}
\label{eqn:4.3} \left\{
\begin{array}{l}
D(\mathrm{A}) = \mathrm{V} \cap \rH^2(\mathcal{O},\mathbb{R}^2)=\mathrm{H} \cap \rH^1_0(\mathcal{O},\mathbb{R}^2) \cap \rH^2(\mathcal{O},\mathbb{R}^2),\\
\vspace{.1cm} \\
\mathrm{A}u=-\mathrm{P}\Delta u, \quad u\in D(\mathrm{A}).
\end{array}
\right.
\end{equation}

It is also a classical result,  see e.g. Cattabriga \cite{Cattabriga_1961} or Temam
\cite[p. 56]{Temam_1997},  that $\mathrm{A}$ is a positive self adjoint operator in
$\mathrm{H}$ and
\begin{equation}
(\rA u, u) \geq \lambda_1 \vert u\vert^2_{\rH}, \;\; u\in D(\rA).
\label{ineq-Poincare-A}
\end{equation}
where the  constant $\lambda_1>0$ is from   the  Poincar\'{e} inequality \eqref{ineq-Poincare}.
Moreover, it is well known,  see for instance    \cite[p. 57]{Temam_1997} that $\mathrm{V}=D(\mathrm{A}^{1/2})$.
Moreover,
from   \cite[Theorem 1.15.3, p. 103]{Triebel-95} it follows that
\[D(\rA^{\alpha/2})=[\rH,D(\rA)]_{\frac{\alpha}{2}},\] where
$[\cdot,\cdot]_\frac{\alpha}{2}$ is the complex interpolation functor  of
order $\frac{\alpha}{2}$, see e.g. \cite{Lions+Magenes_1972}, \cite{Triebel-95}
and   \cite[Theorem 4.2]{Taylor_1981}. Furthermore, as shown in   \cite[Section 4.4.3]{Triebel-95}, for $\alpha  \in (0, \frac12)$

\begin{equation}\label{eqn-domains}
 D(\rA^{\alpha/2})= \rH \cap \rH^{\alpha}(\mathcal{O},\mathbb{R}^2).
\end{equation}
The above equality leads to  the following result.

\begin{proposition}\label{prop-Leray-fractional} Assume that $\alpha  \in (0, \frac12)$.
Then the  Leray-Helmholtz projection $P$  is a well defined and continuous map  from
$\rH^{\alpha}(\mathcal{O},\mathbb{R}^2)$ into  $ D(\rA^{\alpha/2})$.
\end{proposition}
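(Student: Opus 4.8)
The plan is to obtain the assertion by complex interpolation between the two endpoint mapping properties of $P$ that are already available. At the lower endpoint, $P\colon L^2(\mathcal{O},\mathbb{R}^2)\to L^2(\mathcal{O},\mathbb{R}^2)$ is bounded, being the orthogonal projection onto $\rH$ (indeed of norm one). At the upper endpoint, by \cite[Remark I.1.6]{Temam_2001} as recalled above, $P$ maps $\rH^1(\mathcal{O},\mathbb{R}^2)$ continuously into itself. Since $\rH^1(\mathcal{O},\mathbb{R}^2)\subset L^2(\mathcal{O},\mathbb{R}^2)$, the pair $\big(L^2(\mathcal{O},\mathbb{R}^2),\rH^1(\mathcal{O},\mathbb{R}^2)\big)$ is an admissible interpolation couple, and $P$ is a single linear operator defined on the sum $L^2(\mathcal{O},\mathbb{R}^2)$ whose restriction to $\rH^1(\mathcal{O},\mathbb{R}^2)$ is the upper-endpoint operator; thus the consistency required in order to interpolate $P$ holds automatically.

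Next I would invoke the standard complex interpolation identity for the Sobolev scale on the smooth bounded domain $\mathcal{O}$, namely $[L^2(\mathcal{O},\mathbb{R}^2),\rH^1(\mathcal{O},\mathbb{R}^2)]_\alpha=\rH^\alpha(\mathcal{O},\mathbb{R}^2)$ for every $\alpha\in(0,1)$ (see \cite{Triebel-95} and \cite{Lions+Magenes_1972}; no boundary conditions enter these full Sobolev spaces, so the identity is valid on the whole range). By the interpolation property of bounded operators applied to $P$ at the exponent $\alpha$, it follows that $P\colon\rH^\alpha(\mathcal{O},\mathbb{R}^2)\to \rH^\alpha(\mathcal{O},\mathbb{R}^2)$ is bounded for every $\alpha\in(0,1)$, and in particular for $\alpha\in(0,\tfrac12)$.

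It then remains to identify the range. Since $\Range P\subset \rH$, the image of $\rH^\alpha(\mathcal{O},\mathbb{R}^2)$ under $P$ is contained in $\rH\cap \rH^\alpha(\mathcal{O},\mathbb{R}^2)$, which by \eqref{eqn-domains} coincides, with equivalent norms, with $D(\rA^{\alpha/2})$ precisely because $\alpha\in(0,\tfrac12)$. Combining the interpolation bound on the $\rH^\alpha$ norm with this identification of the target norm yields the continuity of $P\colon\rH^\alpha(\mathcal{O},\mathbb{R}^2)\to D(\rA^{\alpha/2})$, which is the claim.

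The one point that deserves care is the role of the restriction $\alpha<\tfrac12$: the boundedness of $P$ on $\rH^\alpha(\mathcal{O},\mathbb{R}^2)$ produced by interpolation holds for all $\alpha\in(0,1)$, and the constraint $\alpha<\tfrac12$ serves only to guarantee, via \eqref{eqn-domains}, that the target $\rH\cap \rH^\alpha(\mathcal{O},\mathbb{R}^2)$ can be identified with $D(\rA^{\alpha/2})$ — for $\alpha\geq\tfrac12$ the homogeneous boundary condition encoded in $D(\rA^{\alpha/2})$ would obstruct this identification. The only genuinely external input is the Sobolev interpolation identity on $\mathcal{O}$, which is available because $\overline{\mathcal{O}}$ is a $C^\infty$ manifold with boundary; beyond that the argument is a direct application of the interpolation theorem and the identification \eqref{eqn-domains}.
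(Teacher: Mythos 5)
Your argument is correct, but it is not the route the paper takes. The paper's proof works directly with the explicit representation $Pu=u-\nabla(p+q)$ from \eqref{Temam_Remark 1.6}: it shows that for $u\in \rH^{\alpha}(\mathcal{O},\mathbb{R}^2)$ the Dirichlet problem \eqref{brc33} gives $p\in H^{\alpha+1}(\mathcal{O})\cap H^1_0(\mathcal{O})$ by fractional elliptic regularity, then interpolates the normal trace $\gamma_\nu$ between $E(\mathcal{O})\to H^{-1/2}(\Gamma)$ and $E(\mathcal{O})\cap\rH^1\to H^{1/2}(\Gamma)$ to put the Neumann data of \eqref{brc34} in $H^{-1/2+\alpha}(\Gamma)$ (checking the compatibility condition via the Stokes formula), and concludes $q\in H^{\alpha+1}(\mathcal{O})$, hence $Pu\in\rH^{\alpha}$. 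You instead interpolate the operator $P$ itself between the two endpoint bounds $P\colon L^2\to L^2$ and $P\colon \rH^1\to\rH^1$ (the latter being exactly the fact the paper quotes from Temam, Remark I.1.6), and then use \eqref{eqn-domains} to identify the target; this is shorter and avoids both the fractional elliptic regularity and the trace interpolation, at the cost of taking the $\rH^1\to\rH^1$ bound as a black box (it is itself proved by the integer-order version of the paper's elliptic argument). Your closing observation is the right one and matches Remark \ref{rem-Leray-fractional}: the interpolation gives boundedness on $\rH^{\alpha}$ for all $\alpha\in(0,1)$, and the restriction $\alpha<\tfrac12$ enters only through \eqref{eqn-domains}, i.e.\ through the absence of boundary conditions in $D(\rA^{\alpha/2})$; the authors note that an interpolation proof with target $D(\rA^{\alpha/2})$ for all $\alpha\in(0,1)$ would instead require the (unknown, probably false) endpoint $P\colon\rH^1\to\rV$.
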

\begin{proof}
Let us fix $\alpha \in (0,\frac12)$. Since, by its definition, the range of $P$  is contained in  $\rH$, it is sufficient to prove that for every $u\in  \rH^{\alpha}(\mathcal{O},\mathbb{R}^2)$, $Pu \in \rH^{\alpha}(\mathcal{O},\mathbb{R}^2)$. For this aim, let us fix $u\in  \rH^{\alpha}(\mathcal{O},\mathbb{R}^2)$. Then $\divv u \in
H^{\alpha-1}(\mathcal{O})$. Therefore, by the elliptic regularity  we infer that the solution $p$ of the problem \eqref{brc33} belongs to the Sobolev space $H^{\alpha+1}(\mathcal{O})\cap H_0^{1}(\mathcal{O})$ and therefore $\nabla p \in \rH^{\alpha}(\mathcal{O},\mathbb{R}^2)$. \\
Since by \cite[Theorem I.1.2]{Temam_2001},   the linear map $\gamma_\nu$ is   bounded  from $E(\mathcal{O})$ to $H^{-\frac12}(\Gamma)$ and  from $E(\mathcal{O}) \cap \rH^{1}(\mathcal{O},\mathbb{R}^2)$ to $H^{\frac12}(\Gamma)$, by a standard interpolation argument we infer that $\gamma_\nu$ is a  bounded linear map from $E(\mathcal{O}) \cap  \rH^{\alpha}(\mathcal{O},\mathbb{R}^2)$ to $H^{-\frac12+\alpha}(\Gamma)$. Thus we infer that $\gamma_\nu(u-\nabla p) \in\,H^{-\frac12+\alpha}(\Gamma)$ and by the Stokes formula (I.1.19) from \cite{Temam_2001},
$\lb \gamma_\nu(u-\nabla p),1\rb=0$. Therefore, again by the elliptic regularity, see for instance \cite{Lions+Magenes_1972},
 the solution $q$ of the problem \eqref{brc34} belongs to  $H^{\alpha+1}(\mathcal{O})$ and therefore $\nabla q \in \rH^{\alpha}(\mathcal{O},\mathbb{R}^2)$. This proves that $Pu \in \rH^{\alpha}(\mathcal{O},\mathbb{R}^2)$ as required.\\
The proof is complete.
\end{proof}

\begin{remark}\label{rem-Leray-fractional} We only claim that the above result is true for $\alpha<\frac12$. In particular, we do not claim that is $P$ is a bounded linear map from $\rH^{1}(\mathcal{O},\mathbb{R}^2)$ to  $\rV$ and we are not aware of such a result. However, if this is true, Proposition \ref{prop-Leray-fractional} will hold for any $\alpha \in (0,1)$ with a simple proof by complex interpolation. However, it seems to us that the result for $\alpha>\frac12$ is not true, since we cannot see how one could prove that $Pu\vert _{\partial\mathcal{O}}=0$. The reason why
Proposition \ref{prop-Leray-fractional} holds for any $\alpha \in (0,\frac12)$ is that according to identity \eqref{eqn-domains} the  only boundary conditions satisfied by functions belonging to  $D(\rA^{\alpha/2})$ are those satisfied by functions belonging to the space $\rH$. One can compare with the paper \cite{Temam_1982} by Temam (or chapter 6 of his book \cite{Temam_1983}).

\end{remark}

Let us  finally recall that by a result of
Fujiwara--Morimoto \cite{Fuj+Mor_1977}  the projection $\mathrm{P}$ extends to a
bounded linear projection in the space $L^q(\mathcal{O},\mathbb{R}^2)$, for any $q \in\,(1,\infty)$.

Now, consider the trilinear form $b$ on $V\times V\times V$ given by
\[
b(u,v,w)=\sum_{i,j=1}^{2}\int_{\mathcal{O}}u_{i}{\frac{\partial v_{j}%
}{\partial x_{i}}}w_{j}\,\,{d}x,\quad u,v,w\in \rV.
\]
Indeed, $b$ is a continuous trilinear form such that
\begin{equation}
\label{eqn:b01}
b(u,v,w)=-b(u,w,v),
\quad  \, u\in \mathrm{V}, v, w\in
\rH_0^{1}(\mathcal{O},\mathbb{R}^2),
\end{equation}
and
\begin{equation}
\begin{aligned}
\label{eqn:4.0a}
 \vert b(u,v,w)  \vert \leq C\left\{
\begin{array}{ll}
 \vert u\vert_\rH ^{1/2}\vert \nabla u\vert_\rH^{1/2}\vert \nabla  v\vert _\rH^{1/2}\vert \mathrm{A}v\vert_\rH ^{1/2}\vert w\vert_\rH  &
\quad \, u \in \mathrm{V}, v\in D(\mathrm{A}), w\in \mathrm{H}\\
\vert u\vert_\rH ^{1/2}\vert \mathrm{A}u\vert_\rH ^{1/2}\vert \nabla  v\vert _\rH\vert w\vert_\rH & \quad
 u \in D(\mathrm{A}), v\in \mathrm{V}, w\in \mathrm{H}\\
\vert u\vert_\rH \vert \nabla  v\vert_\rH \vert w\vert_\rH ^{1/2}\vert \mathrm{A}w\vert _\rH^{1/2} & \quad
 u \in \mathrm{H}, v\in \mathrm{V}, w\in D(\mathrm{A})\\
\vert u\vert _\rH^{1/2}\vert \nabla  u\vert_\rH ^{1/2}\vert \nabla  v\vert_\rH \vert w\vert_\rH ^{1/2}\vert \nabla  w\vert_\rH ^{1/2} & \quad
 u, v, w \in \mathrm{V},
\end{array}
\right.
\end{aligned}
\end{equation}
for some constant $C>0$ (for a proof see  for instance \cite[Lemma 1.3, p.163]{Temam_2001}  and  \cite{Temam_1997}).

Define next the bilinear map $B:\rV\times \rV\rightarrow \rV^{\prime}$ by setting
\[
\left\langle B(u,v),w\right\rangle =b(u,v,w),\quad u,v,w\in \rV,
\]
and the homogenous polynomial of second degree $B:\rV \rightarrow \rV^{\prime}$ by

\[
B(u)=B(u,u),\; u\in \rV.
\]
Let us observe that if $v \in\,D(\rA)$, then $B(u,v) \in\,H$ and the following inequality  follows directly from the first inequality in \eqref{eqn:4.0a}
\begin{equation}
\label{ineq-B01}
\vert \rB(u,v) \vert_\rH^2 \leq C  \vert u\vert_\rH\vert \nabla u\vert_\rH\vert \nabla  v\vert_\rH\vert \mathrm{A}v\vert_\rH, \; u\in \rV, \, v\in D(\rA).
\end{equation}
Moreover, the following identity is a direct consequence of \eqref{eqn:b01}.
\begin{equation}
\label{eqn-B02}
\lb  \rB(u,v),v \rb =0,\;\;  u,v\in \rV.
\end{equation}

Let us also recall the following fact (see \cite[Lemma 4.2]{Brz+Li_2006}).

\begin{lemma}\label{lem:form-b} The trilinear map
$b:\mathrm{V}\times \mathrm{V} \times \mathrm{V} \to \mathbb{R}$
has a unique extension to a bounded trilinear map from
$L^4(\mathcal{O},\mathbb{R}^2) \times (L^4(\mathcal{O},\mathbb{R}^2)
\cap\mathrm{H})\times \mathrm{V}$ and from
$L^4(\mathcal{O},\mathbb{R}^2) \times \mathrm{V}\times
L^4(\mathcal{O},\mathbb{R}^2)$ into $\mathbb{R}$. Moreover, $B$ maps
$L^4(\mathcal{O},\mathbb{R}^2)\cap\mathrm{H}$ (and so $\mathrm{V}$)
into $\mathrm{V}^\prime$ and
\begin{equation}\label{eqn:4.0}
\vert B(u) \vert_{\mathrm{V}^\prime} \leq C_1\vert u
\vert^2_{L^4(\mathcal{O},\mathbb{R}^2)} \leq 2^{1/2}C_1 \vert u \vert_\rH  \vert
\nabla  u \vert _\rH\leq C_2\vert u \vert_\rV^2 , \quad
 u \in \mathrm{V}.
\end{equation}
\end{lemma}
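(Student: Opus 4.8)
The statement to prove is Lemma \ref{lem:form-b}: that the trilinear form $b$ extends boundedly to the two product spaces indicated, that $B$ maps $L^4 \cap \rH$ into $\rV'$, and that the chain of inequalities \eqref{eqn:4.0} holds. The natural strategy is to bound $b(u,v,w)$ by quantities that are continuous in the stated topologies, then extend by density of $\rV$ in $L^4(\mathcal{O},\mathbb{R}^2)$ and in $\rV$ itself; the duality estimates for $B$ follow by taking suprema over $w$.

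First I would estimate $b(u,v,w)$ directly from its integral definition. Writing $b(u,v,w)=\sum_{i,j}\int_{\mathcal O}u_i\,(\partial v_j/\partial x_i)\,w_j\,dx$ and applying Hölder's inequality with exponents $(4,2,4)$ gives $|b(u,v,w)|\le C|u|_{L^4}|\nabla v|_{\rH}|w|_{L^4}$, which is exactly the bound adapted to the space $L^4\times \rV\times L^4$. For the other product space $L^4\times(L^4\cap\rH)\times \rV$, I would instead use the antisymmetry \eqref{eqn:b01}, namely $b(u,v,w)=-b(u,w,v)$ valid for $u\in\rV$, $v,w\in \rH_0^1$, to move the derivative onto the third slot; Hölder with exponents $(4,4,2)$ then yields $|b(u,v,w)|=|b(u,w,v)|\le C|u|_{L^4}|v|_{L^4}|\nabla w|_{\rH}$. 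Each right-hand side is continuous on the respective product space, so the uniqueness of the continuous extension follows once I invoke density of $\rV$ in $L^4(\mathcal{O},\mathbb{R}^2)$ (from the Sobolev embedding $\rV\embed L^4$ in dimension two) and in $\rV$.

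Next, to obtain that $B$ maps $L^4\cap\rH$ into $\rV'$ together with \eqref{eqn:4.0}, I would set $v=u$ in the second extension and compute $|B(u)|_{\rV'}=\sup_{|w|_\rV\le1}|b(u,u,w)|$. The bound $|b(u,u,w)|\le C|u|_{L^4}^2|\nabla w|_\rH$ gives the first inequality $|B(u)|_{\rV'}\le C_1|u|_{L^4}^2$. The second inequality, $|u|_{L^4}^2\le 2^{1/2}|u|_\rH|\nabla u|_\rH$, is precisely the two-dimensional Ladyzhenskaya inequality; I would either cite it or recover it from the last line of \eqref{eqn:4.0a} with $u=v=w$, which already encodes $|b(u,u,u)|\le C|u|_\rH|\nabla u|_\rH|\nabla u|_\rH$-type scaling. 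The final inequality $|u|_\rH|\nabla u|_\rH\le C_2|u|_\rV^2$ is immediate from Poincaré \eqref{ineq-Poincare} (which bounds $|u|_\rH$ by $|\nabla u|_\rH=|u|_\rV$ up to the constant $\lambda_1^{-1/2}$) together with the equivalence of the $\rH^1_0$ and $\rV$ norms noted earlier.

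The only genuinely delicate point is the Ladyzhenskaya inequality $|u|_{L^4}^2 \le 2^{1/2}|u|_\rH|\nabla u|_\rH$ with the sharp constant $2^{1/2}$: this is where the two-dimensionality of $\mathcal O$ is essential, and it is the step I expect to require the most care, since a naive Gagliardo--Nirenberg argument gives the correct powers but not necessarily the stated constant. Everything else is routine Hölder-and-density bookkeeping. Since this inequality is classical, I would simply cite Temam \cite{Temam_2001,Temam_1997}, as the excerpt already does for \eqref{eqn:4.0a}, and devote the written proof to the Hölder estimates and the density extension.
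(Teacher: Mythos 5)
Your proposal is correct and follows essentially the same route as the paper: the authors likewise derive the single H\"older estimate $\vert b(u,v,w)\vert \le C\vert u\vert_{L^4}\vert\nabla v\vert_{L^2}\vert w\vert_{L^4}$ (their \eqref{eqn:4.00}) and then invoke the antisymmetry \eqref{eqn:b01} to cover the second product space, leaving the Ladyzhenskaya and Poincar\'e steps in \eqref{eqn:4.0} to the cited references. Your write-up simply makes explicit the density argument and the chain of inequalities that the paper's two-line proof leaves implicit.
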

\begin{proof} It it enough to observe
that due to the H\"older inequality,
  the following
inequality holds
\begin{equation}\label{eqn:4.00}
\vert b(u,v, w)\vert \le C\vert u\vert_{L^4(\mathcal{O},\mathbb{R}^2)} \vert
\nabla v \vert _{L^2(\mathcal{O})}  \vert w
\vert_{L^4(\mathcal{O},\mathbb{R}^2)}, \quad u, v, w \in \rH_0^{1}(\mathcal{O},\mathbb{R}^2).
\end{equation}
Thus, our result follows from \eqref{eqn:b01}.
\end{proof}
Let us also recall the following well known result, see \cite{Temam_2001} for a proof.

\begin{lemma}\label{lem-B}
For any $T\in  (0,\infty]$ and  for any $u\in L^2(0,T;D(\rA))$ with $u^\prime \in  L^2(0,T;\rH)$, we have
\[\int_0^T \vert \rB(u(t),u(t)) \vert_\rH^2 \, dt <\infty.\]
\end{lemma}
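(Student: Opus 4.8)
The plan is to reduce everything to the single pointwise estimate \eqref{ineq-B01} and then convert the integrability hypotheses into a uniform bound on $\vert u(t)\vert_\rH$. First I would specialize \eqref{ineq-B01} to $v=u$, which gives, for a.e.\ $t\in(0,T)$,
\[
\vert \rB(u(t),u(t))\vert_\rH^2 \le C\,\vert u(t)\vert_\rH\,\vert \nabla u(t)\vert_\rH^2\,\vert \rA u(t)\vert_\rH .
\]
Since $\rV=D(\rA^{1/2})$ and $\rA$ is positive and self-adjoint, for $u(t)\in D(\rA)$ we have $\vert \nabla u(t)\vert_\rH^2=\vert u(t)\vert_\rV^2=(\rA u(t),u(t))_\rH\le \vert \rA u(t)\vert_\rH\,\vert u(t)\vert_\rH$ by Cauchy--Schwarz. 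Substituting this bound for $\vert\nabla u(t)\vert_\rH^2$ yields the cleaner pointwise inequality
\[
\vert \rB(u(t),u(t))\vert_\rH^2\le C\,\vert u(t)\vert_\rH^2\,\vert \rA u(t)\vert_\rH^2 .
\]

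The remaining task is to show that $M:=\sup_{t\in[0,T]}\vert u(t)\vert_\rH$ is finite; granting this, one concludes immediately that
\[
\int_0^T\vert \rB(u(t),u(t))\vert_\rH^2\,dt\le C\,M^2\int_0^T\vert \rA u(t)\vert_\rH^2\,dt<\infty,
\]
the last integral being finite because $u\in L^2(0,T;D(\rA))$. To bound $M$ I would work in the Gelfand triple $\rV\subset\rH\subset\rV'$ from \eqref{eqn:Gelfanf}: since $u\in L^2(0,T;D(\rA))\subset L^2(0,T;\rV)$ and $u'\in L^2(0,T;\rH)\subset L^2(0,T;\rV')$, the standard Lions--Magenes regularity result (see Temam \cite{Temam_2001}) gives $u\in C([0,T];\rH)$ together with the energy identity $\vert u(t)\vert_\rH^2=\vert u(0)\vert_\rH^2+2\int_0^t (u'(s),u(s))_\rH\,ds$. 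For finite $T$ continuity on the compact interval $[0,T]$ already forces $M<\infty$, and the proof is finished.

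The only genuinely delicate point is the case $T=\infty$, where continuity alone does not bound $M$. Here I would use that the embedding $D(\rA)\hookrightarrow\rH$ forces $u\in L^2(0,\infty;\rH)$, so that
\[
\int_0^\infty\vert (u'(s),u(s))_\rH\vert\,ds\le \Vert u'\Vert_{L^2(0,\infty;\rH)}\,\Vert u\Vert_{L^2(0,\infty;\rH)}<\infty ,
\]
whence the energy identity yields $\sup_{t\ge0}\vert u(t)\vert_\rH^2\le \vert u(0)\vert_\rH^2+2\Vert u'\Vert_{L^2(0,\infty;\rH)}\Vert u\Vert_{L^2(0,\infty;\rH)}<\infty$, giving $M<\infty$ uniformly in $T$. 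I expect this last step---justifying the energy identity and extracting from it a finite $M$ when $T=\infty$---to be the main obstacle, since it is precisely where the coupling of the two hypotheses $u\in L^2(0,T;D(\rA))$ and $u'\in L^2(0,T;\rH)$ is essential; everything else is the direct chain of elementary inequalities above.
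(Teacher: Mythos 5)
Your proof is correct, and it rests on the same core estimate as the paper's, namely \eqref{ineq-B01} with $v=u$; the difference lies in how the uniform-in-time bound is produced. The paper's (very short) proof invokes the trace theorem for the triple $D(\rA)\subset\rV\subset\rH$ (citing Vishik--Fursikov, Proposition I.3.1) to get $u\in C([0,T];\rV)$ outright, after which the factor $\vert u\vert_\rH\vert\nabla u\vert_\rH^2$ in \eqref{ineq-B01} is bounded by a constant and only $\vert\rA u\vert_\rH$ remains to be integrated. You instead use the weaker and more standard Lions--Magenes continuity and energy identity in the Gelfand triple $\rV\subset\rH\subset\rV^\prime$ (which needs only $u\in L^2(0,T;\rV)$ and $u^\prime\in L^2(0,T;\rV^\prime)$), yielding just $\sup_t\vert u(t)\vert_\rH<\infty$, and you compensate for the missing $\rV$-continuity by the interpolation $\vert\nabla u\vert_\rH^2=(\rA u,u)_\rH\le\vert\rA u\vert_\rH\vert u\vert_\rH$, which pushes everything onto $\vert u\vert_\rH^2\vert\rA u\vert_\rH^2\in L^1(0,T)$. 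Your explicit handling of $T=\infty$ via $\sup_{t\ge 0}\vert u(t)\vert_\rH^2\le\vert u(0)\vert_\rH^2+2\Vert u^\prime\Vert_{L^2(0,\infty;\rH)}\Vert u\Vert_{L^2(0,\infty;\rH)}$ is also more careful than the paper's, which relegates the unbounded-interval case to a footnote. The trade-off is that the paper's route delivers the stronger byproduct $u\in C([0,T];\rV)$, which is used elsewhere in the text, whereas yours is more self-contained and avoids the sharper trace theorem $W^{1,2}(0,T;D(\rA),\rH)\hookrightarrow C([0,T];\rV)$.
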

\begin{proof}
Our assumption implies that\footnote{Note that  in the case $T=\infty$ one also has $\lim_{t\to \infty} u(t)=0$ in $\rV$.} $u\in C([0,T];\rV)$ (for a proof see for instance \cite[Proposition I.3.1]{Vishik+Fursikov_1988}). Then, we can conclude thanks to \eqref{ineq-B01}
\end{proof}

The restriction of the map $\rB$ to the space $D(\rA)\times D(\rA)$ has also the following representation
\begin{equation}
\label{eqn-B-using-LH}
  \rB(u,v)= P( u\nabla v), \;\; u,v\in D(\rA),
\end{equation}
where $P$ is the Leray-Helmholtz projection operator and $u\nabla v=\sum_{j=1}^2 u^jD_jv \in L^2(\mathcal{O},\mathbb{R}^2)$.
This representation together with Proposition \ref{prop-Leray-fractional} allows us to prove the following property of the map $B$.

\begin{proposition}\label{prop-Leray-fractional-alpha} Assume that $\alpha  \in (0, \frac12)$.
Then for any $s \in\,(1,2]$ there exists a constant $c>0$ such that
\begin{equation}
\label{ineqn-B-fractional}
  \vert \rB(u,v)\vert_{D(\rA^{\alpha/2})} \leq  c \vert u\vert_{D(\rA^{\frac{s}2})}  \vert v\vert_{D(\rA^{\frac{1+\alpha}2})} , \;\; u,v\in D(\rA).
\end{equation}
\end{proposition}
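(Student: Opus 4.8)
The plan is to reduce the fractional estimate for $\rB$ to the continuity of the Leray--Helmholtz projection on $\rH^\alpha$ (Proposition \ref{prop-Leray-fractional}) combined with a pointwise multiplication estimate in fractional Sobolev spaces. First I would invoke the representation \eqref{eqn-B-using-LH}: for $u,v\in D(\rA)$ one has $\rB(u,v)=P(u\nabla v)$ with $u\nabla v=\sum_{j=1}^2 u^jD_jv$. Since $\alpha\in(0,\tfrac12)$, Proposition \ref{prop-Leray-fractional} gives
\[
|\rB(u,v)|_{D(\rA^{\alpha/2})}=|P(u\nabla v)|_{D(\rA^{\alpha/2})}\le c\,|u\nabla v|_{\rH^\alpha(\mathcal{O},\mathbb{R}^2)},
\]
so it suffices to bound $|u\nabla v|_{\rH^\alpha}$ by the product of the two domain norms.

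Second, I would establish the multiplication estimate
\[
|u\nabla v|_{\rH^\alpha(\mathcal{O},\mathbb{R}^2)}\le c\,|u|_{\rH^s(\mathcal{O},\mathbb{R}^2)}\,|\nabla v|_{\rH^\alpha(\mathcal{O},\mathbb{R}^2)}.
\]
Each component of $u\nabla v$ is a finite sum of scalar products $u^jD_jv^k$ with $u^j\in H^s(\mathcal{O})$ and $D_jv^k\in H^\alpha(\mathcal{O})$, so this reduces to the scalar fact that $H^s(\mathcal{O})$ acts as a pointwise multiplier on $H^\alpha(\mathcal{O})$. On $\mathbb{R}^2$ this is classical: for $s>d/2$ the space $H^s$ is a multiplier algebra acting boundedly on $H^r$ whenever $|r|\le s$, and here $d=2$, $s>1=d/2$ and $0\le\alpha<1<s$, so $r=\alpha$ is admissible; it is precisely at this point that the hypothesis $s>1$ is needed. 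On the bounded $C^\infty$ domain $\mathcal{O}$ I would transfer the estimate by means of a bounded Sobolev extension operator $\mathcal{O}\to\mathbb{R}^2$, applying the multiplication theorem on $\mathbb{R}^2$ and then restricting.

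Third, I would pass from Sobolev norms to domain norms by complex interpolation, exactly as in the identity $D(\rA^{\alpha/2})=[\rH,D(\rA)]_{\alpha/2}$ already used in the excerpt. Monotonicity of the interpolation functor together with $\rH\subset L^2(\mathcal{O},\mathbb{R}^2)$ and $D(\rA)\subset\rH^2(\mathcal{O},\mathbb{R}^2)$ yields $D(\rA^{s/2})=[\rH,D(\rA)]_{s/2}\hookrightarrow[L^2,\rH^2]_{s/2}=\rH^s(\mathcal{O},\mathbb{R}^2)$ and likewise $D(\rA^{(1+\alpha)/2})\hookrightarrow\rH^{1+\alpha}(\mathcal{O},\mathbb{R}^2)$. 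Hence $|u|_{\rH^s}\le c\,|u|_{D(\rA^{s/2})}$ and $|\nabla v|_{\rH^\alpha}\le|v|_{\rH^{1+\alpha}}\le c\,|v|_{D(\rA^{(1+\alpha)/2})}$, and chaining the three steps produces \eqref{ineqn-B-fractional}.

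The main obstacle is the fractional multiplication estimate of the second step, in particular its validity up to the boundary of $\mathcal{O}$: the paraproduct/multiplier argument is transparent on $\mathbb{R}^2$, and the only genuinely delicate point is transferring it to the domain without losing the sharp relation between the exponents that forces $s$ to lie strictly above the critical value $d/2=1$. The extension--restriction reduction resolves this, once one checks that a single extension operator can be used simultaneously for all the exponents $\alpha,\,s,\,1+\alpha\in[0,2]$ appearing in the argument.
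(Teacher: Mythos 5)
Your proposal is correct and follows essentially the same route as the paper: both reduce \eqref{ineqn-B-fractional} via the representation $\rB(u,v)=P(u\nabla v)$ and the continuity of $P$ from $\rH^{\alpha}(\mathcal{O},\mathbb{R}^2)$ into $D(\rA^{\alpha/2})$ (Proposition \ref{prop-Leray-fractional}) to the product estimate $\vert u\nabla v\vert_{\rH^{\alpha}}\leq C\vert u\vert_{\rH^{s}}\vert v\vert_{\rH^{1+\alpha}}$, and then pass between domain norms and Sobolev norms. The only (inessential) difference is in how that product estimate is obtained: you invoke the classical $H^s$-multiplier theorem on $\mathbb{R}^2$ ($s>d/2=1$) together with an extension operator, whereas the paper interpolates between the two scalar endpoint bounds $\vert uv\vert_{L^2}\leq C\vert u\vert_{H^s}\vert v\vert_{L^2}$ and $\vert uv\vert_{H^1}\leq C\vert u\vert_{H^s}\vert v\vert_{H^1}$, each proved by Gagliardo--Nirenberg inequalities.
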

\begin{proof}
In view of equality \eqref{eqn-B-using-LH}, since
 the  Leray-Helmholtz projection $P$  is a well defined and continuous map  from
$\rH^{\alpha}(\mathcal{O},\mathbb{R}^2)$ into  $ D(\rA^{\alpha/2})$ and since the norms in the spaces $D(\rA^{\frac{s}2})$ are equivalent to norms in $\rH^s(\mathcal{O},\mathbb{R}^2)$, it is enough to show that
\[
\vert u\nabla v \vert_{\rH^{\alpha}} \leq  C \vert u\vert_{\rH^{s}}  \vert v\vert_{\rH^{1+\alpha}} , \;\; u,v\in \rH^2(\mathcal{O},\mathbb{R}^2).
\]
The last inequality is a consequence of the Marcinkiewicz Interpolation Theorem, the complex interpolation and the following two inequalities for scalar functions which can be proved by using Gagliado-Nirenberg inequalities
\begin{eqnarray*}
\vert u v \vert_{L^2} &\leq&  C \vert u\vert_{H^{s}}  \vert v\vert_{L^2} , \;\; u \in H^{s},  v\in L^2,\\
\vspace{.1cm} \\
\vert u v \vert_{H^{1}} &\leq&  C \vert u\vert_{H^{s}}  \vert v\vert_{H^{1}} , \;\; u \in H^{s},  v\in H^{1}.
\end{eqnarray*}
\end{proof}

\section{The skeleton equation}\label{sec-skeleton}

We are here dealing with  the following functional version of the Navier-Stokes equation
\begin{equation}
\label{eqn_NSE01}
\left\{
\begin{array}{l}
u^\prime(t)+\nu \rA u(t)+\rB(u(t),u(t))=f(t), \; t\in (0,T)\\
\vspace{.1cm} \\
u(0)=u_0,
\end{array}\right.\end{equation}
where $T\in (0,\infty]$ and $\nu>0$. Let us recall the following definition (see \cite[Problem 2, section III.3]{Temam_2001})

\begin{definition}
Given {$T>0$,} $f\in L^2(0,T;\rV^\prime)$ and $u_0\in \rH$, a solution to problem \eqref{eqn_NSE01} is a function $u\in L^2(0,T;\rV)$ such that $u^\prime \in L^2(0,T;\rV^\prime)$,  $u(0)=u_0$ \footnote{It is known, see for instance \cite[Lemma III.1.2]{Temam_2001} that these two properties of $u$ imply that there exists a unique $\bar{u}\in C([0,T],\rH)$. When we write $u(0)$ later we mean $\bar{u}(0)$.} and \eqref{eqn_NSE01} is fulfilled.
\end{definition}

It is known (see e.g.  \cite[Theorems III.3.1/2]{Temam_2001}) that if $T\in (0,\infty]$, then for every  $f\in L^2(0,T;\rV^\prime)$ and $u_0\in \rH$ there exists exactly one solution $u$ to  problem \eqref{eqn_NSE01}{, which satisfies
\begin{equation}\label{ineq-aux-00}
\vert u\vert^2_{C([0,T],\rH)} +\vert u\vert^2_{L^2(0,T,\rV)}\leq \vert u_0\vert_{\rH}^2+\vert f\vert^2_{L^2(0,T,\rV^\prime)}.
\end{equation}}
Moreover, see \cite[Theorem III.3.10]{Temam_2001}, if $T<\infty$ and $f\in L^2(0,T;\rH)$ then
\begin{equation}
\label{brc82}
\begin{array}{l}
\hspace{-1truecm}{\sup_{t \in (0,T]} | \sqrt{t}\,u (t) |_V^2+ \int_0^T \vert \sqrt{t}\,\rA u (t) \vert_{\rH}^2\, dt }\\
\vspace{.1cm}\\
\leq \Big( \vert u_0\vert_{\rH}^2+ \vert f\vert^2_{L^2(0,T;\rV^\prime)}+T\vert f\vert^2_{L^2(0,T;\rH)} \Big) e^{\,c\big( \vert u_0\vert_{\rH}^4+ \vert f\vert^4_{L^2(0,T;\rV^\prime)} \big),}
\end{array}
\end{equation}
{for  a  constant $c$  independent of $T$, $f$ and $u_0$. }

Finally, if $T\in (0,\infty]$, $f\in L^2(0,T;\rH)$ and $u_0\in \rV$,  then {the unique solution $u$ satisfies}
$$u\in L^2(0,T;D(\rA)) \cap C([0,T];\rV),\;\; u^\prime\in L^2(0,T;\rH),$$
{and, for\footnote{Please note that $ [0,T]\cap[0,\infty)$ is equal to $[0,T]$ if $T<\infty$ and to $[0,\infty)$ if $T=\infty$.}
$ t \in [0,T]\cap [0,\infty)$,
\begin{eqnarray}
\label{ineq-aux-02}
\frac{d}{dt}| u(t)|_V^2+\lambda_1 |u(t)|_V^2
 &\leq &
\frac d{dt}| u(t)|_V^2+ \vert \rA u(t)\vert_H^2\\
 &\leq&2 \vert f(t)\vert_{\rH}^2 +108 |u(t)|_{V}^2 \vert u(t)\vert_{\rH}^2 | u(t)|_V^2.
\nonumber
\end{eqnarray}
 Hence, by the Gronwall Lemma and inequality \eqref{ineq-aux-00}}, for any $t \in [0,T]\cap [0,\infty)$
\begin{equation}
\label{ineq-u-V}
| u(t) |_V^2 \leq \Big( | u_0|_V^2  +2 \int_0^t\vert f(s)\vert_{\rH}^2\,ds  \Big) e^{ -\lambda_1 t+ 54 \big( \vert u_0\vert_{\rH}^2+ \int_0^t\vert f(s)\vert_{\rV^\prime}^2\,ds  \big)^2}.
\end{equation}
so that, in particular,

\begin{equation}
\label{ineq-L^infty(0,T,V)}
e^{\lambda_1 t}\vert u\vert^2_{C([0,T],\rV)} \leq \Big( | u_0|_V^2  +{2} \vert f\vert^2_{L^2(0,T;\rH)} \Big) e^{ 54 \big( \vert u_0\vert_{\rH}^2+ \vert f\vert^2_{L^2(0,T;\rV^\prime)} \big)^2}.
\end{equation}
Moreover, thanks to \eqref{ineq-aux-02}, this yields
{\begin{equation}
\label{ineq-L^2(0,T,H^2)}
\begin{array}{lcl}
\hspace{-3truecm}\lefteqn{\vert u\vert^2_{L^2(0,T,D(\rA))} \leq  | u_0|_V^2 + {2}\vert f\vert^2_{L^2(0,T;\rH)}
+ 54 \,\Big( \vert u_0\vert_{\rH}^2+\vert f\vert^2_{L^2(0,T,\rV^\prime)}\Big)^2}\\
&& \times \Big( | u_0|_V^2+ 2 \vert f\vert^2_{L^2(0,T;\rH)} \Big)
e^{ 54 \big( \vert u_0\vert_{\rH}^2+ \vert f\vert^2_{L^2(0,T;\rV^\prime)} \big)^2}
\\&&\hspace{-3truecm}\lefteqn{ =\Big(| u_0|_V^2+ 2\vert f\vert^2_{L^2(0,T;\rH)} \Big)\Big( 1+ 54 \,\big( \vert u_0\vert_{\rH}^2+\vert f\vert^2_{L^2(0,T,\rV^\prime)}\big)^2  e^{ 54 \big( \vert u_0\vert_{\rH}^2+ \vert f\vert^2_{L^2(0,T;\rV^\prime)} \big)^2}\Big).}
\end{array}
\end{equation}
}
The above results and arguments yield in particular the following corollary.

\begin{corollary}\label{cor-u-nfty-V}
If $f\in L_{\rm loc}^2(0,\infty ;\rH)$ and $u_0\in \rV$, then the  solution $u$ to problem \eqref{eqn_NSE01} satisfies
 $u \in L_{\rm loc}^2(0,\infty;D(\rA))$, $u\in C([0,\infty),\rV)$ and
\begin{equation}
\label{ineq-u-V2}
e^{ \lambda_1 t} | u(t) |_V^2 \leq \Big( | u_0|_V^2  +2 \int_0^t\vert f(s)\vert_{\rH}^2\,ds  \Big)  e^{54 \big( \vert u_0\vert_{\rH}^2+ \int_0^t\vert f(s)\vert_{\rV^\prime}^2\,ds  \big)^2},\;\;  t \geq 0.
\end{equation}
  In particular, if $f\in L^2(0,\infty ;\rH)$ then  $u \in L^2(0,\infty;D(\rA))$ and
\begin{equation}
\label{ineq-u-V3}
e^{ \lambda_1 t} | u(t) |_V^2 \leq \Big( | u_0|_V^2  +2 \int_0^\infty\vert f(s)\vert_{\rH}^2\,ds  \Big)  e^{54 \big( \vert u_0\vert_{\rH}^2+ \int_0^\infty\vert f(s)\vert_{\rV^\prime}^2\,ds  \big)^2},\;\;  t \geq 0.
\end{equation}
If also $f=0$, this gives
\begin{equation}
\label{ineq-u-V4}
 | u(t) |_V^2 \leq   e^{54 \vert u_0\vert_{\rH}^4 }e^{-\lambda_1 t} \, | u_0|_V^2 ,\;\;  t \geq 0.
\end{equation}

\end{corollary}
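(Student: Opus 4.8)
The plan is to assemble the corollary from the a priori bounds established just above it, the only genuinely new ingredient being a localization in time that accommodates an $f$ lying merely in $L^2_{\mathrm{loc}}(0,\infty;\rH)$, together with a passage to the limit $T\to\infty$ for the two global assertions. Throughout I use the continuity of the embedding $\rH\hookrightarrow\rV^\prime$ furnished by the Gelfand triple \eqref{eqn:Gelfanf}, so that local (resp. global) $L^2$-integrability of $f$ with values in $\rH$ forces the same integrability with values in $\rV^\prime$.

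First I would establish the regularity $u\in L^2_{\mathrm{loc}}(0,\infty;D(\rA))\cap C([0,\infty);\rV)$. Fix an arbitrary $T\in(0,\infty)$. Since $f\in L^2_{\mathrm{loc}}(0,\infty;\rH)$ we have $f\in L^2(0,T;\rH)\subset L^2(0,T;\rV^\prime)$, so the existence and uniqueness theorem applies, and the regularity statement quoted just before the corollary (valid for every $T\in(0,\infty]$ when $f\in L^2(0,T;\rH)$ and $u_0\in\rV$) yields $u\in L^2(0,T;D(\rA))\cap C([0,T];\rV)$ with $u^\prime\in L^2(0,T;\rH)$. By uniqueness, the solutions obtained on the nested intervals $[0,T]$ coincide on their common domains and therefore define a single function $u$ on $[0,\infty)$; as $T$ is arbitrary, $u\in L^2_{\mathrm{loc}}(0,\infty;D(\rA))$, and, being continuous into $\rV$ on each $[0,T]$, $u\in C([0,\infty);\rV)$.

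Next, the pointwise bound \eqref{ineq-u-V2} is nothing but the Gronwall estimate \eqref{ineq-u-V} with $T$ replaced by the running time $t$, after multiplying both sides by $e^{\lambda_1 t}$. Concretely, setting $y(t)=|u(t)|_V^2$, inequality \eqref{ineq-aux-02} has the form $y^\prime+\lambda_1 y\le 2|f(t)|_\rH^2+g(t)\,y$ with a nonnegative coefficient $g$ controlled by $|u(t)|_V^2|u(t)|_\rH^2$; multiplying by $e^{\lambda_1 t}$ and applying Gronwall's lemma, the accumulated exponent $\int_0^t g(s)\,ds$ is bounded by means of the energy inequality \eqref{ineq-aux-00}, which controls both $\sup_{s\le t}|u(s)|_\rH^2$ and $\int_0^t|u(s)|_V^2\,ds$ by $|u_0|_\rH^2+\int_0^t|f(s)|_{\rV^\prime}^2\,ds$. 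This produces precisely the exponential factor appearing in \eqref{ineq-u-V}, and hence \eqref{ineq-u-V2}.

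Finally I would derive the two global statements. If $f\in L^2(0,\infty;\rH)$, then the integrands $|f(s)|_\rH^2$ and $|f(s)|_{\rV^\prime}^2$ are nonnegative and globally integrable, so replacing $\int_0^t$ by $\int_0^\infty$ in \eqref{ineq-u-V2} only enlarges the right-hand side and gives \eqref{ineq-u-V3}. For the membership $u\in L^2(0,\infty;D(\rA))$ I would invoke \eqref{ineq-L^2(0,T,H^2)}: bounding each $\int_0^T|f|_\rH^2$ and $\int_0^T|f|_{\rV^\prime}^2$ by its value over $(0,\infty)$ shows that the right-hand side of \eqref{ineq-L^2(0,T,H^2)} is bounded uniformly in $T$, and letting $T\to\infty$ (monotone convergence for $\int_0^T|\rA u(t)|_\rH^2\,dt$) yields $\int_0^\infty|\rA u(t)|_\rH^2\,dt<\infty$. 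Setting $f=0$ in \eqref{ineq-u-V3} makes the two $f$-integrals vanish and leaves exactly \eqref{ineq-u-V4}. The only mildly delicate point is this last limit: it works precisely because the global finiteness of $\|f\|_{L^2(0,\infty;\rH)}$ — and, through $\rH\hookrightarrow\rV^\prime$, of $\|f\|_{L^2(0,\infty;\rV^\prime)}$ — keeps the exponential factor in \eqref{ineq-L^2(0,T,H^2)} bounded as $T\to\infty$, which is where the hypothesis $f\in L^2(0,\infty;\rH)$ (rather than merely $L^2_{\mathrm{loc}}$) is essential.
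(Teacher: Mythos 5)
Your proposal is correct and follows exactly the route the paper intends: the paper offers no separate proof of Corollary \ref{cor-u-nfty-V}, presenting it as an immediate consequence of the regularity statement for $f\in L^2(0,T;\rH)$, $u_0\in\rV$ together with the estimates \eqref{ineq-u-V} and \eqref{ineq-L^2(0,T,H^2)}, and your argument simply fills in the routine details (localization and uniqueness for $f\in L^2_{\rm loc}$, multiplying \eqref{ineq-u-V} by $e^{\lambda_1 t}$, and monotone convergence as $T\to\infty$). No gaps.
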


Now we will formulate and prove some generalizations of the above results when the data $u_0$ and $f$ are slightly more regular. Similar results in the case of integer order of the Sobolev spaces has been studied in \cite{Temam_1982} where some compatibility conditions are imposed.

\begin{proposition}
\label{prop-NSE-fractional}
Assume that $\alpha  \in (0, \frac12)$. If {$T \in (0,\infty]$,}  $f\in L^2(0,T;D(\rA^{\frac{\alpha}{2}}))$ and $u_0\in D(\rA^{\frac{\alpha+1}{2}})$,  then the unique solution $u$  to  problem \eqref{eqn_NSE01} satisfies
\begin{equation}\label{eqn-u-better regularity}
u\in L^2(0,T;D(\rA^{1+\frac{\alpha}{2}})) \cap C([0,T];D(\rA^{\frac{\alpha+1}{2}}))\mbox{ and } \;\;u^\prime(\cdot)\in L^2(0,T;D(\rA^{\frac{\alpha}{2}})).
\end{equation}
{Moreover,  for $t \in [0,T]\cap[0,\infty)$, we have
\begin{eqnarray}\label{ineq-aux-04}
 \vert \rA^{\frac{\alpha+1}2} u(t)\vert_{\rH}^2
 &\leq& e^{-\lambda_1 t} e^{C^2K_3(| u_0|_V,  \vert f \vert_{L^2(0,T;\rH)})} \big( \vert \rA^{\frac{\alpha+1}2} u_0\vert_{\rH}^2+
 \int_0^T \vert \rA^{\frac{\alpha}{2}} f(s)\vert_{\rH}^2\, ds\big),
\end{eqnarray}
where $C>0$ is a generic constant\footnote{In fact, the one from inequality \eqref{ineqn-B-fractional} in Proposition \ref{prop-Leray-fractional-alpha}.} and
\begin{equation}\label{ineq-aux-05}
K_3(R,\rho):=\big[ R^2+ 2\rho^2 \big] \times \Big[ 1+ \frac{54}{\lambda_1^2} \,\big( R^2+\rho^2\big)^2
e^{ \frac{54}{\lambda_1^2} \,\big( R^2+\rho^2\big)^2 }\Big].
\end{equation}
In particular, if $f=0$, then
\begin{equation}\label{ineq-aux-04'}
 \vert \rA^{\frac{\alpha+1}2} u(t)\vert_{\rH}^2
 \leq e^{-\lambda_1 t} e^{C^2K_3(| u_0|_V, 0)}  \vert \rA^{\frac{\alpha+1}2} u_0\vert_{\rH}^2, \;\; t \geq 0.
\end{equation}
}
\end{proposition}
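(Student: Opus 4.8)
The plan is to derive both the regularity \eqref{eqn-u-better regularity} and the bounds \eqref{ineq-aux-04}--\eqref{ineq-aux-04'} from a single higher-order energy estimate, obtained by pairing equation \eqref{eqn_NSE01} with $\rA^{1+\alpha}u(t)$ in $\rH$. The input is the lower-order information of Corollary \ref{cor-u-nfty-V}: since $u_0\in D(\rA^{(\alpha+1)/2})\subset\rV$ and $f\in L^2(0,T;D(\rA^{\alpha/2}))\subset L^2(0,T;\rH)$, we already know $u\in C([0,T];\rV)\cap L^2(0,T;D(\rA))$ together with the quantitative bound \eqref{ineq-L^2(0,T,H^2)}. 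Estimating $\vert u_0\vert_\rH\le\lambda_1^{-1/2}\vert u_0\vert_\rV$ and $\vert f\vert_{L^2(0,T;\rV^\prime)}\le\lambda_1^{-1/2}\vert f\vert_{L^2(0,T;\rH)}$ inside \eqref{ineq-L^2(0,T,H^2)}, one checks that its right-hand side is bounded by $K_3(\vert u_0\vert_\rV,\vert f\vert_{L^2(0,T;\rH)})$, so that
\[\int_0^T\vert u(t)\vert_{D(\rA)}^2\,dt\le K_3\big(\vert u_0\vert_\rV,\vert f\vert_{L^2(0,T;\rH)}\big);\]
this is exactly the quantity that will appear in the exponent of \eqref{ineq-aux-04}.

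Using the self-adjointness of the fractional powers of $\rA$, the four terms of the pairing become $(u^\prime,\rA^{1+\alpha}u)_\rH=\tfrac12\frac{d}{dt}\vert\rA^{(1+\alpha)/2}u\vert_\rH^2$, $\ (\rA u,\rA^{1+\alpha}u)_\rH=\vert\rA^{1+\alpha/2}u\vert_\rH^2$, $\ (f,\rA^{1+\alpha}u)_\rH=(\rA^{\alpha/2}f,\rA^{1+\alpha/2}u)_\rH$, and $(\rB(u,u),\rA^{1+\alpha}u)_\rH=(\rA^{\alpha/2}\rB(u,u),\rA^{1+\alpha/2}u)_\rH$. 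I would bound the nonlinear one by the Cauchy--Schwarz inequality followed by Proposition \ref{prop-Leray-fractional-alpha} with $s=2$,
\[\vert(\rB(u,u),\rA^{1+\alpha}u)_\rH\vert\le\vert\rB(u,u)\vert_{D(\rA^{\alpha/2})}\,\vert\rA^{1+\alpha/2}u\vert_\rH\le C\,\vert u\vert_{D(\rA)}\,\vert\rA^{(1+\alpha)/2}u\vert_\rH\,\vert\rA^{1+\alpha/2}u\vert_\rH,\]
the choice $s=2$ being precisely the one that makes the resulting time integral controllable by $\int_0^T\vert u\vert_{D(\rA)}^2\,dt$ with no power of $T$, hence uniformly for $T\in(0,\infty]$. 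Absorbing both factors $\vert\rA^{1+\alpha/2}u\vert_\rH$ into the dissipative term $\vert\rA^{1+\alpha/2}u\vert_\rH^2$ by Young's inequality, and writing $y(t):=\vert\rA^{(1+\alpha)/2}u(t)\vert_\rH^2$, I arrive (with $C$ a generic constant) at
\[\frac{d}{dt}y+\vert\rA^{1+\alpha/2}u\vert_\rH^2\le 2\vert\rA^{\alpha/2}f\vert_\rH^2+C^2\,\vert u\vert_{D(\rA)}^2\,y.\]

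From here everything follows by splitting the dissipative term through the coercivity $\vert\rA^{1+\alpha/2}u\vert_\rH^2\ge\lambda_1 y$, which comes from \eqref{ineq-Poincare-A} applied to $\rA^{(1+\alpha)/2}u$. Retaining $\lambda_1 y$ on the left and applying Gronwall's lemma, the coefficient $\int_0^T C^2\vert u\vert_{D(\rA)}^2\,dt\le C^2K_3(\vert u_0\vert_\rV,\vert f\vert_{L^2(0,T;\rH)})$ produces the exponential factor and the term $\lambda_1 y$ produces the prefactor $e^{-\lambda_1 t}$, yielding \eqref{ineq-aux-04} (up to the harmless adjustment of the generic constant); taking $f=0$ gives \eqref{ineq-aux-04'}. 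Retaining instead $\vert\rA^{1+\alpha/2}u\vert_\rH^2$ and integrating over $(0,T)$ gives $\int_0^T\vert\rA^{1+\alpha/2}u\vert_\rH^2\,dt<\infty$, i.e. $u\in L^2(0,T;D(\rA^{1+\alpha/2}))$, while \eqref{ineq-aux-04} gives $u\in L^\infty(0,T;D(\rA^{(1+\alpha)/2}))$. Rewriting $u^\prime=f-\rA u-\rB(u,u)$ and noting that $f\in L^2(0,T;D(\rA^{\alpha/2}))$, that $\rA u\in L^2(0,T;D(\rA^{\alpha/2}))$ is equivalent to $u\in L^2(0,T;D(\rA^{1+\alpha/2}))$, and that $\vert\rB(u,u)\vert_{D(\rA^{\alpha/2})}^2\le C^2\vert u\vert_{D(\rA)}^2\,\Vert y\Vert_{L^\infty(0,T)}\in L^1(0,T)$, I obtain $u^\prime\in L^2(0,T;D(\rA^{\alpha/2}))$. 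Finally, $u\in L^2(0,T;D(\rA^{1+\alpha/2}))$ with $u^\prime\in L^2(0,T;D(\rA^{\alpha/2}))$, together with the interpolation identity $[D(\rA^{1+\alpha/2}),D(\rA^{\alpha/2})]_{1/2}=D(\rA^{(1+\alpha)/2})$, give by the Lions--Magenes trace lemma (cf. \cite[Lemma III.1.2]{Temam_2001}) the continuity $u\in C([0,T];D(\rA^{(1+\alpha)/2}))$, completing \eqref{eqn-u-better regularity}.

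The one genuinely delicate point, which I expect to be the main obstacle, is that the pairing with $\rA^{1+\alpha}u$ is not legitimate for the solution itself, since a priori $u(t)$ need not lie in $D(\rA^{1+\alpha})$. The rigorous argument is therefore to carry out the computation on the finite-dimensional Galerkin approximations $u_n$ (the projections onto $\mathrm{span}\{e_1,\dots,e_n\}$, which lie in $\bigcap_m D(\rA^m)$ and for which every pairing above is justified, the spectral projection $P_n$ commuting with all powers of $\rA$ so that $(P_n\rB(u_n,u_n),\rA^{1+\alpha}u_n)_\rH=(\rB(u_n,u_n),\rA^{1+\alpha}u_n)_\rH$), to derive the estimate with constants independent of $n$, and then to pass to the limit $n\to\infty$ using the weak and weak-$\ast$ lower semicontinuity of the $L^\infty(0,T;D(\rA^{(1+\alpha)/2}))$ and $L^2(0,T;D(\rA^{1+\alpha/2}))$ norms; uniqueness of the solution identifies the limit with $u$.
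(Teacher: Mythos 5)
Your proof is correct, and its analytic core is exactly the paper's: you pair the equation with $\rA^{1+\alpha}u$, control the nonlinearity by Cauchy--Schwarz together with \eqref{ineqn-B-fractional} for $s=2$, absorb by Young's inequality, and arrive at precisely the differential inequality \eqref{ineq-aux-01}; you then close it with the lower-order bound \eqref{ineq-L^2(0,T,H^2)} (majorized by $K_3$ via Poincar\'e, as in the paper), Gronwall, and the coercivity $\vert \rA^{1+\frac\alpha2}u\vert_\rH^2\geq\lambda_1\vert\rA^{\frac{\alpha+1}2}u\vert_\rH^2$ for the decay. Where you genuinely diverge is in how the formal computation is legitimized. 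The paper first invokes local well-posedness in $D(\rA^{\frac{\alpha+1}2})$ (via the bilinear continuity of $\rB$ from Proposition \ref{prop-Leray-fractional-alpha} and \cite{Brz_1991}), identifies the local regular solution with the global weak one by uniqueness, and then uses the a priori bound to rule out loss of regularity; as a by-product the continuity $u\in C([0,T];D(\rA^{\frac{\alpha+1}2}))$ comes for free from the local theory. You instead run the estimate on Galerkin approximations and pass to the limit by lower semicontinuity, recovering the continuity afterwards from $u\in W^{1,2}\big(0,T;D(\rA^{1+\frac\alpha2}),D(\rA^{\frac\alpha2})\big)$ and the interpolation--trace theorem of \cite{Lions+Magenes_1972}. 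Your route is more self-contained (no appeal to a separate local existence theorem in the fractional scale) at the cost of having to re-establish the lower-order bound $\int_0^T\vert u_n\vert_{D(\rA)}^2\,dt\leq K_3$ uniformly at the Galerkin level and of a slightly more delicate identification of the limit; the paper's route is shorter given the cited machinery. One small caveat, which applies equally to the paper's own passage from \eqref{brcfine} to \eqref{ineq-aux-03bis}: the single-shot Gronwall argument you describe actually yields the factor $e^{-\lambda_1 t}$ only in front of $\vert\rA^{\frac{\alpha+1}2}u_0\vert_\rH^2$, while the forcing contributes $\int_0^t e^{-\lambda_1(t-s)}(\cdots)\,ds$ rather than $e^{-\lambda_1 t}\int_0^T(\cdots)\,ds$; since the statement is applied later only with $f=0$ (inequality \eqref{ineq-aux-04'}), this does not affect anything downstream, but it is worth being as explicit about it as the paper is not.
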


\begin{proof} Let us fix $T>0$. Since by Proposition \ref{prop-Leray-fractional}, $B$ is a bilinear continuous map from $D(\rA^{\frac{\alpha+1}{2}}) \times D(\rA^{\frac{\alpha+1}{2}})$ to $D(\rA^{\frac{\alpha}{2}})$ it follows (see for instance \cite{Brz_1991} for the simplest argument) that for every $R,\rho>0$ there exists $T_\ast=T_\ast(R,\rho)\in (0,T]$ such that for every $u_0\in D(\rA^{\frac{\alpha+1}{2}})$ and $f\in L^2(0,T;D(\rA^{\frac{\alpha}{2}})$ such that
\[ \vert u_0\vert_{D(\rA^{\frac{\alpha+1}{2}})} \leq R, \ \ \ \vert f\vert_{L^2(0,T_\ast;D(\rA^{\frac{\alpha}{2}})} \leq \rho\]
there exists a unique solution $v$ to  problem \eqref{eqn_NSE01} which satisfy  conditions \eqref{eqn-u-better regularity} on the time interval $[0,T_\ast]$.  Since $D(\rA^{\frac{\alpha+1}{2}})  \subset \rV$ and $L^2(0,T;D(\rA^{\frac{\alpha}{2}}) \subset L^2(0,T;\rH)$ with the embeddings being continuous, $u_0\in \rV$ and $f\in L^2(0,T;\rH)$. Therefore {by Theorems 3.1 and 3.2 in chapter III of \cite{Temam_2001},} there exists a unique solution $u$ to  problem \eqref{eqn_NSE01} on the whole real half-line $[0,\infty)$ which satisfies \eqref{ineq-L^infty(0,T,V)} and \eqref{ineq-L^2(0,T,H^2)}. By the uniqueness {part of the above cited results,} $u=v$ on $[0,T_\ast]$.

Hence it is sufficient to show that the  norm of $u$ in $L^2(0,T_\ast;D(\rA^{1+\frac{\alpha}{2}}) \cap C([0,T_\ast];D(\rA^{\frac{\alpha+1}{2}}))$ and the norm of $u^\prime$ in $L^2(0,T_\ast;D(\rA^{\frac{\alpha}{2}}))$ are bounded by a constant depending only on  $\vert \rA^{\frac{\alpha+1}2} u_0\vert_{\rH}^2$ and  $\int_0^T \vert \rA^{\frac{\alpha}{2}} f(s)\vert_{\rH}^2\, ds$.

{
For this aim,  by calculating the derivative of $\vert \rA^{\frac{\alpha+1}2} u(t)\vert_{\rH}^2 $, i.e. applying Lemma III.1.2 from \cite{Temam_2001} and using inequality\footnote{Since $ \vert \lb \rA^{1+\alpha}u,Bu\rb \vert= \vert \lb \rA^{1+\frac{\alpha}2}u,\rA^{\frac{\alpha}2}Bu\rb \vert \leq  \vert  \rA^{1+\frac{\alpha}2}u \vert \vert \rA^{\frac{\alpha}2}Bu \vert \leq  C \vert  \rA^{1+\frac{\alpha}2} u \vert \vert \rA u \vert   \vert \rA^{\frac{1+\alpha}2}u \vert  \leq \frac14 \vert  \rA^{1+\frac{\alpha}2} u \vert^2+ C^2 \vert \rA u \vert^2   \vert \rA^{\frac{1+\alpha}2}u \vert^2    $. }
\eqref{ineqn-B-fractional}, with $s=2$, we get the following inequality
\begin{equation}
\label{ineq-aux-01}
\begin{array}{l}
\hspace{-1.1cm}\ds{\frac 12 \frac d{dt}\vert \rA^{\frac{\alpha+1}2} u(t)\vert_{\rH}^2+\vert \rA^{\frac{\alpha}2+1} u(t)\vert_{\rH}^2}\\
\vspace{.1cm}\\
\ds{ \leq
       \vert \rA^{\frac{\alpha}{2}} f(t)\vert_{\rH}^2
+C^2 \vert \rA u(t) \vert_{\rH}^2 \vert \rA^{\frac{\alpha+1}2} u(t) \vert_{\rH}^2, \;\; t\in [0,T_\ast].}
\end{array}
   \nonumber
\end{equation}
Thus, denoting  the right hand side of inequality \eqref{ineq-L^2(0,T,H^2)} by $K_1(T,f,u_0)$ and applying the
 Gronwall Lemma we get
\begin{equation}
\label{brcfine}
\begin{array}{l}
\hspace{-1.1cm}\ds{ \vert \rA^{\frac{\alpha+1}2} u(t)\vert_{\rH}^2+\int_0^t\vert \rA^{\frac{\alpha}2+1} u(s)\vert_{\rH}^2\,ds} \\
\vspace{.1cm}\\
\ds{\leq e^{C^2K_1(T,f,u_0)}  \vert \rA^{\frac{\alpha+1}2} u_0\vert_{\rH}^2+ e^{C^2K_1(T,f,u_0)}\int_0^t \vert \rA^{\frac{\alpha}{2}} f(s)\vert_{\rH}^2\, ds}\\
\vspace{.1cm}\\
\ds{
\leq e^{C^2K_1(T,f,u_0)} \big( \vert \rA^{\frac{\alpha+1}2} u_0\vert_{\rH}^2+ \int_0^T \vert \rA^{\frac{\alpha}{2}} f(s)\vert_{\rH}^2\, ds\big)
 , \;\; t\in [0,T_\ast].}
 \end{array}
\end{equation}
}

{
This proves  that the  norm of $u$ in $L^2(0,T_\ast;D(\rA^{1+\frac{\alpha}{2}}) \cap C([0,T_\ast];D(\rA^{\frac{\alpha+1}{2}}))$ is bounded by a constant depending only on  $\vert \rA^{\frac{\alpha+1}2} u_0\vert_{\rH}^2$ and  $\int_0^T \vert \rA^{\frac{\alpha}{2}} f(s)\vert_{\rH}^2\, ds$.
}

{
Finally, the corresponding bound for the norm of $u^\prime$ in $L^2(0,T_\ast;D(\rA^{\frac{\alpha}{2}}))$ follows from  estimate \eqref{ineq-aux-02}, inequality \eqref{ineqn-B-fractional} from Proposition \ref{prop-Leray-fractional-alpha}, the assumption on  $f$ and  the estimates  for $\rA u$ and $\rB(u,u)$.

This concludes the proof of the first part of Proposition \ref{prop-NSE-fractional}, in particular of \eqref{eqn-u-better regularity}.

Let us now assume that $T =\infty$, $R,\rho>0$ and  $f\in L^2(0,\infty;D(\rA^{\frac{\alpha}{2}}))$ and $u_0\in D(\rA^{\frac{\alpha+1}{2}})$ such that $| u_0|_V \leq R$ and $\vert f\vert_{L^2(0,\infty;\rH)} \leq \rho$. Since, by the Poincar\'{e} inequality \eqref{ineq-Poincare}, $\vert f\vert^2_{\rV^\prime} \leq \lambda_1^{-1} \vert f \vert^2_{\rH}$, for $f\in \rH$, we infer that
$K_1(T,f,u_0) \leq K_3(R,\rho)$, where $K_3(R,\rho)$ has been defined in \eqref{ineq-aux-05}.
Thus, from inequality \eqref{brcfine} we infer
\[
 \vert \rA^{\frac{\alpha+1}2} u(t)\vert_{\rH}^2+\int_0^t\vert \rA^{\frac{\alpha}2+1} u(s)\vert_{\rH}^2\,ds
 \leq e^{C^2\,K_3(R,\rho)} \big( \vert \rA^{\frac{\alpha+1}2} u_0\vert_{\rH}^2+ \int_0^t \vert \rA^{\frac{\alpha}{2}} f(s)\vert_{\rH}^2\, ds\big)
 , \;\; t \geq 0.
\]
Since by the Poincar\'{e} inequality \eqref{ineq-Poincare} $  \vert \rA^{\frac{\alpha}2+1} \vert_{\rH}^2 \geq \lambda_1 \vert \rA^{\frac{\alpha+1}2} u\vert_{\rH}^2$, for $u\in D(\rA^{\frac{\alpha}2+1})$, by the inequality above and the  Gronwall Lemma we infer that
\begin{eqnarray}\label{ineq-aux-03bis}
 \vert \rA^{\frac{\alpha+1}2} u(t)\vert_{\rH}^2
 &\leq& e^{-\lambda_1 t} e^{C^2\,K(R,\rho)} \big( \vert \rA^{\frac{\alpha+1}2} u_0\vert_{\rH}^2+ \int_0^t \vert \rA^{\frac{\alpha}{2}} f(s)\vert_{\rH}^2\, ds\big)
 , \;\; t \geq 0.
\end{eqnarray}
This concludes the proof of inequality \eqref{ineq-aux-04} and hence of the second part of the Proposition.
}
\end{proof}

{The previous result can be used to derive the next corollary. The proof of this corollary is analogous to the proof of properties (3.151) in Theorem 3.10 from \cite[chapter III]{Temam_2001}.
\begin{corollary}\label{cor-improvement}
Assume that $\alpha  \in (0, \frac12)$ and $\beta\in (\alpha,1)$. If {$T \in (0,\infty)$,}  $f\in L^2(0,T;D(\rA^{\frac{\alpha}{2}}))$ and $u_0\in \rV$,  then the unique solution $u$  to the problem \eqref{eqn_NSE01} satisfy
\begin{eqnarray}\label{ineq-aux-09}
\nonumber
\sup_{s \in (0,T]}  s^\beta \vert \rA^{\frac{\alpha+1}2} u(s)\vert_{\rH}^2+\int_0^T s^\beta \vert \rA^{\frac{\alpha}2+1} u(s)\vert_{\rH}^2\,ds &\leq&
e^{C^2K_1(T,f,u_0)} \big[  \int_0^T s^\beta \vert \rA^{\frac{\alpha}{2}} f(s)\vert_{\rH}^2\, ds \\
&&\hspace{-3truecm}\lefteqn{ +  C T^{\frac1{\beta-\alpha}}  ( K_0(T,f,u_0)+ K_1(T,f,u_0))  \big].}
\end{eqnarray}
\end{corollary}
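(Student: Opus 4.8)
The plan is to run the weighted energy method behind Temam's property (3.151) on the pointwise differential inequality already obtained in the proof of Proposition \ref{prop-NSE-fractional}, the weight $s^\beta$ being introduced precisely to compensate for the fact that now $u_0\in\rV$ only, instead of $u_0\in D(\rA^{\frac{\alpha+1}{2}})$. First I would recall the inequality
\[
\frac12\frac{d}{dt}\vert \rA^{\frac{\alpha+1}2}u(t)\vert_\rH^2 + \vert \rA^{\frac{\alpha}2+1}u(t)\vert_\rH^2 \le \vert \rA^{\frac\alpha2}f(t)\vert_\rH^2 + C^2\vert \rA u(t)\vert_\rH^2\,\vert \rA^{\frac{\alpha+1}2}u(t)\vert_\rH^2 ,
\]
which is the one derived (via \eqref{ineqn-B-fractional} with $s=2$) inside the proof of Proposition \ref{prop-NSE-fractional}. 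Although $u_0\in\rV$ has no extra regularity, the parabolic smoothing makes this meaningful for $t>0$: for any $t_0>0$ one has $u(t_0)\in D(\rA)\subset D(\rA^{\frac{\alpha+1}{2}})$ (by Corollary \ref{cor-u-nfty-V}), so restarting the equation at $t_0$ and applying Proposition \ref{prop-NSE-fractional} gives $u\in C((0,T];D(\rA^{\frac{\alpha+1}2}))\cap L^2_{\mathrm{loc}}((0,T];D(\rA^{1+\frac\alpha2}))$.

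Next I would multiply by $2s^\beta$ and integrate over $[0,t]$, integrating the time-derivative term by parts. Since $\beta>0$ the boundary contribution at $s=0$ vanishes, and the weight generates a singular term, yielding
\[
t^\beta\vert \rA^{\frac{\alpha+1}2}u(t)\vert_\rH^2 + 2\int_0^t s^\beta \vert \rA^{\frac\alpha2+1}u(s)\vert_\rH^2\,ds \le \beta\int_0^t s^{\beta-1}\vert \rA^{\frac{\alpha+1}2}u(s)\vert_\rH^2\,ds + 2\int_0^t s^\beta\vert \rA^{\frac\alpha2}f(s)\vert_\rH^2\,ds + 2C^2\int_0^t s^\beta\vert \rA u(s)\vert_\rH^2\vert \rA^{\frac{\alpha+1}2}u(s)\vert_\rH^2\,ds .
\]
The crux is the singular term $\beta\int_0^t s^{\beta-1}\vert \rA^{\frac{\alpha+1}2}u\vert_\rH^2\,ds$. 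I would estimate its integrand by interpolating $D(\rA^{\frac{\alpha+1}{2}})$ between $\rV=D(\rA^{\frac12})$ and $D(\rA^{1+\frac\alpha2})$: with $\theta=\frac{\alpha}{1+\alpha}$,
\[
\vert \rA^{\frac{\alpha+1}2}u\vert_\rH^2 \le C\,\vert u\vert_\rV^{2(1-\theta)}\,\vert \rA^{\frac\alpha2+1}u\vert_\rH^{2\theta} ,
\]
and then apply Young's inequality with exponents $\frac1\theta$ and $\frac1{1-\theta}$ to obtain $s^{\beta-1}\vert \rA^{\frac{\alpha+1}2}u\vert_\rH^2 \le \eta\,s^\beta\vert \rA^{\frac\alpha2+1}u\vert_\rH^2 + C_\eta\,s^{\beta-1-\alpha}\vert u\vert_\rV^2$, the exponent $\beta-1-\alpha$ arising because $\frac1{1-\theta}=1+\alpha$. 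Taking $\eta$ small absorbs the first term into the dissipation term on the left.

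The decisive point is that $\beta-1-\alpha>-1$ exactly when $\beta>\alpha$, so that $\int_0^T s^{\beta-1-\alpha}\,ds<\infty$; bounding $\vert u(s)\vert_\rV^2\le K_0(T,f,u_0)$ through \eqref{ineq-L^infty(0,T,V)} then produces a factor given by a power of $T$ times $K_0$, which accounts for the last term $CT^{\frac1{\beta-\alpha}}(K_0+K_1)$ of \eqref{ineq-aux-09}. To conclude I would apply the Gronwall Lemma, treating $2C^2\vert \rA u(s)\vert_\rH^2$ as the coefficient: its integral over $[0,T]$ is controlled by $K_1(T,f,u_0)$ via \eqref{ineq-L^2(0,T,H^2)}, producing the prefactor $e^{C^2K_1(T,f,u_0)}$, while the remaining source is $\int_0^T s^\beta\vert \rA^{\frac\alpha2}f\vert_\rH^2\,ds$. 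Since the resulting bound depends on $u_0$ only through $\vert u_0\vert_\rV$ and $\vert u_0\vert_\rH$ (inside $K_0,K_1$) and through no higher norm, I would finish by a density argument: approximate $u_0\in\rV$ by data in $D(\rA^{\frac{\alpha+1}{2}})$—for which the weighted identity is entirely rigorous with vanishing boundary term at $s=0$—and pass to the limit using continuous dependence of the solution and weak lower semicontinuity of the weighted norms.

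The main obstacle is exactly the control of the singular weight term near $s=0$: everything hinges on choosing the interpolation couple $(\rV,\,D(\rA^{1+\frac\alpha2}))$ so that, after Young's inequality, the surviving power $s^{\beta-1-\alpha}$ is integrable on $(0,T)$, which is the sole role of the hypothesis $\beta>\alpha$. A secondary technical difficulty is the rigorous justification of the weighted energy identity for $u_0\in\rV$ with no further regularity, which I would settle by the restart-and-density scheme above (or equivalently at the Galerkin level), the factor $s^\beta$ with $\beta>0$ being precisely what makes the initial-layer contribution disappear.
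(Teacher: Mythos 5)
Your proposal is correct and follows essentially the same route as the paper: the same weighted differential inequality multiplied by $s^\beta$, the same observation that $\beta>\alpha$ is precisely what makes the singular weight integrable after interpolating $\vert \rA^{\frac{\alpha+1}{2}}u\vert_\rH^2$ against $\vert u\vert_\rV^2$, the same Gronwall step producing the prefactor $e^{C^2K_1(T,f,u_0)}$, and the same restart-at-$t_n\downarrow 0$ plus passage-to-the-limit argument to go from $u_0\in D(\rA^{\frac{\alpha+1}{2}})$ to $u_0\in\rV$. The only (harmless) variation is in the singular term: you interpolate between $\rV$ and $D(\rA^{1+\frac{\alpha}{2}})$ and absorb a small multiple of $s^\beta\vert \rA^{\frac{\alpha}{2}+1}u\vert_\rH^2$ into the dissipation via Young's inequality, whereas the paper interpolates between $\rV$ and $D(\rA)$ and disposes of the whole term at once by H\"older in time, bounding it by $CT^{\cdot}K_0(T,f,u_0)^{1-\alpha}K_1(T,f,u_0)^{\alpha}$ with no absorption needed.
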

\begin{proof} Let us fix $\alpha \in (1,\frac12)$, $\beta\in (\alpha,1)$,   $f\in L^2(0,T;D(\rA^{\frac{\alpha}{2}}))$ and $u_0\in D(\rA^{\frac{\alpha+1}2})$.
Multiplying the differential inequality  \eqref{ineq-aux-01} by $t^\beta$ and then integrating it, we get

\begin{eqnarray}\label{ineq-aux-06}
 t^\beta\vert \rA^{\frac{\alpha+1}2} u(t)\vert_{\rH}^2+\int_0^t\, s^\beta\vert \rA^{\frac{\alpha}2+1} u(s)\vert_{\rH}^2\,ds
 &\leq&
   \int_0^t s^\beta \vert \rA^{\frac{\alpha}{2}} f(s)\vert_{\rH}^2\, ds \\
   +\,C^2  \int_0^t  \vert \rA u(s) \vert_{\rH}^2 \, s^\beta\,\vert \rA^{\frac{\alpha+1}2} u(s) \vert_{\rH}^2\, ds
   &+& \beta \int_0^t  s^{\beta-1}\vert \rA^{\frac{\alpha+1}2} u(s)\vert_{\rH}^2 \,ds, \;\; t\in (0,T].
\nonumber
\end{eqnarray}
Since $A$ is a self-adjoint operator in $\rH$ we have
\[
\vert \rA^{\frac{\alpha+1}2} u\vert_{\rH}^2
 \leq \vert \rA^{\frac{1}2} u\vert_{\rH}^{2(1-\alpha)} \vert \rA^{\frac{2}2} u\vert_{\rH}^{2\alpha}=| u |_V ^{2(1-\alpha)} \vert \rA u\vert_{\rH}^{2\alpha}
 , \;\; u\in D(\rA).
\]
Therefore, by the H\"older inequality,
\begin{equation}\label{ineq-aux-07}
\begin{array}{l}
\ds{
\int_0^t  s^{\beta-1}\vert \rA^{\frac{\alpha+1}2} u(s)\vert_{\rH}^2 \,ds \leq
\left(\sup_{s\in (0,t]} | u(s)|_V^2\int_0^t s^{\frac{\beta-1}{1-\alpha}}\, ds \right)^{1-\alpha} \left(\int_0^t \vert \rA u(s)\vert^2\, ds\right)^{\alpha} .}
\end{array}
\end{equation}
Since we are assuming $\beta>\alpha$ and $\alpha<1$, we infer that $\frac{\beta-1}{1-\alpha}>-1$ and therefore
\[
\int_0^t s^{\frac{\beta-1}{1-\alpha}}\, ds= \frac{1-\alpha}{\beta-\alpha}\;t^{\frac{1-\alpha}{\beta-\alpha}}<\infty.
\]
Let us recall that by  $K_1(T,f,u_0)$ we denote the right hand side of inequality \eqref{ineq-L^2(0,T,H^2)}. Let us also denote by $K_0(T,f,u_0)$ the right hand side of inequality \eqref{ineq-L^infty(0,T,V)}. Then, for a constant $C$ depending only on $\alpha$ and $\beta$ we  get
\[
\beta \int_0^T  s^{\beta-1}\vert \rA^{\frac{\alpha+1}2} u(s)\vert_{\rH}^2 \,ds \leq C T^{\frac1{\beta-\alpha}}  K_0(T,f,u_0)^{1-\alpha} K_1(T,f,u_0)^{\alpha}<\infty.
\]
Therefore,  we can deduce from inequality
\eqref{ineq-aux-06} the following one

\begin{eqnarray}\label{ineq-aux-08}
 t^\beta\vert \rA^{\frac{\alpha+1}2} u(t)\vert_{\rH}^2+\int_0^ts^\beta \vert  \rA^{\frac{\alpha}2+1} u(s)\vert_{\rH}^2\,ds &\leq& e^{C^2K_1(T,f,u_0)}\Big( \int_0^t s^\beta \vert \rA^{\frac{\alpha}{2}} f(s)\vert_{\rH}^2\, ds\\
 &+& C t^{\frac1{\beta-\alpha}}  K_0(t,f,u_0)^{1-\alpha} K(t,f,u_0)^{\alpha}\Big)
 \nonumber\\
 \leq e^{C^2K_1(T,f,u_0)} \big(  \int_0^T s^\beta \vert \rA^{\frac{\alpha}{2}} f(s)\vert_{\rH}^2\, ds &+& C T^{\frac1{\beta-\alpha}}  ( K_0(T,f,u_0)+ K_1(T,f,u_0))  \big)
 , \;\; t\in [0,T].
\nonumber
\end{eqnarray}
This implies inequality \eqref{ineq-aux-09} under the additional assumption that  $u_0\in D(\rA^{\frac{\alpha+1}2})$.

Now, if $u_0\in \rV$, then by \cite[Theorem III.3.10]{Temam_2001}, see also inequality \eqref{ineq-L^2(0,T,H^2)}, there exists a sequence $\{t_n\}$ such $t_n \todown 0$ and $u(t_n)\in D(\rA)\subset D(\rA^{\frac{\alpha+1}2})$. Let us also denote $f_n=f_{|[t_n,T]}$ and observe that
$K_i(T-t_n,f_n,u(t_n))\leq K_i(T,f,u(t_n))$. Thus, by  applying inequality \eqref{ineq-aux-09} to our solution $u$ on the time interval $[t_n,T]$  we get  for each $n\in\mathbb{N}$
\begin{eqnarray}\label{ineq-aux-10}
\sup_{s \in (t_n,T]}  (s-t_n)^\beta \vert \rA^{\frac{\alpha+1}2} u(s)\vert_{\rH}^2+\int_{t_n}^T (s-t_n)^\beta \vert \rA^{\frac{\alpha}2+1} u(s)\vert_{\rH}^2\,ds &\leq&
e^{K_1(T,f,u(t_n))}  \\
&&\hspace{-11.0truecm}\lefteqn{  \times \Big(\int_{t_n}^T (s-t_n)^\beta \vert \rA^{\frac{\alpha}{2}} f(s)\vert_{\rH}^2\, ds  + C (T-(t_n))^{\frac1{\beta-\alpha}}  ( K_0(T,f,u(t_n))+ K_1(T,f,u(t_n)))  \Big).}
\nonumber
\end{eqnarray}
Since $u\in C([0,T];\rV)$, we infer that $| u(t_n)|_V \to | u_0|_V$ and thus $K_i(T,f,u(t_n)) \to K_i(T,f,u_0)$, $i=1,2$. Moreover, by the Lebesgue Monotone Convergence Theorem,
\[
\int_{t_n}^T (s-t_n)^\beta \vert \rA^{\frac{\alpha}2+1} u(s)\vert_{\rH}^2\,ds
=\int_0^T 1_{(t_n,T]}(s)(s-t_n)^\beta \vert \rA^{\frac{\alpha}2+1} u(s)\vert_{\rH}^2\,ds
\to \int_0^T s^\beta \vert \rA^{\frac{\alpha}2+1} u(s)\vert_{\rH}^2\,ds ,
\]
\[
\int_{t_n}^T (s-t_n)^\beta \vert \rA^{\frac{\alpha}{2}} f(s)\vert_{\rH}^2\,ds
=\int_0^T 1_{(t_n,T]}(s)(s-t_n)^\beta \vert \rA^{\frac{\alpha}{2}} f(s)\vert_{\rH}^2\,ds
\to \int_0^T s^\beta \vert \rA^{\frac{\alpha}{2}} f(s)\vert_{\rH}^2\,ds.
\]
Hence, from \eqref{ineq-aux-10} we deduce \eqref{ineq-aux-09}. The proof is complete.
\end{proof}
}

\medskip

Now, for any $-\infty\leq a<b\leq \infty$ such that $a<b$ and for any two reflexive Banach spaces $X$ and $Y$ such that $X \embed Y$ continuously, we denote by\footnote{Some authours, for instance Vishik and Fursikov, use the notation $\mathcal{H}^{1,2}N(a,b;X,Y)$. Our choice is motivated by the notation used in the monograph \cite{Lions+Magenes_1972}, who however use notation $W(a,b)$.  } $W^{1,2}(a,b;X,Y)$ the space of all $u\in L^2(a,b;X)$ which are weakly differentiable as  $Y$-valued functions and their weak derivative
belongs to $L^2(a,b;Y)$. The space $W^{1,2}(a,b;X,Y)$  is a separable Banach space (and Hilbert if both $X$ and $Y$ are Hilbert spaces), with the natural norm
$$\vert u\vert_{W^{1,2}(a,b;X,Y)}^2= \vert u\vert^2_{ L^2(a,b;X)}+\vert u^\prime\vert^2_{ L^2(a,b;Y)},\;\; u\in W^{1,2}(a,b;X,Y).
$$
Later on, we will use  the shortcut notation
\[W^{1,2}(a,b)=W^{1,2}(a,b;D(\rA),\rH).\]

\medskip

We  conclude  this section with the statement of a couple of results which are obvious adaptations of deep results from \cite{Lions+M_2001} to the $2$-dimensional case. To this purpose, there is no need to mention that all what we have said about equation \eqref{eqn_NSE01} in the time interval $[0,T]$ applies to any time interval $[a,b]$, with $-\infty<a<b<\infty$.

\begin{definition}
\label{def-very weak}
Assume that $-\infty \leq a < b\leq \infty$ and $f\in L^2_{\textrm{loc}}((a,b);\rH)$. A function $u\in C((a,b);\rH)$ is called a {\em very weak solution} to the Navier-Stokes equations \eqref{eqn_NSE01} on the interval $(a,b)$ if for all $\phi \in C^\infty ((a,b)\times D)$, such that $\text{div} \phi =0$ on $(a,b)\times  D$ and $\phi=0$ on $(a,b)\times \partial D$,
\begin{equation}
\begin{array}{l}
\ds{
\int_D u(t_1,x) \phi(t_1,x)\, dx}\\
\vs
\ds{= \int_D u(t_0,x) \phi(t_0,x)\, dx+\int_{[t_0,t_1]\times D} u(s,x)\cdot (\partial_s\phi(s,x) + \nu \Delta \phi(s,x))\, dsdx }\\
\vs
\ds{
+\int_{t_0}^{t_1} b(u(s),u(s),\phi(s))\, ds + \int_{[t_0,t_1]\times D}  f(s,x) \cdot \phi(s,x) \, dsdx,}
\end{array}
\label{eqn-very weak}
\end{equation}
 for all $a<t_0<t_1<b$.

\end{definition}

\begin{proposition}\label{prop-very weak}
Assume that $-\infty \leq a < b\leq \infty$ and $f\in L^2_{\textrm{loc}}((a,b);\rH)$. Suppose that the functions $u, v\ \in\, C((a,b);\rH)$ are very weak solutions to the Navier-Stokes equations \eqref{eqn_NSE01} on the interval $(a,b)$, with   $u(t_0)=v(t_0)$,  for some $t_0\in (a,b)$. Then $u(t)=v(t)$ for all $t\geq t_0$.
\end{proposition}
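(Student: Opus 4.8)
The plan is to run the classical two-dimensional energy (Serrin--Prodi type) estimate on the difference of the two solutions, the whole difficulty being to make such an argument admissible for functions that are a priori only continuous with values in $\rH$. Set $w:=u-v\in C((a,b);\rH)$, so that $w(t_0)=0$. Subtracting the two very weak identities \eqref{eqn-very weak} written for $u$ and for $v$, the linear terms assemble into the very weak form of $w^\prime+\nu\rA w$, while the quadratic terms produce, after the bookkeeping $B(u,u)-B(v,v)=B(v,w)+B(w,v)+B(w,w)$, a remainder bilinear in $w$ and the given solutions. Testing (formally) against $w$ and invoking the cancellations $b(v,w,w)=b(w,w,w)=0$ coming from \eqref{eqn:b01}--\eqref{eqn-B02}, the only surviving nonlinear contribution is $b(w,v,w)$, which by \eqref{eqn:4.00} and the two-dimensional Ladyzhenskaya inequality $|w|^2_{L^4}\le C|w|_\rH|w|_\rV$ already used in \eqref{eqn:4.0} obeys $|b(w,v,w)|\le C|w|_\rH|w|_\rV|v|_\rV$.

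With this in hand the computation is routine: a Young inequality absorbing $|w|_\rV$ into the dissipation yields the differential inequality
\[
\frac{d}{dt}|w(t)|_\rH^2\le \frac{C^2}{\nu}\,|v(t)|_\rV^2\,|w(t)|_\rH^2,
\]
and since $\int_{t_0}^{t_1}|v(s)|_\rV^2\,ds<\infty$, the Gronwall lemma together with $w(t_0)=0$ forces $w\equiv 0$ on $[t_0,t_1]$; as $t_1<b$ is arbitrary this gives $u=v$ for all $t\ge t_0$.

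The genuine obstacle is the word \emph{formally} above: a very weak solution is only known to lie in $C((a,b);\rH)$, so $w$ is neither an admissible test function in \eqref{eqn-very weak} nor regular enough for the chain rule $\tfrac{d}{dt}|w|_\rH^2=2\langle w^\prime,w\rangle$ to be valid, and even the scalar $b(w,v,w)$ need not be defined. All three defects disappear once one knows that a very weak solution in fact belongs to $L^2_{\mathrm{loc}}((a,b);\rV)$ with $u^\prime\in L^2_{\mathrm{loc}}((a,b);\rV^\prime)$, i.e. that it is a genuine weak solution in the sense of Section~\ref{sec-skeleton}; for such functions the chain rule is Lemma III.1.2 of \cite{Temam_2001}, the integral $\int_{t_0}^{t_1}|v|_\rV^2\,ds$ is finite, and the nonlinear term is meaningful. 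Establishing this instantaneous gain of $\rV$-regularity from $\rH$-data is a purely two-dimensional phenomenon and is exactly where the adaptation of the results of \cite{Lions+M_2001} enters; concretely I would realise it by regularising the subtracted identity in time (convolution in $t$) so as to obtain legitimate test functions, substituting them, and passing to the limit, the control of the quadratic term in the limit being the delicate step that needs precisely the $L^2_{\mathrm{loc}}(\rV)$ bound. An equivalent and perhaps cleaner route is weak--strong uniqueness: compare each of $u,v$ with the unique solution issued from the datum $u(t_0)\in\rH$ furnished by Temam's theory, which by Corollary~\ref{cor-u-nfty-V} and Proposition~\ref{prop-NSE-fractional} is smooth for $t>t_0$, and run the same estimate with this regular solution as multiplier.
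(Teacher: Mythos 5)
You have correctly located the real difficulty, but your two proposed ways of resolving it do not close the gap, and it is worth being precise about why. First, note that the paper itself does not prove Proposition \ref{prop-very weak}: it is stated as an ``obvious adaptation'' of \cite[Theorem 1.2]{Lions+M_2001} (this identification is made explicitly in the proof of Lemma \ref{lem-clar}), i.e.\ of the Lions--Masmoudi uniqueness theorem for mild solutions in $C([0,T];L^N(\mathbb{R}^N)^N)$, which in dimension $N=2$ is exactly the class $C((a,b);\rH)$. That theorem is not an energy argument; it is a fixed-point/bilinear-estimate argument in critical spaces (in the line of Meyer and \cite{Furioli+LR+T_2000}), developed precisely because the energy method fails at the critical regularity $L^\infty_t L^2_x$ in $2$D.

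Your first route --- mollify in time to make $w$ an admissible test function and pass to the limit ``using the $L^2_{\mathrm{loc}}(\rV)$ bound'' --- is circular: for a very weak solution one has no $L^2_{\mathrm{loc}}(\rV)$ bound, and obtaining one is equivalent to showing that $u$ coincides with the Leray--Hopf solution, which is the conclusion you are after. Time mollification produces admissible test functions for the \emph{linear} part of \eqref{eqn-very weak}, but the term $\int b(u,u,\phi)\,ds$ with $\phi$ a mollification of $w=u-v$ cannot be controlled by $|w|_{C_t\rH}$ alone. Your second route --- weak--strong uniqueness against the smooth solution $\tilde u$ emanating from $u(t_0)\in\rH$ --- runs into the endpoint obstruction: without dissipation for $w$ (again because $u\notin L^2_t\rV$ a priori) the only usable bound is $|b(w,\tilde u,w)|\le |w|_\rH^2\,\|\nabla\tilde u\|_{L^\infty}$, and the smoothing estimates of \cite[Theorem III.3.10]{Temam_2001} give $\|\nabla\tilde u(s)\|_{L^\infty}$ blowing up like $(s-t_0)^{-1}$ as $s\downarrow t_0$, so the Gronwall factor $\exp\bigl(\int_{t_0}^{t}\|\nabla\tilde u(s)\|_{L^\infty}\,ds\bigr)$ is not finite. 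This is exactly the borderline case that Serrin-type weak--strong uniqueness does not cover and that \cite{Lions+M_2001} was written to handle. So the honest statement is: either you cite Lions--Masmoudi (as the paper does), or you must reproduce their critical-space argument; the energy computation you wrote is the correct \emph{a posteriori} verification once both solutions are known to lie in $L^2_{\mathrm{loc}}((a,b);\rV)$, but it cannot be bootstrapped from $C((a,b);\rH)$ by the means you describe.
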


\medskip

In the whole paper we will assume, without any loss of generality, that $\nu=1$.

\medskip

\begin{definition}
\label{def-action}
Assume that $-\infty \leq a < b\leq \infty$. Given a  function $u\in C((a,b);\rH)$ we say that
\[u^\prime+ \rA u +\rB(u,u)\ \in\, L^2(a,b;\rH),\ \ \  (\text{resp.}\  \in\, L^2_{\textrm{loc}}((a,b);\rH))\]
 if there exists $f\in L^2(a,b;\rH)$, (resp. $f \in L^2_{\textrm{loc}}((a,b);\rH)$) such that $u$ is a very weak solution of the Navier-Stokes equations \eqref{eqn_NSE01} on the interval $(a,b)$.

Clearly, the corresponding function $f$ is unique and we will denote it by $\mathcal{H}(u)$, i.e.
\begin{eqnarray}
\label{eqn-A3}
[\cH(u)]&:=&\dela{\big[G(u)\big]^{-1}}u^\prime+\rA u+\rB(u,u).
\end{eqnarray}
\end{definition}

An obvious sufficient condition for the finiteness of  the norm of $\mathcal{H}(u)$ in $L^2(t_0,t_1;H)$ is that $u^\prime$, $Au$  and $B(u,u)$ all belong to $L^2(t_0,t_1;\rH)$. The next result shows that this is not so far from a necessary condition. This is the reason why we have decided present  the following  result based  \cite{Lions+M_2001}, see also  \cite{Furioli+LR+T_2000}.

\begin{lemma}\label{lem-clar}
Suppose that $T>0$ and $u\in C([0,T];\rH)$ is such that
$$u^\prime+\rA u+\rB(u,u)\in L^2(0,T;\rH).$$
Then $u(T)\in \rV$,  $u\in W^{1,2}(t_1,T)$, for any $t_1\in (0,T)$ and
\begin{equation*}
\label{brc82+}
\sup_{t \in (0,T]} | \sqrt{t}\,u (t) |_V^2+ \int_0^T \vert \sqrt{t}\,\rA u (t) \vert_{\rH}^2\, dt <\infty.
\end{equation*}
 Moreover, if $u(0)\in \rV$, then $u\in W^{1,2}(0,T)$.
\end{lemma}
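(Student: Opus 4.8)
The plan is to identify $u$ with the \emph{strong} solution of problem \eqref{eqn_NSE01} driven by the forcing $f:=\cH(u)$, and then to read off for $u$ the regularity estimates already recorded for strong solutions in Section \ref{sec-skeleton}.

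First I would set $f:=u^\prime+\rA u+\rB(u,u)$, which by hypothesis and Definition \ref{def-action} is a well defined element of $L^2(0,T;\rH)$, and consider the Cauchy problem \eqref{eqn_NSE01} with this $f$ and with initial datum $u_0:=u(0)\in\rH$. Since $f\in L^2(0,T;\rH)\subset L^2(0,T;\rV^\prime)$, the existence--uniqueness theory recalled after \eqref{eqn_NSE01} provides a unique strong solution $v\in L^2(0,T;\rV)\cap C([0,T];\rH)$ with $v^\prime\in L^2(0,T;\rV^\prime)$. Because here $f\in L^2(0,T;\rH)$ and $T<\infty$, estimate \eqref{brc82} applies to $v$ and gives
\[
\sup_{t \in (0,T]} | \sqrt{t}\,v (t) |_V^2+ \int_0^T \vert \sqrt{t}\,\rA v (t) \vert_{\rH}^2\, dt <\infty;
\]
in particular $v(T)\in\rV$, and $v\in L^2(t_1,T;D(\rA))$ with $v^\prime\in L^2(t_1,T;\rH)$ on every subinterval $[t_1,T]$, $t_1\in(0,T)$, by the interior regularity that is part of the same theorem.

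The crucial step, which I expect to be the main obstacle, is the identification $u=v$. On the one hand, $v$ is a very weak solution of \eqref{eqn_NSE01} with forcing $f$ in the sense of Definition \ref{def-very weak}: since $v\in L^2(0,T;\rV)$ has genuine spatial derivatives, pairing $v^\prime+\rA v+\rB(v,v)=f$ with a smooth divergence-free test function $\phi$ and integrating by parts in space and in time produces exactly the integral identity \eqref{eqn-very weak}. On the other hand, $u$ is a very weak solution with the same forcing $f$, by the very definition of $\cH(u)$. Both $u$ and $v$ belong to $C([0,T];\rH)$ and coincide at the initial time, $u(0)=v(0)=u_0$, so the uniqueness statement of Proposition \ref{prop-very weak} yields $u=v$ on $[0,T]$; that the agreement is attained at the endpoint $t=0$ rather than at an interior instant is harmless, since both functions are continuous up to $0$ and one may propagate the agreement forward from times tending to $0$. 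The difficulty is concentrated precisely here: because a priori $u$ is only known to lie in $C([0,T];\rH)$ and not in $L^2(0,T;\rV)$, no elementary Gronwall estimate on $|u-v|_\rH^2$ is available, and one must genuinely invoke the uniqueness theorem for very weak solutions adapted from \cite{Lions+M_2001}.

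Once $u=v$ is established, all conclusions follow by transporting the regularity of $v$. The displayed inequality of the lemma and the membership $u(T)\in\rV$ are immediate from \eqref{brc82}. For each $t_1\in(0,T)$ the interior regularity of $v$ gives $u\in L^2(t_1,T;D(\rA))$ and $u^\prime\in L^2(t_1,T;\rH)$, hence $u\in W^{1,2}(t_1,T)$; alternatively $u^\prime=f-\rA u-\rB(u,u)\in L^2(t_1,T;\rH)$ because $f$ and $\rA u$ lie there and because \eqref{ineq-B01}, together with the boundedness of $|u(t)|_\rH$ and $|u(t)|_V$ on $[t_1,T]$, controls $\rB(u,u)$ in $L^2(t_1,T;\rH)$. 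Finally, if $u(0)\in\rV$ then $u_0\in\rV$, and the sharper estimates \eqref{ineq-L^infty(0,T,V)}--\eqref{ineq-L^2(0,T,H^2)} give $v\in L^2(0,T;D(\rA))\cap C([0,T];\rV)$ with $v^\prime\in L^2(0,T;\rH)$, that is $u\in W^{1,2}(0,T)$.
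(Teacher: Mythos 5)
Your proposal is correct and follows essentially the same route as the paper: define $f=\mathcal{H}(u)\in L^2(0,T;\rH)$, solve \eqref{eqn_NSE01} with datum $u(0)$ to obtain a strong solution $v$ satisfying \eqref{brc82}, identify $u=v$ via the uniqueness of very weak (mild) solutions from Proposition \ref{prop-very weak}, and then transport the regularity of $v$ back to $u$. The paper handles the interior regularity by picking $t_0\in(0,t_1)$ with $u(t_0)\in\rV$ and reapplying Temam's theorem on $[t_0,T]$, which is equivalent to your reading of \eqref{brc82} together with the bound \eqref{ineq-B01} on $\rB(u,u)$.
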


\begin{proof} Let us fix $T>0$ and $u$ as in the assumptions of the Lemma and let  us denote $f=u^\prime+\rA u+\rB(u,u)$. By assumptions we infer that $f\in L^2(0,T;\rH)$. Since $u(0)\in \rH$, by  \cite[Theorem III.3.10]{Temam_2001}, there exists a unique solution $v$ to problem \eqref{eqn_NSE01} which satisfies inequality \eqref{brc82}. Then by  $v$ is also a mild solution to \eqref{eqn_NSE01} and since by assumptions and \cite[Proposition 2.5]{Lions+M_2001} $u\in C([0,T];\rH)$ is also a mild solution to \eqref{eqn_NSE01}, by Proposition \ref{prop-very weak} (i.e. \cite[Theorem 1.2]{Lions+M_2001}) we infer that $u=v$. Hence $u\in L^2(0,T;\rV)$, $u^\prime\in L^2(0,T;\rV^\prime)$ and $u$ satisfies \eqref{brc82}. In particular,   for every $t_1\in (0,T)$  we can find $t_0\in (0,t_1)$ such that $u(t_0)\in \rV$ and therefore
by  \cite[Theorem III.3.10]{Temam_2001},
$u\in W^{1,2}(t_0,T)$. In particular,  $u \in C([t_0,T];\rV)$ and   hence    $u(T)\in \rV$.\\
If the additional assumption that  $u(0)\in \rV$ is satisfied, then  by what we have just seen (\cite[Theorem III.3.10]{Temam_2001}) or by the maximal regularity and  the uniqueness of solutions to 2D NSEs), we can conclude that $u\in W^{1,2}(0,T)$.\\

\end{proof}

\begin{rem}\delh{ {\em It should be pointed out that even in the linear case, i.e. when $B=0$, the converse to the last statement of the above Lemma \ref{lem-clar} is not true. Moreover, \delg{even in the linear case,} a stronger version of the first statement, i.e. that $u\in W^{1,2}(0,T)$  is not true {in general}. Indeed, if $u\in W^{1,2}(0,T)$ then $u\in C([0,T];\rV)$. See however Proposition \ref{prop-infty} where a sort of the converse holds true  but on an unbounded interval $(-\infty,0]$.}}
{\em It should be pointed out that we cannot claim that  $u(0)\in \rV$. Indeed, if we take  a solution $u$ of the problem \eqref{eqn_NSE01} with data $u_0\in \rH\setminus\rV$ and $f=0$, then $u$ satisfies the Assumptions of Lemma \ref{lem-clar} but nethervelles $u(0)\notin \rV$. See however Proposition \ref{prop-infty} for a positive result on an unbounded interval $(-\infty,0]$.
}
\end{rem}

A {result} analogous of Lemma \ref{lem-clar} holds in domains of fractional powers of $\rA$.
\begin{lemma}\label{lem-clar-2}
Assume that  \dela{$\delta \in (0,1]$ and} $\alpha\in [0,1/2)$ and suppose that  $u\in C([0,T];\rH)$, for some $T>0$, is such that
$$u^\prime+\rA u+\rB(u,u)\in L^2(0,T;D(\rA^{\frac{\alpha}2})).$$
Then $u(T)\in D(\rA^{\frac{\alpha+1}2})$ and $ u\in W^{1,2}\big(t_0,T;D(\rA^{\frac{\alpha}2+1}),D(\rA^{\frac{\alpha}2})\big)$,
for any $t_0\in (0,T)$.
Moreover, if $u(0)\in D(\rA^{\frac{\alpha+1}2})$, then $u\in W^{1,2}\big(0,T;D(\rA^{\frac{\alpha}2+1}),D(\rA^{\frac{\alpha}2})\big)$.
\end{lemma}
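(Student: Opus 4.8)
The plan is to mimic the proof of Lemma \ref{lem-clar}: first identify $u$ with a genuine strong solution of the Navier--Stokes equations, and then bootstrap its regularity using Proposition \ref{prop-NSE-fractional} together with the weighted estimate of Corollary \ref{cor-improvement}. Set $f:=u^\prime+\rA u+\rB(u,u)$, which by hypothesis belongs to $L^2(0,T;D(\rA^{\frac{\alpha}2}))$. If $\alpha=0$ the statement is exactly Lemma \ref{lem-clar}, so from now on I assume $\alpha\in(0,\frac12)$.

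Since $D(\rA^{\frac{\alpha}2})\embed \rH$, we have $f\in L^2(0,T;\rH)$, so $u$ meets the hypotheses of Lemma \ref{lem-clar}. That lemma identifies $u$ with the unique solution of \eqref{eqn_NSE01} with datum $u(0)\in\rH$ and forcing $f$, shows that $u$ obeys \eqref{brc82}, and gives $u\in W^{1,2}(t_1,T)$ for every $t_1\in(0,T)$; in particular $u\in C((0,T];\rV)$, so $u(t)\in\rV$ for every $t\in(0,T]$.

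Now fix $t_0\in(0,T)$ and choose any $t^\prime\in(0,t_0)$. Then $u(t^\prime)\in\rV$ and $f_{|[t^\prime,T]}\in L^2(t^\prime,T;D(\rA^{\frac{\alpha}2}))$, so Corollary \ref{cor-improvement}, applied on $[t^\prime,T]$ with any fixed $\beta\in(\alpha,1)$, yields $\int_{t^\prime}^T (s-t^\prime)^\beta|\rA^{\frac{\alpha}2+1}u(s)|_\rH^2\,ds<\infty$. Hence $u(s)\in D(\rA^{\frac{\alpha}2+1})\subset D(\rA^{\frac{\alpha+1}2})$ for almost every $s\in(t^\prime,T)$, and I may select an interior time $t_\ast\in(t^\prime,t_0)$ with $u(t_\ast)\in D(\rA^{\frac{\alpha+1}2})$. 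Applying Proposition \ref{prop-NSE-fractional} on $[t_\ast,T]$, with datum $u(t_\ast)\in D(\rA^{\frac{\alpha+1}2})$ and forcing $f\in L^2(t_\ast,T;D(\rA^{\frac{\alpha}2}))$, produces a solution in $W^{1,2}(t_\ast,T;D(\rA^{\frac{\alpha}2+1}),D(\rA^{\frac{\alpha}2}))\cap C([t_\ast,T];D(\rA^{\frac{\alpha+1}2}))$ which, by the uniqueness of very weak solutions (Proposition \ref{prop-very weak}), coincides with $u$ on $[t_\ast,T]$. This already yields $u(T)\in D(\rA^{\frac{\alpha+1}2})$, and, after restriction to $[t_0,T]\subset[t_\ast,T]$, gives $u\in W^{1,2}(t_0,T;D(\rA^{\frac{\alpha}2+1}),D(\rA^{\frac{\alpha}2}))$. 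For the final assertion, if $u(0)\in D(\rA^{\frac{\alpha+1}2})$ one applies Proposition \ref{prop-NSE-fractional} directly on $[0,T]$ and concludes, once more through uniqueness, that $u\in W^{1,2}(0,T;D(\rA^{\frac{\alpha}2+1}),D(\rA^{\frac{\alpha}2}))$.

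The only genuinely delicate steps are these two identifications by uniqueness and the extraction of the interior regular time $t_\ast$; the latter is exactly what Corollary \ref{cor-improvement} is designed to provide, and the former rests on Proposition \ref{prop-very weak}. Everything else reduces to restriction of time intervals and the embedding chain $D(\rA^{\frac{\alpha}2+1})\subset D(\rA^{\frac{\alpha+1}2})\subset\rV$, valid because $\frac{\alpha}2+1\ge\frac{\alpha+1}2\ge\frac12$ for $\alpha\in[0,\frac12)$.
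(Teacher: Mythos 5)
Your proof is correct and follows essentially the same route as the paper's: Lemma \ref{lem-clar} to obtain interior $W^{1,2}$ regularity, selection of an interior time at which the solution lies in $D(\rA^{\frac{\alpha+1}2})$, then Proposition \ref{prop-NSE-fractional} combined with the uniqueness of very weak solutions (Proposition \ref{prop-very weak}) to identify and bootstrap. The only deviation is that you invoke Corollary \ref{cor-improvement} to produce the interior regular time $t_\ast$, whereas the paper observes directly that $u\in W^{1,2}(t_1,T)$ already yields some $t_2$ with $u(t_2)\in D(\rA)\subset D(\rA^{\frac{\alpha+1}2})$ (the inclusion holding because $\alpha<\tfrac12$), so your detour through the weighted estimate, while valid, is not needed.
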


\begin{proof}
Denote $f=u^\prime+\rA u+\rB(u,u)$ and let us fix $t_0\in (0,T)$ and some $t_1\in (0,t_0)$.  By Lemma \ref{lem-clar} we infer that $u\in W^{1,2}(t_1,T)$. In particular, there exists $t_2\in (t_1,t_0)$ such that $u(t_2)\in D(\rA) \subset D(\rA^{\frac{\alpha+1}2})$. The last embedding holds since  by assumptions $\alpha <\frac12$. Since by  our assumption, $f\in L^2(0,T;D(\rA^{\frac{\alpha}2}))$, in view of  Proposition \ref{prop-NSE-fractional}
and   Proposition \ref{prop-very weak}, we infer that $u\in W^{1,2}\big(t_2,T;D(\rA^{\frac{\alpha}2+1}),D(\rA^{\frac{\alpha}2})\big)$. This implies that $u \in C([t_2,T];D(\rA^{\frac{\alpha+1}{2}}))$  and in particular that  $u(T)\in D(\rA^{\frac{\alpha+1}2})$ and
$ u\in W^{1,2}\big(t_0,T;D(\rA^{\frac{\alpha}2+1}),D(\rA^{\frac{\alpha}2})\big)$ as in our first claim. The second claim follows from our last argument. \end{proof}

\medskip

In what follows, for any $r>0$ and  $\gamma\geq 0$ we shall denote {by $B_\gamma(r)$ the closed ball in $D(\rA^{\frac\gamma 2})$ of radius $r$ and centered at the origin, i.e. }
\[{B_\gamma(r):=\left\{ x \in\,D(\rA^{\frac\gamma 2})\ :\ |x|_{D(\rA^{\frac\gamma 2})}\leq r\right\}}.\]
Moreover, for any $\phi \in\,\rH$ and $s\in \mathbb{R}$,  we shall denote
 by {$u_\phi(t;s)$, $t\geq s$ (simply  $u_\phi(t)$, $t\geq 0$, when $s=0$)}  the solution of  problem {\eqref{eqn_NSE01} with the external force $f$ equal to $0$, i.e. }
\begin{equation}
\label{brc80}
\left\{\begin{array}{l}
u^\prime(t)+\rA u(t)+\rB(u(t),u(t))=0,\ \ \ t>s,\\
\vspace{.1cm}\\
u(s)=\phi.
\end{array}\right.
\end{equation}

Moreover, for  ${\phi} \in\,\rH$, $r>0$ and ${\gamma}\geq 0$ we shall denote
\[t_\phi^{r,\gamma}:=\inf\left\{t\geq 0\ :\ {u_\phi(t)} \in\,B_{\gamma}(r)\right\}.\]

\begin{proposition}
\label{propbrc3}
For any $c_1, c_2>0$ and $\sigma\in [0,\frac32)$, there exists $T=T({\sigma}, c_1, c_2)>0$ such that
for every $\phi\in \rH$ such $\vert \phi\vert_{\rH}\leq c_1$, one has
\begin{equation}
\label{eqn-absorption}
\vert A^{\frac\sigma{2}}u_\phi(t)\vert_{\rH}\leq c_2, \mbox{ for all } t\geq T.
\end{equation}
\end{proposition}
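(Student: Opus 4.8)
The plan is to prove this absorbing-set estimate by first reducing to the most demanding regularity, then bootstrapping the regularity of $u_\phi$ from $\rH$ up to $D(\rA^{\sigma/2})$ via the parabolic smoothing already established, and finally exploiting the exponential decay of the higher-order norms. All computations are with $f=0$, so the solution $u_\phi$ is the one from \eqref{brc80}.

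First I would reduce to the case $\sigma\in(1,\tfrac32)$. Since $\rA$ is self-adjoint with spectrum contained in $[\lambda_1,\infty)$, for $0\le\sigma\le\sigma'$ one has $|\rA^{\sigma/2}x|_\rH\le\lambda_1^{(\sigma-\sigma')/2}|\rA^{\sigma'/2}x|_\rH$. Hence, given a target $\sigma\in[0,\tfrac32)$ and $c_2>0$, it suffices to pick any $\sigma'\in(\max(\sigma,1),\tfrac32)$ and prove \eqref{eqn-absorption} for $\sigma'$ with threshold $c_2':=\lambda_1^{(\sigma'-\sigma)/2}c_2$; the estimate for $\sigma$ then follows from the displayed embedding. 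So from now on I assume $1<\sigma<\tfrac32$ and set $\alpha:=\sigma-1\in(0,\tfrac12)$, so that $\tfrac{\alpha+1}{2}=\tfrac\sigma2$.

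Now comes the bootstrap. (i) Testing \eqref{brc80} with $u_\phi$, using $\lb\rB(u,u),u\rb=0$ from \eqref{eqn-B02} and the Poincar\'e inequality \eqref{ineq-Poincare-A}, gives $|u_\phi(t)|_\rH^2\le e^{-2\lambda_1 t}c_1^2$ for all $t\ge0$; in particular $|u_\phi(1)|_\rH\le c_1$. (ii) Applying \eqref{brc82} on $[0,1]$ with $f=0$ yields $|u_\phi(1)|_\rV^2\le c_1^2 e^{c\,c_1^4}=:R_1^2$, depending only on $c_1$. (iii) Restarting at time $1$ and applying Corollary \ref{cor-improvement} on $[1,2]$ with data $u_\phi(1)\in\rV$, the present $\alpha$, and any fixed $\beta\in(\alpha,1)$, I evaluate the resulting bound at $s=1$: with $f=0$ the quantities $K_0,K_1$ depend only on $|u_\phi(1)|_\rV\le R_1$, so $|\rA^{\sigma/2}u_\phi(2)|_\rH\le R_2(c_1)$ with $R_2$ depending only on $c_1$. (iv) Since $\sigma/2>\tfrac12$ gives $D(\rA^{\sigma/2})\embed\rV$ with $|u_\phi(2)|_\rV\le\lambda_1^{-\alpha/2}|\rA^{\sigma/2}u_\phi(2)|_\rH\le\lambda_1^{-\alpha/2}R_2$, I restart at time $2$ and apply the decay estimate \eqref{ineq-aux-04'} of Proposition \ref{prop-NSE-fractional} (legitimate because $u_\phi(2)\in D(\rA^{(\alpha+1)/2})$):
\[|\rA^{\sigma/2}u_\phi(t)|_\rH^2\le e^{-\lambda_1(t-2)}\,e^{C^2 K_3(\lambda_1^{-\alpha/2}R_2,0)}\,R_2^2=:e^{-\lambda_1(t-2)}M(c_1),\qquad t\ge 2.\]

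Since the right-hand side depends only on $c_1$ and tends to $0$ as $t\to\infty$, it suffices to take $T=T(\sigma,c_1,c_2):=2+\tfrac1{\lambda_1}\max\{0,\log(M(c_1)/c_2^2)\}$, which guarantees $|\rA^{\sigma/2}u_\phi(t)|_\rH\le c_2$ for all $t\ge T$, uniformly over $|\phi|_\rH\le c_1$. The main obstacle is not any single estimate but the internal consistency of the bootstrap: because $\phi$ lies only in $\rH$, the norms $|u_\phi(t)|_\rV$ and $|\rA^{\sigma/2}u_\phi(t)|_\rH$ blow up as $t\todown0$, so the fractional-order results cannot be applied directly on $[0,\infty)$. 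The crux is to absorb this initial singularity through the parabolic smoothing encoded in the weights $\sqrt t$ in \eqref{brc82} and $s^\beta$ in Corollary \ref{cor-improvement}, which allow me to trade an arbitrarily rough $\rH$-datum for a bounded $D(\rA^{\sigma/2})$-datum after a fixed waiting time, while keeping every constant dependent on $c_1$ alone.
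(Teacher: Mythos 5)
Your proof is correct and follows essentially the same route as the paper's: smoothing from $\rH$ to $\rV$ on $[0,1]$ via \eqref{brc82}, further smoothing from $\rV$ to $D(\rA^{\sigma/2})$ on $[1,2]$ via Corollary \ref{cor-improvement}, and then exponential decay of the $D(\rA^{\sigma/2})$-norm via \eqref{ineq-aux-04'}, with all constants controlled by $c_1$ alone. The only cosmetic difference is that you reduce the case $\sigma\le 1$ to the fractional case through the spectral inequality $\vert \rA^{\sigma/2}x\vert_\rH\le\lambda_1^{(\sigma-\sigma')/2}\vert \rA^{\sigma'/2}x\vert_\rH$, whereas the paper disposes of $\sigma\le 1$ directly using the $\rV$-norm decay \eqref{ineq-u-V4}; both are fine.
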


\begin{proof} By inequality \eqref{brc82}, for any $\phi \in\,\rH$
\begin{equation}
\label{eqn-absorption+01}
| u_\phi (1) |_V^2 \leq \vert \phi \vert_{\rH}^2 e^{\,C \vert \phi\vert_{\rH}^4}.
\end{equation}
Then, by  inequality \eqref{ineq-u-V4} in Corollary \ref{cor-u-nfty-V}, we infer
\[ | u_\phi(t) |_V^2 \leq   e^{54 \vert \phi\vert_{\rH}^4 }e^{-\lambda_1 (t-1)} \, | u_\phi(1)|_V^2 ,\;\;  t \geq 1.
\]
Combining these two we get
\[ | u_\phi(t) |_V^2 \leq   e^{(54+C) \vert \phi\vert_{\rH}^4 + \lambda_1}e^{-\lambda_1 t} \vert \phi \vert_{\rH}^2  ,\;\;  t \geq 1,
\]
and \eqref{eqn-absorption} follows for $\sigma \leq 1$.

Consider now the case $\sigma \in (1,\frac32)$.
Let us fix constants $c_1, c_2>0$ and an initial data $\phi \in \rH$ such that $\vert \phi\vert_{\rH}\leq c_1$.  We will be applying the previous step  with $\alpha=\sigma-1\in (0,\frac12)$. Choose an auxiliary $\beta\in (\sigma-1,1)$. Then by inequality \eqref{ineq-aux-09} in Corollary \ref{cor-improvement},   we get
\begin{eqnarray*}
 \vert \rA^{\frac{\alpha+1}2} u_\phi(2)\vert^2 &\leq&  e^{C^2K_1(2,0,u_\phi(1))}C   ( K_0(2,0,u_\phi(1))+ K_1(2,0,u_\phi(1))).
 \end{eqnarray*}
 Therefore, recalling how $K_0$ and $K_1$ were defined, due to  \eqref{eqn-absorption+01}
 \[|\rA^{\frac{1+\alpha}2}u_\phi(2)|^2_{\rH}\leq K_4(|\phi|_{\rH}),\]
 for some continuous, increasing function $K_4$. According to \eqref{ineq-aux-04} this allows to conclude.
\end{proof}

\section{Some basic facts on relaxation and $\Gamma$-convergence}
\label{sec-gamma}

Let us assume that $X$ is  a topological space satisfying the first axiom of countability, i.e. every point in $X$   has a countable local base.
For any $x\in\,X$, we shall denote by $\mathcal N(x)$ the set of all open neighborhoods of $x$ in $X$.

\begin{definition}
 \label{def1}
Let $F:X\to \overline{\mathbb{R}}$ be a  function.
\begin{enumerate}
 \item The function  $F$ is called  {\em lower semi-continuous} if  for any $t \in\,\mathbb{R}$, the  inverse image set $F^{-1}((-\infty])=\{x\in X: F(x)\leq t\}$ is closed in $X$,

\item  The function  $F$ is called {\em coercive} if   for any $t \in\,\mathbb{R}$, the closure of the level set $\{x\in X: F(x)\leq t\}$ is countably compact, i.e. every countable open cover has a finite subcover.
\end{enumerate}

\end{definition}
Now, let $\{F_n\}_{\in\,\mathbb{N}}$ be a sequence of functions all defined on $X$ with values in $\overline{\mathbb{R}}$.

\begin{definition}
 \label{def2}
The sequence of functions $\{F_n\}_{\in\,\mathbb{N}}$ is called {\em equi-coercive} if for any $t \in\,\mathbb{R}$ there exists a closed countably compact set $K_t\subset X$ such that
\[ \bigcup_{n \in\,\mathbb{N}}\{x\in X:F_n(x)\leq t\}\subset K_t.\]
\end{definition}
Let us note that if $Y$ is a closed subspace of $X$, then the restrictions to $Y$ of  lower semi-continuous, coercive and equi-coercive functions, remain such on $Y$.

As proved in \cite[Proposition 7.7]{DalMaso_1993}, the following characterization of equi-coercive sequences holds.
\begin{proposition}
 \label{prop1}
The sequence $\{F_n\}_{\in\,\mathbb{N}}$ is equi-coercive  if and only if there exists a lower semi-continuous coercive function $\Psi:X\to \overline{\mathbb{R}}$ such that
\[F_n(x)\geq \Psi(x),\ \ \ \ x \in\,X,\ \ \ n \in\,\mathbb{N}.\]
\end{proposition}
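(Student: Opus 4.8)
The plan is to prove the two implications separately. The sufficiency is immediate from the definitions, while the necessity rests on producing an explicit lower semi-continuous minorant through relaxation, so the real work lies there.

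For the direction ($\Leftarrow$), suppose such a $\Psi$ exists. Fix $t\in\mathbb{R}$. Since $F_n\geq\Psi$ pointwise, each sublevel set satisfies $\{x:F_n(x)\leq t\}\subseteq\{x:\Psi(x)\leq t\}=:K_t$, hence $\bigcup_{n\in\mathbb{N}}\{F_n\leq t\}\subseteq K_t$. Because $\Psi$ is lower semi-continuous, $K_t$ is closed, and because $\Psi$ is coercive, $\overline{K_t}=K_t$ is countably compact. Thus $K_t$ is a closed countably compact set containing the union of the sublevel sets, which is precisely equi-coercivity.

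For the direction ($\Rightarrow$), I would set $G:=\inf_{n\in\mathbb{N}}F_n$ and take $\Psi:=\mathrm{sc}^- G$, the lower semi-continuous envelope of $G$, i.e. the largest lower semi-continuous function below $G$, given concretely by
\[
\Psi(x)=\sup_{U\in\mathcal{N}(x)}\,\inf_{y\in U}G(y).
\]
By construction $\Psi$ is lower semi-continuous and $\Psi\leq G\leq F_n$ for every $n$, so the desired inequality holds; it remains to verify that $\Psi$ is coercive. Fix $t\in\mathbb{R}$ and choose any $t'>t$. Given $x$ with $\Psi(x)\leq t$, the first-countability of $X$ provides a decreasing countable neighborhood base $(U_k)$ at $x$, along which $\lim_k\inf_{U_k}G=\Psi(x)\leq t<t'$; hence for each $k$ there is $y_k\in U_k$ with $G(y_k)<t'$, yielding a sequence $y_k\to x$ contained in $\{G<t'\}$. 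This shows $\{\Psi\leq t\}\subseteq\overline{\{G<t'\}}$. On the other hand, $\{G<t'\}=\bigcup_{n}\{F_n<t'\}\subseteq\bigcup_{n}\{F_n\leq t'\}\subseteq K_{t'}$, where $K_{t'}$ is the closed countably compact set furnished by equi-coercivity; since $K_{t'}$ is closed, $\overline{\{G<t'\}}\subseteq K_{t'}$ and therefore $\{\Psi\leq t\}\subseteq K_{t'}$. As a closed subset of a countably compact set is countably compact, $\{\Psi\leq t\}$ is countably compact, so $\Psi$ is coercive, completing the construction.

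The main obstacle is the verification of coercivity of $\Psi$: one must relate the \emph{closed} sublevel sets of the relaxed function $\Psi$ to the \emph{open} sublevel sets of $G$, and thence to the equi-coercivity sets $K_{t'}$. This is exactly where first-countability is essential, since it lets one replace the neighborhood-infimum in the definition of $\mathrm{sc}^- G$ by an honest convergent sequence, and where one invokes the elementary fact that a closed subset of a countably compact set is itself countably compact (if $\{U_i\}$ is a countable open cover of the closed set $C\subseteq K$, then $\{U_i\}\cup\{X\setminus C\}$ covers $K$ and admits a finite subcover, whose $U_i$-part covers $C$).
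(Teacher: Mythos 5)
Your proof is correct. The paper does not prove this proposition itself --- it simply cites \cite[Proposition 7.7]{DalMaso_1993} --- and your argument (taking $\Psi=\mathrm{sc}^-(\inf_n F_n)$, showing $\{\Psi\leq t\}\subseteq\overline{\{\inf_n F_n<t'\}}\subseteq K_{t'}$ via first countability, and using that a closed subset of a countably compact set is countably compact) is essentially the standard proof given there.
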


Now, we introduce the notion of relaxation of a function $F$.

\begin{definition}
{\em The {\em lower semi-continuous envelope}, or the {\em relaxed
function}, of a function  $F:X\to \bar{\mathbb R}$ is defined by
\[(sc^-F)(x)=\sup\{G(x): G \in\,\mathcal G(F)\} ,\ \ \ \ x \in\,X,\]
where $\mathcal G(F)$ is the set of all lower semi-continuous
functions $G:X \to \bar{\mathbb R}$ such that  $G\leq F$.}
\end{definition}

From the definition, one has immediately that $sc^-F$ is lower
semi-continuous, $sc^-F\leq F$ and $sc^-F\geq G$, for any $G
\in\,\mathcal G(F)$, so that $sc^-F$ can be regarded as the
greatest lower semi-continuous function majorized by $F$.
Moreover, it is possible to prove that
\[(sc^-F)(x)=\sup_{U \in\,\mathcal N(x)}\,\inf_{y \in\,U}F(y),\ \ \ \ x \in\,X,\]
(see \cite[Proposition 3.3]{DalMaso_1993}).

The following result, whose proof can be found  in
\cite[Proposition 3.6]{DalMaso_1993}, provides a possible
characterization of $sc^- F$ which we will use later on in the
paper.

\begin{proposition}
\label{charac}
For any function  $F:X\to \bar{\mathbb R}$, its lower semi-continuous function $sc^-F$ is characterized by the following properties:
\begin{enumerate}
\item for any $x \in\,X$ and any sequence $\{x_n\}_{n \in\,\mathbb{N}}$ convergent to $x$
in $X$, it holds
\[(sc^-F)(x)\leq \liminf_{n\to \infty}F(x_n);\]
\item  for any $x \in\,X$ there exists a sequence $\{x_n\}_{n \in\,\mathbb{N}}$ convergent to $x$
in $X$ such that
\[(sc^-F)(x)\geq \limsup_{n\to \infty}F(x_n).\]
\end{enumerate}

\end{proposition}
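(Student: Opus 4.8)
The plan is to work directly from the representation
\[(sc^-F)(x)=\sup_{U \in\,\mathcal N(x)}\,\inf_{y \in\,U}F(y),\]
recorded just above (from \cite[Proposition 3.3]{DalMaso_1993}), proving the two listed properties in turn and then deducing that together they pin down $sc^-F$. Throughout I write $G:=sc^-F$.

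First I would verify property (1), which is purely formal. Fix $x\in X$ and a sequence $x_n\to x$. For an arbitrary $U\in\mathcal N(x)$, convergence supplies an index $N$ with $x_n\in U$ for all $n\ge N$, whence $F(x_n)\ge\inf_{y\in U}F(y)$ for $n\ge N$ and therefore $\liminf_{n\to\infty} F(x_n)\ge\inf_{y\in U}F(y)$. Taking the supremum over $U\in\mathcal N(x)$ gives $\liminf_{n\to\infty} F(x_n)\ge G(x)$, which is exactly (1). This step uses only the definition of convergence and the supremum representation.

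The substantive step is the construction of a recovery sequence for property (2); this is the only place where the first axiom of countability is genuinely used. Starting from a countable local base at $x$ and replacing it by the nested family $V_n:=U_1\cap\cdots\cap U_n$, I obtain a \emph{decreasing} local base $\{V_n\}$. Since every neighborhood of $x$ contains some $V_n$, the numbers $a_n:=\inf_{y\in V_n}F(y)$ form a non-decreasing sequence with $\lim_{n\to\infty} a_n=\sup_n a_n=G(x)$. I would then select $x_n\in V_n$ realizing the infimum up to a controlled error: if $a_n>-\infty$ choose $x_n$ with $F(x_n)\le a_n+\tfrac1n$, and if $a_n=-\infty$ choose $x_n$ with $F(x_n)\le -n$; in the trivial case $G(x)=+\infty$ the constant sequence $x_n=x$ already works. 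Because $\{V_n\}$ is a decreasing local base we have $x_n\to x$, and a short case analysis on whether $G(x)$ is finite, $-\infty$, or $+\infty$ shows $\limsup_{n\to\infty} F(x_n)\le G(x)$, which is (2). The only delicate point is the bookkeeping with the values $\pm\infty$: when $G(x)>-\infty$ only finitely many $a_n$ can equal $-\infty$, so the contributions $F(x_n)\le -n$ do not affect the $\limsup$, while the remaining terms obey $F(x_n)\le a_n+\tfrac1n\to G(x)$.

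Finally, to justify the word ``characterized,'' I would show that any $H:X\to\bar{\mathbb R}$ obeying the analogues of (1) and (2) coincides with $G$. The argument is symmetric. Given $x$, apply (2) for $H$ to obtain $x_n\to x$ with $H(x)\ge\limsup_{n\to\infty} F(x_n)$, and feed this sequence into (1) for $G$ to get $G(x)\le\liminf_{n\to\infty} F(x_n)\le H(x)$; then apply (2) for $G$ and (1) for $H$ to the resulting sequence to obtain $H(x)\le G(x)$. Hence $H=G$. I expect the recovery-sequence construction to be the main obstacle, precisely because it is where the topological hypothesis enters and where the extended-real-valued nature of $F$ forces the case distinctions; the remaining steps are routine once the representation of $G$ is in hand.
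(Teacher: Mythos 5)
Your proof is correct. The paper itself gives no proof of this proposition, deferring instead to \cite[Proposition 3.6]{DalMaso_1993}, and your argument is essentially the standard one found there: property (1) follows formally from the representation $(sc^-F)(x)=\sup_{U\in\mathcal N(x)}\inf_{y\in U}F(y)$, the recovery sequence for (2) is built from a decreasing countable local base (the only place first countability is used), and the uniqueness step correctly cross-applies (1) and (2) for the two candidate functions. Your handling of the cases $G(x)=\pm\infty$ in the recovery-sequence construction is the one genuinely delicate point and you have it right.
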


Next, we introduce the notion of $\Gamma$-convergence for sequences of functions.
\begin{definition}
 \label{def3}
The $\Gamma$-{\em lower limit} and the $\Gamma$-{\em upper limit} of the sequence $\{F_n\}_{n \in\,\mathbb{N}}$ are the functions from $X$ into $\overline{\mathbb{R}}$ defined respectively by
\[\begin{array}{l}
 \displaystyle{\Gamma-\liminf_{n\to \infty} F_n(x)=\sup_{U \in\,\mathcal N(x)}\,\liminf_{n\to \infty} \inf_{y \in\,U}F_n(y),}\\
\vspace{.cm}
\displaystyle{\Gamma-\limsup_{n\to \infty} F_n(x)=\sup_{U \in\,\mathcal N(x)}\,\limsup_{n\to \infty} \inf_{y \in\,U}F_n(y).}
\end{array}\]
If there exists a function $F:X\to \overline{\mathbb{R}}$ such that $\Gamma-\liminf_{n\to \infty} F_n=\Gamma-\limsup_{n\to \infty} F_n=F$, then we write
\[F=\Gamma-\lim_{n\to \infty} F_n,\]
and we say that the sequence $\{F_n\}_{n \in\,\mathbb{N}}$ is $\Gamma$-convergent to $F$.

\end{definition}

In \cite[Proposition 5.7]{DalMaso_1993} we can find the proof of  the following result, which links $\Gamma$-convergence and relaxation of functions and provides a useful criterium for $\Gamma$-convergence.
\begin{proposition}
 \label{prop3}
If $\{F_n\}_{n \in\,\mathbb{N}}$ is a decreasing sequence converging to $F$ pointwise, then $\{F_n\}_{n \in\,\mathbb{N}}$ is $\Gamma$-convergent to $sc^-F$.
\end{proposition}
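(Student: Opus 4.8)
The plan is to exploit the monotonicity of the sequence to collapse the $\Gamma$-lower and $\Gamma$-upper limits into a single iterated infimum, and then to recognise that iterated infimum as the relaxed function via the formula $(sc^-F)(x)=\sup_{U\in\mathcal N(x)}\inf_{y\in U}F(y)$ recorded just before the statement. Throughout I would work with the extended real line, recalling that for a monotone sequence of numbers in $\overline{\mathbb R}$ the $\liminf$, the $\limsup$ and $\inf_n$ all coincide with the limit.

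First I would fix $x\in X$ and an open neighbourhood $U\in\mathcal N(x)$ and consider the sequence $a_n(U):=\inf_{y\in U}F_n(y)$ in $\overline{\mathbb R}$. Since $\{F_n\}_{n\in\mathbb N}$ is decreasing, so is $\{a_n(U)\}_{n\in\mathbb N}$, whence it converges and $\liminf_{n\to\infty}a_n(U)=\limsup_{n\to\infty}a_n(U)=\inf_{n\in\mathbb N}a_n(U)$. Taking the supremum over $U\in\mathcal N(x)$ of these three equal quantities and comparing with Definition \ref{def3} shows immediately that $\Gamma-\liminf_{n\to\infty}F_n(x)=\Gamma-\limsup_{n\to\infty}F_n(x)$; hence the $\Gamma$-limit exists and equals $\sup_{U\in\mathcal N(x)}\inf_{n\in\mathbb N}\inf_{y\in U}F_n(y)$.

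The second, and only mildly delicate, step is to identify the inner quantity $\inf_{n\in\mathbb N}\inf_{y\in U}F_n(y)$. Because an infimum of a doubly-indexed family in $\overline{\mathbb R}$ does not depend on the order in which the two infima are performed, I would interchange them to get $\inf_{n\in\mathbb N}\inf_{y\in U}F_n(y)=\inf_{y\in U}\inf_{n\in\mathbb N}F_n(y)$; and since the sequence is decreasing and converges pointwise, $\inf_{n\in\mathbb N}F_n(y)=\lim_{n\to\infty}F_n(y)=F(y)$ for every $y\in U$. Therefore $\inf_{n\in\mathbb N}\inf_{y\in U}F_n(y)=\inf_{y\in U}F(y)$.

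Combining the two steps yields $\Gamma-\lim_{n\to\infty}F_n(x)=\sup_{U\in\mathcal N(x)}\inf_{y\in U}F(y)=(sc^-F)(x)$, which is exactly the assertion. The argument is entirely soft and uses no topological structure beyond the definitions; the only points requiring care are the interchange of the two infima and the elementary fact that for a monotone sequence the upper and lower limits agree with $\inf_n$, so I do not anticipate any genuine obstacle beyond careful bookkeeping with values in $\overline{\mathbb R}$.
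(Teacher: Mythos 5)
Your argument is correct and is essentially the standard proof of this fact: the paper itself gives no proof but simply cites \cite[Proposition 5.7]{DalMaso_1993}, and your two steps (monotonicity of $n\mapsto\inf_{y\in U}F_n(y)$ collapsing $\liminf$, $\limsup$ and $\inf_n$ into one quantity, followed by the interchange of the two infima and the identification $\inf_n F_n=F$) reproduce exactly the argument found there. Nothing is missing; the bookkeeping in $\overline{\mathbb R}$ is handled correctly.
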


We conclude by giving a criterium for convergence of minima for $\Gamma$-convergent sequences (for a proof see \cite[Theorem 7.8]{DalMaso_1993}).
\begin{theorem}
 \label{teo4}
Suppose that the sequence $\{F_n\}_{n \in\,\mathbb{N}}$ is equi-coercive in $X$ and $\Gamma$-converges to a function $F$ in $X$. Then, $F$ is coercive and
\[\min_{x \in\,X}F(x)=\lim_{n\to \infty} \inf_{x \in\,X}F_n(x).\]
\end{theorem}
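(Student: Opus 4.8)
The plan is to establish the two one-sided inequalities
\[
\limsup_{n\to\infty}\inf_X F_n\le \min_X F\le \liminf_{n\to\infty}\inf_X F_n,
\]
which together force $\lim_n\inf_X F_n$ to exist and equal $\min_X F$, once we know that $F$ genuinely attains its minimum. I would organize everything around the sequential descriptions of the $\Gamma$-lower and $\Gamma$-upper limits (Definition \ref{def3}), which are available because $X$ satisfies the first axiom of countability, exactly as in the relaxation characterization of Proposition \ref{charac}. Concretely, from $F=\Gamma-\lim_n F_n$ I would use two facts: \emph{(i)} for every $x\in X$ and every sequence $x_n\to x$ one has $F(x)\le\liminf_n F_n(x_n)$; and \emph{(ii)} for every $x\in X$ there is a recovery sequence $x_n\to x$ with $\limsup_n F_n(x_n)\le F(x)$.

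First I would prove that $F$ is coercive. By equi-coercivity and Proposition \ref{prop1} there is a lower semicontinuous coercive $\Psi:X\to\overline{\mathbb{R}}$ with $F_n\ge\Psi$ for all $n$. Applying \emph{(i)} at an arbitrary $x$ along any $x_n\to x$ and using $F_n(x_n)\ge\Psi(x_n)$ together with lower semicontinuity of $\Psi$ gives $F(x)\ge\liminf_n\Psi(x_n)\ge\Psi(x)$, so $F\ge\Psi$. Hence each sublevel set $\{F\le t\}$ sits inside the closed, countably compact set $\{\Psi\le t\}$, and a closed subset of a countably compact set is countably compact, so $F$ is coercive in the sense of Definition \ref{def1}. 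Being a $\Gamma$-limit, $F$ is also lower semicontinuous. Moreover, since $\Psi$ is coercive and lower semicontinuous it attains a finite minimum, so $F_n\ge\Psi\ge\min_X\Psi>-\infty$; in particular $\inf_X F_n$ is bounded below uniformly in $n$, which rules out degeneration to $-\infty$ later.

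Next I would show $F$ attains its minimum and deduce the upper bound. Taking a minimizing sequence $y_k$ with $F(y_k)\to\inf_X F$, for large $k$ the points $y_k$ lie in a fixed sublevel set of $F$ whose closure is countably compact; first countability upgrades countable compactness to sequential compactness, so some subsequence $y_{k_j}\to y$, and lower semicontinuity yields $F(y)\le\liminf_j F(y_{k_j})=\inf_X F$, whence $F(y)=\min_X F$. The upper bound is then immediate: choosing via \emph{(ii)} a recovery sequence $x_n\to y$ with $\limsup_n F_n(x_n)\le F(y)=\min_X F$ and using $\inf_X F_n\le F_n(x_n)$, I obtain $\limsup_n\inf_X F_n\le\min_X F$.

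The lower bound is the step I expect to be the crux, since it is where equi-coercivity is genuinely needed rather than mere coercivity of the individual $F_n$. Passing to a subsequence along which $\inf_X F_n\to L:=\liminf_n\inf_X F_n$ (finite, by the uniform lower bound above), I would select near-minimizers $x_n$ with $F_n(x_n)\le\inf_X F_n+\tfrac1n$, so that $F_n(x_n)\to L$ along the subsequence. For large $n$ these satisfy $F_n(x_n)\le L+1$, hence lie in the \emph{common} countably compact set $K_{L+1}$ provided by Definition \ref{def2}; first countability again lets me extract $x_{n_j}\to x$. Applying the $\Gamma$-lower limit inequality \emph{(i)} to this sequence gives $F(x)\le\liminf_j F_{n_j}(x_{n_j})=L$, so $\min_X F\le F(x)\le L=\liminf_n\inf_X F_n$. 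Combining with the upper bound yields $\limsup_n\inf_X F_n\le\min_X F\le\liminf_n\inf_X F_n$, so all three agree and $\lim_n\inf_X F_n=\min_X F$, as claimed.
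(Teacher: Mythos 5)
The paper does not actually prove this statement: Theorem \ref{teo4} is quoted from Dal Maso with the pointer ``for a proof see [DalMaso, Theorem 7.8]''. Your argument is the standard proof of the fundamental theorem of $\Gamma$-convergence, and its architecture is the right one: deduce $F\ge\Psi$ from the common coercive minorant $\Psi$ of Proposition \ref{prop1} to get coercivity of $F$; use coercivity plus lower semicontinuity (and first countability to pass from countable to sequential compactness) to produce a minimizer $y$; get $\limsup_n\inf_X F_n\le\min_X F$ from a recovery sequence at $y$; and get the reverse inequality by trapping near-minimizers of the $F_n$ in the common countably compact set $K_{L+1}$, extracting a convergent subsequence, and applying the $\Gamma$-liminf inequality. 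The only genuinely new ingredient beyond individual coercivity --- the uniform confinement of near-minimizers --- is exactly where you use equi-coercivity, as it should be.

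There is, however, one step that does not work as written: the derivation of $F\ge\Psi$. Property \emph{(i)} gives $F(x)\le\liminf_n F_n(x_n)$ for $x_n\to x$; combined with $F_n(x_n)\ge\Psi(x_n)$ this bounds $\liminf_n F_n(x_n)$ from below by two different quantities ($F(x)$ and $\liminf_n\Psi(x_n)$) and yields no comparison between them, so the chain ``$F(x)\ge\liminf_n\Psi(x_n)\ge\Psi(x)$'' does not follow. The inequality you need comes from \emph{(ii)}: taking a recovery sequence $x_n\to x$ with $\limsup_n F_n(x_n)\le F(x)$, one gets $F(x)\ge\limsup_n F_n(x_n)\ge\liminf_n\Psi(x_n)\ge\Psi(x)$ by lower semicontinuity of $\Psi$. (Equivalently, and without sequences: $\Gamma\text{-}\liminf$ is monotone, and the $\Gamma$-limit of the constant sequence $\Psi$ is $\mathrm{sc}^-\Psi=\Psi$ since $\Psi$ is lower semicontinuous.) This is a local mislabelling rather than a missing idea, and the rest of the proof is unaffected. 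A very minor further caveat: for $\overline{\mathbb{R}}$-valued functions, $\min_X\Psi$ need not be finite, so the degenerate cases $L=\pm\infty$ should be dispatched separately (they are trivial), rather than excluded by the claim that $\inf_X F_n$ is uniformly bounded below by a finite number.
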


\section{The large deviation action functional}\label{sec-action}

For any fixed $\eps, \delta \in (0,1]$ and $\phi \in\,\rH$, we consider the problem
\begin{equation}
\label{eqn-SNSE-eps}
du(t)+\rA u(t)+\rB(u(t),u(t))=\sqrt{\eps}\,dw^{Q_\delta}(t),\ \ \ \ u(0)=\phi,
\end{equation}
where
\[w^{Q_\delta}(t)=\sum_{k=1}^\infty Q_\delta e_k \beta_k(t),\ \ \ \ t\geq 0,\]
 $\{e_k\}_{k \in\,\mathbb{N}}$ is the basis which diagonalizes the operator $\rA$, $\{\beta_k\}_{k \in \mathbb{N}}$ is a sequence of independent Brownian motions all defined on the stochastic basis $(\Omega, \mathcal {F}, \mathbb{F}, \mathbb{P})$, where $\mathbb{F}=(\mathcal {F}_t)_{t\ge 0}$,  and $Q_\delta$ is a   bounded linear operator on $\rH$, for any $\delta \in\,(0,1]$.

In what follows, we shall assume that the family $\{Q_\delta\}_{\delta \in\,(0,1]}$ satisfies the following conditions.

\begin{assumption}\label{ass-Q}
For every $\delta \in (0,1]$, $Q_\delta$ is a positive linear operator on $\rH$,  the operator
 $ A^{-1}Q_\delta^2$ is trace class, and there exists some $\beta >0$ such that $Q_\delta: \rH \to D(\rA^{\frac \beta 2})$  is an isomorhism. Moreover,
 \[\lim_{\delta\to 0} Q_\delta\, y= y,\ \ y \in\,\rH,\ \ \ \ \ \lim_{\delta\to 0} Q^{-1}_\delta y= y,\ \ y \in\,D(\rA^{\frac \beta 2}),\]
the limites above being in $\rH$, and for any $1 \geq \sigma \geq \delta\geq 0$
\begin{equation}
\label{eqn-B12}
\vert Q_\sigma^{-1}y\vert_{\rH} \geq  \vert Q_\delta^{-1}y\vert_{\rH}, \;\; y\in D(\rA^{\frac\beta2}).
\end{equation}
\end{assumption}
 \begin{rem}\label{rem-RKHS} {\em
The reproducing kernel Hilbert space of the Wiener process $w^{Q_\delta}$ is equal to $Q_\delta(\rH)$ and hence by Assumption \ref{ass-Q} it coincides with the space
  $D(\rA^\beta)$ for some $\beta>0$. This implies that  the results from \cite{Brz+Li_2006} are applicable.

}  \end{rem}
 \begin{rem} {\em It is easy  to see that for the Navier-Stokes equations in a $d$-dimensional domain, $d\geq 2$, any number $\beta > \frac{d}2-1$ and the operators
 \[Q_\delta:=\big(I+\delta \rA^{\beta/2}\big)^{-1}\]
  satisfy  Assumption \ref{ass-Q}.}
 \end{rem}

\medskip

Now, for any $-\infty \leq t_0< t_1\leq \infty$, $\delta\in [0,1]$ and $u \in\,C([t_0,t_1];H)$, we define

\begin{equation}
\label{eqn-BA}
S^\delta_{t_0,t_1}(u):=\frac12 \int_{t_0}^{t_1}\vert Q_\delta^{-1}\left( \cH(u)(t)\right)\vert_{\rH}^2\, dt,
\end{equation}
where $\mathcal{H}(u)$ is defined as in \eqref{eqn-A3}, with the usual convention that $S^\delta_{t_0,t_1}(u)=+\infty$, if
$Q_\delta^{-1}\left( \cH(u)(\cdot)\right) \not\in L^2(t_1,t_2;\rH)$.

When $\delta=0$, the superscript $0$ will be omitted. So we put $S_{t_0,t_1}=S^0_{t_0,t_1}$. Note that according to Lemma \ref{lem-clar-2} a necessary condition for $S^\delta_{t_0,t_1}(u)$ to be  finite is that  $u(T_1)\in D(\rA^{\frac{\alpha+1}2})$ and $ u\in W^{1,2}\big(t_2,t_1;D(\rA^{\frac{\alpha}2+1}),D(\rA^{\frac{\alpha}2})\big)$,
for any $t_2\in (t_0,t_1)$.

\medskip

For any $T>0$, $p\geq 1$, $\eps, \delta \in\,(0,1]$ and $\phi \in\,\rH$, equation \eqref{eqn-SNSE-eps} admits a unique solution $u_\phi^{\eps,\delta} \in L^p(\Omega;C([0,T];\rH))$; for a proof see e.g. the fundamental work of Flandoli \cite{flandoli}.  To be more precise let us now formulate a definition of a solution following \cite{Brz+Li_2006}.

\begin{definition}\label{def-SNSEs}
If $u_0 \in \rH$, then  an $\mathbb{F}$-adapted   process $u(t)$, $t\ge 0$ with
trajectories in $C([0,\infty);\rH)\cap L^4_{\mathrm{loc}}([s,\infty);\mathbb{L}^4(D))$ is a solution to
problem \eqref{eqn-SNSE-eps} iff   for any $ v \in
\mathrm{V}$, $t \geq 0$, $\mathbb{P}$-almost surely, 
\begin{eqnarray*}\nonumber
(u(t),v)&=&(u_0,v)-\nu
\int_s^t(u(r),\mathrm{A}v)\,dr-\int_s^t b(u(r),u(r),v)\, dr \\
&  + & \sqrt{\eps}\;  ( v,  \,w^{Q_\delta}(t) ). \label{eqn:6.14}
\end{eqnarray*}
\end{definition}

As shown in the next theorem, as an immediate consequence of the contraction principle, we have  that the family $\left\{\mathcal{L}(u_\phi^{\eps,\delta})\right\}_{\eps \in\,(0,1]}$ satisfies a large deviation principle in $C([0,T];\rH)$.

\begin{theorem}
\label{brc.teo.4.2}
For any $x \in\,\rH$ and $\delta \in (0,1]$, the family
$\{{\mathcal L}(u_\phi^{\eps,\delta})\}_{\eps \in\,(0,1]}$ satisfies  a large deviation principle on $C([0,T];\rH)$, uniformly with respect to  initial data $\phi$ in  bounded sets of $\rH$, with   good action functional $S^\delta_{T}$.
\end{theorem}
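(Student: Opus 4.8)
The plan is to obtain the large deviation principle directly from the contraction principle, following the classical Freidlin--Wentzell scheme for additive-noise SPDEs as developed for the 2D Navier--Stokes equations in \cite{flandoli}. The key point is that, because the noise enters \eqref{eqn-SNSE-eps} \emph{additively}, the solution is a continuous \emph{deterministic} image of the driving Gaussian process, so that all the probabilistic content is carried by the noise alone. Concretely, I would split $u_\phi^{\eps,\delta}=z^{\eps,\delta}+v^{\eps,\delta}$, where $z^{\eps,\delta}$ is the Ornstein--Uhlenbeck process solving $dz+\rA z\,dt=\sqrt{\eps}\,dw^{Q_\delta}$ with $z(0)=0$, and $v^{\eps,\delta}$ solves the pathwise (random coefficient) equation $v'+\rA v+\rB(v+z^{\eps,\delta},v+z^{\eps,\delta})=0$, $v(0)=\phi$. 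Under Assumption \ref{ass-Q} the operator $\rA^{-1}Q_\delta^2$ is trace class and $Q_\delta$ is an isomorphism of $\rH$ onto $D(\rA^{\beta/2})$, so $z^{\eps,\delta}=\sqrt{\eps}\,z^{\delta}$ lives, almost surely, in a space $\mathcal E_\delta$ continuously embedded in $C([0,T];\rH)\cap L^4(0,T;\mathbb{L}^4(\mathcal O))$, which is regular enough for the deterministic well-posedness theory of Section \ref{sec-skeleton} to apply.

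\textbf{Step 1 (LDP for the Gaussian driver).} The centred Gaussian process $z^{\delta}$ is a continuous linear image of the $Q_\delta$-Wiener process $w^{Q_\delta}$, so by Schilder's theorem the scaled family $\{\mathcal L(\sqrt{\eps}\,z^{\delta})\}_{\eps}$ satisfies an LDP in $\mathcal E_\delta$ with good rate function $I_\delta(z)=\frac12\int_0^T|Q_\delta^{-1}(z'(t)+\rA z(t))|_{\rH}^2\,dt$ whenever $z(0)=0$, $z'+\rA z\in\Range(Q_\delta)$ a.e.\ and the integral is finite, and $I_\delta(z)=+\infty$ otherwise. Here the Cameron--Martin/reproducing kernel identification is exactly the one recorded in Remark \ref{rem-RKHS}.

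\textbf{Step 2 (continuity of the solution map).} Next I would prove that the map $\Psi_\phi^\delta:\mathcal E_\delta\to C([0,T];\rH)$, $z\mapsto u:=v+z$, with $v$ the unique solution of the displaced equation above, is continuous and, moreover, locally uniformly continuous with respect to $\phi\in\rH$. This is a purely deterministic statement resting on the a priori bounds and stability estimates of Section \ref{sec-skeleton} (inequalities \eqref{ineq-aux-00} and \eqref{brc82}) together with the bilinear estimates \eqref{eqn:4.0a} and the antisymmetry \eqref{eqn-B02}: for two solutions driven by nearby elements $z_1,z_2$ and started from nearby data, an energy inequality for the difference, integrated via Gronwall's lemma, controls the $C([0,T];\rH)$-distance of the solutions in terms of the distance of the drivers and of the initial data.

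\textbf{Step 3 (contraction and identification of the rate function).} Since $u_\phi^{\eps,\delta}=\Psi_\phi^\delta(\sqrt{\eps}\,z^{\delta})$ with $\Psi_\phi^\delta$ continuous, the contraction principle transfers the LDP of Step 1 to $\{\mathcal L(u_\phi^{\eps,\delta})\}_{\eps}$ on $C([0,T];\rH)$, with rate function $u\mapsto\inf\{I_\delta(z):\Psi_\phi^\delta(z)=u\}$. For fixed $u$ with $u(0)=\phi$ the preimage is unique: writing $z=u-v$ one computes $z'+\rA z=u'+\rA u+\rB(u,u)=\cH(u)$ with $z(0)=0$, which determines $z$; substituting into $I_\delta$ collapses the infimum to the single value $\frac12\int_0^T|Q_\delta^{-1}\cH(u)(t)|_{\rH}^2\,dt=S^\delta_T(u)$ of \eqref{eqn-BA}, with the value $+\infty$ when $\cH(u)\notin\Range(Q_\delta)$, in agreement with the convention. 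Goodness of $S^\delta_T$ follows because its sublevel sets are bounded in $W^{1,2}(0,T)$ by Lemma \ref{lem-clar} and hence relatively compact in $C([0,T];\rH)$ by the Aubin--Lions lemma, while uniformity over bounded sets of $\phi$ is inherited from the local uniformity of Step 2, every estimate depending on $\phi$ only through $|\phi|_{\rH}$. The main obstacle is Step 2: establishing the continuity (and $\phi$-uniform continuity) of the solution map requires controlling the quadratic nonlinearity $\rB$ uniformly along sequences of drivers, which for 2D Navier--Stokes is delicate and is precisely where the interpolation inequalities \eqref{eqn:4.0a} and the cancellation \eqref{eqn-B02} enter; one must also verify, using the trace-class and isomorphism hypotheses of Assumption \ref{ass-Q}, that $z^{\delta}$ takes values in a space $\mathcal E_\delta$ regular enough that $v+z$ stays within the scope of the deterministic theory.
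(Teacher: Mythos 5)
Your proposal is correct and follows essentially the same route as the paper: the decomposition through the Ornstein--Uhlenbeck process $z_{\eps,\delta}$, the large deviation principle for the Gaussian part in an $L^4$-type space, the continuity of the solution map of the shifted equation (which the paper imports from \cite[Theorem 4.6]{Brz+Li_2006} rather than reproving via energy estimates), the contraction principle, and uniformity over bounded sets of $\phi$ from the local Lipschitz dependence on the initial datum. The only differences are matters of detail — you spell out the identification of the rate function via uniqueness of the preimage and propose to prove Step 2 directly, where the paper cites the literature.
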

\begin{proof}
For every $\eps>0$ { and $\delta\in (0,1]$}, we denote by $z_{\eps,\delta}(t)$ the Ornstein-Uhlenbeck process associated with $\rA$ and $Q_\delta$, that is the solution of the linear problem
\begin{equation}\label{eqn-OUP-eps} dz(t)+\rA z(t)=\sqrt{\eps}\,dw^{Q_\delta}(t),\ \ \ \ z(0)=0.
\end{equation}
We have
\[z_{\eps,\delta}(t)=\sqrt{\eps}\int_0^t e^{-(t-s)\rA}\, dw^{Q_\delta}(s),\ \ \ \ t\geq 0.\]
As well known (see e.g. \cite[Theorem 3]{zab}), under our assumptions the family $\{{\mathcal L}(z_{\eps,\delta})\}_{\eps \in\,(0,1]}$ satisfies a large deviation principle in $C([0,T];L^4(\mathcal{O}))$, with good action functional
\[I_{0,T}^\delta(u)=\frac 12\,\int_0^T\left|Q_\delta^{-1}(u^\prime(t)+\rA u(t))\right|_\rH^2\,dt.\]
Moreover, if we define the mapping ${\mathcal F}:\rH\times C([0,T];L^4(\mathcal{O}))\to C([0,T];\rH)$ which associates to every $\phi \in\,\rH$ and $g \in\,C([0,T];L^4(\mathcal{O}))$ the solution $v \in\,C([0,T];\rH)$ of the problem
\[v^\prime(t)+\rA v(t)+B(v(t)+g(t),v(t)+g(t))=0,\ \ \ \ v(0)=\phi,\]
we have, $\mathbb{P}$-a.s.,
\[u_\phi^{\eps,\delta}={\mathcal F}(\phi,z_{\eps,\delta}).\]
{Since, by} \cite[Theorem 4.6]{Brz+Li_2006},  the mapping ${\mathcal F}:\rH\times C([0,T];L^4(\mathcal{O}))\to C([0,T];\rH)$ is continuous, by the contraction principle, the large deviation principle for $\{z_{\eps,\delta}\}_{\eps \in\,(0,1]}$ on $C([0,T];L^4(\mathcal{O}))$ with action functional $I_{0,T}^\delta$ may be transferred to a large deviation principle for $\{u_\phi^{\eps,\delta}\}_{\eps \in\,(0,1]}$ on $C([0,T];\rH)$,  with action functional $S^\delta_{0,T}$.

Moreover, in \cite[Theorem 4.6]{Brz+Li_2006} it is shown that  for any $R>0$ there exists $c_R>0$ such that for any $z_1, z_2 \in\,B_R(C([0,T];L^4(\mathcal{O})))$
\[ \sup_{\phi \in\,B_0(R)}|{\mathcal F}(\phi,z_1)-{\mathcal F}(\phi,z_2)|_{C([0,T];\rH)}\leq c_R\,|z_1-z_2|_{C([0,T];L^4(\mathcal{O}))}.\]
This implies that the large deviation principle proved above is uniform with respect to the initial data $\phi$  in any bounded subset of $\rH$.

\end{proof}

In what follows, for any $T \in\,(0,+\infty]$ we set
\[S^\delta_{T}:=S^\delta_{0,T},\ \ \ \ \ S^\delta_{-T}:=S^\delta_{-T,0}\]
and
\[S_{T}:=S_{0,T},\ \ \ \ \ S_{-T}:=S_{-T,0}.\]
In particular,
\begin{equation}
\label{eqn-A5}
S^\delta_{-\infty}(u):=\frac12 \int_{-\infty}^0\vert Q_\delta^{-1}\left(\cH(u)(t)\right)\vert_{\rH}^2\, dt
\end{equation}
and
\begin{equation}
\label{eqn-A5bis}
S_{-\infty}(u):=\frac12 \int_{-\infty}^0\vert \cH(u)(t)\vert_{\rH}^2\, dt.
\end{equation}

We conclude the present section with the description of some relevant properties of { the functionals} $S_{-\infty}$ and $S^\delta_{-\infty}$.

To this purpose, we need to introduce the following  functional spaces
\begin{equation}
\label{eqn-B6}
\mathcal{X}=\big\{ u\in C((-\infty,0];\rH): \; \lim_{t\to-\infty}\vert u(t)\vert_{\rH}=0\big\},\ \
\mathcal{X}_\phi=\big\{ u\in \mathcal{X}:u(0)=\phi\big\}.
\end{equation}
 We endow the space  $\mathcal{X}$ with the topology of uniform convergence on compact intervals, i.e. the topology induced by the metric $\rho$ defined by
$$ \rho(u,v):= \sum_{n=1}^\infty 2^{-n}\left(\sup_{s\in [-n,0]} \vert u(s)-v(s)\vert_{\rH}\wedge 1\right),\;\;\; u,v\in \mathcal{X}.
$$
The set $\mathcal{X}_\phi$ is  closed in $\mathcal{X}$ and we endow it with the trace topology induced by $\mathcal{X}$.

Let us note here, see for instance \cite[Proposition I.3.1]{Vishik+Fursikov_1988},  that if  $u\in  W^{1,2}(t_0,\infty)$, then \[\lim_{t\to \infty} \vert u(t)\vert_\rV=0.\]
 Similarly, as shown in Proposition \ref{prop--infty2},  if  $u\in W^{1,2}(-\infty,t_1)$, then
 \[\lim_{t\to -\infty} \vert u(t)\vert_\rV=0.\]
\dela{\begin{eqnarray}
\label{eqn-B7}
\cHX_x&:=& \mathcal{X}_x \cap W^{1,2}(-\infty,0),
 W^{1,2}&:=&W^{1,2}(-\infty,0)
\end{eqnarray}}

\medskip

\begin{proposition}\label{Fact 3} \del{Assume that $x\in \rH$.
Then }\addf{T}he functionals $S_{-\infty}$ and $S^\delta_{-\infty}$, $\delta \in\,(0,1]$, are lower-semicontinuous in $\mathcal{X}$.
\end{proposition}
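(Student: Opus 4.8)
The plan is to reduce the statement on the half-line to a lower semicontinuity statement on each finite interval, and then to pass to the limit in the very weak formulation of the Navier--Stokes equations. We treat both functionals simultaneously by allowing $\delta\in[0,1]$, with the convention $Q_0=I$ (so that $S_{-\infty}=S^0_{-\infty}$), and we abbreviate $\mathcal{H}(u)=u^\prime+\rA u+\rB(u,u)$. Because the integrand in \eqref{eqn-A5} is nonnegative, monotone convergence gives, for every $u\in\mathcal{X}$, the identity $S^\delta_{-\infty}(u)=\sup_{a<0}S^\delta_{a,0}(u)$ (both sides being $+\infty$ when $u$ is not a very weak solution). Hence, if $u_n\to u$ in $\mathcal{X}$ and if the finite-interval lower semicontinuity is known, then for each fixed $a<0$ we have $S^\delta_{-\infty}(u_n)\ge S^\delta_{a,0}(u_n)$, so
\[
\liminf_{n\to\infty}S^\delta_{-\infty}(u_n)\ge\liminf_{n\to\infty}S^\delta_{a,0}(u_n)\ge S^\delta_{a,0}(u),
\]
and taking the supremum over $a<0$ yields $\liminf_n S^\delta_{-\infty}(u_n)\ge S^\delta_{-\infty}(u)$. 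Everything therefore reduces to proving that each $S^\delta_{a,0}$ is lower semicontinuous for uniform convergence on $[a,0]$.

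Fix $a<0$ and a sequence $u_n\to u$ uniformly on $[a,0]$, and set $L:=\liminf_n S^\delta_{a,0}(u_n)$. We may assume $L<\infty$ and pass to a subsequence (not relabelled) along which $S^\delta_{a,0}(u_n)\to L$ and $S^\delta_{a,0}(u_n)\le M<\infty$. Writing $f_n:=\mathcal{H}(u_n)$, finiteness of $S^\delta_{a,0}(u_n)$ means $g_n:=Q_\delta^{-1}f_n\in L^2(a,0;\rH)$ with $|g_n|^2_{L^2(a,0;\rH)}=2S^\delta_{a,0}(u_n)\le 2M$, and in particular each $u_n$ is a very weak solution on $(a,0)$ with forcing $f_n=Q_\delta g_n\in L^2(a,0;\rH)$. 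Since $\{g_n\}$ is bounded in the Hilbert space $L^2(a,0;\rH)$, after a further subsequence $g_n\rightharpoonup h$ weakly there; and since $Q_\delta:\rH\to\rH$ is bounded (being an isomorphism onto $D(\rA^{\beta/2})\hookrightarrow\rH$, with $Q_0=I$), the operator it induces on $L^2(a,0;\rH)$ is weak-to-weak continuous, whence $f_n=Q_\delta g_n\rightharpoonup Q_\delta h=:g$.

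The decisive step is to identify $g$ with $\mathcal{H}(u)$ by passing to the limit in the very weak formulation \eqref{eqn-very weak} written for $u_n$ and $f_n$. The linear terms converge because $u_n\to u$ in $C([a,0];L^2(\mathcal{O}))$ and the test function $\phi$, together with $\partial_s\phi+\nu\Delta\phi$, is smooth and bounded; the forcing term converges because $f_n\rightharpoonup g$ in $L^2(a,0;\rH)$ and $\mathbf{1}_{[t_0,t_1]}\phi\in L^2(a,0;\rH)$. The only delicate term is the nonlinear one, which I would treat by writing $b(u_n,u_n,\phi)=-b(u_n,\phi,u_n)=-\sum_{i,j}\int_{\mathcal{O}}u_{n,i}u_{n,j}\,\partial_i\phi_j\,dx$ via the antisymmetry \eqref{eqn:b01}, thereby transferring the derivative onto $\phi$. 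Since $\partial_i\phi_j$ is bounded and
\[
\big|u_{n,i}u_{n,j}-u_iu_j\big|_{L^1(\mathcal{O})}\le\big(|u_n|_\rH+|u|_\rH\big)\,|u_n-u|_\rH,
\]
this term converges pointwise in $s\in[t_0,t_1]$, and then in the time integral by dominated convergence, a dominating function being furnished by $\sup_n\sup_{[a,0]}|u_n|_\rH<\infty$. Passing to the limit shows that $u$ is a very weak solution on $(a,0)$ with forcing $g$, so by the uniqueness of the forcing noted after Definition \ref{def-action} we obtain $\mathcal{H}(u)=g\in L^2(a,0;\rH)$ and hence $Q_\delta^{-1}\mathcal{H}(u)=Q_\delta^{-1}g=h$.

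Finally, the weak lower semicontinuity of the norm of $L^2(a,0;\rH)$ applied to $g_n\rightharpoonup h$ gives
\[
2\,S^\delta_{a,0}(u)=|Q_\delta^{-1}\mathcal{H}(u)|^2_{L^2(a,0;\rH)}=|h|^2_{L^2(a,0;\rH)}\le\liminf_{n\to\infty}|g_n|^2_{L^2(a,0;\rH)}=2L,
\]
which is precisely the finite-interval lower semicontinuity $S^\delta_{a,0}(u)\le\liminf_n S^\delta_{a,0}(u_n)$ and, combined with the reduction of the first paragraph, completes the proof. The main obstacle is the identification $\mathcal{H}(u)=g$, that is, the passage to the limit in the quadratic term $\rB(u_n,u_n)$ under mere uniform $L^2$-in-space convergence of $u_n$; the integration by parts against the smooth test function is exactly what makes this possible, since it removes the spatial derivative that the available bounds do not control.
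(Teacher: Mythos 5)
Your proof is correct, but it follows a genuinely different route from the paper's. The paper works directly on the half-line: assuming the actions $S^\delta_{-\infty}(u_n)$ are bounded, it invokes the a priori estimates of Propositions \ref{prop-infty} and \ref{prop-inftybis} (which exploit the decay $|u_n(t)|_\rH\to 0$ as $t\to-\infty$ built into $\mathcal{X}$) to conclude that $\{u_n\}$ is bounded in $W^{1,2}\big(-\infty,0;D(\rA^{1+\frac\beta2}),D(\rA^{\frac\beta2})\big)$, extracts weak limits of $u_n$ and of $f_n=\cH(u_n)$, identifies them by the compactness argument of \cite[Section 5]{Brz+Li_2006}, and finishes with weak lower semicontinuity of the convex functional $f\mapsto\int|Q_\delta^{-1}f|_\rH^2$. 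You instead localize to finite intervals via $S^\delta_{-\infty}=\sup_{a<0}S^\delta_{a,0}$, use only the boundedness of $Q_\delta^{-1}f_n$ in $L^2(a,0;\rH)$ together with the uniform convergence $u_n\to u$ in $C([a,0];\rH)$, and identify the limit forcing by passing to the limit in the very weak formulation, handling the quadratic term by throwing the derivative onto the test function (note that for $u$ merely in $\rH$ the identity $b(u,u,\phi)=-\sum_{i,j}\int u_iu_j\partial_i\phi_j$ should be read as the \emph{definition} of the nonlinear term in Definition \ref{def-very weak}, since \eqref{eqn:b01} is stated for $u\in\rV$; this is presentational, not a gap). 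Your argument is more elementary and slightly more general: it does not use the Appendix estimates or the decay condition at $-\infty$, so it proves lower semicontinuity on all of $C((-\infty,0];\rH)$ with the local uniform topology, and it makes explicit the identification step that the paper delegates to a citation. What the paper's heavier route buys is the quantitative $W^{1,2}$-bound on sequences with bounded action, which is reused immediately afterwards to prove compactness of the level sets (Proposition \ref{cor-compact}); your proof yields lower semicontinuity but not that compactness information, so the Appendix propositions would still be needed elsewhere.
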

\begin{proof}
In order to prove the lower semi-continuity of $S_{-\infty}$ and $S^\delta_{-\infty}$, it is sufficient to show that if a $\mathcal{X}$-valued sequence  $\{u_n\}_{n=1}^\infty$ is convergent in $\mathcal{X}$ to a function $u \in \mathcal{X}$, then
 for any $\delta \in\,[0,1]$
\begin{equation}
\label{brc3}
\liminf_{n\to\infty} S^\delta_{-\infty}(u_n)\geq S^\delta_{-\infty}(u).
\end{equation}

First,  we assume that $u\in\mathcal{X}$ is such that  $S^\delta_{-\infty}(u)=\infty$. We want to show that
\[\liminf_{n\to\infty} S^\delta_{-\infty}(u_n)=+\infty.  \] Suppose by contradiction that $\liminf_{n}S^\delta_{-\infty}(u_n)<\infty$.  Then, after extracting a subsequence, we can find  $C>0$ such that
\[ \vert u^\prime_n+A u_n+B(u_n,u_n)\vert_{L^2(-\infty,0;D(\rA^{\frac \beta 2}))} \leq C, \;\; n\in \mathbb{N}.
\]

By Proposition \ref{prop-inftybis} (Proposition \ref{prop-infty}, if $\delta=0$), we have that the sequence
 $\{u_n\}$ is bounded in $W^{1,2}(-\infty,0;D(\rA^{1+\frac{\beta}2}),D(\rA^{\frac{\beta}2}))$ and hence we can find $\tilde{u} \in W^{1,2}(-\infty,0;D(\rA^{1+\frac{\beta}2}),D(\rA^{\frac{\beta}2}))$ such that, after another extraction of  a subsequence,
 \[u_n \to \tilde{u},\ \mbox{ as, } n\to \infty \ \ \mbox{weakly in}\ W^{1,2}(-\infty,0;D(\rA^{1+\frac{\beta}2}),D(\rA^{\frac{\beta}2})).\]
 By the uniqueness of the limit  we infer that $u=\tilde{u}$, so that $u\in  W^{1,2}(-\infty,0;D(\rA^{1+\frac{\beta}2}),D(\rA^{\frac{\beta}2}))$ and  $S^\delta_{-\infty}(u)<\infty$, which contradicts our assumption.

Thus,   assume that  $S^\delta_{-\infty}(u)<\infty$. In  view of the last part of Lemma \ref{lem-clar-2} (Lemma \ref{lem-clar} if $\delta=0$) we have that  $u(0)\in  D(\rA^{\frac{1+\beta}2})$ and $u \in\,W^{1,2}(-\infty,0;D(\rA^{1+\frac{\beta}2}),D(\rA^{\frac{\beta}2}))$.
  \del{as $\{u_n\}\subset \mathcal{X}_x$.}

Now, assume that \eqref{brc3} is not true. Then there exists $\eps>0$ such that, after the extraction of a subsequence,
\[S^\delta_{-\infty}(u_n)<S^\delta_{-\infty}(u)-\eps,\ \ \ \ n \in\,\mathbb{N}.\]

Hence, if we set $f_n=u_n^\prime+Au_n+B(u_n,u_n)$, we have that the sequence $\{f_n\}$ is bounded in $L^2(-\infty,0;D(\rA^{\frac \beta 2}))$ and then, by Proposition \ref{prop-inftybis} (Proposition \ref{prop-infty} if $\delta=0$), we have that  $\{u_n\}$ is bounded in $W^{1,2}(-\infty,0;D(\rA^{1+\frac{\beta}2}),D(\rA^{\frac{\beta}2}))$. This implies that we can find $\tilde{u} \in W^{1,2}(-\infty,0;D(\rA^{1+\frac{\beta}2}),D(\rA^{\frac{\beta}2}))$ such that, after a further extraction of  a subsequence,
\[u_n=\tilde{u},\ \mbox{ as } n\to \infty, \ \mbox{weakly in}\ \ W^{1,2}(-\infty,0;D(\rA^{1+\frac{\beta}2}),D(\rA^{\frac{\beta}2})),\] and, by uniqueness of the limit,  we infer that $u=\tilde{u}$. Moreover,
as  the sequence $\{f_n\}$ is bounded in $L^2(-\infty,0;D(\rA^{\frac{\beta}2}))$, after another   extraction of  a subsequence, we can find  $\tilde{f}\in L^2(-\infty,0;D(\rA^{\frac{\beta}2}))$ such that  $f_n$ converges weakly to  $\tilde{f}$  in $L^2(-\infty,0;D(\rA^{\frac{\beta}2}))$. By employing nowadays standard  compactness argument, see for instance \cite[section 5]{Brz+Li_2006} we can show that $\tilde{f}=f=u^\prime+\rA u+\rB(u,u)$.  Thus, since the mapping
\[f \in\,L^2(-\infty,0;D(\rA^{\frac{\beta}2}))\mapsto \int_{-\infty}^0 |Q_\delta^{-1}f(t)|^2_H\,dt \in\,\mathbb{R}\]
is convex and lower semi-continuous, it is also  weakly lower semi-continuous, so that
\[\liminf_{n\to \infty} S^\delta_{-\infty}(u_n)=\frac 12\liminf_{n\to \infty}|Q_\delta^{-1}f_n|^2_{L^2(-\infty,0;D(\rA^{\frac{\beta}2}))}\geq\frac 12 |Q_\delta^{-1}f|^2_{L^2(-\infty,0;D(\rA^{\frac{\beta}2}))}=S_{-\infty}^\delta(u).\]

\end{proof}

\begin{proposition}\label{cor-compact}
The operators $S_{-\infty}$ and $S^\delta_{-\infty}$ have  compact level sets in $\mathcal{X}$. Moreover, the family $\{S^\delta_{-\infty}\}_{\delta \in\,(0,1]}$ is equi-coercive.
\end{proposition}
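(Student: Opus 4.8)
The plan is to first reduce the equi-coercivity of the family $\{S^\delta_{-\infty}\}_{\delta\in(0,1]}$ to the coercivity of the single functional $S_{-\infty}$, and then to establish the compactness of the level sets directly. The reduction rests on the monotonicity \eqref{eqn-B12} of Assumption \ref{ass-Q}: taking $\delta=0$ there (so that $Q_0=I$ and $S^0_{-\infty}=S_{-\infty}$) gives $|Q_\delta^{-1}y|_\rH\ge |y|_\rH$ for every $y\in D(\rA^{\frac\beta2})$ and every $\delta\in(0,1]$. Applied pointwise to $y=\cH(u)(t)$ — which lies in $D(\rA^{\frac\beta2})$ for a.e.\ $t$ whenever $S^\delta_{-\infty}(u)<\infty$ — and integrated over $(-\infty,0)$, this yields
\[
S^\delta_{-\infty}(u)\ \ge\ S_{-\infty}(u),\qquad u\in\mathcal{X},\ \delta\in(0,1].
\]
Since $S_{-\infty}$ is lower semicontinuous by Proposition \ref{Fact 3}, once we know it has compact level sets it will be a lower semicontinuous, coercive function dominated by every member of the family, so equi-coercivity follows from Proposition \ref{prop1} (equivalently, $\bigcup_\delta\{S^\delta_{-\infty}\le t\}\subset\{S_{-\infty}\le t\}$, and the latter compact set serves as the $K_t$ of Definition \ref{def2}).

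It therefore remains to prove that $S_{-\infty}$ and each $S^\delta_{-\infty}$ have compact level sets; since $\mathcal{X}$ is metrizable it suffices to argue sequentially. Fix $R>0$ and a sequence $\{u_n\}$ with $S^\delta_{-\infty}(u_n)\le R$ (the case $\delta=0$ being identical and simpler). Then $f_n:=\cH(u_n)$ is bounded in $L^2(-\infty,0;D(\rA^{\frac\beta2}))$ — for $\delta>0$ this uses that $Q_\delta\colon\rH\to D(\rA^{\frac\beta2})$ is an isomorphism, and for $\delta=0$ it is immediate — so by the a priori bounds of Proposition \ref{prop-inftybis} (Proposition \ref{prop-infty} when $\delta=0$) the sequence $\{u_n\}$ is bounded in $W^{1,2}(-\infty,0;D(\rA^{1+\frac\beta2}),D(\rA^{\frac\beta2}))$. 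On each interval $[-m,0]$ the Aubin--Lions--Simon compactness lemma then gives relative compactness of $\{u_n\}$ in $C([-m,0];\rH)$; a diagonal extraction over $m\in\mathbb{N}$ produces a subsequence, still denoted $\{u_n\}$, converging uniformly on every compact interval, hence in the metric $\rho$, to some $u\in C((-\infty,0];\rH)$.

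The main point is then to check that the limit $u$ belongs to the level set, and in particular to $\mathcal{X}$. Using the $W^{1,2}$-bound and reflexivity, $\{u_n\}$ converges, along a further subsequence, weakly in $W^{1,2}(-\infty,0;D(\rA^{1+\frac\beta2}),D(\rA^{\frac\beta2}))$, and by uniqueness of limits the weak limit coincides with $u$; thus $u\in W^{1,2}(-\infty,0;D(\rA^{1+\frac\beta2}),D(\rA^{\frac\beta2}))\subset W^{1,2}(-\infty,0)$. This is where the behaviour at $-\infty$ — the genuinely non-routine ingredient, since uniform convergence on compact intervals carries no information about the tail — is controlled: by Proposition \ref{prop--infty2}, $\lim_{t\to-\infty}|u(t)|_\rV=0$, so $u\in\mathcal{X}$. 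Finally, the lower semicontinuity of $S^\delta_{-\infty}$ on $\mathcal{X}$ (Proposition \ref{Fact 3}) gives $S^\delta_{-\infty}(u)\le\liminf_n S^\delta_{-\infty}(u_n)\le R$, so $u$ lies in the level set, which is therefore compact. Combined with the first paragraph, this proves both assertions.
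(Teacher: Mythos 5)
Your proof is correct and follows essentially the same route as the paper's: the domination $S^\delta_{-\infty}\ge S_{-\infty}$ coming from Assumption \ref{ass-Q} together with Proposition \ref{prop1} for equi-coercivity, and then the a priori $W^{1,2}$ bound of Proposition \ref{prop-infty}/\ref{prop-inftybis}, the compact embedding into $C([-T,0];\rH)$, a diagonal extraction, control of the tail at $-\infty$, and lower semicontinuity for the compactness of the level sets. The only (immaterial) difference is that you verify compactness of the level sets of $S^\delta_{-\infty}$ directly for each $\delta$, whereas the paper deduces it from the $\delta=0$ case by observing that a level set of $S^\delta_{-\infty}$ is a closed subset of the corresponding compact level set of $S_{-\infty}$.
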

\begin{proof} First, notice that we have only to prove the compactness of the level sets of $S_{-\infty}$. Actually,  due to Assumption \ref{ass-Q},
\begin{equation}
\label{brc500}
S_{-\infty}^\delta \geq S_{-\infty},\ \ \ \ \delta \in\,(0,1],\end{equation}
and then, as $S^\delta_{-\infty}$ is lower-semicontinuous, the compactness of the level sets of $S_{-\infty}$ implies the compactness of the level sets of $S_{-\infty}^\delta$.
Moreover,  in view of   Proposition \ref{prop1},  \eqref{brc500} and the compactness of the level sets of $S_{-\infty}$ imply the equi-coercivity of the family $\{S^\delta_{-\infty}\}_{\delta \in\,(0,1]}$.

Hence,  we have only to prove that
every  sequence $\big\{u_n\big\}$ in $\mathcal{X}$
  such that
$S_{-\infty}(u_n)\leq r,$ for any $n \in\,\mathbb{N}$,
 has a subsequence convergent in $\mathcal{X}$ to some $u\in \mathcal{X}$ such that $S_{-\infty}(u)\leq r$.

According to the last part of Proposition \ref{prop-infty}  there exists  $M>0$ such that
\[|u_n|_{W^{1,2}(-\infty,0)}\leq M,\ \ \ \ n \in\,\mathbb{N}.\]
 Hence by the Banach-Alaoglu Theorem, we can find $u\in W^{1,2}(-\infty,0)$ and a subsequence of the original sequence that is weakly convergent to $u$ in $W^{1,2}(-\infty,0)$. Note that $u$ being an element of  $W^{1,2}(-\infty,0)$ it must satisfy
 \[\lim_{t\to -\infty} \vert u(t)\vert_{\rH}=0.\]
Since the embedding $D(\rA)\embed \rH$ is compact, by the last part of \cite[Corollary 2.8]{Brz+Gat_1999} with $\alpha=1$ and $q=2$, we infer that that for each $T>0$ the embedding $W^{1,2}(-T,0) \embed C([-T,0],\rH)$ is compact. Hence,  for each $T>0$ we can extract a subsequence strongly convergent in $C([-T,0],\rH)$. By the uniqueness of the limit we infer that the later limit is equal to the restriction of $u$ to the interval $[-T,0]$. In particular \delh{$u(0)=x$ and therefore} $u \in \mathcal{X}\delh{_x}$. Moreover, by  employing the Helly's diagonal procedure, we can find a subsequence of $\big\{u_n\big\}$ which is convergent in $\mathcal{X}$ to $u$ and, as $S_{-\infty}$ is lower semicontinuous, we have that $S_{-\infty}(u)\leq r$. This completes the proof of the compactness of the level sets of $S_{-\infty}$.
\end{proof}

\medskip

\section{The quasi-potential}\label{sec-quasi}

We define, for $\phi\in \rH$, the following  of $[0,\infty]$-valued functions
\begin{equation}
\label{eqn-B4}
U(\phi):=\inf\big\{S_{-T}(u):  T>0,\, u\in C([-T,0];\rH),\  \text{with}\ u(-T)=0,\ u(0)=\phi\big\},
\end{equation}
and for any $\delta \in\,(0,1]$
\begin{equation}
 \label{eqn-A4}
U_\delta(\phi):=\inf\big\{S^\delta_{-T}(u): T>0,\, u\in C([-T,0];\rH),\  \text{with}\ u(-T)=0,\ u(0)=\phi\big\}.
\end{equation}
Note that with our notation $U=U_0$.

As a consequence of Lemma \ref{lem-clar}
 and Lemma \ref{lem-clar-2}, we have the following result.

 \begin{proposition}\label{cor-equiv} {We have}
 \begin{equation}
 \label{ops1}
  U(\phi)<\infty \Longleftrightarrow \phi\in \rV.
  \end{equation}
Moreover, if Assumption  \ref{ass-Q} is satisfied for some $\beta \in (0,\frac12)$, then  we have
\begin{equation}
\label{ops2} U_\delta(\phi)<\infty \Longleftrightarrow \phi\in D(\rA^{\frac{\beta+1}2}).
\end{equation}

\end{proposition}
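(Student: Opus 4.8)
The statement is a pair of equivalences, and my plan is to prove each by treating the two implications separately, using the time-translation invariance of the skeleton equation \eqref{eqn_NSE01}, so that Lemmas~\ref{lem-clar} and~\ref{lem-clar-2}, stated on $[0,T]$, hold verbatim on $[-T,0]$ after a shift; on such an interval their conclusions concern the right endpoint $t=0$. At the outset I would record the elementary reduction for the $\delta$-case: since by Assumption~\ref{ass-Q} the operator $Q_\delta^{-1}$ is an isomorphism of $D(\rA^{\frac\beta2})$ onto $\rH$, the quantity $|Q_\delta^{-1}g|_\rH$ is equivalent to $|g|_{D(\rA^{\frac\beta2})}$, so that for a competitor $u$ one has $S^\delta_{-T}(u)<\infty$ if and only if $\cH(u)\in L^2(-T,0;D(\rA^{\frac\beta2}))$. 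Note also that the admissible set is always nonempty, so finiteness of the infimum is equivalent to the existence of one finite-action competitor.

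For the forward implications I would argue as follows. If $U(\phi)<\infty$, then there is a competitor $u\in C([-T,0];\rH)$ with $u(-T)=0$, $u(0)=\phi$ and $S_{-T}(u)<\infty$, i.e. $\cH(u)\in L^2(-T,0;\rH)$; Lemma~\ref{lem-clar} on $[-T,0]$ then gives $\phi=u(0)\in\rV$. Identically, if $U_\delta(\phi)<\infty$ there is a competitor with $\cH(u)\in L^2(-T,0;D(\rA^{\frac\beta2}))$ by the reduction above, and Lemma~\ref{lem-clar-2} with $\alpha=\beta\in(0,\frac12)$ yields $\phi=u(0)\in D(\rA^{\frac{\beta+1}2})$.

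The real content is in the two backward implications, where a \emph{single} finite-time path of finite action joining $0$ to $\phi$ must be produced. A naive interpolation $t\mapsto\frac{t+T}{T}\phi$ is useless, because for $\phi\in\rV\setminus D(\rA)$ the terms $\rA\phi$ and $\rB(\phi,\phi)$ need not lie in $\rH$; the remedy is to borrow regularity from the smoothing of the Navier--Stokes flow. Assuming $\phi\in\rV$, let $v=u_\phi$ solve \eqref{brc80}; by Corollary~\ref{cor-u-nfty-V}, $v\in C([0,\infty);\rV)\cap L^2_{\mathrm{loc}}(0,\infty;D(\rA))$ with $|v(t)|_\rV\to0$, and by \eqref{brc82} I may fix $T>0$ with $\psi:=v(T)\in D(\rA)$. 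I then define $u$ on $[-2T,0]$ by $u(t)=v(-t)$ on $[-T,0]$ and $u(t)=\frac{t+2T}{T}\psi$ on $[-2T,-T]$; the pieces match at $-T$, so $u\in C([-2T,0];\rH)$ with $u(-2T)=0$, $u(0)=\phi$. On $[-T,0]$ the reversal turns \eqref{brc80} into $\cH(u)=2\rA u+2\rB(u,u)$, which lies in $L^2(-T,0;\rH)$ since $v\in L^2(0,T;D(\rA))$ and $\rB(v,v)\in L^2(0,T;\rH)$ by Lemma~\ref{lem-B}; on $[-2T,-T]$ one has $\cH(u)=\frac1T\psi+\frac{t+2T}{T}\rA\psi+\big(\frac{t+2T}{T}\big)^2\rB(\psi,\psi)$, which is in $L^2$ because $\psi\in D(\rA)$ and $\rB(\psi,\psi)\in\rH$ by \eqref{ineq-B01}. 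Hence $S_{-2T}(u)<\infty$ and $U(\phi)<\infty$.

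For $U_\delta$ I would run the same construction with one order more regularity. Given $\phi\in D(\rA^{\frac{\beta+1}2})$, Proposition~\ref{prop-NSE-fractional} (with $\alpha=\beta$, $f=0$) provides a decaying $v\in C([0,\infty);D(\rA^{\frac{\beta+1}2}))\cap L^2(0,\infty;D(\rA^{1+\frac\beta2}))$, so I may fix $T$ with $\psi:=v(T)\in D(\rA^{1+\frac\beta2})$ and use the same two-piece path $u$. Now $\cH(u)\in L^2(-2T,0;D(\rA^{\frac\beta2}))$: on the reversed piece $\rA u\in L^2(D(\rA^{\frac\beta2}))$ and the nonlinear term is controlled by the fractional estimate \eqref{ineqn-B-fractional} (with $s=2$) together with $u\in C(D(\rA^{\frac{\beta+1}2}))\cap L^2(D(\rA))$, while the affine piece is handled as before since $\psi$, $\rA\psi$ and $\rB(\psi,\psi)$ all belong to $D(\rA^{\frac\beta2})$. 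By the reduction this gives $S^\delta_{-2T}(u)<\infty$, hence $U_\delta(\phi)<\infty$. The only genuinely delicate step is this backward construction: reconciling the exact endpoint $u(-2T)=0$ with the limited regularity of the target $\phi$, which is exactly what the regularizing and decaying free Navier--Stokes flow supplies, the short affine segment absorbing the now-regular connection to the origin.
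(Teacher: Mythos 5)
Your proof is correct, and the forward implications coincide with the paper's (both rest on Lemma \ref{lem-clar-2}, respectively Lemma \ref{lem-clar}, applied on a shifted interval). Where you genuinely diverge is in the backward direction. The paper constructs the finite-action competitor abstractly: it invokes the Lions--Magenes surjectivity of the trace map $W^{1,2}(-T,0;D(\rA^{1+\frac\beta2}),D(\rA^{\frac\beta2}))\ni v\mapsto v(0)\in D(\rA^{\frac{1+\beta}2})$ to produce a path $u_1$ ending at $\phi$, checks via Proposition \ref{prop-Leray-fractional-alpha} that $\cH(u_1)\in L^2(-T,0;D(\rA^{\frac\beta2}))$, and then joins $0$ to a point $u_1(t_0)\in D(\rA^{1+\frac\beta2})$ (which exists for a.e.\ $t_0$) by a linear segment. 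You instead obtain the path ending at $\phi$ dynamically, as the time reversal of the free Navier--Stokes flow started at $\phi$, exploiting the parabolic smoothing in Corollary \ref{cor-u-nfty-V} and Proposition \ref{prop-NSE-fractional} to reach a regular point $\psi$, and then use the same linear-segment device; the identity $\cH(u)=2\rA u+2\rB(u,u)$ on the reversed piece, together with Lemma \ref{lem-B} and \eqref{ineqn-B-fractional}, gives the required integrability. Both arguments are complete. The paper's route is shorter and purely functional-analytic (and reuses the same lifting in Lemma \ref{prop-control}); yours avoids the trace-surjectivity theorem at the cost of the NSE regularity estimates (which the paper needs anyway), and has the side benefit of producing a competitor whose action is explicitly $2\int|\rA u+\rB(u,u)|_\rH^2$ on the reversed piece, which is precisely the construction reused later in Theorem \ref{thm-Vper} and Lemma \ref{lem1}.
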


\begin{proof}
We   prove \eqref{ops2}, as \eqref{ops1} turns out to be a special case, corresponding to the case $\beta=0$. Assume that $U_\delta(\phi)<\infty$.
Then, according to   \eqref{eqn-A4} we can find $T>0$ and $u\in C([-T,0];\rH)$ such that $u(-T)=0$, $u(0)=\phi$ and
$$u^\prime+\rA u+\rB(u,u)\in L^2(-T,0;D(\rA^{\frac \beta 2})).$$
Hence, by   Lemma \ref{lem-clar-2} we infer that $\phi\in\,D(\rA^{\frac{1+\beta}2})$.
\\
Conversely, let us assume that $\phi\in D(\rA^{\frac{1+\beta}2})$. Since for any $T>0$ the map
\[W(-T,0;D(\rA^{1+\frac{\beta}2}),D(\rA^{\frac{\beta}2}))\ni v\mapsto v(0)\in D(\rA^{\frac{1+\beta}2})\] is  surjective, see \cite[Theorem 3.2, p.21 and Remark 3.3, p.22]{Lions+Magenes_1972} for a proof,  we can find $u_1\in W(-T,0;D(\rA^{1+\frac{\beta}2}),D(\rA^{\frac{\beta}2}))$ such that $u_1(0)=\phi$.  By Proposition \ref{prop-Leray-fractional-alpha}, we infer that $u_1^\prime+Au_1+B(u_1,u_1) \in\,L^2(-T,0;D(\rA^{\frac{\beta}2}))$. Moreover,  there exists $t_0 \in\,(-T,0)$ such that $u_1(t_0) \in\,D(\rA^{1+\frac{\beta}2})$. This means that if we define
\[u_2(t):=\frac{t+T}{t_0+T} u_1(t_0),\ \ \ \ t \in\,[-T,t_0],\]
we have that $u_2(-T)=0$, $u_2 \in\,W(-T,t_0;D(\rA^{1+\frac{\beta}2}),D(\rA^{\frac{\beta}2}))$ and $u_2^\prime+Au_2+B(u_2,u_2) \in\,L^2(-T,t_0;D(\rA^{\frac{\beta}2}))$.
Finally, if we define
\[u(t):=\begin{cases}
u_2(t),  &  t \in\,[-T,t_0]\\
u_1(t),  &  t \in\,[t_0,0],
\end{cases}\]
we can conclude that
$S^\delta_{-T}(u)<\infty$.

\end{proof}

Now we can prove the following crucial characterization of the functionals $U_\delta$ and $U$.

\begin{theorem}
\label{Fact 1} For any $\phi \in\,\rV$ we have
\begin{equation}
\label{eqn-B1}
U(\phi):= \min\big\{S_{-\infty}(u): u\in \mathcal{X}_\phi\big\}.
\end{equation}
Analogously, if Assumption  \ref{ass-Q} is satisfied for some $\beta \in (0,\frac12)$, then
 for any  $\delta\in (0,1]$ and $\phi \in D(\rA^{\frac{1+\beta}2})$ we have
\begin{equation}
\label{eqn-A1}
U_\delta(\phi):= \min\big\{S^\delta_{-\infty}(u): u\in \mathcal{X}_\phi\big\}.
\end{equation}
\end{theorem}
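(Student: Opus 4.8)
The plan is to prove the identity \eqref{eqn-A1} for general $\delta\in(0,1]$ (the identity \eqref{eqn-B1} being the special case $\delta=0$, $\beta=0$, where $Q_0=I$ and $D(\rA^{\beta/2})=\rH$), splitting it into two inequalities and then upgrading the infimum to a minimum. Throughout I fix $\delta\in(0,1]$ and $\phi\in D(\rA^{\frac{1+\beta}2})$, and abbreviate $\tilde U_\delta(\phi):=\inf\{S^\delta_{-\infty}(u):u\in\mathcal{X}_\phi\}$; by Proposition \ref{cor-equiv} we know $U_\delta(\phi)<\infty$. For the easy inequality $\tilde U_\delta(\phi)\le U_\delta(\phi)$ I would take any admissible competitor for $U_\delta(\phi)$, i.e. $T>0$ and $u\in C([-T,0];\rH)$ with $u(-T)=0$, $u(0)=\phi$ and $S^\delta_{-T}(u)<\infty$, and extend it by the constant value $0$ for $t\le-T$. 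Since $0$ is the solution of \eqref{eqn_NSE01} with vanishing external force and $u(-T)=0$, the extension is continuous, lies in $\mathcal{X}_\phi$, and (by a routine concatenation of the very weak formulation across $t=-T$) is a very weak solution on $(-\infty,0]$ whose forcing $\mathcal{H}(u)$ vanishes on $(-\infty,-T)$. Hence $S^\delta_{-\infty}(u)=S^\delta_{-T}(u)$, giving $\tilde U_\delta(\phi)\le S^\delta_{-T}(u)$; taking the infimum over all competitors yields $\tilde U_\delta(\phi)\le U_\delta(\phi)$.

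The crux is the reverse inequality $U_\delta(\phi)\le\tilde U_\delta(\phi)$. Here I fix $u\in\mathcal{X}_\phi$ with $S^\delta_{-\infty}(u)<\infty$ and, by Lemma \ref{lem-clar-2} (with $\alpha=\beta$) together with Proposition \ref{prop-inftybis}, record that $u\in W^{1,2}(-\infty,0;D(\rA^{1+\frac\beta2}),D(\rA^{\frac\beta2}))$; in particular $\int_{-\infty}^0|\rA^{1+\frac\beta2}u(t)|_\rH^2\,dt<\infty$. A standard averaging argument then produces times $T_n\to+\infty$ such that $\xi_n:=u(-T_n)\in D(\rA^{1+\frac\beta2})$ with $|\rA^{1+\frac\beta2}\xi_n|_\rH\to0$; since $\rA\ge\lambda_1>0$ by \eqref{ineq-Poincare-A}, the boundedness of the negative fractional powers of $\rA$ gives at once $|\rA\xi_n|_\rH\to0$, $|\rA^{\frac{1+\beta}2}\xi_n|_\rH\to0$ and $|\xi_n|_{D(\rA^{\frac\beta2})}\to0$. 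I then glue a short linear bridge to the origin: on $[-T_n-1,-T_n]$ set $v_n(t):=(t+T_n+1)\,\xi_n$, and define $u_n:=v_n$ there and $u_n:=u$ on $[-T_n,0]$, so that $u_n\in C([-T_n-1,0];\rH)$, $u_n(-T_n-1)=0$, $u_n(0)=\phi$, and
\[S^\delta_{-T_n-1}(u_n)=S^\delta_{-T_n-1,-T_n}(v_n)+S^\delta_{-T_n,0}(u).\]
Writing $\tau=t+T_n+1\in[0,1]$ on the bridge, one has $\mathcal{H}(v_n)=\xi_n+\tau\,\rA\xi_n+\tau^2\,\rB(\xi_n,\xi_n)$, and by \eqref{ineqn-B-fractional} (with $s=2$, $\alpha=\beta$) $|\rB(\xi_n,\xi_n)|_{D(\rA^{\frac\beta2})}\le c\,|\rA\xi_n|_\rH\,|\rA^{\frac{1+\beta}2}\xi_n|_\rH\to0$. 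As $Q_\delta^{-1}$ is bounded from $D(\rA^{\frac\beta2})$ to $\rH$ (Assumption \ref{ass-Q}), all three terms of $Q_\delta^{-1}\mathcal{H}(v_n)$ tend to $0$ uniformly on $[0,1]$, so $S^\delta_{-T_n-1,-T_n}(v_n)\to0$, while $S^\delta_{-T_n,0}(u)\to S^\delta_{-\infty}(u)$ by monotone convergence. Therefore $U_\delta(\phi)\le S^\delta_{-T_n-1}(u_n)\to S^\delta_{-\infty}(u)$, and taking the infimum over $u\in\mathcal{X}_\phi$ gives $U_\delta(\phi)\le\tilde U_\delta(\phi)$.

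Combining the two bounds yields $U_\delta(\phi)=\tilde U_\delta(\phi)<\infty$, and it remains to see the infimum is attained. For this I would take a minimizing sequence $\{u_n\}\subset\mathcal{X}_\phi$; for large $n$ it lies in a sublevel set $\{S^\delta_{-\infty}\le r\}$, which is compact in $\mathcal{X}$ by Proposition \ref{cor-compact}. Extracting a subsequence $u_{n_k}\to u_\ast$ in $\mathcal{X}$, the limit lies in the closed set $\mathcal{X}_\phi$, and the lower semicontinuity from Proposition \ref{Fact 3} gives $S^\delta_{-\infty}(u_\ast)\le\liminf_k S^\delta_{-\infty}(u_{n_k})=\tilde U_\delta(\phi)$, so $u_\ast$ is a minimizer. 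The case $\delta=0$ is entirely analogous, with $\beta=0$, $D(\rA^{\frac\beta2})=\rH$ and $Q_0^{-1}=I$, invoking Lemma \ref{lem-clar} and Proposition \ref{prop-infty} and using \eqref{ineq-B01} in place of \eqref{ineqn-B-fractional} to estimate the bilinear term on the bridge. The step I expect to be the main obstacle is precisely the bridge construction in the second inequality: one must select the gluing times $-T_n$ so that $u(-T_n)$ is simultaneously regular enough (lying in $D(\rA^{1+\frac\beta2})$) and small enough there for $Q_\delta^{-1}\mathcal{H}(v_n)$ to be controlled in $\rH$. This is exactly what the $L^2(-\infty,0;D(\rA^{1+\frac\beta2}))$-regularity delivers through the averaging argument, with the fractional estimate \eqref{ineqn-B-fractional} taming the nonlinearity.
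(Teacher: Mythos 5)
Your proposal is correct, and its overall skeleton (extension by zero for the inequality $\inf\le U_\delta(\phi)$, truncation plus a short bridge to the origin for the reverse inequality, compactness of level sets for attainment) matches the paper's. The genuine difference is in how the bridge is built. The paper proves an abstract control lemma (Lemma \ref{prop-control}): using the Lions--Magenes lifting $R:D(\rA^{\frac{1+\beta}2})\to W^{1,2}(0,T;D(\rA^{1+\frac\beta2}),D(\rA^{\frac\beta2}))$ and the continuity of $\mathcal{H}_T\circ R$, it shows that any endpoint $y$ small merely in $D(\rA^{\frac{1+\beta}2})$ can be reached from $0$ with small action; the gluing time is then selected via the decay $\lim_{t\to-\infty}\vert u(t)\vert_{D(\rA^{\frac{1+\beta}2})}=0$ from Proposition \ref{prop-inftybis}. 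You instead write down the explicit linear bridge $v_n(t)=(t+T_n+1)\,\xi_n$ and compute $\mathcal{H}(v_n)=\xi_n+\tau\rA\xi_n+\tau^2\rB(\xi_n,\xi_n)$ directly; this forces you to control $\rA\xi_n$ in $D(\rA^{\frac\beta2})$, i.e.\ you need $\xi_n=u(-T_n)$ small in the \emph{stronger} norm of $D(\rA^{1+\frac\beta2})$, which you correctly extract by a mean-value selection from $\int_{-\infty}^0\vert\rA^{1+\frac\beta2}u\vert_\rH^2\,dt<\infty$ rather than from the uniform decay. Your route is more elementary and self-contained (no lifting theorem, no continuity of the composed map), at the cost of requiring the full $L^2(-\infty,0;D(\rA^{1+\frac\beta2}))$ regularity at the gluing times; the paper's version is more robust in that it only needs smallness of the trace in $D(\rA^{\frac{1+\beta}2})$, which is why it is stated as a separate reusable lemma (it is invoked again, e.g.\ in the exit-time analysis). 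The one point you pass over quickly, as does the paper, is that the concatenated function is a very weak solution on the whole interval with forcing equal to the concatenated forcings; since both pieces lie in $W^{1,2}$ of the appropriate spaces and agree at the interface, this is the standard gluing fact the paper delegates to \cite{Brz_1991}, and your argument is complete once that is granted.
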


\begin{proof}
We prove \eqref{eqn-A1}, as \eqref{eqn-B1} is a special case, corresponding to $\beta=0$ in Assumption \ref{ass-Q}. Let us fix $T>0$ and $u\in C([-T,0];\rH)$ such that $u(-T)=0$, $u(0)=\phi$ and $S^\delta_{-T}(u)<\infty$ and let us define
\begin{equation}
\label{eqn-bar-u}
\bar{u}(t):= \begin{cases}u(t),& \mbox{if } t \in [-T,0],\\
                            0, & \mbox{if } t \in (-\infty,-T].
\end{cases}
\end{equation}
Obviously, $\bar{u}\in \mathcal{X}_\phi$. We will prove that
\begin{equation}
\label{eqn-*1}
S^\delta_{-\infty}(\bar{u})=S^\delta_{-T}(u)
\end{equation}
Since $S^\delta_{-T}(u)<\infty$,  the function $u$ satisfies the assumptions of Lemma \ref{lem-clar-2}. Therefore, $\phi\in D(\rA^{\frac{1+\beta}2})$ and   $u$ belongs to $W^{1,2}(-T,0;D(\rA^{1+\frac \beta 2}),D(\rA^{\frac \beta 2}))$. Since obviously the zero function is an elements of the space $W^{1,2}(-\infty,-T;D(\rA^{1+\frac \beta 2}),D(\rA^{\frac \beta 2}))$, we infer, see for instance \cite{Brz_1991},  that
\[\bar{u} \in\,W^{1,2}(-\infty,0;D(\rA^{1+\frac \beta 2}),D(\rA^{\frac \beta 2})),\] and  \eqref{eqn-*1} holds.
In particular, this implies  that
$$ \inf\big\{S^\delta_{-\infty}(u): u\in \mathcal{X}_\phi\big\}\leq S^\delta_{-T}(u).$$
Taking now the infimum over all $u$ as above, in view of the definition of $U_\delta(\phi)$  we infer that
$$ \inf\big\{S^\delta_{-\infty}(u): u\in \mathcal{X}_\phi\big\}\leq  U_\delta(\phi).$$

\medskip

It  remains to prove the converse inequality. To this purpose, we will need the following two results, whose proofs are postponed to the end of this section.

\begin{lemma}\label{prop-control} For every  $\delta \in\,[0,1]$, $T>0$ and $\eps>0$, there exists $\eta>0$  such that  for any  $y\in D(\rA^{\frac{1+\beta}2})$ such that  $\vert y\vert_{D(\rA^{\frac{1+\beta}2})} <\eta$, we can find
\[v \in\,W(0,T;D(\rA^{1+\frac \beta 2}),D(\rA^{\frac \beta 2}))\]
with
\begin{equation*}
S^\delta_{T}(v)< \eps,\ \ \ v(0)=0,\ \ v(T)=y.
\end{equation*}
\end{lemma}
Recall that in the case $\delta=0$, we have $S^\delta_{T}=S_{T}$ and we take $\beta=0$.

\begin{lemma}
\label{lem-claim1}
Assume that $u\in \mathcal{X}$. Then for each $\delta \in\,[0,1]$ and $\eps>0$ we can find $T_\eps>0$ and $v_\eps\in C([-T_\eps,0];\rH)$ such that  $v_\eps(-T_\eps)=0$,  $v_\eps(0)=u(0)$ and
$$
S^\delta_{-T_\eps}(v_\eps) \leq S^\delta_{-\infty}(u)+\eps.$$
\end{lemma}

\medskip

Thus, let us prove
\begin{equation}
\label{brc73}
U_\delta (\phi)\leq \inf\{ S^\delta_{-\infty}(u): u\in \mathcal{X}_\phi\}.
\end{equation}
Obviously, we may assume that the right hand side above is finite and so we can find  $u\in \mathcal{X}_\phi$ such that $S^\delta_{-\infty}({u})<\infty$.
In view of Lemma \ref{lem-claim1}, for any $\eps>0$,
$$\inf \big\{S^\delta_{-T} (v): T>0, \, v \in C([-T,0],\rH),\ v(-T)=0,\ v(0)=\phi\big\} \leq S^\delta_{-\infty}(u)+\eps.$$
This implies that $U_\delta(\phi)\leq S^\delta_{-\infty}(u)+\eps$. Thus, by taking the infimum over $\eps>0$ and then over all admissible $u$ we get \eqref{brc73}.
Finally, we remark that the infima are in fact minima, as the level sets of $S_{-\infty}$ and $S^\delta_{-\infty}$ are compact (see Proposition \ref{cor-compact}).

This completes the proof of \eqref{eqn-A1}, provided we can prove Lemmas \ref{prop-control} and  \ref{lem-claim1}.
\end{proof}

\medskip

\begin{proof}[Proof of Lemma \ref{prop-control}]
Let us fix $T>0$ and consider the mapping

\begin{equation*}
\mathcal{H}_T:  W^{1,2}(0,T;D(\rA^{1+\frac \beta 2}),D(\rA^{\frac \beta 2}))\ni v\mapsto v^\prime+\rA v+\rB(v,v)\in L^2(0,T;D(\rA^\frac \beta 2)).
\end{equation*}
Due to Lemma \ref{prop-Leray-fractional-alpha}, the mapping  $\mathcal{H}_{T}$ is well defined and  continuous. Moreover
\begin{equation}
\label{brc25}
S^\delta_{T}(v)\leq c\vert \mathcal{H}_T(v)\vert_{L^2(0,T;D(\rA^{\frac \beta 2}))}, \;\; v\in W^{1,2}(0,T;D(\rA^{1+\frac \beta 2}),D(\rA^{\frac \beta 2})).\end{equation}

Now, by proceeding as in  \cite[Remark 3.3, p. 22]{Lions+Magenes_1972} we can show that there exists a continuous linear map
\[R:D(\rA^{\frac{1+\beta}2}) \to   W^{1,2}(0,T;D(\rA^{1+\frac \beta 2}),D(\rA^{\frac \beta 2})),\]
 such that $[Ry](T)=y$ for every $y\in D(\rA^{\frac{1+\beta}2})$. By using the same augments used in the proof of Proposition \ref{cor-equiv}, we can construct $Ry$ such that $Ry(0)=0$. Thus the map
\[ \mathcal{H}_T \circ R: D(\rA^{\frac{1+\beta}2})\to L^2(0,T;D(\rA^{\frac{\beta}2}))\] is continuous and then for every $\eps>0$ we can find $\eta>0$ such that
\[\vert y\vert_{D(\rA^{\frac{1+\beta}2})}<\eta\Longrightarrow \vert \mathcal{H}_{T} (R y)\vert_{L^2(0,T;D(\rA^{\frac \beta 2})}<\frac{\eps}c.\] Since  $v=Ry$ satisfies $v\in W^{1,2}(0,T;D(\rA^{1+\frac \beta 2}),D(\rA^{\frac \beta 2}))$, $v(0)=0$ and $v(T)=y$, due to \eqref{brc25} the proof is complete.
\end{proof}

\medskip

\begin{proof}[Proof of Lemma \ref{lem-claim1}] We give the proof here for $\delta>0$, as $\delta=0$ is a special case. Let us assume that   $u\in \mathcal{X}_\phi$ for some  $\phi\in H$, and fix  $\eps>0$. We can assume  that $S^\delta_{-\infty}(u)<\infty$.   Then by \eqref{eqn-A5}   we can find $T_\eps>0$ such that
$$S^\delta_{-\infty,-T_\eps}(u)<\frac\eps3.$$
Moreover, the function $u$ satisfies the assumptions of Lemma \ref{lem-clar-2}. Therefore, $\phi\in D(\rA^{\frac{1+\beta}2})$ and   $u$ belongs to $W_{\mathrm{loc}}^{1,2}(-\infty,0;D(\rA^{1+\frac \beta 2}),D(\rA^{\frac \beta 2}))$. As a consequence of Proposition \ref{prop-inftybis}, this implies
\begin{equation}
\label{eqn-*3}
\lim_{t\to-\infty} \vert u(t)\vert_{D(\rA^{\frac{1+\beta}2})}=0.
\end{equation}
Then,  $T_\eps$ can be chosen in such a way that
$$ \vert u(-T_\eps)\vert_{D(\rA^{\frac{1+\beta}2})} < \eta,$$
where we  choose $\eta>0$ as in Lemma \ref{prop-control}, corresponding to $T=1$ and  $\frac\eps3$.
Then by Lemma \ref{prop-control},  we can find   $w\in W^{1,2}(-T_\eps-1,-T_\eps;D(\rA^{1+\frac \beta 2}),D(\rA^{\frac \beta 2}))$ such that
\begin{equation*}
S^\delta_{-T_\eps-1,-T_\eps}(w)< \frac\eps3,\ \ \ w(-T_\eps-1)=0,\ \ w(-T_\eps)=u(-T_\eps).
\end{equation*}
Next,  we define
\begin{equation}
\label{eqn-v}
\bar{u}(t):= \begin{cases} u(t),& \mbox{if } t \in [-T_\eps,0],\\
                            w(t), & \mbox{if } t \in [-T_\eps-1,-T_\eps].
\end{cases}
\end{equation}
Obviously, $\bar{u}(0)=\phi$ and $\bar{u}\in C([-T_\eps-1,0];\rH)$ and,   arguing as before (and hence using for instance  \cite{Brz_1991}), we infer that $\bar{u}\in W^{1,2}(-T_\eps-1,0;D(\rA^{1+\frac \beta 2}),D(\rA^{\frac \beta 2}))$.  Moreover,
\begin{eqnarray*}
S^\delta_{-T}(\bar{u})&=&S^\delta_{-T_\eps-1,-T_\eps}(w)+S^\delta_{-T_\eps}(u)\\
&<& \frac\eps3+\big[ S^\delta_{-\infty}(u)-S^\delta_{-\infty,-T_\eps}(u) \big]< \frac\eps3+ S^\delta_{-\infty}(u).
\end{eqnarray*}
This concludes the proof of Lemma \ref{lem-claim1}.
\end{proof}

\medskip

Next, we prove that both $U$ and $U^\delta$ have compact level sets.

\begin{proposition}\label{prop-lsc}
For any $r>0$ and $\delta \in\,(0,1]$, the sets
\[K_r=\{\phi\in \rH: U(\phi)\leq r\},\ \ \ \ K_r^\delta=\{\phi\in \rH: U_\delta(\phi)\leq r\}\]
are compact in $\rH$.
In particular, both  $U$ and $U^\delta$ are lower semi-continuous in $\rH$.

\end{proposition}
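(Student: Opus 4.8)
The plan is to realise each level set as the continuous image of a compact subset of $\mathcal{X}$, thereby reducing compactness in $\rH$ to the compactness of the level sets of $S_{-\infty}$ and $S^\delta_{-\infty}$ already established in Proposition \ref{cor-compact}. First I would introduce the evaluation map
\[\pi:\mathcal{X}\to \rH,\qquad \pi(u):=u(0),\]
and note that it is continuous for the metric $\rho$: if $\rho(u_n,u)\to 0$, then already the first term of the defining series forces $\sup_{s\in[-1,0]}|u_n(s)-u(s)|_\rH\to 0$, and in particular $|u_n(0)-u(0)|_\rH\to 0$.

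The heart of the argument is the identity
\[K_r=\pi\big(\{u\in\mathcal{X}:S_{-\infty}(u)\leq r\}\big).\]
For the inclusion $\subseteq$, if $\phi\in K_r$ then $U(\phi)\leq r<\infty$, so $\phi\in\rV$ by Proposition \ref{cor-equiv} and the characterisation of Theorem \ref{Fact 1} applies; crucially the infimum defining $U(\phi)$ is attained, so there is $u\in\mathcal{X}_\phi$ with $S_{-\infty}(u)=U(\phi)\leq r$ and $\phi=\pi(u)$. For the inclusion $\supseteq$, if $u\in\mathcal{X}$ has $S_{-\infty}(u)\leq r<\infty$, then $\phi:=u(0)\in\rV$ (a necessary condition for finiteness of $S_{-\infty}(u)$, via Lemma \ref{lem-clar}), hence $u\in\mathcal{X}_\phi$ and \eqref{eqn-B1} gives $U(\phi)\leq S_{-\infty}(u)\leq r$. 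The analogous identity for $K_r^\delta$ is proved verbatim, invoking Proposition \ref{cor-equiv} with the exponent $\frac{1+\beta}{2}$ and the minimality in \eqref{eqn-A1}.

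Granting this identity, the conclusion is immediate: by Proposition \ref{cor-compact} the set $\{u\in\mathcal{X}:S_{-\infty}(u)\leq r\}$ is compact in $\mathcal{X}$, and the continuous image under $\pi$ of a compact set is compact, so $K_r$ is compact in $\rH$; the same reasoning applied to $S^\delta_{-\infty}$ yields compactness of $K_r^\delta$. Finally, since $\rH$ is a metric space its compact subsets are closed, so all the level sets $K_r$ and $K_r^\delta$ are closed in $\rH$, which by Definition \ref{def1} is exactly the lower semi-continuity of $U$ and $U_\delta$.

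I do not anticipate a substantive obstacle, since the real work is carried by Proposition \ref{cor-compact} and Theorem \ref{Fact 1}; the one point that must be handled with care is the attainment of the infima defining $U$ and $U_\delta$, guaranteed by the ``$\min$'' in Theorem \ref{Fact 1}, as this is what allows one to produce, for each $\phi\in K_r$, a preimage $u\in\mathcal{X}_\phi$ lying in the level set of $S_{-\infty}$.
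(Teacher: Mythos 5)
Your proof is correct and follows essentially the same route as the paper: both arguments rest on the attainment of the minimum in Theorem \ref{Fact 1} to lift each $\phi$ in a level set of $U$ (resp.\ $U_\delta$) to an element of the corresponding level set of $S_{-\infty}$ (resp.\ $S^\delta_{-\infty}$), and then on the compactness of those level sets from Proposition \ref{cor-compact}. The only difference is cosmetic: you package the conclusion as ``continuous image under the evaluation map $u\mapsto u(0)$ of a compact set,'' whereas the paper runs the equivalent sequential-compactness argument explicitly, invoking the lower semi-continuity of $S_{-\infty}$ along the way.
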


\begin{proof}
Let $\{\phi_n\}$ be a sequence in $K_r$. In view of \del{Proposition} identity  \eqref{eqn-B1}, for any $n \in\,\mathbb{N}$ there exists $u_n \in\,\mathcal{X}_{\phi_n}$ such that
\[S_{-\infty}(u_n)=U(\phi_n)\leq r.\]
In particular,
\[\{u_n\}\subset \{S_{-\infty}\leq r\},\]
so that, thanks to the compactness of the level sets of $S_{-\infty}$ proved in Proposition \ref{cor-compact}, we {can find a subsequence} $\{u_{n_k}\}\subset \{u_n\}$ and $\bar{u} \in\,C((-\infty,0];\rH)$ such that
\[\lim_{k\to\infty}u_{n_k}=\bar{u},\ \ \ \ \text{in}\ C((-\infty,0];\rH).\]
This implies that
\[\del{\lim_{k\to\infty}\phi_{n_k}=}\lim_{k\to\infty}u_{n_k}(0)=\bar{u}(0),\]
and, due to the lower semi-continuity  of $S_{-\infty}$ proved in Proposition \ref{Fact 3},
\[S_{-\infty}(\bar{u})\leq \liminf_{k\to \infty} S_{-\infty}(u_{n_k})\leq r.\]
On the other hand,  by the definition of $U$,
$U(\bar{u}(0))\leq S_{-\infty}(\bar{u})$.
Hence we can conclude that $\bar{u}(0) \in\,K_r$, and the compactness of $K_r$ follows.

The compactness of the level sets \del{$K_r^\delta$} of $U_\delta$ can be proved analogously.

\end{proof}

\medskip

We conclude this section by studying the continuity of $U$ in $\rV$ and of $U_\delta$ in $D(\rA^{\frac{1+\beta}2})$.

\begin{proposition}
\label{brc505}
The maps $U:\rV\to \mathbb{R}$  and $U_\delta: D(\rA^{\frac{1+\beta}2})\to \mathbb{R}$ are continuous.
\end{proposition}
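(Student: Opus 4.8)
The lower semicontinuity of $U$ on $\rV$ and of $U_\delta$ on $D(\rA^{\frac{1+\beta}2})$ is already available from Proposition \ref{prop-lsc}, which gives lower semicontinuity in the $\rH$-topology; since $\rV\embed\rH$ and $D(\rA^{\frac{1+\beta}2})\embed\rH$ continuously, the same holds in the finer topologies. So the whole content of the statement is \emph{upper} semicontinuity, and the plan is to prove it by exhibiting, for any $\phi_n\to\phi$, near-optimal competitors for $\phi_n$ whose action converges to $U_\delta(\phi)$. I would carry this out for $U_\delta$ with $\delta\in(0,1]$ and $\beta\in(0,\frac12)$; the case of $U$ is recovered by setting $\beta=0$, $Q_\delta=I$ and replacing Proposition \ref{prop-Leray-fractional-alpha} by the elementary bound \eqref{ineq-B01}.

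Fix $\phi_n\to\phi$ in $D(\rA^{\frac{1+\beta}2})$ and set $w_n:=\phi_n-\phi\to0$. By Theorem \ref{Fact 1} the minimum in \eqref{eqn-A1} is attained, so I pick $u\in\mathcal{X}_\phi$ with $S^\delta_{-\infty}(u)=U_\delta(\phi)<\infty$. Since $Q_\delta:\rH\to D(\rA^{\frac\beta2})$ is an isomorphism (Assumption \ref{ass-Q}), finiteness of $S^\delta_{-\infty}(u)$ means $\cH(u)\in L^2(-\infty,0;D(\rA^{\frac\beta2}))$, and Lemma \ref{lem-clar-2} (on, say, $[-2,0]$) then gives $u\in C([-1,0];D(\rA^{\frac{1+\beta}2}))\cap L^2(-1,0;D(\rA^{1+\frac\beta2}))$. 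The competitors $u_n\in\mathcal{X}_{\phi_n}$ will be built by correcting $u$ only on $[-1,0]$. The correction must be regular enough that $Q_\delta^{-1}\cH(u_n)$ remains in $L^2(\rH)$, and here a crude affine interpolation fails because $\rA w_n$ need not lie in $D(\rA^{\frac\beta2})$. Instead I would use the continuous right inverse of the trace map from \cite[Remark 3.3, p.22]{Lions+Magenes_1972}: a bounded linear $R:D(\rA^{\frac{1+\beta}2})\to W^{1,2}(-1,0;D(\rA^{1+\frac\beta2}),D(\rA^{\frac\beta2}))$ with $[Ry](0)=y$. Choosing a smooth scalar cutoff $\theta$ with $\theta(0)=1$, $\theta(-1)=0$ and setting $\psi_n:=\theta\,(Rw_n)$, one gets $\psi_n(0)=w_n$, $\psi_n(-1)=0$, and $\psi_n\to0$ in $W^{1,2}(-1,0;D(\rA^{1+\frac\beta2}),D(\rA^{\frac\beta2}))$, hence (by the trace embedding into $C([-1,0];D(\rA^{\frac{1+\beta}2}))$) uniformly in $D(\rA^{\frac{1+\beta}2})$. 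Define $u_n:=u$ on $(-\infty,-1]$ and $u_n:=u+\psi_n$ on $[-1,0]$, so $u_n\in\mathcal{X}_{\phi_n}$.

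It then remains to check $S^\delta_{-\infty}(u_n)\to S^\delta_{-\infty}(u)$. The two integrands agree on $(-\infty,-1]$, while on $[-1,0]$
\begin{equation*}
\cH(u_n)=\cH(u)+\big(\psi_n'+\rA\psi_n\big)+\rB(u,\psi_n)+\rB(\psi_n,u)+\rB(\psi_n,\psi_n).
\end{equation*}
The bracketed term tends to $0$ in $L^2(-1,0;D(\rA^{\frac\beta2}))$ directly from $\psi_n\to0$ in the $W^{1,2}$-space. For the bilinear terms I apply Proposition \ref{prop-Leray-fractional-alpha} with $\alpha=\beta$ and a fixed $s\in(1,1+\beta]$, estimating e.g.
\[
\int_{-1}^0|\rB(u,\psi_n)|^2_{D(\rA^{\frac\beta2})}\,dt\le c^2\Big(\sup_{t\in[-1,0]}|\psi_n(t)|^2_{D(\rA^{\frac{1+\beta}2})}\Big)\int_{-1}^0|u(t)|^2_{D(\rA^{\frac s2})}\,dt\longrightarrow 0,
\]
and symmetrically for $\rB(\psi_n,u)$ and $\rB(\psi_n,\psi_n)$ (the last quadratically small). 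Hence $\cH(u_n)\to\cH(u)$ in $L^2(-\infty,0;D(\rA^{\frac\beta2}))$, and since $Q_\delta^{-1}:D(\rA^{\frac\beta2})\to\rH$ is bounded, $S^\delta_{-\infty}(u_n)\to S^\delta_{-\infty}(u)=U_\delta(\phi)$. Therefore $\limsup_n U_\delta(\phi_n)\le\limsup_n S^\delta_{-\infty}(u_n)=U_\delta(\phi)$, which together with lower semicontinuity gives $\lim_n U_\delta(\phi_n)=U_\delta(\phi)$.

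The hard part is the construction in the second paragraph: the boundary perturbation $w_n$ has to be spread back in time along a path whose derivative and $\rA$-image both live in $D(\rA^{\frac\beta2})$, so that the inverse noise operator $Q_\delta^{-1}$ can be applied and the action stays finite. This is exactly why the continuous lifting $R$ of the trace operator, rather than a naive interpolation, is indispensable, and it is also where the bilinear estimate of Proposition \ref{prop-Leray-fractional-alpha} enters through its admissible range $s\le 1+\beta$ (which is what forces $\beta>0$ in the $U_\delta$ case, the $U$ case being handled separately via \eqref{ineq-B01}).
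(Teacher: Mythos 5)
Your proof is correct and follows the paper's overall strategy: lower semicontinuity is inherited from Proposition \ref{prop-lsc}, and upper semicontinuity is obtained by perturbing the minimizer $u \in\,\mathcal{X}_\phi$ furnished by Theorem \ref{Fact 1} into a competitor in $\mathcal{X}_{\phi_n}$ and passing to the limit in the action. The only genuine difference is the choice of corrector. The paper sets $u_n(t)=u(t)+e^{t\rA}(\phi_n-\phi)$ on the whole half-line $(-\infty,0]$, uses $\rV=(\rH,D(\rA))_{\frac12,2}$ to conclude that $u_n\to u$ in $W^{1,2}(-\infty,0)$, and then simply invokes the continuity of $S_{-\infty}$ on that space; you instead localize the correction to $[-1,0]$ via a cutoff times the Lions--Magenes trace lifting $R$ (the same device the paper employs in the proofs of Proposition \ref{cor-equiv} and Lemma \ref{prop-control}) and then verify by hand, through Proposition \ref{prop-Leray-fractional-alpha} (resp.\ the elementary bound \eqref{ineq-B01} when $\beta=0$), that the linear and bilinear error terms vanish in $L^2(-1,0;D(\rA^{\frac\beta2}))$. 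The two correctors are interchangeable, and both rely on the same attainment and gluing facts; your version has the added merit of making explicit the continuity of $\cH$, and hence of $S^\delta_{-\infty}$, on the relevant $W^{1,2}$ space --- a point the paper's one-line appeal to ``continuity of $S_{-\infty}$'' leaves implicit --- and of actually writing out the $U_\delta$ case that the paper dismisses as ``analogous''.
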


\begin{proof}
In the previous proposition we have seen that $U$ is lower semi-continuous in $\rH$. In particular, it is lower semi-continuous in $\rV$. Thus, is we prove that $U$ is also upper semi-continuous in $\rV$, we can conclude that it is continuous on $\rV$.

Let $\{\phi_n\}_{n \in\,\mathbb{N}}$ be a sequence in $\rV$ converging to some $\phi$ in $\rV$. As $\phi \in\,\rV$, according to Proposition \ref{cor-equiv} and  Theorem \ref{Fact 1}, there exists $u \in\,\mathcal{X}_\phi\cap W^{1,2}(-\infty,0)$ such that $U(\phi)=S_{-\infty}(u)$.
Now, we define
\[u_n(t)=u(t)+e^{t\rA}(\phi_n-\phi),\ \ \ \ t\leq0.\]

Clearly $u_n(0)=\phi_n$. Then, as $\phi_n-\phi \in\,\rV$, we have that $u_n \in\,\mathcal{X}_{\phi_n}\cap W^{1,2}(-\infty,0)$.
Moreover, as $\phi_n$ converges to $\phi$ in $\rV$ and $\rV=(\rH,D(\rA))_{\frac12,2}$, we infer  that $u_n$ converges to $u$ in $W^{1,2}(-\infty,0)$, so that
\[\lim_{n\to\infty}S_{-\infty}(u_n)=S_{-\infty}(u).\]
This allows to conclude that
\[U(\phi)=S_{-\infty}(u)=\lim_{n\to \infty} S_{-\infty}(u_n)\geq \limsup_{n\to\infty} U(\phi_n),\]
so that upper semi-continuity follows.

The proof of the continuity of {the map} $U_\delta: D(\rA^{\frac{1+\beta}2})\to \mathbb{R}$ is analogous.

\end{proof}

\section{Stochastic Navier Stokes equations with periodic boundary conditions}\label{sect-periodic}

All what we have discussed throughout the paper until now applies to the case when the Dirichlet boundary conditions are replaced by the periodic boundary conditions. In the latter case, it is customary to study our problem  in the $2$-dimensional torus $\mathbb{T}^2$ (of fixed dimensions $L\times L$), instead of a regular bounded domain $\mathcal{O}$. All the mathematical background can be found in the small book \cite{Temam_1983} by Temam. In particular, the space $\rH$ is equal to
\[\H=\{ u\in L_0^2(\mathbb{T}^2,\mathbb{R}^2): \divv (u)=0 \mbox{ and } \gamma_\nu(u)_{\vert \Gamma_{j+2}}=-\gamma_\nu(u)_{\vert \Gamma_{j}}, \; j=1,2\}, \]
where $L_0^2(\mathbb{T}^2,\mathbb{R}^2)$ is the Hilbert space consisting of those $u\in L^2(\mathbb{T}^2,\mathbb{R}^2)$ which satisfy $\int_{\mathbb{T}^2} u(x)\, dx=0$ and
$\Gamma_j$, $j=1,\cdots,4$ are the four (not disjoint) parts of the boundary of $\partial(\mathbb{T}^2)$ defined by
\[
\Gamma_j=\{ x=(x_1,x_2) \in [0,L]^2: x_j=0\},\;\Gamma_{j+2}=\{ x=(x_1,x_2) \in [0,L]^2: x_j=L\},\;\; j=1,2.
\]
The Stokes operator $\rA$ can be defined in a natural way and it satisfies all the properties know in the bounded domain case, inclusive the positivity property \eqref{ineq-Poincare-A} (with $\lambda_1=\frac{4\pi^2}{L^2}$) and the following property involving the nonlinear term $B$
\begin{eqnarray}\label{eqn-A-B}
\left<\rA u,B(u,u)\right>_\rH=0, \;\; u\in D(\rA),
\end{eqnarray}
see \cite[Lemma 3.1]{Temam_1983} for a proof.
The Leray-Helmholtz projection operator $P$ has the following explicit formula using the Fourier series, see   \cite[(2.13)]{Temam_1983}
\[
[P(f)]_k=\frac{L^2}{4\pi^2}\big(f_k-\frac{(k\cdot f_k)f_k}{\vert k\vert^2} \big),\;\ k
\in\mathbb{Z}^2\setminus\{0\},\; f=\sum_{n\in\mathbb{Z}^2\setminus\{0\}} f_n e^{\frac{2\pi i n\cdot x}{L}}\in L^2_0(\mathbb{T}^2,\mathbb{R}^2).
\]
It follows from the above that $P$ is a bounded linear map from $D(\rA^\alpha)$ to itself for every $\alpha\geq 0$, compare with Proposition \ref{prop-Leray-fractional} in the bounded domain case.

In  the next Theorem we will show that, in this case,  an explicit representation of $U(x)$ can be given, for any $x \in\,\rV$.

\begin{theorem}\label{thm-Vper}
Assume that periodic boundary conditions hold. Then
\[U(\phi)=\begin{cases}
|\phi|_\rV^2,  &\phi \in\,\rV,\\
+\infty,  &  \phi \in\,\rH\setminus \rV.
\end{cases}\]
\end{theorem}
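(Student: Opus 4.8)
The plan is to combine the variational characterization of $U$ from Theorem~\ref{Fact 1} with the orthogonality relation \eqref{eqn-A-B}, which is the feature special to the periodic setting. For $\phi \in \rH \setminus \rV$ there is nothing to do: by \eqref{ops1} in Proposition~\ref{cor-equiv} one has $U(\phi) = +\infty$. So I would fix $\phi \in \rV$ and, using \eqref{eqn-B1}, reduce the statement to showing that $\min\{ S_{-\infty}(u) : u \in \mathcal{X}_\phi \} = |\phi|_\rV^2$. The whole argument rests on one algebraic identity: if $u(t) \in D(\rA)$ for a.e.\ $t$, then expanding the square in the integrand of $S_{-\infty}$ and using that in the periodic case $(\rA u, \rB(u,u))_\rH = 0$ by \eqref{eqn-A-B}, the cross term $2(\rA u, \rB(u,u))_\rH$ drops, and comparison with the analogous expansion of $|u' - \rA u + \rB(u,u)|_\rH^2$ yields
$$|u' + \rA u + \rB(u,u)|_\rH^2 = |u' - \rA u + \rB(u,u)|_\rH^2 + 2\frac{d}{dt}|u(t)|_\rV^2,$$
where one uses $4(u', \rA u)_\rH = 2\frac{d}{dt}(\rA u, u)_\rH = 2\frac{d}{dt}|u|_\rV^2$ and the self-adjointness of $\rA$.

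For the lower bound I would take any $u \in \mathcal{X}_\phi$ with $S_{-\infty}(u) < \infty$ (otherwise the inequality is trivial). By Lemma~\ref{lem-clar} such a $u$ lies in $W^{1,2}(-\infty,0)$, so $u(t) \in D(\rA)$ a.e.\ and the identity is licit, while Proposition~\ref{prop--infty2} gives $\lim_{t\to-\infty}|u(t)|_\rV = 0$. Integrating over $(-\infty,0]$ and dividing by $2$ produces
$$S_{-\infty}(u) = \frac12 \int_{-\infty}^0 |u'(t) - \rA u(t) + \rB(u(t),u(t))|_\rH^2\, dt + |\phi|_\rV^2 \geq |\phi|_\rV^2,$$
the boundary term being $|u(0)|_\rV^2 - \lim_{t\to-\infty}|u(t)|_\rV^2 = |\phi|_\rV^2$. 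Taking the infimum over $\mathcal{X}_\phi$ gives $U(\phi) \geq |\phi|_\rV^2$.

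For the matching upper bound I would exhibit a path killing the remaining integral, i.e.\ a $u \in \mathcal{X}_\phi$ solving $u' - \rA u + \rB(u,u) = 0$. Writing $w(s) := u(-s)$, this amounts to solving the time-reversed equation
$$w'(s) + \rA w(s) - \rB(w(s),w(s)) = 0, \qquad w(0) = \phi, \quad s \geq 0,$$
with $w(s) \to 0$ as $s \to \infty$. Since $-\rB$ obeys the same estimates as $\rB$ and still satisfies $(\rB(w,w),w)_\rH = 0$ by \eqref{eqn-B02} and, in the periodic case, $(\rA w, \rB(w,w))_\rH = 0$ by \eqref{eqn-A-B}, the existence-uniqueness theory of Temam applies verbatim. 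The energy identities read
$$\frac{d}{ds}|w(s)|_\rH^2 = -2|w(s)|_\rV^2, \qquad \frac{d}{ds}|w(s)|_\rV^2 = -2|\rA w(s)|_\rH^2,$$
the second again using \eqref{eqn-A-B}. The first, with the Poincar\'e inequality \eqref{ineq-Poincare-A}, gives $|w(s)|_\rH^2 \leq e^{-2\lambda_1 s}|\phi|_\rH^2$ and global existence; the second shows $|w(s)|_\rV$ is nonincreasing and $\int_0^\infty |\rA w(s)|_\rH^2\, ds < \infty$, so along some $s_n \to \infty$ one has $|w(s_n)|_\rV^2 \leq |w(s_n)|_\rH\,|\rA w(s_n)|_\rH \to 0$, whence by monotonicity $\lim_{s\to\infty}|w(s)|_\rV = 0$. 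Then $u(t) = w(-t)$ lies in $\mathcal{X}_\phi \cap W^{1,2}(-\infty,0)$, annihilates the first integral above, and gives $S_{-\infty}(u) = |\phi|_\rV^2$, hence $U(\phi) \leq |\phi|_\rV^2$.

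The main obstacle is the upper bound: I must verify that the reversed problem is globally well-posed in the relevant energy spaces and that its solution relaxes to $0$ in $\rV$ (not just in $\rH$), so that the constructed $u$ genuinely belongs to $\mathcal{X}_\phi$ and carries enough regularity ($u \in L^2(-\infty,0;D(\rA))$ and $u' \in L^2(-\infty,0;\rH)$) to justify the algebraic identity and the integration by parts. The lower bound, by contrast, follows in one line from that identity once the decay at $-\infty$ is controlled.
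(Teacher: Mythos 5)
Your proposal is correct and follows essentially the same route as the paper: the same polarization identity based on the periodic orthogonality \eqref{eqn-A-B} yields the lower bound $S_{-\infty}(u)\geq |\phi|_\rV^2$ after integration, and the same time-reversed flow $w'+\rA w-\rB(w,w)=0$ furnishes the minimizer for the upper bound. The only cosmetic differences are that the paper invokes Proposition \ref{prop-infty} (rather than Lemma \ref{lem-clar}, which is stated on finite intervals) to get $u\in W^{1,2}(-\infty,0)$ and the decay of $|u(t)|_\rV$ as $t\to-\infty$, and that it delegates the global well-posedness and $\rV$-relaxation of the reversed equation to Section \ref{sec-prel}, where you instead write out the energy identities explicitly.
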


\begin{proof}
By  Theorem \ref{Fact 1}, we have that
\[U(\phi)=\min\,\left\{\,S_{-\infty}(u)\ :\ u \in\,\mathcal{X}_\phi\,\right\},\ \ \ \ \phi \in\,\rV,\]
and by Proposition \ref{cor-equiv} we have  that $U(\phi)<\infty$ if and only if $\phi \in\,\rV$.
Now, let us fix $\phi \in\,\rV$ and $u \in\,\mathcal{X}_\phi$ such that $S_{-\infty}(u)<\infty$. In view of Proposition \ref{prop-infty}, we have that
\[u \in\,C((-\infty,0];\rV)\cap L^2(-\infty,0;D(\rA)),\ \ \ \ u^\prime \in\,L^2(-\infty,0;\rH)\]
and
\begin{equation}
\label{brc74}
\lim_{t\to -\infty}|u(t)|_\rV=0.
\end{equation}
We have
\[\begin{array}{l}
|u^\prime(t)+\rA u(t)+\rB(u(t),u(t))|_\rH^2=|u^\prime(t)-\rA u(t)+\rB(u(t),u(t))|_\rH^2\\
\vspace{.1cm}\\
+4|\rA u(t)|_\rH^2+4\left<u^\prime(t)-\rA u(t)+\rB(u(t),u(t)),\rA u(t)\right>_\rH.
\end{array}\]
Then, thanks to \eqref{eqn-A-B} we get
\[|u^\prime(t)+\rA u(t)+\rB(u(t),u(t))|_\rH^2=|u^\prime(t)-\rA u(t)+\rB(u(t),u(t))|_\rH^2+4\left<u^\prime(t),\rA u(t)\right>_\rH.\]
According to \eqref{brc74}, this means that
\[\begin{array}{l}
S_{-\infty}(u)=\frac 12\int_{-\infty}^0|u^\prime(t)-\rA u(t)+\rB(u(t),u(t))|_\rH^2\,dt+\int_{-\infty}^0\frac{d}{dt}|u(t)|_\rV^2\,dt\\
\vspace{.1cm}\\
=\frac 12\int_{-\infty}^0|u^\prime(t)-\rA u(t)+\rB(u(t),u(t))|_\rH^2\,dt+|u(0)|_\rV^2.
\end{array}\]
In particular,
\[U(\phi)\geq |\phi|_\rV^2.\]
On the other hand, if we show that for any $\phi \in\,\rV$ there exists $\bar{u} \in\,W^{1,2}(-\infty,0)\cap \mathcal{X}_\phi$ such that
$\bar{u}^\prime(t)-\rA \bar{u}(t)+B(\bar{u}(t),\bar{u}(t))=0,$ for $t \in\,(-\infty,0)$, we conclude that $U(\phi)=|\phi|_\rV^2$.

As we have seen in Section \ref{sec-prel}, if $\phi \in\,\rV$ then the problem
\[\left\{\begin{array}{l}
v^\prime(t)+\rA v(t)-\rB(v(t),v(t))=0,\ \ \ t>0,\\
\vspace{.1cm}\\
v(0)=\phi,
\end{array}\right.\] admits a unique solution $v \in\,L^2(0,+\infty;D(\rA))\cap C([0,+\infty);\rV)$, with $v^\prime \in\,L^2(0,+\infty;\rH)$,  with
\[\lim_{t\to\infty}|v(t)|_\rV^2=0.\]
This means that if we define
\[\bar{u}(t)=v(-t),\ \ \ \ t\leq 0,\]
we can conclude our proof, as $\bar{u} \in\,W^{1,2}(-\infty,0)\cap \mathcal{X}_\phi$ and
$\bar{u}^\prime(t)-\rA \bar{u}(t)+B(\bar{u}(t),\bar{u}(t))=0$.

\end{proof}

We have already mentioned in the Introduction that  a  finite dimensional counterpart of Theorem  \eqref{thm-Vper} was first derived  in Theorem IV.3.1 in the monograph \cite{freidlin}. It has later been discussed in Example B.2 for finite dimensional Landau-Lifshitz-Gilbert equations by Kohn et al \cite{KRV}.

\section{Convergence of $U_\delta$ to $U$}\label{sec-conv}

Our aim in this section  is to prove Theorem \ref{thm-aim}, that is
\[\lim_{\delta \to 0}U_\delta(\phi)=U(\phi), \ \ \ \ \phi \in\,D(\rA^{\frac{\beta+1}2}).\]

To this purpose, we introduce an auxiliary  functional $\tilde{S}_{-\infty}: \mathcal{X}_\phi \to [0,\infty]$, where
$\phi \in\,D(\rA^{\frac{\beta+1}2})$ is fixed,  by the formula
\begin{equation}\label{eqn-St}
\tilde{S}_{-\infty}(v):=\begin{cases}
{S}_{-\infty}(v), & \mbox{ if } v \in \mathcal{X}_\phi \cap W^{1,2}\big(-\infty,0;D(\rA^{\frac{\beta}2+1}),D(\rA^{\frac{\beta}2})\big),\\
+ \infty, & \mbox{ if } v \in \mathcal{X}_\phi \setminus W^{1,2}\big(-\infty,0;D(\rA^{\frac{\beta}2+1}),D(\rA^{\frac{\beta}2})\big) .
\end{cases}
\end{equation}

\medskip

\begin{lemma}\label{Fact 4} Under Assumption \ref{ass-Q}, if $\phi\in\rH$, then
\begin{equation}\label{eqn-Fact 4}
\Gamma-\lim_{\delta\to 0} S^\delta_{-\infty} =\mathrm{sc}^{-}\tilde{S}_{-\infty} \mbox{ in } \mathcal{X}_\phi.
\end{equation}
\end{lemma}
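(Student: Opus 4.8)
The plan is to recognize \eqref{eqn-Fact 4} as a direct instance of Proposition \ref{prop3}: I will show that, for every fixed $u\in\mathcal{X}_\phi$, the family $\delta\mapsto S^\delta_{-\infty}(u)$ is non-decreasing in $\delta$ and converges, as $\delta\searrow 0$, precisely to $\tilde S_{-\infty}(u)$. Granting these two facts, I fix any sequence $\delta_n\searrow 0$; then $\{S^{\delta_n}_{-\infty}\}_{n\in\mathbb{N}}$ is a decreasing sequence of functions on the metric (hence first countable) space $\mathcal{X}_\phi$ converging pointwise to $\tilde S_{-\infty}$, so Proposition \ref{prop3} yields $\Gamma-\lim_{n\to\infty}S^{\delta_n}_{-\infty}=\mathrm{sc}^{-}\tilde S_{-\infty}$. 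Since this $\Gamma$-limit does not depend on the chosen sequence, \eqref{eqn-Fact 4} follows.

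Monotonicity is immediate from Assumption \ref{ass-Q}. Indeed, for $1\geq\sigma\geq\delta>0$ and $y\in D(\rA^{\frac{\beta}2})$, inequality \eqref{eqn-B12} gives $\vert Q_\sigma^{-1}y\vert_{\rH}\geq\vert Q_\delta^{-1}y\vert_{\rH}$; applying this pointwise with $y=\cH(u)(t)$ and integrating in $t$ shows $S^\sigma_{-\infty}(u)\geq S^\delta_{-\infty}(u)$, i.e. $\delta\mapsto S^\delta_{-\infty}(u)$ decreases as $\delta\searrow 0$.

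For the pointwise limit I distinguish two cases. If $u\notin W^{1,2}(-\infty,0;D(\rA^{\frac{\beta}2+1}),D(\rA^{\frac{\beta}2}))$, then by Lemma \ref{lem-clar-2} (applied on the relevant subintervals, as in Proposition \ref{Fact 3}) one has $\cH(u)\notin L^2(-\infty,0;D(\rA^{\frac{\beta}2}))$; since $Q_\delta^{-1}$ is an isomorphism from $D(\rA^{\frac{\beta}2})$ onto $\rH$, the function $Q_\delta^{-1}\cH(u)$ fails to lie in $L^2(-\infty,0;\rH)$ for every $\delta\in(0,1]$, whence $S^\delta_{-\infty}(u)=+\infty=\tilde S_{-\infty}(u)$ for all such $\delta$ and the limit is trivial. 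If instead $u\in W^{1,2}(-\infty,0;D(\rA^{\frac{\beta}2+1}),D(\rA^{\frac{\beta}2}))$, then $\cH(u)\in L^2(-\infty,0;D(\rA^{\frac{\beta}2}))$ and, by Assumption \ref{ass-Q}, $Q_\delta^{-1}\cH(u)(t)\to\cH(u)(t)$ in $\rH$ for almost every $t$ as $\delta\to 0$. Using \eqref{eqn-B12} with $\sigma=1$ we have the domination $\vert Q_\delta^{-1}\cH(u)(t)\vert_{\rH}^2\leq\vert Q_1^{-1}\cH(u)(t)\vert_{\rH}^2$, whose right-hand side is integrable because $Q_1^{-1}$ is bounded from $D(\rA^{\frac{\beta}2})$ to $\rH$ and $\cH(u)\in L^2(-\infty,0;D(\rA^{\frac{\beta}2}))$. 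The dominated convergence theorem then gives
\[\lim_{\delta\to 0}S^\delta_{-\infty}(u)=\frac12\int_{-\infty}^0\vert\cH(u)(t)\vert_{\rH}^2\,dt=S_{-\infty}(u)=\tilde S_{-\infty}(u).\]

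It is worth stressing where the two features of the statement come from, since this also pinpoints the only delicate step. The monotonicity expressed by \eqref{eqn-B12} is exactly the hypothesis that makes Proposition \ref{prop3} applicable, so no separate recovery-sequence ($\Gamma$-$\limsup$) construction is needed. The appearance of the relaxation $\mathrm{sc}^{-}$ rather than $\tilde S_{-\infty}$ itself reflects that $\tilde S_{-\infty}$ need not be lower semicontinuous: it is forced to $+\infty$ off the weighted space $W^{1,2}(-\infty,0;D(\rA^{\frac{\beta}2+1}),D(\rA^{\frac{\beta}2}))$, and it is precisely on this complement that the pointwise limit $\tilde S_{-\infty}$ genuinely differs from $S_{-\infty}$ (where the latter may still be finite). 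The main point requiring care is therefore the identification of the pointwise limit with $\tilde S_{-\infty}$; this rests on the equivalence, via the isomorphism property of $Q_\delta$ together with Lemma \ref{lem-clar-2} (and Proposition \ref{prop-Leray-fractional-alpha} for the converse inclusion), between finiteness of $S^\delta_{-\infty}(u)$ and membership of $u$ in that weighted $W^{1,2}$ space — an equivalence that holds simultaneously for every $\delta\in(0,1]$.
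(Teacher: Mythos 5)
Your proof is correct and follows essentially the same route as the paper: monotonicity in $\delta$ via \eqref{eqn-B12} plus identification of the pointwise limit with $\tilde S_{-\infty}$ (trivially in the non-regular case, by dominated convergence with majorant $\vert Q_1^{-1}\cH(u)(\cdot)\vert_{\rH}^2$ otherwise), and then Proposition \ref{prop3}. The only differences are cosmetic — you spell out why $S^\delta_{-\infty}(u)=+\infty$ off the weighted $W^{1,2}$ space (the paper asserts this; the cleanest reference is Proposition \ref{prop-inftybis} rather than Lemma \ref{lem-clar-2} on subintervals) and you note the passage from sequences to the continuum parameter, both of which the paper leaves implicit.
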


\begin{proof}
According to Proposition \ref{prop3}, the proof of \eqref{eqn-Fact 4} follows, once we show that for any $u \in \fourIdx{}{}{}{\phi}\cX $ the function
\[(0,1]\ni \delta \mapsto S^\delta_{-\infty}(u)\]
 is decreasing
 and
\begin{equation}
\label{brc6}
\lim_{\delta \to 0} S^\delta_{-\infty}(u)= \tilde{S}_{-\infty}(u),\;\; u \in \mathcal{X}_\phi.
\end{equation}

\medskip
Let us fix a function $u \in \fourIdx{}{}{}{\phi}\cX $.
In view of  Assumption \ref{ass-Q}, for each $y\in D(A^{\frac\beta 2})$, the function
$(0,1]\ni \delta \mapsto \vert Q^{-1}_\delta y\vert_{\rH}^2 \in\,\mathbb{R}$ is decreasing. This implies that for any fixed $u$ the mapping
$(0,1]\ni \delta \mapsto S^\delta_{-\infty}(u)$ is decreasing.

We notice that if $u \in\, \mathcal{X}_\phi \setminus W^{1,2}\big(-\infty,0;D(\rA^{\frac{\beta}2+1}),D(\rA^{\frac{\beta}2})\big)$, then for any $\d \in\,(0,1)$
\[S^\delta_{-\infty}(u)=\tilde{S}_{-\infty}(u)=+\infty,\]
so that \eqref{brc6} follows.
On the other end, if $u \in \mathcal{X}_\phi \cap W^{1,2}\big(-\infty,0;D(\rA^{\frac{\beta}2+1}),D(\rA^{\frac{\beta}2})\big)$, we have
$$\tilde{S}_{-\infty}(u)=S_{-\infty}(u)=\frac12 \int_{-\infty}^0\vert \cH(u)(t)\vert_{\rH}^2\, dt.$$
Thus, since
$S^\delta_{-\infty}(u)=\frac12 \int_{-\infty}^0\vert Q_\delta^{-1}\cH(u)(t)\vert_{\rH}^2\, dt$,  by the Lebesgue dominated convergence theorem we obtain \eqref{brc6}, once we have observed  that  according to Assumption \ref{ass-Q},  for all $y\in D(\rA^{\frac\beta 2})$ it holds $Q_\delta^{-1} y\to y$, as $\delta \searrow 0$, and $\vert Q_\delta^{-1} y\vert_{\rH} \leq \vert Q_1^{-1} y\vert_{\rH} $.

\end{proof}

\begin{lemma}\label{thm-Gamma} If  $\phi\in \rV$, then
\begin{equation}
\label{brcf1000} \mathrm{sc}^{-}\tilde{S}_{-\infty}(u)=S_{-\infty}(u),\ \ \ \ u \in\,\mathcal{X}_\phi.\end{equation}
\end{lemma}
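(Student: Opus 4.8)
The plan is to prove the two inequalities $S_{-\infty}\le \mathrm{sc}^{-}\tilde{S}_{-\infty}$ and $\mathrm{sc}^{-}\tilde{S}_{-\infty}\le S_{-\infty}$ on $\mathcal{X}_\phi$ separately. The first is immediate: by the definition \eqref{eqn-St} one has $S_{-\infty}\le \tilde{S}_{-\infty}$ pointwise on $\mathcal{X}_\phi$ (the two functionals coincide on $\mathcal{X}_\phi\cap W^{1,2}(-\infty,0;D(\rA^{\frac{\beta}2+1}),D(\rA^{\frac{\beta}2}))$, while $\tilde{S}_{-\infty}=+\infty$ off this set), and $S_{-\infty}$ is lower semicontinuous on $\mathcal{X}$, hence on the closed subspace $\mathcal{X}_\phi$, by Proposition \ref{Fact 3}. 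Thus $S_{-\infty}$ is a lower semicontinuous minorant of $\tilde{S}_{-\infty}$ and is therefore dominated by the greatest such minorant, which by definition is $\mathrm{sc}^{-}\tilde{S}_{-\infty}$.

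For the reverse inequality I would invoke the first characterisation in Proposition \ref{charac}: it suffices to produce, for each $u\in\mathcal{X}_\phi$ with $S_{-\infty}(u)<\infty$ (the case $S_{-\infty}(u)=\infty$ being vacuous), a recovery sequence $\{u_n\}\subset \mathcal{X}_\phi\cap W^{1,2}(-\infty,0;D(\rA^{\frac{\beta}2+1}),D(\rA^{\frac{\beta}2}))$ with $u_n\to u$ in $\mathcal{X}_\phi$ and $\limsup_n S_{-\infty}(u_n)\le S_{-\infty}(u)$. Since $\tilde{S}_{-\infty}(u_n)=S_{-\infty}(u_n)$ for such $u_n$, Proposition \ref{charac}(1) then gives $\mathrm{sc}^{-}\tilde{S}_{-\infty}(u)\le\liminf_n\tilde{S}_{-\infty}(u_n)\le\limsup_n S_{-\infty}(u_n)\le S_{-\infty}(u)$, as wanted.

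To construct the recovery sequence I would first record that, since $S_{-\infty}(u)<\infty$, Lemma \ref{lem-clar} and Proposition \ref{prop-infty} give $u\in W^{1,2}(-\infty,0)=W^{1,2}(-\infty,0;D(\rA),\rH)$ with $|u(t)|_{\rV}\to 0$ as $t\to-\infty$, while Lemma \ref{lem-B} gives $\rB(u,u)\in L^2(-\infty,0;\rH)$; moreover the endpoint constraint forces $\phi=u(0)\in D(\rA^{\frac{\beta+1}2})$ by Lemma \ref{lem-clar-2}, which is exactly the standing assumption of this section (and reduces to $\rV$ when $\beta=0$). I would then smooth in space with the analytic semigroup, setting $u_n^0:=e^{-\frac1n\rA}u$: since $e^{-\frac1n\rA}$ commutes with $\rA$ and gains arbitrarily many powers of $\rA$, each $u_n^0$ lies in $W^{1,2}(-\infty,0;D(\rA^{\frac{\beta}2+1}),D(\rA^{\frac{\beta}2}))$, and $u_n^0\to u$ in $W^{1,2}(-\infty,0)$. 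To meet the constraint at $t=0$, where $u_n^0(0)=e^{-\frac1n\rA}\phi=:\phi_n\neq\phi$, I would add the correction $z_n(t):=e^{t\rA}(\phi-\phi_n)$, $t\le0$, and set $u_n:=u_n^0+z_n$, so that $u_n(0)=\phi$ and $|u_n(t)|_{\rH}\to0$ as $t\to-\infty$, i.e. $u_n\in\mathcal{X}_\phi$. As in the continuity argument of Proposition \ref{brc505}, the identity $\int_{-\infty}^0|\rA^{\gamma}e^{t\rA}w|_{\rH}^2\,dt=\tfrac12|\rA^{\gamma-\frac12}w|_{\rH}^2$ shows, applied with $\gamma=\frac{\beta}2+1$ and $w=\phi-\phi_n\to0$ in $D(\rA^{\frac{\beta+1}2})$, that $z_n$ belongs to $W^{1,2}(-\infty,0;D(\rA^{\frac{\beta}2+1}),D(\rA^{\frac{\beta}2}))$, and, applied with $\gamma=1$ (using $\phi_n\to\phi$ in $\rV$), that $z_n\to0$ in $W^{1,2}(-\infty,0)$. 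Hence $u_n\to u$ in $W^{1,2}(-\infty,0)$ and $u_n\in\mathcal{X}_\phi\cap W^{1,2}(-\infty,0;D(\rA^{\frac{\beta}2+1}),D(\rA^{\frac{\beta}2}))$.

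Finally, since the map $v\mapsto\cH(v)=v'+\rA v+\rB(v,v)$ is continuous from $W^{1,2}(-\infty,0)$ into $L^2(-\infty,0;\rH)$ — the only nontrivial point being the continuity of $v\mapsto\rB(v,v)$, which follows from the interpolation estimate \eqref{ineq-B01} exactly as in Lemma \ref{lem-B} and as already used in Proposition \ref{brc505} — the convergence $u_n\to u$ in $W^{1,2}(-\infty,0)$ yields $S_{-\infty}(u_n)=\tfrac12|\cH(u_n)|^2_{L^2(-\infty,0;\rH)}\to S_{-\infty}(u)$, which gives the required $\limsup$ bound and completes the argument. I expect the main obstacle to be precisely this continuity of the nonlinear term over the \emph{unbounded} interval $(-\infty,0)$: one must bound $\rB$ globally in time, which is where the uniform $W^{1,2}(-\infty,0)$ estimate and the decay at $-\infty$ supplied by Proposition \ref{prop-infty} are essential, together with the fact that the smoothing $e^{-\frac1n\rA}$ is a contraction commuting with $\rA$, so that no regularity is lost and the smoothed sequence lands in the finer space while still converging in the coarser one.
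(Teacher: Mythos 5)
Your overall strategy is the same as the paper's: the inequality $S_{-\infty}\leq \mathrm{sc}^{-}\tilde{S}_{-\infty}$ from lower semicontinuity of $S_{-\infty}$ (Proposition \ref{Fact 3}), and the reverse inequality via a recovery sequence in $\mathcal{X}_\phi\cap W^{1,2}\big(-\infty,0;D(\rA^{\frac{\beta}2+1}),D(\rA^{\frac{\beta}2})\big)$ converging in $W^{1,2}(-\infty,0)$, combined with the continuity of $S_{-\infty}$ on $W^{1,2}(-\infty,0)$. The paper simply asserts the density of $\mathcal{X}_\phi\cap W^{1,2}\big(-\infty,0;D(\rA^{\frac{1+\beta}2}),D(\rA^{\frac{\beta}2})\big)$ in $\mathcal{X}_\phi\cap W^{1,2}(-\infty,0)$, whereas you make it explicit with the semigroup mollification $e^{-\frac1n\rA}u$ plus the endpoint correction $e^{t\rA}(\phi-e^{-\frac1n\rA}\phi)$; that explicit construction is a genuine improvement in principle.

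There is, however, one concrete error: you assert that $S_{-\infty}(u)<\infty$ forces $\phi=u(0)\in D(\rA^{\frac{\beta+1}2})$ by Lemma \ref{lem-clar-2}. That lemma requires $u^\prime+\rA u+\rB(u,u)\in L^2\big(0,T;D(\rA^{\frac{\beta}2})\big)$, while finiteness of $S_{-\infty}$ (the $\delta=0$ functional) only gives $\cH(u)\in L^2(-\infty,0;\rH)$, hence only $u(0)\in\rV$ via Lemma \ref{lem-clar}. This is not a cosmetic point: your correction term is $z_n(t)=e^{t\rA}w_n$ with $w_n=(I-e^{-\frac1n\rA})\phi$, and $\int_{-\infty}^0\vert \rA^{\frac{\beta}2+1}e^{t\rA}w_n\vert_\rH^2\,dt=\tfrac12\vert\rA^{\frac{\beta+1}2}w_n\vert_\rH^2$ is finite only if $\phi\in D(\rA^{\frac{\beta+1}2})$; for $\phi\in\rV\setminus D(\rA^{\frac{\beta+1}2})$ your $u_n$ does not lie in the finer space, $\tilde{S}_{-\infty}(u_n)=+\infty$, and the recovery sequence fails. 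In fact no recovery sequence can exist then, since the trace theorem forces $v(0)\in D(\rA^{\frac{\beta+1}2})$ for every $v$ in $W^{1,2}\big(-\infty,0;D(\rA^{\frac{\beta}2+1}),D(\rA^{\frac{\beta}2})\big)$, so $\tilde{S}_{-\infty}\equiv+\infty$ on $\mathcal{X}_\phi$ and the asserted identity cannot hold wherever $S_{-\infty}$ is finite. To be fair, the lemma as printed (with only $\phi\in\rV$) suffers from the same defect, and the paper's density assertion is equally vacuous in that case; the statement is only used, and your construction is correct, under the hypothesis $\phi\in D(\rA^{\frac{\beta+1}2})$ of Theorem \ref{thm-aim}. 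You should therefore assume $\phi\in D(\rA^{\frac{\beta+1}2})$ outright rather than derive it, and delete the appeal to Lemma \ref{lem-clar-2}.
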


\begin{proof} In view of  Proposition \ref{charac},   we get \eqref{brcf1000} if we show that
for every sequence $\big\{u_n\big\}_n\subset \mathcal{X}_\phi$ convergent to $u$ in $\mathcal{X}_\phi$ it holds
\begin{equation}
\label{brc4}
S_{-\infty}(u) \leq \liminf_{n} \tilde{S}_{-\infty} (u_n),
\end{equation}
and for some sequence
$\big\{u_n\big\}_n\subset \mathcal{X}_\phi$ convergent to $u$ in $\mathcal{X}_\phi$ it holds
\begin{equation}
\label{brc5}
S_{-\infty}(u) \geq \limsup_{n} \tilde{S}_{-\infty} (u_n).
\end{equation}
It is immediate to check that \eqref{brc4} follows from the lower semi-continuiuty of $S_{-\infty}$ and the definition of $\tilde{S}_{-\infty}$. Thus, let us prove   \eqref{brc5}.
We are going to prove that there exists a sequence $\{u_n\}$ in  $ \mathcal{X}_\phi \cap W^{1,2}\big(-\infty,0;D(\rA^{\frac{\beta}2+1}),D(\rA^{\frac{\beta}2})\big)$  such that
\begin{equation}\label{eqn-Fact 6_1}
 \lim_{n\to \infty} \; \sup_{t\in (-\infty,0]} \vert u_n(t)-u(t)\vert_\rH=0,
\end{equation}
 and
\begin{equation}\label{eqn-Fact 6_2}
S_{-\infty}(u) \geq \limsup_{n\to \infty} \tilde{S}_{-\infty}(u_n).
\end{equation}

To this purpose, we can assume that $S_{-\infty}(u)<\infty$. Then,  according to  Proposition \ref{prop-infty}, we have that $u  \in \mathcal{X}_\phi \cap W^{1,2}\big(-\infty,0)$ and $\phi=u(0)\in \rV$.
Since
\[W^{1,2}\big(-\infty,0) \embed C_b((-\infty,0],\rH),\] it is enough to find a sequence $\{u_n\}$ in  $\mathcal{X}_\phi \cap W^{1,2}\big(-\infty,0;D(\rA^{\frac{\beta}2+1}),D(\rA^{\frac{\beta}2})\big)$ satisfying \eqref{eqn-Fact 6_2} and, instead of \eqref{eqn-Fact 6_1}, the following stronger condition
\begin{equation}\label{eqn-Fact 6_3}
 \lim_{n\to \infty} \;  \vert u_n-u\vert_{W^{1,2}(-\infty,0)} =0.
\end{equation}
Actually, if we have found a  sequence
 $\big\{u_n\big\}\subset \mathcal{X}_\phi \cap W^{1,2}\big(-\infty,0;D(\rA^{\frac{\beta}2+1}),D(\rA^{\frac{\beta}2})\big)$    satisfying \eqref{eqn-Fact 6_3}, then in view of   \eqref{eqn-St}, $\tilde{S}_{-\infty}(u_n)={S}_{-\infty}(u_n)$ for every $n$. Therefore,  in view of \eqref{eqn-Fact 6_3}, we obtain \eqref{eqn-Fact 6_2}, as ${S}_{-\infty}$ is a
continuous functional on $W^{1,2}(-\infty,0)$.
 Let us finally observe that the existence of the required sequence is just a consequence of the  density of the space $\fourIdx{}{}{}{\phi}\cX \cap W^{1,2}(-\infty,0;D(\rA^{\frac{1+\beta}2}),D(\rA^{\frac{\beta}2}))$ in $\fourIdx{}{}{}{\phi}\cX \cap W^{1,2}(-\infty,0)$.

\end{proof}

Thus we can conclude that the following limit holds.

\begin{theorem}\label{thm-aim}
Under Assumption \ref{ass-Q}, we have
\begin{equation}\label{eqn-4.01} \lim_{\delta \to 0}U_\delta(\phi)=U(\phi), \ \ \ \ \phi \in\,D(\rA^{\frac{\beta+1}2}).
\end{equation}
\end{theorem}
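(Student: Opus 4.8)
The plan is to assemble the ingredients already prepared in Sections \ref{sec-gamma}--\ref{sec-conv} into the convergence-of-minima framework for $\Gamma$-convergent sequences. First I would note that, since $\beta\in(0,\frac12)$, we have the continuous embedding $D(\rA^{\frac{\beta+1}2})\subset D(\rA^{\frac12})=\rV$, so every $\phi$ appearing in the statement lies in $\rV$; in particular both quasipotentials are finite by Proposition \ref{cor-equiv}, and the variational characterizations of Theorem \ref{Fact 1} apply and give
\[U_\delta(\phi)=\min_{u\in\mathcal{X}_\phi}S^\delta_{-\infty}(u),\qquad U(\phi)=\min_{u\in\mathcal{X}_\phi}S_{-\infty}(u).\]
Thus the claimed limit \eqref{eqn-4.01} is exactly the statement that the minima of $S^\delta_{-\infty}$ over $\mathcal{X}_\phi$ converge, as $\delta\searrow0$, to the minimum of $S_{-\infty}$ over $\mathcal{X}_\phi$.

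The first substantive step is to identify the $\Gamma$-limit of the family $\{S^\delta_{-\infty}\}$ on $\mathcal{X}_\phi$. Combining Lemma \ref{Fact 4}, which yields $\Gamma-\lim_{\delta\to0}S^\delta_{-\infty}=\mathrm{sc}^{-}\tilde{S}_{-\infty}$ in $\mathcal{X}_\phi$, with Lemma \ref{thm-Gamma}, which for $\phi\in\rV$ identifies $\mathrm{sc}^{-}\tilde{S}_{-\infty}=S_{-\infty}$ on $\mathcal{X}_\phi$, gives $\Gamma-\lim_{\delta\to0}S^\delta_{-\infty}=S_{-\infty}$. Next I would invoke Proposition \ref{cor-compact}, by which $\{S^\delta_{-\infty}\}_{\delta\in(0,1]}$ is equi-coercive on $\mathcal{X}$; since $\mathcal{X}_\phi$ is closed in $\mathcal{X}$, the restrictions remain equi-coercive, as observed after Definition \ref{def2}. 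With equi-coercivity and $\Gamma$-convergence to $S_{-\infty}$ in hand, Theorem \ref{teo4} delivers
\[\min_{u\in\mathcal{X}_\phi}S_{-\infty}(u)=\lim\ \inf_{u\in\mathcal{X}_\phi}S^\delta_{-\infty}(u),\]
which is precisely $U(\phi)=\lim_{\delta\to0}U_\delta(\phi)$.

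The only genuine obstacle is a bookkeeping one: the $\Gamma$-convergence and minimum-convergence results of Section \ref{sec-gamma} (in particular Theorem \ref{teo4}) are phrased for sequences $\{F_n\}$, whereas here $\delta$ ranges over the continuum $(0,1]$. To bridge this I would fix an arbitrary sequence $\delta_n\searrow0$ and apply Theorem \ref{teo4} to $F_n:=S^{\delta_n}_{-\infty}$, obtaining $\lim_{n\to\infty}U_{\delta_n}(\phi)=U(\phi)$. Finally, by Assumption \ref{ass-Q} (specifically the monotonicity \eqref{eqn-B12}) the map $\delta\mapsto S^\delta_{-\infty}(u)$ is nondecreasing in $\delta$ for each fixed $u$, hence so are the infima $U_\delta(\phi)$; a monotone function of $\delta$ has the same limit along every sequence $\delta_n\searrow0$ as its full limit $\lim_{\delta\to0}U_\delta(\phi)$. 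This upgrades the sequential statement to the continuous one and completes the argument.
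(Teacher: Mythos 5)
Your proof is correct and follows essentially the same route as the paper's: both combine the variational characterizations $U_\delta(\phi)=\min_{\mathcal{X}_\phi}S^\delta_{-\infty}$ and $U(\phi)=\min_{\mathcal{X}_\phi}S_{-\infty}$ from Theorem \ref{Fact 1} with the equi-coercivity of Proposition \ref{cor-compact} and the $\Gamma$-convergence obtained from Lemmas \ref{Fact 4} and \ref{thm-Gamma}, then conclude via Theorem \ref{teo4}. Your additional remarks --- the embedding $D(\rA^{\frac{\beta+1}2})\subset\rV$ justifying the use of Lemma \ref{thm-Gamma}, and the monotonicity argument upgrading sequential convergence along $\delta_n\searrow 0$ to the full limit --- are sensible points that the paper leaves implicit, but they do not change the substance of the argument.
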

\begin{proof}
 Let us fix $\phi\in D(\rA^{\frac{\beta+1}2})$. In view of Theorem \ref{Fact 1}
   \[U(\phi)=\min\big\{S_{-\infty}(u): u\in \mathcal{X}_\phi\big\}.\]
   and for any $\delta  \in\,(0,1]$
 \[U_\delta(\phi)=\min\big\{S^\delta_{-\infty}(u): u\in \mathcal{X}_\phi\big\}.\]

Thus, thanks to Theorem \ref{teo4}, our result is proved  as we have shown  that for any $\phi \in\,D(\rA^{\frac{\beta+1}2})$ the family $\{S^\delta_{-\infty}\}_{\delta \in\,(0,1]}$ is equi-coercive in $\mathcal{X}_\phi$ and, as a consequence of Lemma \ref{Fact 4}   and Lemma
\ref{thm-Gamma},
\begin{equation}
\label{brcf13}
\Gamma-\lim_{\delta \to 0}S^\delta_{-\infty}=S_{-\infty},\ \ \ \text{in}\ \mathcal{X}_\phi.
\end{equation}

\end{proof}

\section{Application to the exit problem}\label{sec-exit}

A  domain $D\subset \rH$ is said to be invariant and attracted to the asymptotically stable equilibrium $0$ of  the deterministic  Navier-Stokes equations \eqref{eqn_NSE01}
\begin{equation}\label{eqn-SNS-det}
u^\prime(t)+A u(t)+B(u(t),u(t))=0,\ \ \ \ u(0)=\phi,
\end{equation}
if,  for any $\phi \in\,D$,  the solution $u_\phi(t)$ to \eqref{eqn-SNS-det} remains in $D$, for every $t\geq 0$,   and
\[\lim_{t\to\infty }|u_\phi(t)|_\rH=0.\]

It is well known that, as the solution $u_\phi$ satisfies inequality \eqref{ineq-aux-00}  by the Poincar\'e inequality \eqref{ineq-Poincare-A},  every  ball in $\rH$ is invariant and attracted to $0$.

\medskip

Throughout this section, we will assume the following conditions on  $D$.
\begin{assumption}
\label{D}
 The set $D\subset \rH$  is bounded, open, connected, contains  $0$,  is invariant and attracted to $0$.  Moreover,
 for any $\phi \in\partial D\cap D(\rA^{\frac{1+\beta}{2}})$ there exists a sequence $\{\phi_n\}\subset  (\rH \setminus \bar{D})\cap D(\rA^{\frac{1+\beta}{2}})$ such that
\[\lim_{n\to \infty}|\phi_n-\phi|_{D(\rA^{\frac{1+\beta}{2}})}=0.\]
\end{assumption}

\begin{remark}
If for every $\phi \in\partial D\cap D(\rA^{\frac{1+\beta}{2}})$ there exists $y \in\,\rH\setminus \bar{D}\cap D(\rA^{\frac{1+\beta}{2}})$ such that
\[\{t\phi +(1-t)y{\;\;: t\in [0,1)}\}\subset \rH\setminus \bar{D},\]
then Assumption \ref{D} is clearly satisfied. Such a property is true if, for example, $D$ is convex.
\end{remark}

\begin{lemma}
\label{point}
For any $\delta \in\,(0,1]$,  there exists $y_\delta \in\,\partial D$ such that
\begin{equation}\label{eqn-boundary point}
\inf_{y \in\,\partial D} U_\delta(y)=U_\delta(y_\delta).
\end{equation}
\end{lemma}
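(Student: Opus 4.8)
The plan is to realize the infimum as the limit of a minimizing sequence and to extract a convergent subsequence by exploiting the compactness of the sublevel sets of $U_\delta$ established in Proposition \ref{prop-lsc}. The one subtlety is that in the infinite-dimensional space $\rH$ the boundary $\partial D$, although closed and bounded, need not be compact, so one cannot simply invoke continuity of $U_\delta$ on a compact set; the needed compactness must come from the functional itself rather than from the constraint set.

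First I would set $m:=\inf_{y\in\partial D}U_\delta(y)$ and observe that $\partial D\neq\emptyset$, since $D$ is a bounded, open, nonempty, and hence proper subset of the connected space $\rH$. If $m=+\infty$, then $U_\delta\equiv+\infty$ on $\partial D$ and any $y_\delta\in\partial D$ satisfies \eqref{eqn-boundary point}, so the claim is trivial. Thus I may assume $m<\infty$ and choose a minimizing sequence $\{y_n\}\subset\partial D$ with $U_\delta(y_n)\to m$.

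For $n$ large enough one has $U_\delta(y_n)\leq m+1$, so that $\{y_n\}$ eventually lies in the sublevel set $K_{m+1}^\delta=\{\phi\in\rH:U_\delta(\phi)\leq m+1\}$, which is compact in $\rH$ by Proposition \ref{prop-lsc}. Hence there is a subsequence $\{y_{n_k}\}$ converging in $\rH$ to some $y_\delta$. Since $D$ is open, its boundary $\partial D=\overline{D}\setminus D$ is closed in $\rH$, and therefore $y_\delta\in\partial D$.

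Finally, the lower semi-continuity of $U_\delta$ in $\rH$ (again Proposition \ref{prop-lsc}) gives
\[U_\delta(y_\delta)\leq\liminf_{k\to\infty}U_\delta(y_{n_k})=m,\]
while $y_\delta\in\partial D$ forces $U_\delta(y_\delta)\geq m$. Hence $U_\delta(y_\delta)=m=\inf_{y\in\partial D}U_\delta(y)$, which is exactly \eqref{eqn-boundary point}. The main point — and the only place where anything beyond soft functional analysis enters — is the compactness of the sublevel sets of $U_\delta$: it is what replaces the (generally false) compactness of $\partial D$ and is precisely the reason the infimum is attained. This compactness is not elementary, since it rests on Proposition \ref{cor-compact} together with the regularizing/dissipative estimates for the Navier--Stokes dynamics on $(-\infty,0]$, but it is already available to us from the earlier sections.
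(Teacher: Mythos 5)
Your proposal is correct and follows essentially the same route as the paper: minimizing sequence, compactness of the sublevel sets of $U_\delta$ from Proposition \ref{prop-lsc}, closedness of $\partial D$, and lower semi-continuity. The only difference is that you dispose of the case $\inf_{y\in\partial D}U_\delta(y)=+\infty$ as vacuous, whereas the paper actually rules it out by producing a point of $\partial D\cap D(\rA^{\frac{1+\beta}2})$ (via density of $D(\rA^{\frac{1+\beta}2})$ in $\rH$, connectedness of the segment $t\mapsto t\tilde{\phi}$, and Proposition \ref{cor-equiv}); that finiteness is what is really needed in the later applications of the lemma, so it is worth recording even though the statement as written does not demand it.
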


\begin{proof}{ First we will show that
\begin{equation}\label{eqn-finitequasi}
\inf_{\phi \in \partial D} U_\delta(\phi)<\infty.\end{equation}
}
Since  $\rH\setminus \bar{D}$ is an open subset of $\rH$ and the space $D(\rA^{\frac{1+\beta}2})$ is dense in $\rH$,  there exists $\tilde{\phi} \in (\rH\setminus \bar{D}) \cap D(\rA^{\frac{1+\beta}2})$. Since $0 \in D$, and the path $t \mapsto t\tilde{\phi}$ is continuous, there must exist $0<t_0<1$ such that $ t_0\tilde{\phi} \in \partial D$. Clearly, $t_0 \tilde{\phi} \in\,D(\rA^{\frac{1+\beta}2}) $, so that, as $\partial D\cap D(\rA^{\frac{1+\beta}2}) \neq \emptyset$, according to Proposition \ref{cor-equiv}, {property \eqref{eqn-finitequasi} follows. }

Due to the compactness of the level sets  {of the functionals $U_\delta$}, {we infer}  that there exists $y_{\delta} \in\,\partial D\cap D(\rA^{\frac{1+\beta}2})$ such   that \eqref{eqn-boundary point} holds.

\end{proof}

Now, for any $\phi \in\,D$, $\eps>0$ and $\delta \in\,(0,1]$, we will denote by $\tau_\phi^{\eps,\delta}$ the exit time of the solution $u_\phi^{\eps,\delta}$ of equation \eqref{eqn-SNSE-eps}  from the domain $D$, that is
\[\tau_\phi^{\eps,\delta}=\inf\left\{t\geq  0\ :\ u_\phi^{\eps,\delta}(t)  \in\,\partial D\right\}.\]

Our purpose here is to prove the following exponential estimate for the expectation of $\tau_\phi^{\eps,\delta}$ in terms of the infimum of $U_\delta$ on the boundary of $D$.

\begin{theorem}
\label{brcteo2}
 For any $\delta \in\,(0,1]$ and $\phi \in\,D$
\[
\lim_{\eps\to 0}\eps \log\,\E\,\tau_\phi^{\eps,\delta}=\min_{y  \in\,\partial D} U_\delta(y).
\]
\end{theorem}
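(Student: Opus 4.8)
The plan is to prove the two matching inequalities
\[
\limsup_{\eps\to 0}\eps\log\E\,\tau_\phi^{\eps,\delta}\le \min_{y\in\partial D}U_\delta(y)
\quad\text{and}\quad
\liminf_{\eps\to 0}\eps\log\E\,\tau_\phi^{\eps,\delta}\ge \min_{y\in\partial D}U_\delta(y),
\]
following the classical Freidlin--Wentzell exit scheme (see \cite{freidlin} and the companion paper \cite{cerrai-freidlin}), with the uniform large deviation principle of Theorem \ref{brc.teo.4.2} and the compactness/continuity properties of $U_\delta$ (Propositions \ref{prop-lsc} and \ref{brc505}) as the main analytic inputs. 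Since $\delta$ is fixed, I abbreviate $V:=\min_{y\in\partial D}U_\delta(y)=U_\delta(y_\delta)$, the minimum being attained at some $y_\delta\in\partial D\cap D(\rA^{\frac{1+\beta}2})$ by Lemma \ref{point}, and I write $\mathcal B_\rho:=\{x\in\rH:\,|x|_\rH\le\rho\}$.

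\emph{Upper bound.} Fix $\eta>0$. By Assumption \ref{D}, $y_\delta$ is a $D(\rA^{\frac{1+\beta}2})$-limit of points $\tilde y\in(\rH\setminus\bar D)\cap D(\rA^{\frac{1+\beta}2})$, so the continuity of $U_\delta$ on $D(\rA^{\frac{1+\beta}2})$ (Proposition \ref{brc505}) lets me choose such a $\tilde y$ with $U_\delta(\tilde y)<V+\eta$. By the definition \eqref{eqn-A4}, there exist $T>0$ and a path from $0$ to $\tilde y$ of action less than $V+\eta$. Because $D$ is attracted to $0$, Proposition \ref{propbrc3} furnishes a deterministic time $T_0$ after which $u_\phi$ lies in an arbitrarily small ball around $0$, at zero action; concatenating the deterministic relaxation with the control path produces $\psi\in C([0,T_0+T];\rH)$ with $\psi(0)=\phi$, $\psi(T_0+T)=\tilde y\notin\bar D$ and $S^\delta_{T_0+T}(\psi)<V+\eta$. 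The uniform LDP lower bound of Theorem \ref{brc.teo.4.2} then gives, for all small $\eps$,
\[
\inf_{\phi\in D}\mathbb{P}\big(u_\phi^{\eps,\delta}\ \text{exits } D \text{ before time } T_0+T\big)\ \ge\ \exp\!\big(-(V+2\eta)/\eps\big),
\]
and a standard Markov-property/geometric-trials argument (as in \cite[Ch.~4]{freidlin}) converts this into $\E\,\tau_\phi^{\eps,\delta}\le(T_0+T)\exp((V+2\eta)/\eps)$, whence the $\limsup$ inequality on letting $\eta\to0$.

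\emph{Lower bound.} Fix $\eta>0$ and a small radius $\rho>0$. The core estimate is that, uniformly over starting points $x$ on the sphere $\{|x|_\rH=\rho\}$,
\[
\mathbb{P}_x\big(u^{\eps,\delta}\ \text{reaches }\partial D\ \text{before returning to } \mathcal B_{\rho/2}\big)\le \exp\!\big(-(V-\eta)/\eps\big)
\]
for all small $\eps$. This follows from the uniform LDP upper bound of Theorem \ref{brc.teo.4.2} together with the action lower bound that any trajectory joining $\mathcal B_\rho$ to $\partial D$ has action at least $V-\eta$: prolonging such a trajectory backward from its starting point in $\mathcal B_\rho$ to $0$ along a cheap control (whose cost is $O(\rho)$, by a controllability argument of the type of Lemma \ref{prop-control}) yields a competitor in the infimum \eqref{eqn-A4} defining $U_\delta$ at the exit point $z\in\partial D$, hence action $\ge U_\delta(z)-O(\rho)\ge V-O(\rho)$; the compactness of the level sets of $S^\delta_{-\infty}$ (Proposition \ref{cor-compact}) and of $U_\delta$ (Proposition \ref{prop-lsc}), together with Theorem \ref{Fact 1}, make this bound uniform as $\rho,\eta\to0$. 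A cycling/excursion argument between $\mathcal B_{\rho/2}$ and $\{|x|_\rH=\rho\}$ then shows that the number of returns before exit is geometrically large, giving $\E\,\tau_\phi^{\eps,\delta}\ge\exp((V-2\eta)/\eps)$ for small $\eps$, and the $\liminf$ inequality on letting $\eta\to0$ and $\rho\to0$.

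The main obstacle is the infinite-dimensional character of the problem, which enters precisely through the two uniformity requirements above: the large-deviation estimates must hold uniformly over the family of initial data generated by the excursions, and the action lower bound must hold uniformly over all entrance points near $0$ and all exit points on $\partial D$. Both are supplied by the uniform-over-bounded-sets formulation of Theorem \ref{brc.teo.4.2} and by the compactness of the level sets of $U_\delta$ and $S^\delta_{-\infty}$ in Propositions \ref{prop-lsc} and \ref{cor-compact}; the geometric accessibility of $\partial D$ from the complement of $\bar D$ needed in the upper bound is exactly Assumption \ref{D}.
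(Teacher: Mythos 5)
Your overall architecture coincides with the paper's: Theorem \ref{brcteo2} is obtained by adapting the finite-dimensional exit scheme of \cite[Theorem 5.7.11]{dembo} and \cite{freidlin}, once one has (i) a near-optimal controlled exit path from a neighbourhood of $0$, uniformly over initial data (the paper's Lemma \ref{lem1}, which is exactly your upper-bound construction from Proposition \ref{propbrc3}, Assumption \ref{D} and Proposition \ref{brc505} --- note only that the deterministic relaxation does not land at $0$, so a short connector of action $<\eta/2$ joining $u_\phi(T_0)$ to $0$ is needed; your ``at zero action'' glosses over this, but it is minor), and (ii) a uniform upper estimate on the probability of reaching a closed subset $N$ of $\partial D$ before returning to a small ball around $0$ (the paper's Lemma \ref{lem3bis}), together with the auxiliary Lemmas \ref{lem2}--\ref{lem4}. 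Your upper bound is essentially the paper's argument.

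The genuine gap is in your justification of (ii). You claim that any path joining the ball $\{|x|_\rH\le\rho\}$ to $\partial D$ has action at least $\min_{y\in\partial D}U_\delta(y)-O(\rho)$, because one can ``prolong it backward from its starting point to $0$ along a cheap control whose cost is $O(\rho)$, by a controllability argument of the type of Lemma \ref{prop-control}''. This step fails: Lemma \ref{prop-control} steers $0$ to points that are small in the $D(\rA^{\frac{1+\beta}2})$-norm, not in the $\rH$-norm, and by Proposition \ref{cor-equiv} one has $U_\delta(x)=+\infty$ for every $x\notin D(\rA^{\frac{1+\beta}2})$. A generic entrance point $x$ with $|x|_\rH=\rho$ need not lie in $D(\rA^{\frac{1+\beta}2})$, and even when it does its $D(\rA^{\frac{1+\beta}2})$-norm is not controlled by $\rho$; so the backward prolongation does not have cost $O(\rho)$ and your competitor path in the infimum \eqref{eqn-A4} does not exist. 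Moreover the sphere $\{|x|_\rH=\rho\}$ is not compact in $\rH$, so the compactness of the level sets of $U_\delta$ and $S^\delta_{-\infty}$ that you invoke cannot by itself restore the required uniformity over entrance points. The paper circumvents this (proof of Lemma \ref{lem3bis}, Steps 1--2) by a different device: it compares the controlled trajectory $u_x^f$ started at $x$ with the trajectory $u_0^f$ started at $0$ and driven by the \emph{same} control $f$, via the continuous-dependence estimate \eqref{z1}, namely $|u_x^f-u_0^f|_{C([0,T];\rH)}\le\varphi\bigl(|f|_{L^2(0,T;\rH)}\bigr)\,|x|_\rH$; the endpoint of the zero-started trajectory then lies within $\varphi(\sqrt{2\beta})\rho$ of $N$ and satisfies $U_\delta\le\beta$, and compactness of the level sets of $U_\delta$ together with the closedness of $N$ yields the contradiction. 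You need this (or an equivalent) replacement for the backward-prolongation step; with it, the rest of your outline goes through as in the paper.
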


As we already pointed out in \cite[Section 7]{cerrai-freidlin}, the proof of the previous result is based on the few lemmas below,  whose proofs  are postponed till   Appendix \ref{app:B}. Actually, the arguments used in finite dimension (see
\cite[proof of Theorem 5.7.11]{dembo} and \cite[proof of Theorem 4.1]{freidlin}), can be adapted to this infinite dimensional case, once the following preliminary results   are proven.

\medskip

\begin{lemma}
\label{lem1}
For any $\eta>0$ and $\mu>0$, there exist
{
$T_0=T_0(\eta,\mu)>0$ and $h=h(\eta)>0$ such that for all $\phi \in\,B_{0}(\mu)$ there exist $T\leq T_0$ and $v \in\,C([0,T];\rH)$, with $v(0)=\phi$, such that
\begin{equation}
\label{brc89}
d_{\rH}(v(T),\bar{D})=h
\end{equation}
and
\begin{equation}
\label{brc89'}
S^\delta_{0,T}(v)\leq \inf_{y \in\,\partial D} U_\delta(y)+\eta.
\end{equation}
}
\end{lemma}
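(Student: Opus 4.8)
The plan is to build the control $v$ by concatenating three pieces: first the free (zero--cost) deterministic flow carrying $\phi$ from the ball $B_0(\mu)$ down to a very small point, then a short cheap bridge from that point to the origin, and finally a near--optimal path climbing from $0$ to a fixed target $\psi$ lying just outside $\bar D$. Fix $\delta$ throughout. Since the minimizer $y_\delta$ from Lemma \ref{point} lies in $\partial D\cap D(\rA^{\frac{1+\beta}2})$ and realizes $\inf_{y\in\partial D}U_\delta(y)$, Assumption \ref{D} furnishes a sequence in $(\rH\setminus\bar D)\cap D(\rA^{\frac{1+\beta}2})$ converging to $y_\delta$ in $D(\rA^{\frac{1+\beta}2})$; by the continuity of $U_\delta$ on $D(\rA^{\frac{1+\beta}2})$ (Proposition \ref{brc505}) I would pick one such point $\psi$ with $U_\delta(\psi)<\inf_{y\in\partial D}U_\delta(y)+\eta/2$. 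I then set $h:=d_{\rH}(\psi,\bar D)>0$, which depends only on $\eta$ (with $\delta$ fixed), as required.

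For the third piece, Theorem \ref{Fact 1} gives $\hat u\in\mathcal{X}_\psi$ with $S^\delta_{-\infty}(\hat u)=U_\delta(\psi)$; applying Lemma \ref{lem-claim1} with tolerance $\eta/4$ produces $T_2>0$ and a path on $[-T_2,0]$ from $0$ to $\psi$ of action at most $U_\delta(\psi)+\eta/4<\inf_{y\in\partial D}U_\delta(y)+3\eta/4$, which after the shift $t\mapsto t+T_2$ becomes a path $w$ on $[0,T_2]$ with $w(0)=0$ and $w(T_2)=\psi$. For the bridge I would invoke the version of Lemma \ref{prop-control} with the two endpoints interchanged — obtained by the same continuous--right--inverse construction applied to the trace at the initial time instead of the final time — yielding $\eta'>0$ such that any $y$ with $|y|_{D(\rA^{\frac{1+\beta}2})}<\eta'$ can be joined to $0$ over the unit interval by a path in $W^{1,2}(0,1;D(\rA^{1+\frac\beta2}),D(\rA^{\frac\beta2}))$ of action below $\eta/4$. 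Finally, Proposition \ref{propbrc3}, used with $c_1=\mu$, $\sigma=1+\beta<\tfrac32$ and $c_2=\eta'/2$, supplies a time $T_1=T_1(\eta,\mu)$, uniform over $\phi\in B_0(\mu)$, after which $|u_\phi(T_1)|_{D(\rA^{\frac{1+\beta}2})}\le\eta'/2<\eta'$.

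I would then define $v$ to equal $u_\phi$ on $[0,T_1]$, the bridge from $y:=u_\phi(T_1)$ to $0$ on $[T_1,T_1+1]$, and $w$ on $[T_1+1,T]$, with $T:=T_1+1+T_2=:T_0$; continuity holds since the matched endpoints agree. The deterministic flow is a very weak solution with vanishing forcing, so $\cH(u_\phi)=0$ and $S^\delta_{0,T_1}(u_\phi)=0$ despite $\phi$ being only in $\rH$; each of the other two pieces has finite action and lies in the relevant $W^{1,2}$ space, so by the gluing property of such functions (see \cite{Brz_1991}) $v$ is again a very weak solution on $[0,T]$ and the action is additive over the three subintervals. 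Summing, $S^\delta_{0,T}(v)<0+\eta/4+\big(\inf_{y\in\partial D}U_\delta(y)+3\eta/4\big)=\inf_{y\in\partial D}U_\delta(y)+\eta$, while $v(0)=\phi$ and $d_{\rH}(v(T),\bar D)=d_{\rH}(\psi,\bar D)=h$. All chosen times and the number $h$ depend only on $\eta,\mu$ and not on the particular $\phi\in B_0(\mu)$, giving the asserted uniformity.

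The main obstacle I anticipate is the regularity and integrability at the junctions: I must ensure the concatenated $v$ still satisfies $\cH(v)\in L^2(0,T;D(\rA^{\frac\beta2}))$, so that its action decomposes as claimed. This forces me to route the path through a point small in the strong norm $D(\rA^{\frac{1+\beta}2})$ — hence the use of Proposition \ref{propbrc3} with $\sigma=1+\beta$ rather than merely $\sigma=1$ — and to realize the bridge $y\to 0$ inside $W^{1,2}(0,1;D(\rA^{1+\frac\beta2}),D(\rA^{\frac\beta2}))$. A naive linear interpolation would fail here, because $u_\phi(T_1)$ need not lie in $D(\rA^{1+\frac\beta2})$, which is precisely why the right--inverse construction underlying Lemma \ref{prop-control}, rather than an explicit formula, is the appropriate tool.
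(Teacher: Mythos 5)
Your proposal is correct and follows essentially the same three-piece construction as the paper: the free flow on $[0,T_1]$ brought into a small ball of $D(\rA^{\frac{1+\beta}2})$ via Proposition \ref{propbrc3} with $\sigma=1+\beta$, a cheap unit-time bridge down to $0$, and then a near-optimal path from $0$ to a fixed point of $(\rH\setminus\bar D)\cap D(\rA^{\frac{1+\beta}2})$ obtained from Lemma \ref{point}, Assumption \ref{D} and the continuity of $U_\delta$ on $D(\rA^{\frac{1+\beta}2})$. The only local deviation is the bridge: the paper writes it explicitly as $z_2(t)=(T_1+1-t)e^{-(t-T_1)\rA}z_1(T_1)$ and bounds its action directly via \eqref{ineqn-B-fractional}, whereas you invoke a time-reversed variant of Lemma \ref{prop-control}; this variant is legitimate (the right-inverse construction applies equally to the trace at the initial time, and $\mathcal{H}_T\circ R'$ is continuous and vanishes at $0$), so both routes work.
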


\medskip

\begin{lemma}
\label{lem2}
There exists $\mu_0>0$ such that for any $\eta>0$ and $\mu \in\,(0,\mu_0]$
\[
\lim_{\eps\to 0}\eps \log\left(\inf_{\phi \in\,B_{0}(\mu)}\mathbb{P}\left(\tau_\phi^{\eps,\delta}\leq T\right)\right)>-\left(\inf_{y \in\,\partial D}U_\delta(y)+\eta\right),
\]
for some  $T=T(\eta,\mu)>0$.

\end{lemma}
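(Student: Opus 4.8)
The plan is to derive the lower bound from the large deviation principle of Theorem \ref{brc.teo.4.2} by exhibiting, for each starting point $\phi$, a control path that forces exit and whose action is close to $\inf_{y\in\partial D}U_\delta(y)$; the candidate paths are furnished by Lemma \ref{lem1}. First I would fix $\mu_0>0$ small enough that the closed ball $B_0(\mu_0)$ is contained in the open set $D$ (possible since $0\in D$ and $D$ is open), so that every $\phi\in B_0(\mu)$ with $\mu\le\mu_0$ is an admissible initial datum lying in $D$. Given $\eta>0$ and $\mu\in(0,\mu_0]$, I would apply Lemma \ref{lem1} \emph{with $\eta/2$ in place of $\eta$}, obtaining $T_0=T_0(\eta,\mu)$, $h=h(\eta)>0$, and for each $\phi\in B_0(\mu)$ a time $T_\phi\le T_0$ and a path $v_\phi\in C([0,T_\phi];\rH)$ with $v_\phi(0)=\phi$, $d_\rH(v_\phi(T_\phi),\bar D)=h$ and $S^\delta_{0,T_\phi}(v_\phi)\le \inf_{y\in\partial D}U_\delta(y)+\eta/2$. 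The use of $\eta/2$ creates the slack needed for the final strict inequality, and I would then set $T:=T_0$.

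Since $S^\delta_{0,T_\phi}(v_\phi)<\infty$, Lemma \ref{lem-clar-2} gives $v_\phi(T_\phi)\in D(\rA^{\frac{1+\beta}2})\subset\rV$, so by Corollary \ref{cor-u-nfty-V} the deterministic equation \eqref{eqn-SNS-det} has a global solution issuing from $v_\phi(T_\phi)$. Gluing $v_\phi$ on $[0,T_\phi]$ to this solution on $[T_\phi,T_0]$ produces $\tilde v_\phi\in C([0,T_0];\rH)$, a very weak solution with forcing $\mathcal H(v_\phi)\,1_{[0,T_\phi]}$; since $\mathcal H(\tilde v_\phi)=0$ on $[T_\phi,T_0]$ the extension costs no action, whence $S^\delta_{0,T_0}(\tilde v_\phi)=S^\delta_{0,T_\phi}(v_\phi)\le \inf_{y\in\partial D}U_\delta(y)+\eta/2$. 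Placing all control paths on the common horizon $[0,T_0]$ is what makes the subsequent uniform estimate possible. The second ingredient is a deterministic exit argument. Consider the open set in path space
\[
\tilde E_\phi:=\Big\{w\in C([0,T_0];\rH):\ \sup_{t\in[0,T_0]}\vert w(t)-\tilde v_\phi(t)\vert_\rH<h\Big\}.
\]
If $u_\phi^{\eps,\delta}\in\tilde E_\phi$, then in particular $\vert u_\phi^{\eps,\delta}(T_\phi)-\tilde v_\phi(T_\phi)\vert_\rH<h$, and since $\tilde v_\phi(T_\phi)=v_\phi(T_\phi)$ satisfies $d_\rH(v_\phi(T_\phi),\bar D)=h$, the triangle inequality gives $d_\rH(u_\phi^{\eps,\delta}(T_\phi),\bar D)>0$, i.e. $u_\phi^{\eps,\delta}(T_\phi)\notin\bar D$. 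As $u_\phi^{\eps,\delta}(0)=\phi\in D$ and the trajectory is continuous, it must meet $\partial D$ before time $T_\phi\le T_0$, so $\{u_\phi^{\eps,\delta}\in\tilde E_\phi\}\subset\{\tau_\phi^{\eps,\delta}\le T_0\}$.

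Finally I would invoke the uniform large deviation lower bound. The uniformity of the LDP of Theorem \ref{brc.teo.4.2} over bounded subsets of $\rH$ (which rests on the uniform Lipschitz estimate $\sup_{\phi\in B_0(R)}\vert\mathcal F(\phi,z_1)-\mathcal F(\phi,z_2)\vert_{C([0,T_0];\rH)}\le c_R\vert z_1-z_2\vert_{C([0,T_0];L^4(\mathcal O))}$ recalled in its proof, together with the $\phi$--independence of $z_{\eps,\delta}$) yields: for every $s_0,\gamma,\rho>0$ there is $\eps_0>0$ such that for all $\eps\le\eps_0$, all $\phi\in B_0(\mu)$ and all $\psi$ with $\psi(0)=\phi$ and $S^\delta_{0,T_0}(\psi)\le s_0$,
\[
\mathbb P\Big(\sup_{t\in[0,T_0]}\vert u_\phi^{\eps,\delta}(t)-\psi(t)\vert_\rH<\rho\Big)\ge \exp\Big(-\frac{S^\delta_{0,T_0}(\psi)+\gamma}{\eps}\Big).
\]
Applying this with $\psi=\tilde v_\phi$, $\rho=h$, $s_0=\inf_{y\in\partial D}U_\delta(y)+\tfrac\eta2$ and $\gamma=\tfrac\eta4$, I obtain, uniformly in $\phi\in B_0(\mu)$ and for all small $\eps$,
\[
\mathbb P(\tau_\phi^{\eps,\delta}\le T_0)\ge\mathbb P(u_\phi^{\eps,\delta}\in\tilde E_\phi)\ge \exp\Big(-\frac{1}{\eps}\big(\inf_{y\in\partial D}U_\delta(y)+\tfrac{3\eta}4\big)\Big).
\]
Taking $\eps\log$, then the infimum over $\phi\in B_0(\mu)$, and letting $\eps\to0$ gives $\liminf_{\eps\to0}\eps\log\inf_{\phi\in B_0(\mu)}\mathbb P(\tau_\phi^{\eps,\delta}\le T_0)\ge -\big(\inf_{y}U_\delta(y)+\tfrac{3\eta}4\big)>-\big(\inf_{y}U_\delta(y)+\eta\big)$, which is the assertion with $T=T_0$.

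I expect the main obstacle to be the genuine \emph{uniformity} of the lower bound over the set $B_0(\mu)$, which is bounded but not compact in the infinite-dimensional space $\rH$; this is precisely why one cannot argue pointwise in $\phi$ and must lean on the uniform Lipschitz dependence of the solution map $\mathcal F(\phi,\cdot)$ on the noise recorded in the proof of Theorem \ref{brc.teo.4.2}, combined with the fact that the driving Ornstein--Uhlenbeck process $z_{\eps,\delta}$ carries no $\phi$-dependence. A secondary technical point is the clean gluing of $v_\phi$ with the deterministic flow to land all control paths on the single horizon $[0,T_0]$ while keeping their actions uniformly controlled; the regularity $v_\phi(T_\phi)\in\rV$ supplied by Lemma \ref{lem-clar-2} is what makes this legitimate.
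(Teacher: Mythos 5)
Your proposal is correct and is essentially the argument the paper intends: the paper does not write out a proof of this lemma but refers to \cite[Lemmas 7.3--7.5]{cerrai-freidlin} and \cite[Chapter 5]{dembo}, whose content is exactly your combination of the quasi-exiting control paths from Lemma \ref{lem1}, their extension by the unforced flow to a common horizon, the deterministic tube-implies-exit inclusion, and the Freidlin--Wentzell form of the lower bound from the LDP of Theorem \ref{brc.teo.4.2}, uniform over bounded sets of $\rH$. Your closing remarks correctly identify the two points that make the argument work, namely the uniformity of the LDP lower bound (resting on the uniform Lipschitz dependence of $\mathcal{F}(\phi,\cdot)$ on the noise) and the regularity $v_\phi(T_\phi)\in\rV$ needed for the gluing.
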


\medskip

\begin{lemma}
\label{lem3}
For any $\mu>0$  such that $B_{0}(\mu) \subset D$,
\[\lim_{t\to +\infty}\,\liminf_{\eps\to 0}\eps\,\log \left(\,\sup_{\phi \in\,D}\mathbb{P}\left(\sigma_\phi^{\eps,\delta,\mu}>t\right)\right)=-\infty,
\]
where
\[\sigma_\phi^{\eps,\delta,\mu}:=\inf \left\{\,t\geq 0\ ;\ u_\phi^{\eps,\delta}(t) \in\,B_{0}(\mu)\cup \partial D\,\right\}.\]
Moreover,
\[
\lim_{\eps \to 0}\mathbb{P}\left(u_\phi^{\eps,\delta}(\sigma_\phi^{\eps,\delta,\mu}) \in\,B_{0}(\mu)\right)=1.
\]

\end{lemma}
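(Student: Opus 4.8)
The plan is to combine the uniform absorption of the deterministic flow toward the equilibrium $0$ with the uniform (in the initial datum) large deviation estimates obtained in the proof of Theorem \ref{brc.teo.4.2}. First I would fix $R>0$ with $D\subset B_0(R)$ and apply Proposition \ref{propbrc3} with $\sigma=0$, $c_1=R$ and $c_2=\mu/2$ to get a time $T_1=T_1(\mu,R)>0$ such that the solution $u_\phi$ of \eqref{brc80} satisfies $|u_\phi(T_1)|_{\rH}\le \mu/2$ for every $\phi\in D$; since $D$ is invariant, $u_\phi(s)\in D$ for all $s\ge 0$. The key observation is that on the event $\{\sup_{s\in[0,T_1]}|u_\phi^{\eps,\delta}(s)-u_\phi(s)|_{\rH}<\mu/2\}$ one has $|u_\phi^{\eps,\delta}(T_1)|_{\rH}<\mu$, so $u_\phi^{\eps,\delta}(T_1)\in B_0(\mu)$ and hence $\sigma_\phi^{\eps,\delta,\mu}\le T_1$. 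Consequently
\[
\sup_{\phi\in D}\mathbb{P}\big(\sigma_\phi^{\eps,\delta,\mu}>T_1\big)\le \sup_{\phi\in D}\mathbb{P}\Big(\sup_{s\in[0,T_1]}|u_\phi^{\eps,\delta}(s)-u_\phi(s)|_{\rH}\ge \tfrac{\mu}{2}\Big).
\]

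To estimate the right-hand side I would use the factorization $u_\phi^{\eps,\delta}=\mathcal{F}(\phi,z_{\eps,\delta})$, $u_\phi=\mathcal{F}(\phi,0)$, together with the uniform Lipschitz bound recalled in the proof of Theorem \ref{brc.teo.4.2}: there is $c_R>0$ with $\sup_{\phi\in B_0(R)}|\mathcal{F}(\phi,z_{\eps,\delta})-\mathcal{F}(\phi,0)|_{C([0,T_1];\rH)}\le c_R|z_{\eps,\delta}|_{C([0,T_1];L^4(\mathcal{O}))}$. Thus the event above is contained, uniformly in $\phi\in D$, in $\{|z_{\eps,\delta}|_{C([0,T_1];L^4(\mathcal{O}))}\ge \mu/(2c_R)\}$, a closed set not containing the unique null-cost path $g\equiv 0$. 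Since $\{z_{\eps,\delta}\}$ satisfies a large deviation principle in $C([0,T_1];L^4(\mathcal{O}))$ with good rate $I_{0,T_1}^\delta$, the number $c:=\inf\{I_{0,T_1}^\delta(g):|g|_{C([0,T_1];L^4(\mathcal{O}))}\ge \mu/(2c_R)\}$ is strictly positive, and there is $\eps_0>0$ with
\[
p(\eps):=\sup_{\phi\in D}\mathbb{P}\big(\sigma_\phi^{\eps,\delta,\mu}>T_1\big)\le e^{-c/(2\eps)},\qquad 0<\eps\le\eps_0.
\]

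Next I would iterate this bound by the Markov property of $u^{\eps,\delta}$. On $\{\sigma_\phi^{\eps,\delta,\mu}>T_1\}$ the solution at time $T_1$ lies in $D\setminus B_0(\mu)\subset D$, so $\sup_{\phi\in D}\mathbb{P}(\sigma_\phi^{\eps,\delta,\mu}>(n+1)T_1)\le p(\eps)\,\sup_{\phi\in D}\mathbb{P}(\sigma_\phi^{\eps,\delta,\mu}>nT_1)$, whence $\sup_{\phi\in D}\mathbb{P}(\sigma_\phi^{\eps,\delta,\mu}>nT_1)\le p(\eps)^n\le e^{-nc/(2\eps)}$. For $t\in[nT_1,(n+1)T_1)$ this gives $\eps\log\sup_{\phi\in D}\mathbb{P}(\sigma_\phi^{\eps,\delta,\mu}>t)\le -nc/2$ for every $\eps\le\eps_0$, so $\liminf_{\eps\to 0}\eps\log\sup_{\phi\in D}\mathbb{P}(\sigma_\phi^{\eps,\delta,\mu}>t)\le -\lfloor t/T_1\rfloor\, c/2\to-\infty$ as $t\to\infty$, which is the first assertion.

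For the second assertion I would fix $\phi\in D$ and use that $\phi$ is interior. The orbit $\{u_\phi(s):0\le s\le T_1\}$ is a compact subset of the open set $D$, so $d_0:=\mathrm{dist}_{\rH}(\{u_\phi(s):s\in[0,T_1]\},\partial D)>0$. Setting $\eta_0:=\tfrac12\min(d_0,\mu)$, on the event $\{\sup_{s\in[0,T_1]}|u_\phi^{\eps,\delta}(s)-u_\phi(s)|_{\rH}<\eta_0\}$ the trajectory $u_\phi^{\eps,\delta}$ stays at $\rH$-distance at least $d_0/2>0$ from $\partial D$ on $[0,T_1]$ and satisfies $|u_\phi^{\eps,\delta}(T_1)|_{\rH}\le\mu$; hence it reaches $B_0(\mu)$ by time $T_1$ without ever meeting $\partial D$, so $u_\phi^{\eps,\delta}(\sigma_\phi^{\eps,\delta,\mu})\in B_0(\mu)$. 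The probability of this event tends to $1$ as $\eps\to 0$, by the same factorization and the convergence of $z_{\eps,\delta}$ to $0$ in probability in $C([0,T_1];L^4(\mathcal{O}))$, which yields $\lim_{\eps\to 0}\mathbb{P}(u_\phi^{\eps,\delta}(\sigma_\phi^{\eps,\delta,\mu})\in B_0(\mu))=1$. The main delicacy throughout is the uniformity in $\phi\in D$ of the exponential estimate, which is precisely what the uniform Lipschitz continuity of $\mathcal{F}$ on bounded sets delivers, reducing the whole matter to a single $\phi$-free large deviation bound for the Gaussian process $z_{\eps,\delta}$.
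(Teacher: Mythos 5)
Your argument is correct and essentially complete, which is in fact more than the paper itself supplies: the paper proves Lemma \ref{lem3} only by reference, asserting that the argument is analogous to Lemmas 7.3--7.5 of \cite{cerrai-freidlin} (an infinite-dimensional adaptation of \cite[Chapter 5]{dembo}) and rests on the uniformity in $\phi$ of the large deviation principle of Theorem \ref{brc.teo.4.2}. The skeleton is the same in both treatments: a deterministic absorption time $T_1$ uniform over $D$ (your use of Proposition \ref{propbrc3} with $\sigma=0$), a one-step exponential bound $p(\eps)=\sup_{\phi\in D}\mathbb{P}(\sigma_\phi^{\eps,\delta,\mu}>T_1)\leq e^{-c/(2\eps)}$, and iteration via the Markov property. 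Where you genuinely diverge is in how the one-step bound is produced: the route the paper points to applies the uniform LDP upper bound for $u_\phi^{\eps,\delta}$ itself to the closed set of paths avoiding $B_0(\mu)\cup\partial D$ on $[0,T_1]$ and shows the action-infimum over that set is positive, whereas you push the whole event through the solution map $\mathcal{F}$, exploit its Lipschitz continuity uniformly over $\phi\in B_0(R)$, and reduce to a single $\phi$-free closed-set upper bound for the Gaussian process $z_{\eps,\delta}$ from \cite{zab}. This is more elementary (no transferred LDP for $u^{\eps,\delta}$ is needed) and makes the uniformity in $\phi$ transparent; the same mechanism cleanly handles the second assertion via the positive distance of the compact deterministic orbit from $\partial D$. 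Two small points should be tidied up, neither of which affects validity: the Lipschitz estimate for $\mathcal{F}$ quoted from the proof of Theorem \ref{brc.teo.4.2} is stated for $z_1,z_2$ in a ball of $C([0,T_1];L^4(\mathcal{O}))$, so you should take $c_R$ to be the Lipschitz constant on, say, the unit ball and use that $\left\{|z_{\eps,\delta}|_{C([0,T_1];L^4(\mathcal{O}))}<\min\left(1,\tfrac{\mu}{2c_R}\right)\right\}$ implies the closeness event, the complement still being a closed set of positive rate; and the Markov property of $u^{\eps,\delta}$, which drives the iteration, deserves an explicit citation (it is standard for additive noise, cf.\ \cite{flandoli}).
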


\medskip

\begin{lemma}
\label{lem3bis}
For any closed set $N\subset \partial D$,
\begin{equation}
\label{z9}
\lim_{\mu\to 0}\limsup_{\eps\to 0}\eps\log\left(\sup_{\phi \in\,B_{0}(3\mu)}\mathbb{P}\left(u_\phi^{\eps,\delta}(\sigma_\phi^{\eps,\delta,\mu}) \in\,N\right)\right)\leq -\inf_{\phi \in\,N}U_\delta(\phi).\end{equation}
\end{lemma}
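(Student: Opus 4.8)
The plan is to combine a uniform large deviation upper bound on a fixed time interval with the a~priori estimates for finite--action trajectories, reducing the whole statement to the single scalar quantity
\[
m(\mu):=\inf\Big\{ S^\delta_{0,t_0}(u)\ :\ t_0>0,\ u\in C([0,t_0];\rH),\ u(0)\in B_0(3\mu),\ u(t_0)\in N\Big\}.
\]
First I would perform a time truncation. Fix $\mu>0$ with $B_0(\mu)\subset D$. By Lemma~\ref{lem3}, for every $L>0$ there is $T=T(\mu,L)$ with $\limsup_{\eps\to0}\eps\log\sup_{\phi\in B_0(3\mu)}\mathbb{P}(\sigma^{\eps,\delta,\mu}_\phi>T)\le -L$. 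Splitting $\mathbb{P}(u^{\eps,\delta}_\phi(\sigma^{\eps,\delta,\mu}_\phi)\in N)\le \mathbb{P}(\sigma^{\eps,\delta,\mu}_\phi\le T,\ u^{\eps,\delta}_\phi(\sigma^{\eps,\delta,\mu}_\phi)\in N)+\mathbb{P}(\sigma^{\eps,\delta,\mu}_\phi>T)$, the first event forces the trajectory to meet $N$ before time $T$, so it is contained in $\{u^{\eps,\delta}_\phi\in\Psi\}$, where
\[
\Psi:=\big\{u\in C([0,T];\rH):\ u(0)\in B_0(3\mu),\ u(t)\in N\text{ for some }t\in[0,T]\big\}.
\]
Since $N$ and $B_0(3\mu)$ are closed and $[0,T]$ is compact, $\Psi$ is closed in $C([0,T];\rH)$, so the uniform large deviation principle of Theorem~\ref{brc.teo.4.2} gives $\limsup_{\eps\to0}\eps\log\sup_{\phi\in B_0(3\mu)}\mathbb{P}(u^{\eps,\delta}_\phi\in\Psi)\le -\inf_{u\in\Psi}S^\delta_{0,T}(u)$. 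For $u\in\Psi$ meeting $N$ at $t_0\le T$ one has $S^\delta_{0,T}(u)\ge S^\delta_{0,t_0}(u)\ge m(\mu)$, whence $\inf_\Psi S^\delta_{0,T}\ge m(\mu)$; taking $L>m(\mu)$ and combining the two terms yields $\limsup_{\eps\to0}\eps\log\sup_{\phi\in B_0(3\mu)}\mathbb{P}(u^{\eps,\delta}_\phi(\sigma^{\eps,\delta,\mu}_\phi)\in N)\le -m(\mu)$.

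It therefore remains to prove that $\lim_{\mu\to0}m(\mu)=\inf_{\phi\in N}U_\delta(\phi)$. Since $B_0(3\mu)$ shrinks as $\mu\downarrow0$, the map $\mu\mapsto m(\mu)$ is nondecreasing, and taking the competitor $u(0)=0\in B_0(3\mu)$ together with the characterisation $U_\delta(\phi)=\min\{S^\delta_{-\infty}(v):v\in\mathcal{X}_\phi\}$ of Theorem~\ref{Fact 1} and the truncation Lemma~\ref{lem-claim1} shows $m(\mu)\le\inf_N U_\delta$ for every $\mu$; so only the lower bound $\lim_{\mu\to0}m(\mu)\ge\inf_N U_\delta$ is at stake (we may assume $\inf_N U_\delta<\infty$, the other case being trivial). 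I would argue by contradiction: if $\lim_{\mu\to0}m(\mu)=c<\inf_N U_\delta$, choose $\mu_n\downarrow0$ and near--minimisers which, after the time shift placing the right endpoint at $0$, are paths $v_n\in C([-t_n,0];\rH)$ with $v_n(-t_n)=\phi_n\in B_0(3\mu_n)$, $v_n(0)=y_n\in N$ and $S^\delta_{-t_n,0}(v_n)\le c+1/n$.

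The crux, and the main obstacle, is to pass to the limit and produce a genuine competitor in some $\mathcal{X}_y$, $y\in N$. Note that $\phi_n\to0$ holds only in $\rH$, so one cannot simply prepend a cheap path joining $0$ to $\phi_n$ (Lemma~\ref{prop-control} would require smallness in $D(\rA^{\frac{1+\beta}2})$); the $\rH$--convergence must instead be exploited inside the compactness argument. The energy estimate \eqref{ineq-aux-00}, which needs only $\rH$--data, bounds $v_n$ in $L^2(-t_n,0;\rV)\cap C([-t_n,0];\rH)$ uniformly in $n$, while $f_n:=v_n'+\rA v_n+\rB(v_n,v_n)$ is bounded in $L^2(-t_n,0;D(\rA^{\frac\beta2}))$; a compactness argument on each compact subinterval, analogous to the proof of Proposition~\ref{cor-compact} and based on the compact embedding of the relevant $W^{1,2}$ spaces into $C([-T,0];\rH)$, together with a diagonal extraction, yields a subsequence converging uniformly on compact subintervals of $(-\infty,0]$ to a path $\bar v$, with $y:=\lim_n y_n=\bar v(0)\in N$ since $N$ is closed (if $t_n\to t^\ast<\infty$ along a subsequence one works on $[-t^\ast,0]$ and extends $\bar v$ by $0$ on $(-\infty,-t^\ast]$, which is continuous precisely because $\bar v(-t^\ast)=\lim_n\phi_n=0$). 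The uniform $\rH$--bound passes to $\bar v$, and by lower semicontinuity of $S^\delta_{-\infty}$ (Proposition~\ref{Fact 3}) one has $S^\delta_{-\infty}(\bar v)\le\liminf_n S^\delta_{-t_n,0}(v_n)\le c<\infty$; feeding the finite action and the uniform $\rH$--bound into the dissipative energy inequality forces $\lim_{t\to-\infty}|\bar v(t)|_\rH=0$, so $\bar v\in\mathcal{X}_y$ (and Proposition~\ref{prop-inftybis}, or Proposition~\ref{prop-infty} if $\delta=0$, upgrades the regularity if needed). Then Theorem~\ref{Fact 1} gives
\[
\inf_{\phi\in N}U_\delta(\phi)\le U_\delta(y)\le S^\delta_{-\infty}(\bar v)\le c<\inf_{\phi\in N}U_\delta(\phi),
\]
a contradiction. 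Hence $\lim_{\mu\to0}m(\mu)=\inf_N U_\delta$, and substituting into the bound of the first paragraph and letting $\mu\to0$ yields \eqref{z9}.
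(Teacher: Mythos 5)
Your probabilistic skeleton --- truncation via Lemma \ref{lem3}, then the uniform large deviation upper bound over the closed set of paths that start in $B_0(3\mu)$ and hit $N$ before time $T$ --- is exactly the paper's strategy, and it correctly reduces the lemma to the deterministic claim that the minimal action $m(\mu)$ joining $B_0(3\mu)$ to $N$ satisfies $\liminf_{\mu\to 0}m(\mu)\geq \inf_N U_\delta$. The gap is in your proof of that claim. Your near-minimisers $v_n$ start at points $\phi_n\in B_0(3\mu_n)$ that are small only in $\rH$, and every estimate that yields precompactness in $C([-T,0];\rH)$ (the $W^{1,2}$ bounds coming from \eqref{brc82} and \eqref{ineq-L^2(0,T,H^2)}) is an \emph{interior} estimate that degenerates at the left endpoint $-t_n$. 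So the limit $\bar v$ is obtained only away from the initial time, and nothing forces $\lim_{t\downarrow -t^\ast}\bar v(t)=\lim_n\phi_n=0$: the controls $f_n$, merely bounded in $L^2$, may concentrate their mass on $[-t_n,-t_n+1/n]$ and drive the solution from $\phi_n\approx 0$ to some $z\neq 0$ in asymptotically zero time. In the limit both the connection to $0$ and the action spent on that burst are lost (lower semicontinuity only controls the action of $\bar v$ on the open interval), the extension by zero is discontinuous, $\bar v\notin \mathcal{X}_y$, and no contradiction is reached. Bounding the lost action from below by $U_\delta(z)$ would be circular, since that is again a statement about paths starting near, but not at, the origin. (Also, the case $\inf_N U_\delta=+\infty$ is not ``trivial'': it requires $m(\mu)\to\infty$, i.e.\ the same lower bound at every finite level.)

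The paper closes exactly this gap by a different device. Its Step 1 establishes the Lipschitz estimate $|u^f_\phi-u^f_0|_{C([0,T];\rH)}\leq \varphi\bigl(|f|_{L^2}\bigr)\,|\phi|_{\rH}$, uniformly in $T$, by a Gronwall argument based on \eqref{eqn-B02}, \eqref{eqn:4.0a} and \eqref{ineq-aux-00}. Given trajectories $u^{f_n}_{\phi_n}$ reaching within $1/n$ of $N$ with $|\phi_n|_{\rH}\to 0$ and $|f_n|^2_{L^2}\leq 2\beta$, one replaces them by $u^{f_n}_{0}$, driven by the \emph{same} controls but started from $0$: their endpoints $\hat\phi_n$ still accumulate on $N$ by Step 1, and now satisfy $U_\delta(\hat\phi_n)\leq \tfrac12|f_n|^2_{L^2}\leq\beta$ \emph{by definition of} $U_\delta$, because they are genuinely reached from the origin. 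Compactness of the level sets of $U_\delta$ in $\rH$ (Proposition \ref{prop-lsc}) and closedness of $N$ then produce a point of $N$ with $U_\delta\leq\beta<\inf_N U_\delta$, the desired contradiction, with no need to control the paths near their initial time. If you wish to keep your formulation via $m(\mu)$, prove its lower bound by this restart-from-zero comparison rather than by compactness of the paths themselves.
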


\begin{lemma}
\label{lem4}
 For every $\lambda >0$ and $\mu>0$ such that $B_{0}(\mu)\subset D$, there exists $T=T(\mu,\lambda)<\infty$ such that
\[ \limsup_{\eps\to 0}\eps \log\,\left(\sup_{\phi \in\,B_{0}(\mu)}\mathbb{P}\left(\sup_{t \in\,[0,T]}|u_\phi^{\eps,\delta}(t)-\phi|_{\rH}\geq 3\mu\right)\right)<-\lambda.\]
	\end{lemma}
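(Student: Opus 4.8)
The plan is to reduce the statement to a purely deterministic variational lower bound on the action functional, exploiting the \emph{uniform} large deviation upper bound from Theorem \ref{brc.teo.4.2}. First I would introduce the set
\[
C=\big\{u\in C([0,T];\rH):\ \sup_{t\in[0,T]}|u(t)-u(0)|_{\rH}\geq 3\mu\big\},
\]
which is closed in $C([0,T];\rH)$ because $u\mapsto \sup_{t}|u(t)-u(0)|_\rH$ is continuous. Since $\{\mathcal L(u^{\eps,\delta}_\phi)\}_\eps$ satisfies the large deviation principle with good rate function $S^\delta_{0,T}$ \emph{uniformly} over $\phi$ in the bounded set $B_0(\mu)\subset\rH$, the uniform upper bound yields
\[
\limsup_{\eps\to0}\eps\log\Big(\sup_{\phi\in B_0(\mu)}\mathbb{P}\big(u^{\eps,\delta}_\phi\in C\big)\Big)\ \leq\ -\,m(T),\qquad
m(T):=\inf\big\{S^\delta_{0,T}(u):\ |u(0)|_\rH\leq\mu,\ u\in C\big\}.
\]
It therefore suffices to prove that $m(T)\to\infty$ as $T\to0$ and then to choose $T=T(\mu,\lambda)$ with $m(T)>\lambda$.

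To bound $m(T)$ I would take any $u$ with $|u(0)|_\rH\le\mu$, $u\in C$ and $S^\delta_{0,T}(u)<\infty$, and set $f:=\cH(u)$. Finiteness of the action forces $f\in L^2(0,T;\rH)$, since $|f|_\rH\le\|Q_\delta\|_{\mathcal L(\rH)}\,|Q_\delta^{-1}f|_\rH$, so $u$ solves \eqref{eqn_NSE01} with this $f$ and satisfies the energy equality $\tfrac12\tfrac{d}{dt}|u|_\rH^2=(f,u)_\rH-|u|_\rV^2-\langle \rB(u,u),u\rangle$. The decisive idea is to estimate the growth of the norm $|u(t)|_\rH$ rather than the displacement $|u(t)-u(0)|_\rH$ directly: using $\langle\rB(u,u),u\rangle=0$ from \eqref{eqn-B02} together with $|u|_\rV^2\ge0$ gives $\tfrac12\tfrac{d}{dt}|u|_\rH^2\le |f|_\rH|u|_\rH$, whence (after regularising $|u|_\rH$ by $\sqrt{|u|_\rH^2+\varepsilon}$ to handle its zeros and letting $\varepsilon\to0$) one obtains $|u(t)|_\rH\le|u(0)|_\rH+\int_0^t|f(s)|_\rH\,ds$ for every $t$. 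Because $|u(0)|_\rH\le\mu$ while $\sup_t|u(t)-u(0)|_\rH\ge3\mu$ produces a $t^*\le T$ with $|u(t^*)|_\rH\ge 3\mu-\mu=2\mu$, I conclude $\int_0^{t^*}|f|_\rH\,ds\ge\mu$, and Cauchy--Schwarz gives $\int_0^T|f|_\rH^2\,ds\ge\mu^2/T$. Hence
\[
S^\delta_{0,T}(u)=\tfrac12\int_0^T|Q_\delta^{-1}f|_\rH^2\,dt\ \ge\ \frac{1}{2\|Q_\delta\|_{\mathcal L(\rH)}^2}\int_0^T|f|_\rH^2\,dt\ \ge\ \frac{\mu^2}{2\|Q_\delta\|_{\mathcal L(\rH)}^2\,T},
\]
and this bound is uniform in $\phi\in B_0(\mu)$, so $m(T)\ge \mu^2\big(2\|Q_\delta\|_{\mathcal L(\rH)}^2 T\big)^{-1}$.

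Finally, choosing $T=T(\mu,\lambda):=\mu^2\big(4\lambda\|Q_\delta\|_{\mathcal L(\rH)}^2\big)^{-1}$ makes $m(T)\ge 2\lambda>\lambda$, so the displayed upper bound gives $\limsup_{\eps\to0}\eps\log(\cdots)\le -m(T)<-\lambda$, as required. I expect the main obstacle to be exactly the low regularity of the initial datum: since $\phi=u(0)\in\rH$ need not lie in $\rV$, one cannot differentiate $|u(t)-\phi|_\rH^2$ directly (the term $\langle\rA u,\phi\rangle$ is meaningless when $\phi\notin\rV$), and the naive heuristic that the deterministic flow stays near $\phi$ for short times fails in infinite dimensions, since $|e^{-t\rA}\phi-\phi|_\rH$ can be of order $\mu$ for rough high-frequency $\phi$ even for arbitrarily small $t$. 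Tracking $|u(t)|_\rH$ instead keeps every pairing inside the admissible duality $\langle\cdot,\cdot\rangle_{\rV',\rV}$, which is precisely what lets the energy argument run uniformly over all $\phi\in B_0(\mu)$.
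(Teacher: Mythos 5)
Your argument is correct, but it takes a genuinely different route from the paper's. The paper does not use the large deviation principle at all for this lemma: it writes $u_\phi^{\eps,\delta}-\phi = z^{\eps,\delta}+\bigl(v_\phi^{z^{\eps,\delta}}-v_\phi^0\bigr)+\bigl(v_\phi^0-\phi\bigr)$, where $z^{\eps,\delta}$ is the Ornstein--Uhlenbeck process, bounds the purely deterministic last term by $2\mu$ (using the same monotonicity of $|\cdot|_\rH$ along the unforced flow that you exploit), controls the middle term via the continuity of the map $z\mapsto v_\phi^z$ from \cite[Theorem 4.6]{Brz+Li_2006}, and reduces everything to the Gaussian tail estimate \eqref{eqn-proof-01} for the stochastic convolution in $C([0,T];L^4(\mathcal{O}))$ from \cite{Brz+Peszat_2000}, whose rate $\beta^2/(CT)$ blows up as $T\to 0$. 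You instead go through the uniform upper bound of Theorem \ref{brc.teo.4.2} and a variational lower bound $S^\delta_{0,T}(u)\ge \mu^2\bigl(2\|Q_\delta\|_{\mathcal L(\rH)}^2 T\bigr)^{-1}$ for paths that move $3\mu$ away from a starting point in $B_{0}(\mu)$; the key inequality $|u(t)|_\rH\le|u(0)|_\rH+\int_0^t|f(s)|_\rH\,ds$, obtained from \eqref{eqn-B02} after discarding $|u|_\rV^2$, is sound, and finiteness of the action does force $\cH(u)\in L^2(0,T;\rH)$ through the boundedness of $Q_\delta$, so the energy identity for the weak solution is available. Your route is closer to the classical finite-dimensional Freidlin--Wentzell argument and avoids the $L^4$-valued exponential estimate altogether; what it costs is some care in the form of the uniform upper bound: the Freidlin--Wentzell statement controls the probability that $u_\phi^{\eps,\delta}$ is at distance $\ge\rho$ from the level set of the action, so to bound $\mathbb{P}(u_\phi^{\eps,\delta}\in C)$ you must note that any path within distance $\rho$ of $C$ and starting in $B_{0}(\mu)$ still satisfies your action bound with $3\mu$ replaced by $3\mu-\rho$ --- a routine enlargement you gloss over, but one that goes through because your lower bound depends continuously on the threshold. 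Your closing observation about why one must track $|u(t)|_\rH$ rather than $|u(t)-\phi|_\rH$ correctly identifies the structural point shared by both proofs, and explains why the threshold must exceed $2\mu$.
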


\bigskip

Next, by proceeding as in  the proof of \cite[Theorem 7.7]{cerrai-freidlin})  we can conclude that the following approximation result holds.
\label{sec:6}
\begin{theorem}
\label{brcteo}
Suppose that Assumption \ref{ass-Q} is satisfied. If for any $\phi \in\,\rV\cap \partial D$ there exists a sequence $\{\phi_n\}\subset D(\rA^{\frac{1+\beta}2})\cap \partial D$ such that
\begin{equation}
\label{brc76}
\lim_{n\to\infty}|\phi_n-\phi|_{\rV}=0,
\end{equation}
then
\begin{equation}
\label{brc75}
\lim_{\delta \to 0}\,\inf_{\phi \in\,\partial D}U_\delta(\phi)= \inf_{\phi \in\,\partial D}U(\phi).
\end{equation}
\end{theorem}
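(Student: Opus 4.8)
The plan is to establish the two one-sided inequalities $\liminf_{\delta\to 0}\inf_{\phi\in\partial D}U_\delta(\phi)\ge \inf_{\phi\in\partial D}U(\phi)$ and $\limsup_{\delta\to 0}\inf_{\phi\in\partial D}U_\delta(\phi)\le \inf_{\phi\in\partial D}U(\phi)$ separately, and then combine them to obtain \eqref{brc75}. The only inputs I intend to use are the monotonicity \eqref{brc500}, the pointwise convergence $U_\delta\to U$ on $D(\rA^{\frac{\beta+1}2})$ from Theorem \ref{thm-aim}, the characterization of finiteness in Proposition \ref{cor-equiv}, the $\rV$-continuity of $U$ from Proposition \ref{brc505}, and the approximation hypothesis \eqref{brc76}. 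I would also invoke Lemma \ref{point} to guarantee that, whenever $\rV\cap\partial D\ne\emptyset$, the quantities $\inf_{\partial D}U_\delta$ are finite and attained, so that all the $\limsup/\liminf$ manipulations below stay away from the degenerate case.

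The lower bound is immediate. By \eqref{brc500} one has $S^\delta_{-\infty}\ge S_{-\infty}$ on $\mathcal{X}$, and taking the infimum over $\mathcal{X}_\phi$ in the characterizations \eqref{eqn-B1} and \eqref{eqn-A1} yields $U_\delta(\phi)\ge U(\phi)$ for every $\phi$, both sides being $+\infty$ when $\phi\notin D(\rA^{\frac{1+\beta}2})$. Hence $\inf_{\partial D}U_\delta\ge \inf_{\partial D}U$ for each $\delta\in(0,1]$, which gives at once $\liminf_{\delta\to 0}\inf_{\partial D}U_\delta\ge \inf_{\partial D}U$.

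For the upper bound I would first observe that, by Proposition \ref{cor-equiv}, $U=+\infty$ off $\rV$, so $\inf_{\partial D}U=\inf_{\phi\in\rV\cap\partial D}U(\phi)$; if $\rV\cap\partial D=\emptyset$ the statement is trivial, since then $\inf_{\partial D}U=+\infty$ and the lower bound forces $\inf_{\partial D}U_\delta=+\infty$ for every $\delta$. Assume therefore $\rV\cap\partial D\ne\emptyset$ and fix an arbitrary $\phi\in\rV\cap\partial D$. By the approximation hypothesis \eqref{brc76} there is a sequence $\{\phi_n\}\subset D(\rA^{\frac{1+\beta}2})\cap\partial D$ with $\phi_n\to\phi$ in $\rV$. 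Since each $\phi_n\in D(\rA^{\frac{1+\beta}2})$, Theorem \ref{thm-aim} gives $\lim_{\delta\to 0}U_\delta(\phi_n)=U(\phi_n)$, while $\phi_n\in\partial D$ gives $\inf_{\partial D}U_\delta\le U_\delta(\phi_n)$; taking $\limsup_{\delta\to 0}$ yields $\limsup_{\delta\to 0}\inf_{\partial D}U_\delta\le U(\phi_n)$ for every $n$. Letting $n\to\infty$ and using the $\rV$-continuity of $U$ from Proposition \ref{brc505} together with $\phi_n\to\phi$ in $\rV$, I obtain $\limsup_{\delta\to 0}\inf_{\partial D}U_\delta\le U(\phi)$. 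As $\phi\in\rV\cap\partial D$ was arbitrary, taking the infimum over such $\phi$ and recalling $\inf_{\partial D}U=\inf_{\rV\cap\partial D}U$ closes the upper bound, and \eqref{brc75} follows.

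The step I expect to demand the most care is precisely the passage from a generic boundary point to the regular approximants: Theorem \ref{thm-aim} only supplies the pointwise limit $U_\delta\to U$ on $D(\rA^{\frac{\beta+1}2})$, whereas a point that nearly realizes $\inf_{\partial D}U$ need only lie in $\rV$. It is exactly the density assumption \eqref{brc76}, coupled with the $\rV$-continuity of $U$, that decouples the two limits $\delta\to 0$ and $n\to\infty$; dropping either ingredient would break the argument. A secondary point worth verifying carefully is that the interchange is legitimate only because $\{\phi_n\}$ is chosen inside $\partial D$ (so that $\inf_{\partial D}U_\delta\le U_\delta(\phi_n)$ genuinely holds), which is guaranteed by the formulation of \eqref{brc76}.
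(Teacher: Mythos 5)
Your proof is correct, and it takes a genuinely different route from the paper's. The paper's argument (given only as a sketch) stays inside the $\Gamma$-convergence framework of Section \ref{sec-gamma}: it introduces the auxiliary functionals $\tilde{U}_\delta$, $\tilde{U}$ supported on $\Lambda_\beta=D(\rA^{\frac{1+\beta}2})\cap\partial D$, proves $\Gamma-\lim_{\delta\to 0}\tilde{U}_\delta=\mathrm{sc}^-\tilde{U}$ via Proposition \ref{prop3}, identifies $\mathrm{sc}^-\tilde{U}$ with $U$ on $\partial D$ (and $+\infty$ elsewhere) using \eqref{brc76} and Proposition \ref{brc505}, and then invokes Theorem \ref{teo4} to convert $\Gamma$-convergence plus equi-coercivity into convergence of infima. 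You instead prove the two one-sided inequalities directly: the lower bound from the pointwise monotonicity $U_\delta\geq U$ (which indeed follows already from $S^\delta_{-T}\geq S_{-T}$ and the definitions \eqref{eqn-B4}--\eqref{eqn-A4}, without the variational characterizations), and the upper bound by combining $\inf_{\partial D}U_\delta\leq U_\delta(\phi_n)$, the pointwise limit of Theorem \ref{thm-aim} on $D(\rA^{\frac{\beta+1}2})$, and the $\rV$-continuity of $U$ along the approximating sequence from \eqref{brc76}. Your version is more elementary and self-contained — it dispenses with the relaxation machinery, with the equi-coercivity of $\{\tilde{U}_\delta\}$, and with the explicit computation of $\mathrm{sc}^-\tilde{U}$ — while the paper's formulation is designed to slot into the general scheme of \cite{cerrai-freidlin} and makes the relaxed functional visible as an object in its own right. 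Two cosmetic points: your parenthetical ``both sides being $+\infty$ when $\phi\notin D(\rA^{\frac{1+\beta}2})$'' is not quite right, since $U(\phi)$ can be finite for $\phi\in\rV\setminus D(\rA^{\frac{1+\beta}2})$ by Proposition \ref{cor-equiv} (the inequality $U_\delta(\phi)\geq U(\phi)$ of course still holds there, as the left side is $+\infty$); and the appeal to Lemma \ref{point} is not actually needed, since all the inequalities are valid in $[0,+\infty]$.
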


\begin{proof}[Sketch of the Proof]
Limit \eqref{brc75} follows from Theorem \ref{Fact 1} and \eqref{brc76} in virtue of a general argument based on $\Gamma$-convergence and relaxation, which applies to more general situations, and which has been introduced in \cite{cerrai-freidlin}. Actually, we define
\[\tilde{U}(\phi)=\begin{cases}
U(\phi),  &   \phi \in\,\Lambda_\beta,\\
+\infty,  &  \phi \in\,H\setminus \Lambda_\beta,
\end{cases}\]
and for any $\delta \in\,(0,1]$
\[\tilde{U}_\delta(\phi)=\begin{cases}
U_\delta(\phi),  &   \phi \in\,\Lambda_\beta,\\
+\infty,  &  \phi \in\,H\setminus \Lambda_\beta,
\end{cases}\]
where $\Lambda_\beta=D(A^{\frac{1+\beta}2})\cap \partial D$. One can prove that
\[\Gamma-\lim_{\delta\to 0}\tilde{U}_\delta=\text{sc}^-\tilde{U},\ \ \ \ \text{in}\ \rH,\]
and then, by using \eqref{brc76} and the continuity of $U$ in the space $\rV$ proved in Proposition \ref{brc505}, one can show
\[\text{sc}^-\tilde{U}(\phi)=\begin{cases}
U(\phi),  &   \phi \in\,\partial D,\\
+\infty,  &  \phi \in\,\rH\setminus \partial D.
\end{cases}\]
This implies \eqref{brc75}.

\end{proof}

In view of Theorems \ref{brcteo2} and \ref{brcteo}, we obtain the  following result.

\begin{corollary}\label{corollary}
Under the same assumptions of Theorem \ref{brcteo}, we have
\[\lim_{\delta\to 0}\,\lim_{\eps\to 0}\eps \log \mathbb{E}\,\tau_\phi^{\eps,\delta}=\inf_{\phi \in\,\partial D}U(\phi).\]

\end{corollary}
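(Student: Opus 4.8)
The plan is to obtain the claimed iterated limit by composing the two results established immediately above, namely Theorem \ref{brcteo2}, which evaluates the inner limit in $\eps$, and Theorem \ref{brcteo}, which evaluates the outer limit in $\delta$. There is essentially no new analytic content to produce; the whole point is that the two theorems have been arranged precisely so that one can be substituted into the other.

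First I would fix $\phi\in D$ and an arbitrary $\delta\in(0,1]$. By Theorem \ref{brcteo2} the inner limit exists and
\[\lim_{\eps\to 0}\eps\log\,\E\,\tau_\phi^{\eps,\delta}=\min_{y\in\partial D}U_\delta(y).\]
The minimum on the right is attained and finite by Lemma \ref{point}, so it coincides with $\inf_{y\in\partial D}U_\delta(y)$. This identifies the value of the inner limit, for each fixed $\delta$, as the infimum of $U_\delta$ over $\partial D$. Next I would let $\delta\to 0$. Since the hypotheses of the corollary are exactly those of Theorem \ref{brcteo}, that theorem yields
\[\lim_{\delta\to 0}\inf_{y\in\partial D}U_\delta(y)=\inf_{y\in\partial D}U(y).\]
Substituting the expression for the inner limit obtained in the previous step gives
\[\lim_{\delta\to 0}\lim_{\eps\to 0}\eps\log\,\E\,\tau_\phi^{\eps,\delta}=\lim_{\delta\to 0}\inf_{y\in\partial D}U_\delta(y)=\inf_{y\in\partial D}U(y),\]
which is the assertion.

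Since both ingredients are already available, I do not expect any genuine obstacle: the only point that requires care is that Theorem \ref{brcteo2} produces a bona fide limit equal to $\inf_{y\in\partial D}U_\delta(y)$, rather than merely a $\limsup$ or $\liminf$, so that the passage $\delta\to 0$ supplied by Theorem \ref{brcteo} can be applied directly to that expression. Consequently the iterated limit is well defined and no interchange-of-limits issue arises; the argument reduces to a clean substitution of one established limit into the other.
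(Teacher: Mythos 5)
Your proposal is correct and is precisely the argument the paper intends: the corollary is stated as an immediate consequence of Theorem \ref{brcteo2} (inner limit in $\eps$, giving $\min_{y\in\partial D}U_\delta(y)$) composed with Theorem \ref{brcteo} (outer limit in $\delta$ of $\inf_{y\in\partial D}U_\delta(y)$). Your remark that the minimum is attained by Lemma \ref{point}, so that the substitution is of a genuine limit into a genuine limit, is the only point needing care, and you have addressed it.
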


Informally, this means that  for $0<\eps<<\delta<<1$, the following asymptotic formula holds
\[\mathbb{E}\,\tau_\phi^{\eps,\delta}\sim \exp\left(\frac 1\eps \inf_{\phi \in\,\partial D}U(\phi)\right).\]

\begin{rem}
{\em  As in \cite[Remark 7.8]{cerrai-freidlin}, we notice that if we take $D=B_0(r)$, for $r>0$, then   condition \eqref{brc76} assumed in Theorem \ref{brcteo} is fulfilled. Actually, as $D(A^{\frac{1+\beta}2})$ is dense in $\rV$, we can find a sequence $\{\hat{\phi}_n\}\subset D(A^{\frac{1+\beta}2})$ which is convergent to $\phi$ in $\rV$. Then, if we set $\phi_n=r \hat{\phi}_n/|\hat{\phi}_n|_\rH$, we conclude that $\{\phi_n\}\subset D(\rA^{\frac{1+\beta}2})\cap \partial D$ and \eqref{brc76} holds.
}
\end{rem}

\appendix

\section{Proofs of some auxiliary results}
\label{app:A}

\begin{proposition}\label{prop-infty}
Assume that \del{$x\in \rV$ and}
$z\in \mathcal{X}$  is such that $S_{-\infty}(z)<\infty$.
Then, $z(0) \in\,V$,
 \begin{equation}
\label{eqn-A04}
\lim_{t\to-\infty} \vert z(t)\vert_{\rV}=0,
\end{equation}
and  $z\in W^{1,2}(-\infty,0)$, i.e.
\begin{equation}
\label{eqn-A10}
\int_{-\infty}^0 \vert Az(t)\vert_\rH^2\, dt  + \int_{-\infty}^0 \vert z^\prime(t)\vert_\rH^2\, dt<\infty.
\end{equation}
Moreover, there exists a continuous and strictly increasing function $\varphi: [0,\infty) \to [0,\infty)$ such that
$\varphi(0)=0$ and, if
$z \in \mathcal{X}$ is a solution to the problem
\[
z^\prime(t)+\rA z(t)+\rB(z(t),z(t))=f(t), \ \ \  t\leq 0,
\]
with  $f$ being an element of $L^2(-\infty,0)$, then  $z\in W^{1,2}(-\infty,0)$, $z(0)\in V$ and
\[ \vert z(0) \vert_{\rV}^2+\vert z\vert^2_{W^{1,2}(-\infty,0)} \leq
\varphi( \int_{-\infty}^0 \vert f(t)\vert_\rH^2\, dt).    \]
\end{proposition}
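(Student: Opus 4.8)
The plan is to prove the quantitative ``moreover'' statement, from which the three qualitative assertions follow at once: indeed $S_{-\infty}(z)<\infty$ means precisely that $f:=\cH(z)=z^\prime+\rA z+\rB(z,z)$ belongs to $L^2(-\infty,0;\rH)$, so that $z$ is a very weak solution of the equation with this force. Throughout write $\rho:=\int_{-\infty}^0|f(t)|_\rH^2\,dt$. First I would upgrade the interior regularity on finite intervals: for each $a<0$ the restriction $z|_{[a,0]}$ lies in $C([a,0];\rH)$ with $\cH(z)=f\in L^2(a,0;\rH)$, so Lemma \ref{lem-clar} (after a time shift) gives $z(0)\in\rV$ and $z\in W^{1,2}(t_1,0)$ for every $t_1\in(a,0)$. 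Since $W^{1,2}(t_1,0)\embed C([t_1,0];\rV)$, letting $a\to-\infty$ yields $z(t)\in\rV$ for \emph{every} $t\le0$, together with continuity of $t\mapsto|z(t)|_\rV^2$ on $(-\infty,0]$. In particular the first claim $z(0)\in\rV$ is already established.

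Next I would run the two basic energy balances on each interval $[a,0]$, where by uniqueness (Proposition \ref{prop-very weak}) $z$ is the unique solution with datum $z(a)\in\rV$ and force $f$. The $\rH$-estimate \eqref{ineq-aux-00} (which already incorporates the cancellation $\langle\rB(z,z),z\rangle=0$ from \eqref{eqn-B02}), combined with $|z(a)|_\rH\to0$ as $a\to-\infty$ (membership in $\mathcal{X}$) and the Poincar\'e inequality $|f|_{\rV'}^2\le\lambda_1^{-1}|f|_\rH^2$, gives after passing to the limit the bounds $\sup_{t\le0}|z(t)|_\rH^2\le R_0$ and $\int_{-\infty}^0|z(t)|_\rV^2\,dt\le R_0$ with $R_0:=\lambda_1^{-1}\rho$. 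The decisive consequence, special to two dimensions, is that the critical coefficient is integrable:
\[
C_0:=108\int_{-\infty}^0|z(t)|_\rV^2\,|z(t)|_\rH^2\,dt\le 108\,\Big(\sup_{t\le0}|z(t)|_\rH^2\Big)\int_{-\infty}^0|z(t)|_\rV^2\,dt\le 108\,R_0^2<\infty.
\]
Moreover $\int_{-\infty}^0|z|_\rV^2<\infty$ and continuity of $|z(\cdot)|_\rV$ furnish a sequence $a_n\to-\infty$ with $|z(a_n)|_\rV\to0$.

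The heart of the argument is a Gronwall step on the $\rV$-estimate. On each $[a_n,0]$ (where $z(a_n)\in\rV$, so $z\in W^{1,2}(a_n,0)$) the differential inequality \eqref{ineq-aux-02}, namely $\tfrac{d}{dt}|z|_\rV^2+|\rA z|_\rH^2\le 2|f|_\rH^2+108|z|_\rV^4|z|_\rH^2$ whose nonlinear term is controlled by \eqref{ineq-B01}, holds a.e.; writing $c(t):=108|z(t)|_\rV^2|z(t)|_\rH^2$ and using $|\rA z|_\rH^2\ge\lambda_1|z|_\rV^2$, Gronwall from $a_n$ to any $t\le0$, after bounding $\int_{a_n}^t c\le C_0$ and $e^{-\lambda_1(t-s)}\le1$, yields
\[
|z(t)|_\rV^2\le|z(a_n)|_\rV^2\,e^{C_0}+2e^{C_0}\int_{-\infty}^0|f(s)|_\rH^2\,ds.
\]
Sending $n\to\infty$ gives $\sup_{t\le0}|z(t)|_\rV^2\le R_1:=2e^{C_0}\rho$. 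Feeding this into the integrated form of \eqref{ineq-aux-02} over $[a_n,0]$ and letting $n\to\infty$ controls $\int_{-\infty}^0|\rA z|_\rH^2\,dt\le 2\rho+R_1C_0<\infty$; then $z^\prime=f-\rA z-\rB(z,z)$ with Lemma \ref{lem-B} (so $\rB(z,z)\in L^2(-\infty,0;\rH)$) gives $z^\prime\in L^2(-\infty,0;\rH)$, which is \eqref{eqn-A10}, i.e. $z\in W^{1,2}(-\infty,0)$. Since $|z(\cdot)|_\rV^2$ is then absolutely continuous with $\tfrac{d}{dt}|z|_\rV^2=2(\rA z,z^\prime)_\rH\in L^1(-\infty,0)$ while $|z|_\rV^2\in L^1(-\infty,0)$, it has a limit at $-\infty$ which must be $0$, giving \eqref{eqn-A04}. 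Finally, each of the displayed bounds ($|z(0)|_\rV^2\le R_1$, and the norms of $\rA z$, $\rB(z,z)$, $z^\prime$ in $L^2(-\infty,0;\rH)$) is an explicit continuous increasing function of $\rho$ vanishing at $\rho=0$, so taking $\varphi$ to be their sum, augmented by a linear term to force strict monotonicity, produces the asserted $\varphi$.

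The main obstacle is exactly the passage to the infinite past: the finite-interval Gronwall bounds cannot be applied directly on $(-\infty,0]$ because $z$ need only lie in $\rH$, not $\rV$, far back in time, and the exponential factors could a priori blow up. The remedy I would stress is the two-step bootstrap above: first extract from the $\rH$-energy balance the finiteness of the integrated critical coefficient $C_0$ (a genuinely two-dimensional phenomenon, where $\sup|z|_\rH^2\cdot\int|z|_\rV^2$ closes the estimate) together with a vanishing sequence $|z(a_n)|_\rV\to0$, and only then run Gronwall with $a_n\to-\infty$ so that the boundary contribution at the left endpoint disappears.
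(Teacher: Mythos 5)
Your proposal is correct and follows essentially the same route as the paper's proof: the $\rH$-energy balance (using $\lb\rB(z,z),z\rb=0$ and $|z(s)|_\rH\to0$) yields $\sup_{t\le 0}|z(t)|_\rH^2$ and $\int_{-\infty}^0|z|_\rV^2$, hence the finiteness of $\int_{-\infty}^0|z|_\rH^2|z|_\rV^2$ and a sequence $a_n\to-\infty$ with $|z(a_n)|_\rV\to0$, after which the Gronwall step on the $\rV$-estimate starting from $a_n$ closes the argument exactly as in the paper. The only cosmetic deviation is your derivation of $\lim_{t\to-\infty}|z(t)|_\rV=0$ from absolute continuity of $|z(\cdot)|_\rV^2$ with $L^1$ derivative, where the paper instead reads the decay off the tail of the convergent integral appearing in its Gronwall representation; both are valid.
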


\begin{proof}
The argument below is a bit informal but it can easily be made fully rigorous. We will be careful with the constants as we want to prove the last part of the Proposition as well.

We have that $z(0) \in\,V$, as a consquence of Lemma \ref{lem-clar}. Next, we will be proving \eqref{eqn-A04}.

In view  of Lemma \ref{lem-clar}, we can assume that $z\in W^{1,2}_{\mathrm{loc}}(-\infty,0)$. Since $S_{-\infty}(z)<\infty$, if we set
\begin{equation}
\label{brc10}
f(t)=z^\prime(t)+A z(t)+B(z(t),z(t)),\ \ \ \ t\leq 0,
\end{equation}
we have that $f \in\,L^2(-\infty,0;H)$. If we multiply equation \eqref{brc10} by $z$ and  use   equality \eqref{eqn-B02},   we get

\begin{eqnarray}\label{eqn-NSE03b}
\frac12 \frac{d}{dt}
 \vert z(t) \vert_\rH^2+ \vert  z(t) \vert_\rV^2&=&(f, z)_\rH
\leq \frac12 \vert  z(t) \vert_\rV^2 + \frac{1}{2\lambda_1}\vert f(t) \vert_\rH^2,\;\; t<0,
\end{eqnarray}
where     $\lambda_1$ is the Poincar\'e constant of the domain $\mathcal{O}$. Hence, 
\begin{eqnarray}\label{eqn-NSE04'}
\vert z(t)\vert_\rH^2 +\int_s^t \vert  z(r)\vert_\rV^2\, dr
&\leq&  \vert z(s)\vert_\rH^2  +\frac 1{\lambda_1} \int_s^t \vert f(r)\vert_\rH^2  dr, \;\; -\infty < s\leq t\leq 0.
\nonumber
\end{eqnarray}
As
\[\lim_{s\to -\infty}|z(s)|_\rH=0,\]
we infer that
\begin{eqnarray}\label{eqn-NSE05'}
\vert z(t)\vert_\rH^2 +\int_{-\infty}^t \vert  z(s)\vert_\rV^2\, ds
&\leq&    \frac 1{\lambda_1} \int_{-\infty}^t \vert f(r)\vert_\rH^2  dr, \;\; -\infty <  t\leq 0.
\nonumber
\end{eqnarray}
This implies
\begin{equation}\label{eqn-NSE06'}
\vert z(t)\vert_\rH^2
\leq \frac 1{\lambda_1}\int_{-\infty}^t \vert f(r)\vert_\rH^2  \, dr \leq \frac 1{\lambda_1} \int_{-\infty}^0 \vert f(r)\vert_\rH^2  dr,\ \ \ t\leq 0,
\end{equation}
and
\begin{equation}
\label{eqn-NSE07'}
\int_{-\infty}^0 \vert z(s)\vert_\rV^2  ds \leq \frac 1{\lambda_1} \int_{-\infty}^0 \vert f(r)\vert_\rH^2  dr.
\end{equation}
The latter inequality means that  $z\in L^2((-\infty,0],\rV)$, which implies that
 we can find a decreasing sequence $\{s_n\}$ such that $s_n\todown -\infty$ and
\begin{equation}\label{eqn-NSE02'}
\lim_{n\to\infty} \vert z(s_n)\vert_\rV=0.
\end{equation}

Next we multiply equation \eqref{brc10} by $\rA z(t)$. Thanks to \eqref{ineq-B01} and to  the Young inequality,  we get

\begin{eqnarray}
\label{eqn-NSE03}
\frac12 \frac{d}{dt}\vert z(t) \vert_\rV^2+\vert \rA z(t) \vert_\rH^2&=&-(B(z(t),z(t)),Az(t))_\rH+(f(t),\rA z(t))_\rH\\
&\leq& \frac14 \vert \rA z(t) \vert_\rH^2+\frac{C_2}2\vert z(t) \vert_\rH^2\vert z(t) \vert_\rV^4+\frac14 \vert \rA z(t) \vert_\rH^2 +  \vert f(t) \vert_\rH^2.
\nonumber
\end{eqnarray}
where $C_2=\frac{54}4C^2$ and $C$ is the constant from inequality \eqref{ineq-B01} .

 Applying next the Poincar{\'e} inequality \eqref{ineq-Poincare} we get,
\begin{equation}\label{eqn-NSE04}
 \frac{d}{dt}\vert z(t) \vert_\rV^2+\lambda_1 \vert z(t) \vert_\rV^2
\leq C_2 \big[ \vert z(t) \vert_\rH^2\vert z(t) \vert_\rV^2\big] \vert z(t) \vert_\rV^2 +2\vert f(t) \vert_\rH^2.
\end{equation}
Hence, since $\lambda_1\geq 0$, we have
\begin{equation}\label{eqn-NSE04''}
 \frac{d}{dt}\vert z(t) \vert_\rV^2
\leq C_2 \big[ \vert z(t) \vert_\rH^2\vert z(t) \vert_\rH^2\big] \vert z(t) \vert_\rV^2 +2\vert f(t) \vert_\rH^2,
\end{equation}
and so,  by the Gronwall Lemma, for any $-\infty < s\leq t\leq 0$ we get
\begin{eqnarray}\label{eqn-NSE05}
\vert z(t)\vert_\rV^2
&\leq&  \vert z(s)\vert_\rV^2 \exp\left(C_2 \int_s^t\dela{\big( -\lambda_1 +}\vert z(r)\vert_\rH^2\vert z(r)\vert_\rV^2\, dr\right)\\
&+&2\int_s^t \vert f(r)\vert_\rH^2 \exp\left(C_2\int_r^t\dela{\big( -\lambda_1 +}\vert z(\rho)\vert_\rH^2\vert z(\rho)\vert_\rV^2\, d\rho\right)\, dr.
\nonumber
\end{eqnarray}
Using the above inequality with $s=s_n$ from \eqref{eqn-NSE02'} and then taking the limit as $n\to\infty$,  we infer that
\begin{eqnarray}\label{eqn-NSE06}
\vert z(t)\vert_\rV^2
&\leq&   2\int_{-\infty}^t \vert f(r)\vert_\rH^2 \exp\left(C_2\int_r^t\dela{\big( -\lambda_1 +}\vert z(\rho)\vert_\rH^2\vert z(\rho)\vert_\rV^2\, d\rho\right)\, dr,\ \ \  t\leq 0.
\end{eqnarray}
Of course, for the above to be correct we need to show that the sequence
\[
\Big\{\int_{s_n}^t\dela{\big( -\lambda_1 +}\vert z(r)\vert_\rH^2\vert z(r)\vert_\rV^2\, dr \Big\}_{n\geq 1}
\] is bounded from above. But  in view of estimates \eqref{eqn-NSE06'} and \eqref{eqn-NSE07'} we have
\begin{equation}\label{eqn-A15}
 \int_{-\infty}^0 \vert z(\rho)\vert_\rH^2\vert z(\rho)\vert_\rV^2\, d\rho\leq \frac 1{\lambda_1^2} \vert f\vert^4_{L^2(-\infty,0,\rH)} <\infty.
 \end{equation}
Therefore, since
\[\int_r^t \vert z(\rho)\vert_\rH^2\vert z(\rho)\vert_\rV^2\, d\rho\leq  \int_{-\infty}^0  \vert z(\rho)\vert_\rH^2\vert z(\rho)\vert_\rV^2\, d\rho,\ \ \ \ -\infty< r\leq t\leq 0,\]
 we can conclude that
\begin{equation}\label{eqn-A16}
\sup_{t\leq 0} \vert z(t)\vert_\rV^2 \leq 2 \exp\left(\frac {C_2}{\lambda_1^2} \vert f\vert^4\right)\sup_{t\leq 0} \int_{-\infty}^t \dela{\e^{ -\lambda_1(t-r)}} \vert f(r)\vert_\rH^2 \,dr  \leq  2 \exp\left(\frac {C_2}{\lambda_1^2} \vert f\vert^4\right) \vert f\vert^2
\end{equation}
(here, for the sake of brevity, we denote $\vert f\vert_{L^2(-\infty,0;\rH)}=\vert f\vert$).

Moreover, as
\[\int_{-\infty}^0 \vert f(r)\vert_\rH^2 \exp\left(C_2\int_r^t\dela{\big( -\lambda_1 +}\vert z(\rho)\vert_\rH^2\vert z(\rho)\vert_\rV^2\, d\rho\right)\, dr<\infty,\]
we have
\[\lim_{t\to-\infty} \int_{-\infty}^t \vert f(r)\vert_\rH^2 \exp\left(C_2\int_r^t\dela{\big( -\lambda_1 +}\vert z(\rho)\vert_\rH^2\vert z(\rho)\vert_\rV^2\, d\rho\right)\, dr=0,\]
so that from  \eqref{eqn-NSE06} we conclude that \eqref{eqn-A04} holds.

Now,
to prove that $z \in\,W^{1,2}(-\infty,0)$, we observe that from \eqref{eqn-NSE03} we  also have
\[
\vert z(0)\vert_\rV^2+\int_{-\infty}^0 \vert A z(t)\vert_\rH^2\, dt
\leq C_2\int_{-\infty}^0 \big[ \vert z(t)\vert_\rH^2\vert z(t)\vert_\rV^2\big] \vert z(t)\vert_\rV^2\,dt  +2\int_{-\infty}^0\vert f(t)\vert_\rH^2 \,dt,
\]
where we have  used \eqref{eqn-A04}. Since by \eqref{eqn-NSE06'} and \eqref{eqn-A16},
$$\sup_{t\in (-\infty,0]} \vert z(t)\vert_\rH^2\vert z(t)\vert_\rV^2\leq \frac{2}{\lambda_1} \exp\left(\frac {C_2}{\lambda_1^2} \vert f\vert^4\right) \vert f\vert^4 <\infty,$$
 we infer that
\begin{equation}\label{eqn-NSE08}
\vert z(0)\vert_\rV^2+\int_{-\infty}^0 \vert A z(t)\vert_\rH^2\, dt
\leq \frac{2\,C_2}{\lambda_1} \exp\left(\frac {C_2}{\lambda_1^2} \vert f\vert^4\right) \vert f\vert^4\int_{-\infty}^0 \vert z(t)\vert_\rV^2\,dt  +2\int_{-\infty}^0\vert f(t)\vert_\rH^2\, dt.
\end{equation}
Hence, in view of \eqref{eqn-NSE07'}, we infer that
\begin{equation}\label{eqn-NSE09}
\vert z(0)\vert_\rV^2+\int_{-\infty}^0 \vert A z(t)\vert_\rH^2\, dt
\leq \frac{2\,C_2}{\lambda_1^2} \exp\left(\frac {C_2}{\lambda_1^2} \vert f\vert^4\right) \vert f\vert^6+2 \vert f\vert^2,
\end{equation}
and this  concludes the proof of the first part of \eqref{eqn-A10}.

In order to prove the second part of \eqref{eqn-A10}, it is enough to show that
\[\int_{-\infty}^0 \vert B(z(t),z(t))\vert_\rH^2\, dt<\infty.\]
 Indeed, by the Minkowski inequality  we have
\begin{equation}\label{eqn-A20}
\vert z^\prime\vert_{L^2(-\infty,0;\rH)} \leq \vert Az\vert_{L^2(-\infty,0;\rH)} +\vert B(z,z) \vert_{L^2(-\infty,0;\rH)}+\vert f\vert_{L^2(-\infty,0;\rH)}.
\end{equation}
According to inequalities \eqref{eqn-NSE06'}, \eqref{eqn-NSE06} and \eqref{eqn-NSE09} and to  inequality \eqref{ineq-B01}, we have
\begin{eqnarray}\label{eqn-A.21}
\int_{-\infty}^0 \vert B(z(t),z(t))\vert_\rH^2\, dt &\leq&  C  \int_{-\infty}^0 \vert z(t) \vert_\rH \vert z(t) \vert_\rV^2 \vert A z(t) \vert_\rH \, dt\\
&\leq & C \sup_{t\leq 0}  \vert z(t) \vert_\rH\vert z(t) \vert_\rV  \left(\int_{-\infty}^0\vert z(t)  \vert_\rV^2 \, dt\right)^\frac12  \left(\int_{-\infty}^0  \vert A z(t) \vert_\rH^2 \, dt \right)^\frac12
\nonumber
\\ &&\hspace{-2truecm}\lefteqn{\leq \frac{2}{\lambda_1} \exp\left(\frac {C_2}{\lambda_1^2} \vert f\vert^4\right) \vert f\vert^4
\frac 1{\sqrt{\lambda_1}}|f|\left( \frac{2\,C_2}{\lambda_1^2} \exp\left(\frac {C_2}{\lambda_1^2} \vert f\vert^4\right) \vert f\vert^6+2 \vert f\vert^2\right)^{\frac 12}<\infty.}
\nonumber
\end{eqnarray}
The final statement follows from inequalities \eqref{eqn-NSE09}, \eqref{eqn-A20} and \eqref{eqn-A.21}.
\end{proof}

\begin{rem}
{\em \begin{enumerate}
\item  Our proof of Proposition \ref{prop-infty}  has been inspired by \cite{Brz+Li_2006}.
\item
Roughly speaking, the above result says that the following two equalities hold
 \begin{equation*}
 \{z\in \mathcal{X}: S_{-\infty}(z) <\infty\} =\mathcal{X} \cap W^{1,2}(-\infty,0)
 \end{equation*}
 and
\begin{equation*}
 \{z\in \mathcal{X}: u(0)=\phi \mbox{ and } S_{-\infty}(z) <\infty\} =\mathcal{X}_\phi  \cap W^{1,2}(-\infty,0), \;\; \phi\in \rV.
 \end{equation*}
\end{enumerate}}
\end{rem}

\medskip

 Next Proposition generalizes Proposition \ref{prop-infty} to $S^\delta_{-\infty}$.

\begin{proposition} \label{prop-inftybis} \del{Let Assumption \ref{ass-Q} be satisfied, with $\beta \in\,(0,1/2)$.}
Assume that $\alpha \in\,(0,1/2)$ and
$z\in \mathcal{X}$  is such that $S^\delta_{-\infty}(z)<\infty$. Let  $f \in L^2((-\infty,0];D(\rA^{\frac\a 2})$
be  defined as
\begin{equation}\label{eqn-NSE01`}
z^\prime(t)+\rA z(t)+\rB(z(t),z(t))=\del{Q_\delta} f(t), \ \ \ \ t\leq 0.
\end{equation}
Then  $z(0)\in D(\rA^{\frac{\a+1}2})$,
\begin{equation}
\label{eqn-A04'}
\lim_{t\to-\infty} \vert z(t)\vert_{D(\rA^{\frac{\a+1}2})}=0,
\end{equation}
and $z\in W^{1,2}\big(-\infty,0;D(\rA^{1+\frac{\a}2}),D(\rA^{\frac{\a}2})\big)$, i.e.
\begin{equation}
\label{eqn-A11}
\int_{-\infty}^0 \vert A^{\frac{\a}2+1} z(t)\vert_\rH^2\, dt  + \int_{-\infty}^0 \vert  A^{\frac{\a+1}2}  z^\prime(t)\vert_\rH^2\, dt<\infty.
\end{equation}
Moreover, there exists a continuous and strictly increasing function $ \varphi: [0,\infty) \to [0,\infty)$ such that
$ \varphi(0)=0$ and if
$z \in \mathcal{X}$ is a solution to the problem
\[
z^\prime(t)+\rA z(t)+\rB(z(t),z(t))=f(t), \ \ \  t\leq 0,
\]
with  $f$ being an element of $L^2(-\infty,0;D(\rA^{\frac\a 2}))$, then  $z\in W^{1,2}\big(-\infty,0;D(\rA^{\frac{\a}2+1}),D(\rA^{\frac{\a}2})\big) $, $z(0)\in D(\rA^{\frac{\a+1}2})$ and
\[ \vert z(0) \vert_{D(\rA^{\frac{\a+1}2})}^2+ \vert z\vert^2_{W^{1,2}(-\infty,0;D(\rA^{\frac{\a}2+1}),D(\rA^{\frac{\a}2}))} \leq \varphi( \vert f\vert^2_{L^2(-\infty,0)}).\]
\end{proposition}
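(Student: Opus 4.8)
The plan is to follow the proof of Proposition \ref{prop-infty} and bootstrap one level of spatial regularity by means of the fractional estimate \eqref{ineqn-B-fractional}. First I would note that $D(\rA^{\frac\alpha2})\embed\rH$, so the force $f$ also lies in $L^2(-\infty,0;\rH)$ and Proposition \ref{prop-infty} applies verbatim: it yields $z\in W^{1,2}(-\infty,0)$, $z(0)\in\rV$, $\lim_{t\to-\infty}|z(t)|_\rV=0$, and a bound $\sup_{t\le0}|z(t)|_\rV^2+\int_{-\infty}^0\big(|\rA z(t)|_\rH^2+|z(t)|_\rV^2\big)\,dt\le\varphi_0\big(|f|^2_{L^2(-\infty,0;\rH)}\big)$ for the continuous increasing function $\varphi_0$ produced there. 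I abbreviate $I_A:=\int_{-\infty}^0|\rA z(t)|_\rH^2\,dt<\infty$.

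The heart of the argument is a second energy estimate at the level $D(\rA^{\frac{\alpha+1}2})$. Testing the equation $z'+\rA z+\rB(z,z)=f$ with $\rA^{\alpha+1}z(t)$ — a computation legitimate on each finite interval thanks to the forward regularity furnished by Proposition \ref{prop-NSE-fractional} and Lemma \ref{lem-clar-2}, exactly as in Proposition \ref{prop-infty} — and bounding the trilinear term via \eqref{ineqn-B-fractional} with $s=2$ followed by Young's inequality, precisely as in the proof of Proposition \ref{prop-NSE-fractional}, I would arrive at
\[
\frac{d}{dt}|\rA^{\frac{\alpha+1}2}z(t)|_\rH^2+|\rA^{\frac\alpha2+1}z(t)|_\rH^2\le 2|\rA^{\frac\alpha2}f(t)|_\rH^2+2C^2|\rA z(t)|_\rH^2\,|\rA^{\frac{\alpha+1}2}z(t)|_\rH^2.
\]
To integrate this back to $-\infty$ I need a sequence $s_n\todown-\infty$ with $|\rA^{\frac{\alpha+1}2}z(s_n)|_\rH\to0$, and this is exactly where the first step pays off: by the interpolation inequality $|\rA^{\frac{\alpha+1}2}v|_\rH^2\le|v|_\rV^{2(1-\alpha)}|\rA v|_\rH^{2\alpha}$ and Hölder with conjugate exponents $\tfrac1{1-\alpha},\tfrac1\alpha$,
\[
\int_{-\infty}^0|\rA^{\frac{\alpha+1}2}z(t)|_\rH^2\,dt\le\Big(\int_{-\infty}^0|z(t)|_\rV^2\,dt\Big)^{1-\alpha}\Big(\int_{-\infty}^0|\rA z(t)|_\rH^2\,dt\Big)^{\alpha}<\infty,
\]
the two integrals being finite by \eqref{eqn-NSE07'} and Step~1; hence the required sequence exists.

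Applying the Gronwall lemma to the displayed differential inequality on $[s_n,t]$ and letting $n\to\infty$ (the exponential factor being uniformly bounded by $e^{2C^2I_A}$) gives
\[
|\rA^{\frac{\alpha+1}2}z(t)|_\rH^2\le 2e^{2C^2I_A}\int_{-\infty}^t|\rA^{\frac\alpha2}f(r)|_\rH^2\,dr,\qquad t\le0,
\]
which at once bounds $\sup_{t\le0}|\rA^{\frac{\alpha+1}2}z(t)|_\rH$ and, since $\int_{-\infty}^t|\rA^{\frac\alpha2}f|_\rH^2\,dr\to0$ as $t\to-\infty$, produces the decay \eqref{eqn-A04'}. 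Integrating the same differential inequality over $(-\infty,0]$ and dominating its last term by $2C^2\big(\sup_{t\le0}|\rA^{\frac{\alpha+1}2}z|_\rH^2\big)I_A$ then yields $z(0)\in D(\rA^{\frac{\alpha+1}2})$ and $\int_{-\infty}^0|\rA^{\frac\alpha2+1}z|_\rH^2\,dt<\infty$; the companion bound for $z'$ follows from $\rA^{\frac\alpha2}z'=\rA^{\frac\alpha2}f-\rA^{\frac\alpha2+1}z-\rA^{\frac\alpha2}\rB(z,z)$ via Minkowski, together with $\int_{-\infty}^0|\rB(z,z)|_{D(\rA^{\alpha/2})}^2\,dt\le C^2\big(\sup|\rA^{\frac{\alpha+1}2}z|_\rH^2\big)I_A$, again from \eqref{ineqn-B-fractional}. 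This establishes \eqref{eqn-A11}. Every constant above is a continuous increasing function of $\rho:=|f|^2_{L^2(-\infty,0;D(\rA^{\alpha/2}))}$ (note $|f|^2_{L^2(-\infty,0;\rH)}\le c\rho$, so $I_A\le\varphi_0(c\rho)$), so the composite defines the required continuous increasing $\varphi$ with $\varphi(0)=0$. I expect the main obstacle to be the rigorous justification of testing with $\rA^{\alpha+1}z$ on the unbounded half-line — carried out by truncation to $[s_n,0]$ and passage to the limit, which is precisely what the finiteness $\int_{-\infty}^0|\rA^{\frac{\alpha+1}2}z|_\rH^2\,dt<\infty$ secured above makes possible.
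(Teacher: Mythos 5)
Your proposal is correct and follows essentially the same route as the paper's own proof: a second energy estimate at the level $D(\rA^{\frac{\alpha+1}{2}})$ obtained by testing with $\rA^{\alpha+1}z$, the trilinear term controlled by \eqref{ineqn-B-fractional} with $s=2$, Gronwall integrated back from a sequence $s_n\todown-\infty$ along which $|\rA^{\frac{\alpha+1}{2}}z(s_n)|_\rH\to0$, and the $z'$ bound via Minkowski together with the same fractional estimate for $\rB(z,z)$. The only cosmetic difference is that you justify the existence of the sequence $s_n$ by an explicit interpolation-plus-H\"older bound on $\int_{-\infty}^0|\rA^{\frac{\alpha+1}{2}}z|_\rH^2\,dt$, whereas the paper extracts it directly from the finiteness of $\int_{-\infty}^0|\rA z|_\rH^2\,dt$ established in Proposition \ref{prop-infty}; both are equivalent.
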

\begin{proof} Following the methods from the proof of Proposition \ref{prop-infty} it is sufficient to prove the first part of  Proposition
\ref{prop-inftybis}.

Let us fix $\a \in\,(0,1/2)$, $\phi \in\,D(\rA^{\frac{1+\a}2})$ and $z\in \mathcal{X}_\phi$  such that $S^\delta_{-\infty}(z)<\infty$. Let us define
  $f \in L^2(-\infty,0;D(\rA^{\frac\a 2})$ by \eqref{eqn-NSE01`}. Since the assumptions in the present proposition
 are stronger than the assumptions of Proposition
\ref{prop-infty}, we can freely use the results from the proof of the latter.

So, firstly, let us notice that by inequality  \eqref{eqn-NSE08}
 we can find a decreasing sequence $\{s_n\}$ such that $s_n\downarrow -\infty$ and
\begin{equation}\label{eqn-NSE02''}
\lim_{n\to\infty} \vert \rA^{\frac{1+\a}2} z(s_n)\vert_\rH=0.
\end{equation}
Arguing as in the proof of Proposition \ref{prop-NSE-fractional},
if we calculate the derivative of $\vert \rA^{\frac{\a+1}2} u(t)\vert_{\rH}^2 $ and use inequality
\eqref{ineqn-B-fractional}, with $s=2$, to get the following generalisation of \eqref{eqn-NSE03}

\begin{eqnarray}\label{eqn-NSE03c}
\frac12 \frac{d}{dt}\vert \rA^{\frac{1+\a}2} z(t) \vert_\rH^2+\vert \rA^{\frac{\a}2+1} z(t) \vert_\rH^2&=&-(B(z(t),z(t)),\rA^{{\a}+1}z(t))_\rH+(f(t),\rA^{\a+1} z(t))_\rH\\
&\leq& \frac12 \vert \rA^{\frac{\a}2+1} z(t) \vert_\rH^2+C\vert \rA z(t) \vert_\rH^2\vert \rA^{\frac{\a}2+1} z(t) \vert^2 + C\, \vert \rA^{\frac{\a}2} f(t) \vert_\rH^2.
\nonumber
\end{eqnarray}
Let us note that contrary to \eqref{eqn-NSE03}, the highest power of $z$ on the RHS of \eqref{eqn-NSE03c} is $4$.

Hence, we infer that
\begin{equation}\label{eqn-NSE04'''}
 \frac{d}{dt}\vert \rA^{\frac{1+\a}2} z(t) \vert_\rH^2 +\vert \rA^{\frac{\a}2+1} z(t) \vert_\rH^2
\leq C\vert \rA z(t) \vert_\rH^2 \vert \rA^{\frac{1+\a}2} z(t) \vert_\rH^2  +2\vert \rA^{\frac{\a}2} f(t) \vert_\rH^2.
\end{equation}

Therefore,   by the Gronwall Lemma, for any $-\infty < s\leq t\leq 0$ we get
\begin{eqnarray}\label{eqn-NSE05''}
\vert \rA^{\frac{1+\a}2}z(t)\vert_\rH^2
&\leq&  \vert \rA^{\frac{1+\a}2}z(s)\vert_\rH \exp\left(C \int_s^t \vert\rA z(r)\vert_\rH^2\,dr\right)\\
&+&2\int_s^t \vert \rA^{\frac{\a}2} f(r)\vert_\rH^2 \exp\left(C\int_r^t \vert \rA z(\rho)\vert_\rH^2\, d\rho\right)\, dr.
\nonumber
\end{eqnarray}
Using the above with $s=s_n$ from \eqref{eqn-NSE02''} and then taking the limit as $n\to\infty$,  we infer that
\begin{equation}\label{eqn-NSE06''}
\vert \rA^{\frac{1+\a}2}z(t)\vert_\rH^2
\leq   2\int_{-\infty}^t \vert \rA^{\frac{\a}2} f(r)\vert_\rH^2 \exp\left(C\int_r^t\vert \rA z(\rho)\vert_\rH^2\, d\rho\right)\, dr,\ \ \  t\leq 0.
\end{equation}
As in the proof of the previous Proposition \ref{prop-infty} the above is true because now by inequality \eqref{eqn-NSE09}   the sequence
\[
\Big\{\int_{s_n}^t\dela{\big( -\lambda_1 +}\vert \rA z(r)\vert_\rH^2\, dr \Big\}_{n\geq 1}
\] is bounded from above by
$ \frac{2\,C_2}{\lambda_1^2} \exp\left(\frac {C_2}{\lambda_1^2} \vert f\vert^4\right) \vert f\vert^6+2 \vert f\vert^2$, where
$\vert f\vert$ denotes $\vert f\vert_{L^2(-\infty,0;\rH)}$. Therefore,
 we can conclude that
\begin{equation}\label{eqn-A16'}
\sup_{t\leq 0} \vert \rA^{\frac{1+\a}2} z(t)\vert \leq
2\int_{-\infty}^0 \vert \rA^{\frac{\a}2} f(r)\vert_\rH^2\,dr \exp\left(C \frac{2\,C_2}{\lambda_1^2} \exp\left(\frac {C_2}{\lambda_1^2} \vert f\vert^4\right) \vert f\vert^6+2 \vert f\vert^2 \right).
\end{equation}

Moreover, as
\[\int_{-\infty}^0 \vert \rA^{\frac{\a}2} f(r)\vert_\rH^2 \exp\left(C\int_r^t \vert \rA z(\rho)\vert_\rH^2\, d\rho\right)\, dr<\infty,\]
we have that
\[\lim_{t\to-\infty} \int_{-\infty}^t \vert \rA^{\frac{\a}2} f(r)\vert_\rH^2 \exp\left(C\int_r^t\vert \rA z(\rho)\vert_\rH^2\, d\rho\right)\, dr=0.\]
Hence  \eqref{eqn-A04'} follows  from  \eqref{eqn-NSE06''}.

Now, to prove the second part of Proposition \ref{prop-inftybis}, i.e. the first inequality in \eqref{eqn-A04'},
 we observe that from \eqref{eqn-NSE04'''} we  also have
\[
\vert \rA^{\frac{\a+1}2} z(0)\vert_\rH^2+\int_{-\infty}^0 \vert \rA^{\frac{\a+2}2} z(t)\vert_\rH^2\, dt
\leq C\int_{-\infty}^0 \big[ \vert \rA z(t)\vert_\rH^2\big] \vert \rA^{\frac{\a+1}2} z(t)\vert_\rH^2\,dt
+2\int_{-\infty}^0\vert \rA^{\frac{\a}2} f(t)\vert_\rH^2 \,dt.
\]
Taking into account inequalities \eqref{eqn-A16'} and \eqref{eqn-NSE09} we infer that
\begin{eqnarray}\label{eqn-NSE08'}
&&\vert \rA^{\frac{\a+1}2} z(0)\vert_\rH^2+\int_{-\infty}^0 \vert \rA^{\frac{\a+2}2} z(t)\vert_\rH^2\, dt
\leq C \Big( \frac{2\,C_2}{\lambda_1^2} \exp\left(\frac {C_2}{\lambda_1^2} \vert f\vert^4\right) \vert f\vert^6+2 \vert f\vert^2\Big)
\\
&&\Big(\int_{-\infty}^0 \vert \rA^{\frac{\a}2} f(r)\vert_\rH^2\,dr \exp\left(C \frac{2\,C_2}{\lambda_1^2} \exp\left(\frac {C_2}{\lambda_1^2} \vert f\vert^4\right) \vert f\vert^6+2 \vert f\vert^2 \right)\Big)+2\int_{-\infty}^0\vert \rA^{\frac{\a}2} f(t)\vert_\rH^2 \,dt,
\nonumber
\end{eqnarray}
and this  concludes the proof of the first part of inequality \eqref{eqn-A04'}.

As in the proof of the previous Proposition, in order to prove the third  part of Proposition \ref{prop-inftybis}, i.e. the second inequality in \eqref{eqn-A04'},
it is enough to show that
\[ \int_{-\infty}^0 \vert \rA^{\frac{\a}2}   B(z(t),z(t))\vert_\rH^2\, dt < \infty.\]

According to inequalities \eqref{ineqn-B-fractional} (with $s=2$)), \eqref{eqn-NSE09}  and  \eqref{eqn-A16'}
we have
\begin{eqnarray}
&&\hspace{-10truecm}\lefteqn{
\int_{-\infty}^0 \vert \rA^{\frac{\a}2}   B(z(t),z(t))\vert_\rH^2\, dt \leq  C  \int_{-\infty}^0 \vert \rA z(t) \vert^2_\rH
\vert \rA^{\frac{\a+1}2} z(t) \vert_\rH^2  \, dt}\\
&&\hspace{-9truecm}\lefteqn{\leq  C \sup_{t\leq 0}  \vert \rA^{\frac{\a+1}2} z(t) \vert^2_\rH
\int_{-\infty}^0  \vert \rA z(t) \vert_\rH^2 \, dt \leq \Big(\frac{4C\,C_2}{\lambda_1^2} \exp\left(\frac {C_2}{\lambda_1^2} \vert f\vert^4\right) \vert f\vert^6+2 \vert f\vert^2\Big)}
\nonumber
\\
&&\hspace{-5truecm}\lefteqn{\int_{-\infty}^0 \vert \rA^{\frac{\a}2} f(r)\vert_\rH^2 \exp\left(C \frac{2\,C_2}{\lambda_1^2} \exp\left(\frac {C_2}{\lambda_1^2} \vert f\vert^4\right) \vert f\vert^6+2 \vert f\vert^2 \right).
}
\nonumber
\end{eqnarray}
The proof is now complete.
\end{proof}

\begin{proposition}\label{prop--infty2}
Assume that $T\in \mathbb{R}\cup\{+\infty\}$. If $z$ belongs to $W^{1,2}(-\infty,T)$ then
\[\lim_{t\to -\infty} \vert z(t)\vert_\rV=0.\]
\end{proposition}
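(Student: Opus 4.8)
The plan is to reduce everything to the elementary observation that a nonnegative, absolutely continuous function on $(-\infty,T)$ which is integrable and has integrable derivative must tend to $0$ at $-\infty$.

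First I would unwind the definition: $z\in W^{1,2}(-\infty,T)=W^{1,2}(-\infty,T;D(\rA),\rH)$ means $z\in L^2(-\infty,T;D(\rA))$ and $z^\prime\in L^2(-\infty,T;\rH)$. The key analytic input is that $D(\rA)\subset \rV\subset D(\rA)^\prime$ is a Gelfand triple: indeed, for $g\in\rH$ the functional $v\mapsto (g,\rA v)_\rH$ is bounded on $D(\rA)$ and extends the inner product of $\rV$ (since $(g,v)_\rV=(g,\rA v)_\rH$ for $g\in\rV$, $v\in D(\rA)$), so that $\rH\embed D(\rA)^\prime$. As $z\in L^2(-\infty,T;D(\rA))$ and $z^\prime\in L^2(-\infty,T;\rH)\subset L^2(-\infty,T;D(\rA)^\prime)$, the Lions--Magenes/Temam lemma (Lemma III.1.2 in \cite{Temam_2001}, already invoked repeatedly above) yields $z\in C((-\infty,T];\rV)$ together with the identity
\[\frac{d}{dt}|z(t)|_\rV^2=2(\rA z(t),z^\prime(t))_\rH,\qquad t<T,\]
valid in the sense of absolutely continuous functions on every compact subinterval.

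Next I would integrate. By the Cauchy--Schwarz inequality the right-hand side is bounded by $2|\rA z(t)|_\rH\,|z^\prime(t)|_\rH$, which is a product of two functions in $L^2(-\infty,T)$ and hence lies in $L^1(-\infty,T)$. Consequently $t\mapsto |z(t)|_\rV^2$ has an integrable derivative on $(-\infty,T)$, so the limit $\ell:=\lim_{t\to-\infty}|z(t)|_\rV^2$ exists and is finite. Finally, the continuous embedding $D(\rA)\embed\rV$ gives $z\in L^2(-\infty,T;\rV)$, i.e. $\int_{-\infty}^T|z(t)|_\rV^2\,dt<\infty$; were $\ell>0$, we would have $|z(t)|_\rV^2\geq \ell/2$ for all sufficiently negative $t$, contradicting integrability. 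Hence $\ell=0$, which is exactly the assertion.

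The only slightly delicate point — and the step I would be most careful about — is the justification of the differentiation formula for $|z(t)|_\rV^2$; everything else is routine. This is handled precisely by choosing the Gelfand triple $D(\rA)\subset\rV\subset D(\rA)^\prime$ above rather than the more familiar $\rV\subset\rH\subset\rV^\prime$, after which the cited abstract lemma applies verbatim.
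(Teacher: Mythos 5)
Your argument is correct, but it is not the route the paper takes. The paper first reduces to $T=+\infty$ via the extension theorem \cite[Theorem 2.2, p.~13]{Lions+Magenes_1972}, then invokes the trace theory for the spaces $W^{1,2}(\mathbb{R};D(\rA),\rH)$: since $\rV=[D(\rA),\rH]_{1/2}$, \cite[Theorem 3.1, p.~19]{Lions+Magenes_1972} gives that $z$ is bounded and continuous with values in the interpolation space $\rV$, and the decay at $\pm\infty$ comes from \cite[(2.27), p.~16]{Lions+Magenes_1972}. You instead shift the Gelfand triple to $D(\rA)\subset\rV\subset D(\rA)'$ (your identification $\rH\embed D(\rA)'$ via $g\mapsto (g,\rA\,\cdot)_\rH$ is legitimate: it is bounded because $|\rA v|_\rH$ is equivalent to the graph norm, injective because $\rA$ is surjective onto $\rH$, and consistent with $(g,v)_\rV=(\rA v,g)_\rH$ for $g\in\rV$, $v\in D(\rA)$), apply the Lions--Magenes/Temam differentiation lemma on compact subintervals to get $\frac{d}{dt}|z(t)|_\rV^2=2(\rA z(t),z'(t))_\rH\in L^1(-\infty,T)$, and finish with the standard ``integrable function with integrable derivative tends to zero'' dichotomy. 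Your version is more elementary and self-contained --- it avoids both the extension step and the interpolation-space trace theorem --- at the cost of having to justify the nonstandard pivot space; the paper's version gets boundedness and continuity into $\rV$ on all of $\mathbb{R}$ essentially for free from the abstract trace theory, which is also what it needs elsewhere. Both are sound.
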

\begin{proof}
In view of \cite[Theorem 2.2, p. 13]{Lions+Magenes_1972} it is enough to consider the case $T=+\infty$.
So let us take $z\in W^{1,2}(-\infty,\infty)$
Then, since $\rV=[D(\rA),\rH]_{1/2}$,  according to \cite[Theorem 3.1, p. 19]{Lions+Magenes_1972}, $z:\mathbb{R}\to \rV$
is a bounded and continuous function. Moreover, by \cite[(2.27), p. 16]{Lions+Magenes_1972}
\[\lim_{t\to\pm\infty}|u(t)|_{|rH}=0.\]
Hence the result  follows by applying \eqref{prop-infty}.
\end{proof}

\section{Proofs of Lemmas in Section \ref{sec-exit}}
\label{app:B}

\begin{proof}[Proof of Lemma \ref{lem1}]
In view of Proposition \ref{propbrc3}, for any $\kappa>0$ there exists $T(\beta,\kappa,\mu)>0$ such that
\[u^\phi(t;0) \in\,B_{\frac{\beta+1}2}(\kappa),\ \ \ \phi \in\,B_{0}(\mu),\ \ \ t>T(\beta,\kappa,\mu).\]
Thus, if we set $T_1=T(\beta,\kappa,\mu)+1$ and
\[z_1(t)=u^\phi(t;0),\ \ \ t \in\,[0,T_1],\]
we have that $z_1(0)=\phi$, $z_1(T_1) \in\,D(\rA^{\frac{1+\beta}2})$ and
\begin{equation}
\label{p1}
S^\delta_{T_1}(z_1)=0.
\end{equation}

Now, we define
\[z_2(t)=(T_1+1-t)e^{-(t-T_1)\rA}z_1(T_1),\ \ \ \ t \in\,[T_1,T_1+1].\]
We have $z_2(T_1)=z_1(T_1)$ and $z_1(T_1+1)=0$. Moreover,
\[{\mathcal H}(z_2)(t)=-e^{-(t-T_1)\rA}z_1(T_1)+B(z_2(t),z_2(t)),\]
so that, according to Assumption \ref{ass-Q}, we have
\[\begin{array}{l}
\displaystyle{S^\delta_{T_1,T_1+1}(z_2)\leq c\,\int_{T_1}^{T_1+1} \left|e^{-(t-T_1)\rA}z_1(T_1)\right|_{D(\rA^{\frac \beta 2})}^2\,dt+
c\,\int_{T_1}^{T_1+1} \left|B(z_2(t),z_2(t))\right|_{D(\rA^{\frac \beta 2})}^2\,dt.}
\end{array}\]
Now, thanks to \eqref{ineqn-B-fractional}, with $s=1+\beta$, we have
\[\left|B(z_2(t),z_2(t))\right|_{D(\rA^{\frac \beta 2})}\leq \left|e^{-(t-T_1)\rA}z_1(T_1)\right|_{D(\rA^{\frac {1+\beta} 2})}^2\leq c\,\kappa^2,\]
so that
\[S^\delta_{T_1,T_1+1}(z_2)\leq c\,\kappa^2+c\,\kappa^4.\]
Therefore, we fix $\bar{\kappa}>0$ small enough such that
\begin{equation}
\label{p2}
S^\delta_{T_1,T_1+1}(z_2)\leq c\,\bar{\kappa}^2+c\,\bar{\kappa}^4<\frac \eta 2,
\end{equation}
and in correspondence of $\bar{\kappa}$ we fix $T_1=T(\beta,\bar{\kappa},\mu)$.

As we have seen in Lemma \ref{point}, there exists $\phi_\delta \in\,\partial D\cap D(\rA^{\frac{1+\beta}2})$ such that
\[U_\delta(\phi_\delta)=\inf_{\phi \in\,\partial D}U_\delta(\phi).\]
According to Assumption \ref{D},  we can fix $\{\phi_{\delta,n}\}_{n \in\,\mathbb{N}}\subset  \overline{D}^c\cap D(\rA^{\frac{1+\beta}2})$, such that
\begin{equation}
\label{brc92}
\lim_{n\to\infty}|\phi_{\delta,n}-\phi_\delta|_{D(\rA^{\frac{1+\beta}2})}=0.
\end{equation}
Thus, since the mapping $U_\delta:D(\rA^{\frac{1+\beta}2})\to [0,+\infty)$ is continuous (see Proposition \ref{brc505}), we have
\[\lim_{n\to\infty}U_\delta(\phi_{\delta,n})=U_\delta(\phi_\delta).\]
This means that we can fix $\bar{n} \in\,\mathbb{N}$ such that
\[U_\delta(\phi_{\delta,\bar{n}})<U_\delta(\phi_\delta)+\frac \eta 4,\]
and $T_2=T_2(\eta)>0$ and $z_3 \in\,C([0,T_2];\rH)$ such that $z_3(0)=0$, $z_3(T_2)=\phi_{\delta,\bar{n}}$ and
\[S^\delta_{T_2}(z_3)<U_\delta(\phi_{\delta,\bar{n}})+\frac \eta 4<U_\delta(\phi_\delta)+\frac \eta 2.\]

Therefore, if we define $T:=T_1+T_2+1$ and
\[v^\phi(t)=\begin{cases}
z_1(t),  &  t \in\,[0,T_1],\\
z_2(t),  &  t \in\,[T_1,T_1+1],\\
z_3(t-(T_1+1)),  &  t \in\,[T_1+1,T],
\end{cases}\]
we get $v^\phi \in\,C([0,T;\rH)$, with $v^\phi(0)=\phi$ and $v^\phi(T)=\phi_{\delta,\bar{n}}$ and
\[S^\delta_{T}(v^\phi)\leq U_\delta(\phi_\delta)+\eta.\]
Moreover, $T$ only depends on $\mu$ and $\eta$.

\end{proof}

\medskip

\begin{proof}[Proof of Lemma \ref{lem2} and lemma \ref{lem3}]
The proofs of these two lemmas are analogous to the proofs of \cite[Lemmas 7.3, 7.4 and 7.5]{cerrai-freidlin} and is based on the validity of a large deviation principle for the $2$-D Navier-Stokes equation perturbed by additive noise, as proved in Theorem \ref{brc.teo.4.2},  which is uniform with respect to the initial condition $\phi$ in a bounded set of $\rH$. The arguments used in \cite{cerrai-freidlin} are an adaptation to an infinite dimensional setting of the methods used in \cite[Chapter 5]{dembo}.
\end{proof}

\medskip

\begin{proof}[Proof of Lemma \ref{lem3bis}]
Our proof follows a path  analogous to the one followed in the proof of Lemmas 9.5, 9.6 and 9.9  in \cite{cerrai-salins}. We proceed here in three steps.
\smallskip

{\em Step 1.} {We will show that} there exists a  strictly increasing continuous  function $ \varphi: [0,\infty) \to [0,\infty)$
 such that  for any $\phi \in\,\rH$ and $f \in\,L^2(0,T;\rH)$ and for any $T>0$,
\begin{equation}
\label{z1}
|u_\phi^f-u_0^f|_{C([0,T];H)}\leq \varphi\left(|f|_{L^2(0,T;\rH)}\right)|\phi|_{\rH},
\end{equation}
where {$u_\phi^f \in L^2(0,T;\rV)$, with $(u_\phi^f)^\prime \in L^2(0,T;\rV^\prime)$,  is the solution of  problem \eqref{eqn_NSE01}, i.e.}
\[u^\prime(t)+\rA u(t)+B(u(t),u(t))=f(t),\ \ \ \ u(0)=\phi.\]

\medskip

{\em Proof.} Let us fix $T>0$, $\phi \in\,\rH$ and  $f \in\,L^2(0,T;\rH)$, and  denote $v:=u_0^f-u_\phi^f$. Then
\[v^\prime(t)+\rA v(t)+\rB(v(t),u^0_f(t))+\rB(u^0_f(t),v(t))=0, \;\;\; v(0)=-\phi\]
{and hence, by \cite[Lemmata III.2.1 and III.3.2]{Temam_2001},}
\begin{eqnarray}
\nonumber
\frac 12 \frac{d}{dt}|v(t)|_{\rH}^2&+&| v(t)|_V^2=-\left<\rB(v(t),u^0_f(t)),v(t)\right>_{\rH}\leq  \sqrt{2}\,| u^0_f(t))|_V |v(t)|_{\rH} | v(t)|_V\\
\nonumber\vspace{.1mm}\\
&\leq& \frac 12 | v(t)|_V^2+\,| u^0_f(t))|_V^2|v(t)|^2_{\rH}.
\end{eqnarray}
By the  Gronwall Lemma, this implies
\begin{equation}
\label{z2}
|u_\phi^f(t)-u_0^f(t)|_{\rH}^2\leq |\phi|_{\rH}^2\exp\left(2\int_0^t| u^0_f(s)|_V ^2\,ds\right), \;\; t \in [0,T].
\end{equation}

This together with inequality \eqref{ineq-aux-00} implies that
\begin{equation}
\label{z3}
|u_\phi^f-u_0^f|_{C([0,T];\rH)}\leq |\phi|_{\rH}\varphi \big( \vert f\vert^2_{L^2(0,T;\rV^\prime)}\big).
\end{equation}

\medskip

{\em Step 2.}  {Assume that   $C\subset \rH$ is a  closed set and  a real number $\beta$ satisfies
\begin{equation}\label{ineq-beta}
\beta<\inf_{\phi \in\,C}U_\delta(\phi).
\end{equation}}
Then  there exists { a positive number} $\rho_0>0$ such that for every $T>0$ and every $u \in\,C([0,T];\rH)$, with $|u(0)|_{\rH}<\rho_0$ and $S^\delta_{0,T}(u)\leq \beta$, it holds
\[\text{dist}_{\rH}(u(t),C) > {\rho_0}, \mbox{ for every } t \in [0, T].\]

\medskip
{\em Proof.} Suppose our claim is not true.
 Then for every $n \in\,\mathbb{N}$ we can find  $\phi_n \in\, \rH$, $T_n>0$, $\hat{T}_n \in\,[0,T_n]$ and $f_n \in\,L^2(0,T_n;\rH)$ such that
for every $ n \in\,\mathbb{N}$
\begin{equation}
\label{z7}
|\phi_n|_{\rH} < \frac1n,\ \ \ |f_n|_{L^2(0,T_n;\rH)}^2<2\beta\end{equation}
and
\begin{equation}
\label{z6}
\mathrm{dist}(u_{\phi_n}^{f_n}(\hat{T}_n),C)\leq \frac1n.\end{equation}
Now, if we set
$\hat{\phi}_n:=u_{0}^{f_n}(\hat{T_n})$, then by \eqref{z1} we have
\[|\hat{\phi}_n-u_{\phi_n}^{f_n}(\hat{T_n})|_{\rH}\leq \varphi\left(|f_n|_{L^2(0,T_n;\rH)}\right)|\phi_n|_{\rH},\]
so that, thanks to \eqref{z7} and \eqref{z6}, we get
\begin{equation}
\label{z8}
\lim_{n\to \infty }\text{dist}(\hat{\phi}_n,C)\leq \lim_{n\to\infty}
\Big[ \varphi\left(\sqrt{2\beta}\right)|\phi_n|_{\rH}+\frac1n\Big]=0.\end{equation}
Moreover,
\[U_\delta(\hat{\phi}_n)\leq \frac 12 |f_n|_{L^2(0,T_n;\rH)}^2 \leq \beta,\]
and then, by  the compactness in $\rH$ of the level sets of the functional $U_\delta$, we infer that there is a subsequence $\{\hat{\phi}_{n_k}\}$ and an element $\hat{\phi} \in\,\rH$ such that
$\hat{\phi}_{n_k}\to \hat{\phi}$ in $\rH$ and $U_\delta(\hat{\phi}) {\leq }\beta$.  {On the other hand,  $C$ is a closed subset of $\rH$ so
by  \eqref{z8} we infer that  $\hat{\phi} \in\,C$. This  contradicts our assumption \eqref{ineq-beta}.}
\medskip

{\em Step 3.}  {Assume that  $N\subset \partial D$ is a  closed set. Then} \eqref{z9} holds.

\medskip

{\em Proof.} {Let us choose a real number $\beta$ such that condition \eqref{ineq-beta} holds. Let us also choose a positive number $\mu>0$ such that $B_{0}(3\mu)\subset D$.}
 For any $T>0$, we have
\begin{equation}
\label{z11}
\mathbb{P}\left(u_\phi^{\eps,\delta}(\sigma_\phi^{\eps,\delta,\mu}) \in\,N\right)\leq \mathbb{P}\left(\sigma_\phi^{\eps,\delta,\mu}>T\right)+\mathbb{P}\left(u_\phi^{\eps,\delta}(t) \in\,N,\ \text{for\ some}\ t\in [0,T]\right).\end{equation}
According to Step 2 and to the fact that the large deviation principle proved in Theorem \ref{brc.teo.4.2} is uniform with respect to initial conditions $\phi$ in bounded sets of $\rH$, for any  $\mu>0$ such that $B_{0}(3\mu)\subset D$ and any $\beta<\inf_{x \in\,C}U_\delta(x)$, we can find $\eps_1>0$ such that for every $\eps \in (0, \eps_1]$
\begin{eqnarray}
&&\hspace{-2truecm}\lefteqn{\sup_{\phi \in\,B_{0}(3\mu)}\mathbb{P}\left(u_\phi^{\eps,\delta}(t) \in\,N,\ \text{for\ some}\ t\in [0,T]\right)}\\
&\leq& \sup_{\phi \in\,B_{0}(3\mu)} \mathbb{P}\left(\text{dist}_{C([0,T];\rH)}(u_\phi^{\eps,\delta},K^\delta_T(\beta))>3\mu\right)\leq e^{-\frac{\beta-\gamma}\eps},
\end{eqnarray}
where $K^\delta_T(\beta)=\{u \in\,C([0,T];\rH)\,:\,S^\delta_{T}(u)\leq \beta\}$.

Moreover, in view of Lemma \ref{lem3}, there exist $T>0$ and $\eps_2\leq \eps_1$ such that
\[\sup_{x \in\,D}\mathbb{P}\left(\sigma_\phi^{\eps,\delta,\mu}>T\right)\leq e^{-\frac{\beta}\eps},\ \ \ \ \eps\leq \eps_2.\]

Then, thanks to \eqref{z11}, we can conclude the proof of Step 3, due to the arbitrariness of $\gamma>0$ and {condition \eqref{ineq-beta}}.\end{proof}
\medskip

\begin{proof}[Proof of Lemma \ref{lem4}] Let us fix $\phi\in \rH$ and $\delta \in (0,1]$. For any $\eps>0$, let us now denote   by $z_{\eps,\delta}$ the Ornstein-Uhlenbeck process defined by equation \eqref{eqn-OUP-eps} and by $u_\phi^{\eps,\delta}$ the solution to the stochastic Navier-Stokes equation \eqref{eqn-SNSE-eps}. Thanks to \cite[Theorem 1.2 ]{Brz+Peszat_2000} (with $\xi(t)$ being the $\gamma$-radonifying natural embedding operator from $Q_\delta(\rH)$ to $\rH \cap L^4(\mathcal{O})$) we infer that there exists a constant $C>0$ such that for any $R>0$ and $\eps>0$
\begin{equation}
\label{eqn-proof-01}
\eps \log \mathbb{P} \big( \vert z^{\eps,\delta}\vert_{C([0,T];L^4(\mathcal{O}))} \geq R\big) \leq -\frac{R^2}{CT}.
\end{equation}
Let us now fix $\mu>0$ and $\lambda>0$. By the above inequality there exists $T_0 >0$ such that
\begin{equation}
\label{eqn-proof-02}
\eps \log \mathbb{P} \big( \vert z^{\eps,\delta}\vert_{C([0,T_0];L^4(\mathcal{O}))} \geq \frac{\mu}{3} \big) \leq -\frac{\lambda}{2}, \; \;\eps>0.
\end{equation}
	For a given $z\in\, C([0,T_0];L^4(\mathcal{O})$ and $\phi \in\,\rH$ let us denote by $v_\phi^z$ the unique solution to the problem
\begin{equation}
\label{eqn-proof-03}
(v_\phi^z)^\prime(t)+\rA v_\phi^z(t)+B(v_\phi^z(t)+z(t),v_\phi^z(t)+z(t))=0,\;\; t\in [0,T_0], \ \ \ v_\phi^z(0)=\phi.
\end{equation}
Note that $v_\phi^0$ is the unique solution to the deterministic NSE satisfying  the initial condition $v_\phi^0(0)=\phi$. Hence \begin{equation}
\label{eqn-proof-07}
 \sup_{\phi \in\,B_{0}(\mu)}\vert v_\phi^0-\phi\vert_{C([0,T];\rH)} \leq 2\mu.
\end{equation}

By  \cite[Theorem 4.6]{Brz+Li_2006} we infer that \del{for every $\eta>0$} there exists $\beta>0$ such that
\begin{equation}
\label{eqn-proof-04}
 \vert z \vert_{C([0,T_0];L^4(\mathcal{O}))} < \beta\Longrightarrow \sup_{\phi \in\,B_{0}(\mu)}\vert v_\phi^z -v_\phi^0\vert_{C([0,T_0];\rH)} < \del{\eta}\frac{\mu}{3}.
\end{equation}
By a simple uniqueness argument, the above holds with the same constant for all $T\in (0,T_0]$, i.e.
\del{for every $\eta>0$} there exists $\beta>0$ such that for every $T\in (0,T_0]$,
\begin{equation}
\label{eqn-proof-05}
 \begin{aligned}
&\vert z \vert_{C([0,T];L^4(\mathcal{O}))} < \beta\Longrightarrow  \sup_{\phi \in\,B_{0}(\mu)}\vert v_\phi^z -v_\phi^0\vert_{C([0,T];\rH)} < \frac{\mu}{3},\\
&\sup_{\phi \in\,B_{0}(\mu)}\vert v_\phi^0-\phi\vert_{C([0,T];\rH)} < 2\mu.
\end{aligned}
\end{equation}

Since, see  \cite{Brz+Li_2006}, $ u_\phi^{\eps,\delta}-\phi=z^{\eps,\delta}+v_\phi^{z^{\eps,\delta}}-v_\phi^0+v_\phi^0-\phi$,  we infer that for every $\eps>0$,
\begin{equation}
\label{eqn-proof-06}
\begin{aligned}
&\eps \log\sup_{\phi \in\,B_{0}(\mu)} \mathbb{P} \big( \vert u_\phi^{\eps,\delta}-\phi\vert_{C([0,T];\rH)} \geq 3\mu \big) \leq
\eps \log \mathbb{P} \big( \vert z^{\eps,\delta}\vert_{C([0,T];\rH)} \geq \frac{\mu}{3} \big)\\
&+\eps \log \sup_{\phi \in\, B_{0}(\mu)}\mathbb{P} \big( \vert v_\phi^{z^{\eps,\delta}}-v_\phi^0\vert_{C([0,T];\rH)} \geq \frac{\mu}{3} \big)
+\eps \log \sup_{\phi \in\, B_{0}(\mu)}\mathbb{P} \big( \vert v_\phi^0-\phi\vert_{C([0,T];\rH)} \geq \frac{7\mu}{3} \big).
\end{aligned}
\end{equation}

Let us note that by the second part of \eqref{eqn-proof-05},  the last term on the RHS of inequality \eqref{eqn-proof-06} is equal to $0$.

In order to estimate the first term on the RHS of inequality \eqref{eqn-proof-06} let us choose $T\leq T_0$ such that
\[ \frac{\beta^2}{CT} \geq  \frac{\lambda}{2}\]
and then   apply inequality \eqref{eqn-proof-01} with $R=\beta$. We get that $\eps \log \mathbb{P} \big( \vert z^{\eps,\delta}\vert_{C([0,T];\rH)} \geq \frac{\mu}{3} \big) \leq - \frac{\lambda}{2}$.

In order to estimate the second term on the RHS of inequality \eqref{eqn-proof-06}  we use inequalities \eqref{eqn-proof-05} and \eqref{eqn-proof-02}. Thus  we deduce that
\begin{equation}
\label{eqn-proof-08}
\begin{aligned}
&\eps \log \sup_{\phi \in\, B_{0}(\mu)}\mathbb{P} \big( \vert u_\phi^{\eps,\delta}-\phi\vert_{C([0,T];\rH)} \geq \mu \big) \leq
\eps \log \mathbb{P} \big( \vert z^{\eps,\delta}\vert_{C([0,T];\rH)} \geq \frac{\mu}{3} \big)\\
&+\eps \log \mathbb{P} \big( \vert z^{\eps,\delta} \vert_{C([0,T_0];L^4(\mathcal{O}))} \geq \beta \big)
\leq - \frac{\lambda}{2}- \frac{\lambda}{2}=-\lambda.
\end{aligned}
\end{equation}
This completes the proof.
\end{proof}

\end{document}